\newtheorem{theorem}{Theorem}[section]
\newtheorem{proposition}[theorem]{Proposition}
\newtheorem{lemma}[theorem]{Lemma}
\newtheorem{corollary}[theorem]{Corollary}
\theoremstyle{definition}
\newtheorem{definition}[theorem]{Definition}
\newtheorem{example}[theorem]{Example}
\newtheorem{remark}[theorem]{Remark}
\newtheorem{question}[theorem]{Question}
\newcommand{\ir}{{\mathsf{Irr}}}
\newcommand{\cl}{{\rm cl}}
\newcommand{\ii}{{\rm int}}
\newcommand{\ua}{\mathord{\uparrow}}
\newcommand{\da}{\mathord{\downarrow}}
\newcommand{\mk}{\mathord{\mathsf{K}}}
\journal{Annals of Pure and Applied Logic}
\begin{document}

\begin{frontmatter}



\title{On H-sober spaces and H-sobrifications of $T_0$ spaces \tnoteref{t1}}
\tnotetext[t1]{This research was supported by the National Natural Science Foundation of China (11661057) and NSF of Jiangxi Province (20192ACBL20045)}

\author[X. Xu]{Xiaoquan Xu}
\ead{xiqxu2002@163.com}
\address[X. Xu]{School of Mathematics and Statistics,
Minnan Normal University, Zhangzhou 363000, China}

\begin{abstract}
In this paper, we provide a uniform approach to $d$-spaces, sober spaces and well-filtered spaces, and develop a general framework for dealing with all these spaces. The concepts of irreducible subset systems (R-subset systems for short), H-sober spaces and super H-sober spaces for a general R-subset system H are introduced. It is proved that the product space of a family of $T_0$ spaces is H-sober iff each factor space is H-sober, and if H has a natural property (called property M), then the super H-sobriety is a special type of H-sobriety, and hence the product space of a family of $T_0$ spaces is super H-sober iff each factor space is super H-sober. Let $\mathbf{Top}_0$ be the category of all $T_0$ spaces with continuous mappings. For a $T_0$ space $X$ and an H-sober space $Y$, we show that the function space $\mathbf{Top}_0(X, Y)$ equipped with the topology of pointwise convergence is H-sober. Furthermore, if H has property M and $Y$ is a super H-sober space, then the function space $\mathbf{Top}_0(X, Y)$ equipped with the topology of pointwise convergence is super H-sober. One immediate corollary is that for a $T_0$ space $X$ and a well-filtered space $Y$, the function space $\mathbf{Top}_0(X, Y)$ equipped with the topology of pointwise convergence is well-filtered. For an R-subset system H having property M, the Smyth power space of a H-sober space is not H-sober in general. But for the super {\rm H}-sobriety, we prove that a $T_0$ space $X$ is super H-sober iff its Smyth power space $P_S(X)$ is super H-sober. A direct construction of the H-sobrifcations and super H-sobrifications of $T_0$ spaces is given. So the category of all H-sober spaces is reflective in $\mathbf{Top}_0$, and the category of all super H-sober spaces is also reflective in $\mathbf{Top}_0$ if H has property M. It is shown that the H-sobrification preserves finite products of $T_0$ spaces, and the super H-sobrification preserves finite products of $T_0$ spaces if H has property M.

\end{abstract}

\begin{keyword}
Irreducible subset system; H-Rudin set; H-sober determined set; H-sober space; Super H-sober space; H-soberification; Smyth power space

\MSC 54D35; 18B30; 06B35; 54B20; 03B70

\end{keyword}




\end{frontmatter}


\section{Introduction}

In non-Hausdorff topology and domain theory, the $d$-spaces, well-filtered spaces and sober spaces form three of the most important classes (see [2-21, 23, 24, 26-36]). A space $X$ is a $d$-space provided for any directed subset $D$ of $X$ (under the specialization order of $X$), there exists a unique point $x\in X$ such that $\overline{D}=\overline{\{x\}}$; $X$ is sober provided for any irreducible closed subset $A$ of $X$, there is a unique point $a\in X$ such that $A=\overline{\{a\}}$. A $T_0$ space $Y$ is well-filtered if for any filtered family $\mathcal{K}\subseteq \mathord{\mathsf{K}}(X)$ and any open set $U$, $\bigcap\mathcal{K}{\subseteq} U$ implies $K{\subseteq} U$ for some $K{\in}\mathcal{K}$. So the $d$-spaces and the sober spaces are in the same type, and the well-filtered spaces are in a different type. More precisely, a $T_0$ space $X$ is well-filtered iff its Smyth power space $P_S(X)$ is a $d$-space (see \cite{xi-zhao-MSCS-well-filtered} or \cite{xuxizhao}). So the well-filtered spaces are in a "higher" level. Let $\mathbf{Top}_0$ be the category of all $T_0$ spaces with continuous mappings and $\mathbf{Sob}$ that of all sober spaces with continuous mappings. Denote the category of all $d$-spaces with continuous mappings and that of all well-filtered spaces with continuous mappings respectively by $\mathbf{Top}_d$ and $\mathbf{Top}_w$. It is well-known that $\mathbf{Sob}$ is reflective in $\mathbf{Top}_0$ (see \cite{redbook, Jean-2013}). Using $d$-closures, Wyler \cite{Wyler} proved that $\mathbf{Top}_d$ is reflective in $\mathbf{Top}_0$. Later, Ershov \cite{Ershov_1999} showed that the $d$-completion (i.e., the $d$-reflection) of $X$ can be obtained by adding the closure of directed sets onto $X$ (and then repeating this process by transfinite induction). In \cite{Keimel-Lawson}, using Wyler's method, Keimel and Lawson proved that for a full subcategory $\mathbf{K}$ of $\mathbf{Top}_0$ containing $\mathbf{Sob}$, if $\mathbf{K}$ has certain properties, then $\mathbf{K}$ is reflective in $\mathbf{Top}_0$. They showed that $\mathbf{Top}_d$ and some other categories have such properties. For quite a long time, it is not known whether $\mathbf{Top}_w$ is reflective in $\mathbf{Top}_0$. Recently, following Keimel and Lawson's method, a positive answer to this problem was given in \cite{wu-xi-xu-zhao-19}. Following Ershov's method, a construction of the well-filtered reflection of $T_0$ spaces was presented in \cite{Shenchon}. In \cite {xu20}, for a full subcategory $\mathbf{K}$ of $\mathbf{Top}_0$ containing $\mathbf{Sob}$, the author  provided a direct approach to $\mathbf{K}$-reflections of $T_0$ spaces (see also \cite{ZhangLi}).

In this paper, we provide a uniform approach to $d$-spaces, sober spaces and well-filtered spaces, and develop a general framework for dealing with all these spaces. Based on the concept of subset system in \cite{Wright78} and the Topological Rudin Lemma in \cite{Klause-Heckmann}, we introduce the concepts of \emph{irreducible subset system} (\emph{R}-\emph{subset system} for short), \emph{H}-\emph{sober space} and \emph{super H}-\emph{sober space} for a general R-subset system H. It is proved that the product space of a family of $T_0$ spaces is H-sober iff each factor space is H-sober, and if H has a natural property (called \emph{property M}), then the super H-sobriety is a special type of H-sobriety, and hence the product space of a family of $T_0$ spaces is super H-sober iff each factor space is super H-sober. As a direct corollary, we get that the product space of a family of $T_0$ spaces is well-filtered iff each factor space is well-filtered. For a $T_0$ space $X$ and an H-sober space $Y$, we show that the function space $\mathbf{Top}_0(X, Y)$ of all continuous functions $f : X \longrightarrow Y$ equipped with the topology of pointwise convergence (i.e., the relative product topology) is H-sober. Furthermore, if H has property M and $Y$ is a super H-sober space, then the function space $\mathbf{Top}_0(X, Y)$ equipped with the topology of pointwise convergence is super H-sober. One immediate corollary is that for a $T_0$ space $X$ and a well-filtered space $Y$, the function space $\mathbf{Top}_0(X, Y)$ equipped with the topology of pointwise convergence is well-filtered.

For an R-subset system H having property M, the Smyth power space of a H-sober space is not H-sober in general. But for the super {\rm H}-sobriety, we prove that a $T_0$ space $X$ is super H-sober iff its Smyth power space $P_S(X)$ is super H-sober. In particular, we have that a $T_0$ space $X$ is well-filtered iff $P_S(X)$ is a $d$-space iff $P_S(P_S(X))$ is a $d$-space (that is, $P_S(X)$ is well-filtered).

Let $\mathbf{H}$-$\mathbf{Sober}$ be the category of all H-sober spaces with continuous mappings and $\mathbf{SH}$-$\mathbf{Sober}$  that of all super H-sober spaces with continuous mappings. In order to investigate the reflections of $\mathbf{H}$-$\mathbf{Sober}$ and $\mathbf{SH}$-$\mathbf{Sober}$  in $\mathbf{Top}_0$, we give a direct construction of the H-sobrifcations and super H-sobrifications of $T_0$ spaces, and consequently we get that  $\mathbf{H}$-$\mathbf{Sober}$ is reflective in $\mathbf{Top}_0$, and $\mathbf{SH}$-$\mathbf{Sober}$ is also reflective in $\mathbf{Top}_0$ if H has property M. It is proved that the H-sobrification preserves finite products of $T_0$ spaces, and the super H-sobrification preserves finite products of $T_0$ spaces if H has property M.

Applying directly the above results to the R-subset systems of  $\mathcal D$ and $\mathcal R$, where for each $T_0$ space $X$, $\mathcal D(X)$ is the set of all directed subsets of $X$ and $\mathcal R(X)$ is the set of all countable irreducible subsets of $X$, we immediately get the following known results: (1) $\mathbf{Top}_d$, $\mathbf{Sob}$ and $\mathbf{Top}_w$ are all reflective in $\mathbf{Top}_0$; and (2) the $d$-space reflection and the well-filtered reflection preserve finite products of $T_0$ spaces.

\section{Preliminary}

In this section, we briefly  recall some fundamental concepts and notations that will be used in the paper. Some basic properties of directed sets, irreducible sets, sober spaces, compact saturated sets, well-filtered spaces and Smyth power spaces are presented. For further details, we refer the reader to \cite{redbook, Jean-2013,  Schalk}.

For a poset $P$ and $A\subseteq P$, let
$\mathord{\downarrow}A=\{x\in P: x\leq  a \mbox{ for some }
a\in A\}$ and $\mathord{\uparrow}A=\{x\in P: x\geq  a \mbox{
	for some } a\in A\}$. For  $x\in P$, we write
$\mathord{\downarrow}x$ for $\mathord{\downarrow}\{x\}$ and
$\mathord{\uparrow}x$ for $\mathord{\uparrow}\{x\}$. A subset $A$
is called a \emph{lower set} (resp., an \emph{upper set}) if
$A=\mathord{\downarrow}A$ (resp., $A=\mathord{\uparrow}A$). Define $A^{\uparrow}=\{x\in P : x \mbox{ is an upper bound of } A \mbox{ in }P\}$. Dually, define $A^{\downarrow}=\{x\in P : x \mbox{ is a lower bound of } $A$ \mbox{ in } $P$\}$. The set $A^{\delta}=(A^{\ua})^{\da}$ is called the \emph{cut} generated by $A$. Let $P^{(<\omega)}=\{F\subseteq P : F \mbox{~is a nonempty finite set}\}$, $P^{(\leqslant\omega)}=\{F\subseteq P : F \mbox{~is a nonempty countable set}\}$ and $\mathbf{Fin} ~P=\{\uparrow F : F\in P^{(<\omega)}\}$.
 For a nonempty subset $A$ of $P$, define $\mathrm{max}(A)=\{a\in A : a \mbox{~ is a maximal element of~} A\}$ and $\mathrm{min}(A)=\{a\in A : a \mbox{~ is a minimal element of~} A\}$.

 The category of all sets with mappings is denoted by $\mathbf{Set}$. For a set $X$ and $A, B\subseteq X$, $A\subset B$ means that $A\subseteq B$ but $A\neq B$, that is, $A$ is a proper subset of $B$. Let $|X|$ be the cardinality of $X$ and $\omega=|\mathbb{N}|$, where $\mathbb{N}$ is the set of all natural numbers.

 Let $\mathbf{Poset}$ denote the category of all posets with order-preserving mappings. For $P\in $\emph{ob}($\mathbf{Poset}$) and $D\subseteq P$,  $D$ is called \emph{directed} if every two
elements in $D$ have an upper bound in $D$. The set of all directed sets of $P$ is denoted by $\mathcal D(P)$. $I\subseteq P$ is called an \emph{ideal} of $P$ if $I$ is a directed lower subset of $P$. Let $\mathrm{Id} (P)$ be the poset of all ideals of $P$ with the order of set inclusion. Dually, we define the concept of \emph{filters} and denote the poset of all filters of $P$ by $\mathrm{Filt}(P)$. The ideals $\da x$ and filters $\ua x$ are called \emph{principal ideals} and \emph{principal filters} respectively. A poset $P$ is called \emph{chain-complete}, if every chain of $P$ has a least upper bound in $P$. More general, $P$ is called a
\emph{directed complete poset}, or \emph{dcpo} for short, provided that $\bigvee D$ exists in $P$ for any
$D\in \mathcal D(P)$.

\begin{lemma}\emph{(\cite{Mar76})}\label{Markowsky} If $D$ is an infinite directed set, then there exists a transfinite sequences $D_\alpha, \alpha< |D|$, of
directed subsets of $D$ having the following properties:
\begin{enumerate}[\rm (1)]
\item for each $\alpha$, if $\alpha$ is finite, so is $D_\alpha$, while if $\alpha$ is infinite $|D_\alpha|=|\alpha|$ (thus for all $\alpha, |D_\alpha|<|D|$).
    \item if $\alpha<\beta<|D|, D_\alpha\subset D_\beta$ (that is, $D_\alpha$ is a proper subset of $D_\beta$).
    \item $D=\bigcup_{\alpha<|D|} D_\alpha$.
\end{enumerate}
\end{lemma}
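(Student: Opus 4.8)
The plan is to build the sequence by transfinite recursion after fixing an enumeration of $D$; the only genuine work is a cardinality-preserving way of enlarging a subset of $D$ to a directed one. Write $\kappa=|D|$, an infinite cardinal, and fix a bijection $\xi\mapsto d_\xi$ so that $D=\{d_\xi:\xi<\kappa\}$ with the $d_\xi$ pairwise distinct.

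First I would isolate the key tool, a \emph{directed hull}: for every nonempty $A\subseteq D$ I claim there is a directed set $\widehat{A}$ with $A\subseteq\widehat{A}\subseteq D$ which is finite when $A$ is finite and satisfies $|\widehat{A}|=|A|$ when $A$ is infinite. For finite $A$, directedness of $D$ yields (by induction on $|A|$) an upper bound $u$ of $A$ in $D$, and $\widehat{A}=A\cup\{u\}$ works. For infinite $A$, fix a choice function assigning to each $\{x,y\}\in D^{(<\omega)}$ an upper bound $u(x,y)\in D$, set $A_0=A$ and $A_{n+1}=A_n\cup\{u(x,y):x,y\in A_n\}$, and put $\widehat{A}=\bigcup_{n<\omega}A_n$. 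Any two points of $\widehat{A}$ lie in a common $A_n$ and hence have an upper bound in $A_{n+1}\subseteq\widehat{A}$, so $\widehat{A}\in\mathcal D(D)$; and since $|A_{n+1}|\le|A_n|+|A_n|^2=|A_n|$ for infinite $A_n$, induction gives $|A_n|=|A|$ for all $n$, whence $|\widehat{A}|=\omega\cdot|A|=|A|$.

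Then I would define $D_\alpha$ for $\alpha<\kappa$ by recursion, maintaining the invariants that $D_\alpha\in\mathcal D(D)$, that $\{d_\xi:\xi<\alpha\}\subseteq D_\alpha$, that $D_\beta\subsetneq D_\alpha$ whenever $\beta<\alpha$, and that $D_\alpha$ is finite for finite $\alpha$ while $|D_\alpha|=|\alpha|$ for infinite $\alpha$. Put $D_0=\{d_0\}$. At a successor $\alpha+1$, choose $e_\alpha\in D\setminus D_\alpha$ — possible because $|D_\alpha|<\kappa$ — and set $D_{\alpha+1}=\widehat{D_\alpha\cup\{d_\alpha,e_\alpha\}}$; here $e_\alpha$ forces $D_\alpha\subsetneq D_{\alpha+1}$, the point $d_\alpha$ preserves the containment invariant, and the hull keeps the cardinality at $|\alpha|=|\alpha+1|$ (or finite). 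At a limit $\alpha$, set $D_\alpha=\bigcup_{\beta<\alpha}D_\beta$, which is a union of a chain of directed sets and hence directed; it visibly satisfies the containment invariant, satisfies properness via $D_\beta\subsetneq D_{\beta+1}\subseteq D_\alpha$, and the estimate $\sum_{\beta<\alpha}|D_\beta|\le|\alpha|\cdot|\alpha|=|\alpha|$ together with $\{d_\xi:\xi<\alpha\}\subseteq D_\alpha$ pins $|D_\alpha|$ at $|\alpha|$. Finally, since $\kappa$ is a limit ordinal, each $d_\xi$ lies in $D_{\xi+1}\subseteq\bigcup_{\alpha<\kappa}D_\alpha$, so $\bigcup_{\alpha<\kappa}D_\alpha=D$, which is (3), while (1) and (2) are exactly the maintained invariants.

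I expect the main obstacle to be the cardinality bookkeeping rather than the recursion itself: one must verify that the directed hull does not inflate cardinality (the estimates $|A_{n+1}|=|A_n|$ and $|\widehat{A}|=|A|$) and that the sums taken at limit stages stay bounded by $|\alpha|$, using $|\beta|\le|\alpha|$ for $\beta<\alpha$ and the absorption law $|\alpha|\cdot|\alpha|=|\alpha|$ for infinite $\alpha$. The matching lower bound $|D_\alpha|\ge|\alpha|$ then comes for free from the distinctness of the $d_\xi$ together with the containment invariant.
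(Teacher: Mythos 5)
The paper does not prove this lemma; it is quoted directly from Markowsky's paper \cite{Mar76}, so there is no internal proof to compare against. Your argument is correct and is essentially the standard (indeed Markowsky's own) proof: enumerate $D$, use a cardinality-preserving directed hull obtained by closing under chosen pairwise upper bounds through $\omega$ stages, and run a transfinite recursion that adds $d_\alpha$ plus a fresh element $e_\alpha$ at successors and takes unions at limits; the cardinality bookkeeping ($|A_{n+1}|=|A_n|$ for infinite $A_n$, $|\alpha|\cdot|\alpha|=|\alpha|$ at limits, and the lower bound from $\{d_\xi:\xi<\alpha\}\subseteq D_\alpha$) is all in order.
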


\begin{corollary}\emph{(\cite{Mar76})}\label{dcpo=chain comp} For a poset $P$, $P$ is a dcpo if and only if $P$ is chain-complete.
\end{corollary}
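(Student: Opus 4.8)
The plan is to prove the two implications separately, with essentially all the content residing in the direction from chain-completeness to directed-completeness, where Lemma~\ref{Markowsky} will do the heavy lifting. The reverse implication is immediate: a chain is in particular a directed set, so if $P$ is a dcpo then every chain of $P$ has a least upper bound, and $P$ is chain-complete.

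For the converse, suppose $P$ is chain-complete and let $D\in\mathcal D(P)$; I want to produce $\bigvee D$. I would argue by transfinite induction on the cardinality $|D|$, proving the statement ``every directed subset of $P$ of cardinality at most $\kappa$ has a supremum in $P$'' for every cardinal $\kappa$. When $D$ is finite, directedness gives a greatest element of $D$ (an upper bound in $D$ for the whole finite set), which is its supremum, settling the base case. For the inductive step I would apply Lemma~\ref{Markowsky} to an infinite directed set $D$ to obtain a strictly increasing transfinite family $(D_\alpha)_{\alpha<|D|}$ of directed subsets with $|D_\alpha|<|D|$ and $D=\bigcup_{\alpha<|D|}D_\alpha$. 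By the inductive hypothesis each $s_\alpha:=\bigvee D_\alpha$ then exists in $P$.

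The crucial observation is that the family $(s_\alpha)_{\alpha<|D|}$ is a chain: if $\alpha\leq\beta$ then $D_\alpha\subseteq D_\beta$, so $s_\beta$ is an upper bound of $D_\alpha$ and hence $s_\alpha\leq s_\beta$. Chain-completeness then supplies $s:=\bigvee_{\alpha<|D|}s_\alpha$. I would finish by verifying that $s=\bigvee D$: it is an upper bound of $D$, since each $d\in D$ lies in some $D_\alpha$ and thus $d\leq s_\alpha\leq s$; and it is the least one, since any upper bound $u$ of $D$ bounds every $D_\alpha$, forcing $s_\alpha\leq u$ for all $\alpha$ and therefore $s\leq u$. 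This closes the induction and shows $P$ is a dcpo.

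I expect the only genuine obstacle to be organizing the transfinite induction correctly: the induction must be run on cardinality (not on a fixed ordinal), so that the hypothesis applies uniformly to the possibly infinite pieces $D_\alpha$ precisely because $|D_\alpha|<|D|$. Paired with this is the key idea that passing from the $D_\alpha$ to their suprema $s_\alpha$ converts a directed set into a chain, which is exactly what makes chain-completeness sufficient to conclude directed-completeness. Everything else reduces to a routine check of the universal property of the supremum.
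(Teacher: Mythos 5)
Your proof is correct and is exactly the derivation the paper intends: the corollary is stated as a consequence of Lemma~\ref{Markowsky}, and the standard route is the transfinite induction on $|D|$ you describe, replacing the directed pieces $D_\alpha$ by their suprema to obtain a chain to which chain-completeness applies. No gaps.
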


\begin{lemma}\label{D countable chain}
Let $P$ be a poset and $D$ a countable directed subset of $P$. Then there exists a countable chain $C\subseteq D$ such that $\da D=\da C$. Hence, $\bigvee C$ exists and $\bigvee C=\bigvee D$ whenever $\bigvee D$ exists.
\end{lemma}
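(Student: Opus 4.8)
The plan is to produce $C$ by a straightforward recursion that converts an enumeration of $D$ into an increasing sequence, invoking directedness at each step. First I would dispose of the degenerate case: if $D$ is finite, then being directed it contains a greatest element $m$, and $C=\{m\}$ is a (finite, hence countable) chain with $\da C=\da\{m\}=\da D$. So I may assume $D$ is countably infinite and fix an enumeration $D=\{d_n:n\in\mathbb{N}\}$ (directedness guarantees $D\neq\emptyset$, so $d_0$ exists).

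Next I would build a chain inside $D$ by recursion. Put $c_0=d_0$, and given $c_n\in D$ use the directedness of $D$ to choose some $c_{n+1}\in D$ that is an upper bound of $\{c_n,d_{n+1}\}$ in $D$. By construction $c_n\le c_{n+1}$ for every $n$, so $C:=\{c_n:n\in\mathbb{N}\}$ is a countable chain with $C\subseteq D$; moreover $d_n\le c_n$ for each $n$ (and $d_0=c_0$). The inclusion $C\subseteq D$ gives $\da C\subseteq\da D$, while $d_n\le c_n\in C$ for every $n$ gives $\da D\subseteq\da C$; hence $\da C=\da D$, as required.

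For the final assertion I would observe that the set of upper bounds of a subset depends only on its down-closure: $x\in A^{\ua}$ iff $x\ge a$ for all $a\in A$ iff $x\ge y$ for all $y\in\da A$, so $A^{\ua}=(\da A)^{\ua}$. Applying this to $A=C$ and $A=D$ together with $\da C=\da D$ yields $C^{\ua}=(\da C)^{\ua}=(\da D)^{\ua}=D^{\ua}$, i.e.\ $C$ and $D$ have exactly the same set of upper bounds. Since $\bigvee A$, when it exists, is precisely the least element of $A^{\ua}$, it follows that $\bigvee C$ exists iff $\bigvee D$ exists, and then $\bigvee C=\bigvee D$; in particular $\bigvee C=\bigvee D$ whenever $\bigvee D$ exists.

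I expect no serious obstacle here: the argument is a routine recursion on $\mathbb{N}$, and the only points needing care are the degenerate finite (or singleton) case and the elementary observation $A^{\ua}=(\da A)^{\ua}$, which is what lets one transfer the existence and value of joins across two sets sharing the same down-closure.
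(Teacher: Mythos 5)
Your proposal is correct and follows essentially the same route as the paper's proof: dispose of the finite case via the greatest element, then recursively build an increasing sequence $c_n$ dominating the enumeration $d_n$ (the paper takes $c_{n+1}$ above $\{d_{n+1},c_0,\dots,c_n\}$, which is equivalent to your choice above $\{c_n,d_{n+1}\}$ since the $c_i$ are already increasing). Your explicit justification of the last sentence via $A^{\ua}=(\da A)^{\ua}$ is a welcome elaboration of what the paper leaves implicit.
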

\begin{proof}
	If $|D|<\omega$, then $D$ contains a largest element $d$, so let $C=\{d\}$, which satisfies the requirement.
	
	Now assume $|D|=\omega$ and let $D=\{d_n:n<\omega\}$. We use induction on $n\in\omega$ to  define $C=\{c_n:n<\omega\}$.
	More precisely, let $c_0=d_0$ and let $c_{n+1}$ be an upper bound of $\{d_{n+1},c_0, c_1,c_2\ldots,c_n\}$ in $D$. It is clear that $C$ is a chain and $\da D=\da C$.
\end{proof}

As in \cite{redbook}, the \emph{upper topology} on a poset $Q$, generated
by the complements of the principal ideals of $Q$, is denoted by $\upsilon (Q)$. A subset $U$ of $Q$ is \emph{Scott open} if
(i) $U=\mathord{\uparrow}U$ and (ii) for any directed subset $D$ for
which $\bigvee D$ exists, $\bigvee D\in U$ implies $D\cap
U\neq\emptyset$. All Scott open subsets of $Q$ form a topology.
This topology is called the \emph{Scott topology} on $Q$ and
denoted by $\sigma(Q)$. The space $\Sigma~\!\! Q=(Q,\sigma(Q))$ is called the
\emph{Scott space} of $Q$. The upper sets of $Q$ form the (\emph{upper}) \emph{Alexandroff topology} $\alpha (Q)$.

For $X\in$ \emph{ob}($\mathbf{Top}_0$), we use $\leq_X$ to represent the \emph{specialization order} of $X$, that is, $x\leq_X y$ if{}f $x\in \overline{\{y\}}$). In the following, when a $T_0$ space $X$ is considered as a poset, the order always refers to the specialization order if no other explanation. Let $\mathcal O(X)$ (resp., $\Gamma (X)$) be the set of all open subsets (resp., closed subsets) of $X$. Define $\mathcal S_c(X)=\{\overline{{\{x\}}} : x\in X\}$ and $\mathcal D_c(X)=\{\overline{D} : D\in \mathcal D(X)\}$. A space $X$ is called a $d$-\emph{space} (or \emph{monotone convergence space}) if $X$ (with the specialization order) is a dcpo
 and $\mathcal O(X) \subseteq \sigma(X)$ (cf. \cite{redbook, Wyler}). Clearly, for a dcpo $P$, $\Sigma~\!\! P$ is a $d$-space. The category of all $d$-spaces with continuous mappings is denoted by $\mathbf{Top}_d$.

One can directly get the following result (cf. \cite{xu-shen-xi-zhao1}).

\begin{proposition}\label{d-spacecharac1} For a $T_0$ space $X$, the following conditions are equivalent:
\begin{enumerate}[\rm (1)]
	        \item $X$ is a $d$-space.
            \item For each $D\in \mathcal D(X)$, there exists a \emph{(}unique\emph{)} point $x\in X$ such that $\overline{D}=\overline{\{x\}}$.
\end{enumerate}
\end{proposition}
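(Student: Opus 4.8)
The plan is to route everything through the two basic facts about the specialization order in a $T_0$ space: for points one has $x\leq_X y$ exactly when $x\in\overline{\{y\}}$, so that $\overline{\{x\}}=\da x$; and a point $s$ lies in $\overline{D}$ precisely when every open neighbourhood of $s$ meets $D$. With these in hand the equivalence reduces to matching the order-theoretic data (directed suprema, Scott-openness) against the closure condition in (2). I would also record once and for all that open sets are automatically upper sets in the specialization order, which I will need when checking the Scott-open condition.

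For $(1)\Rightarrow(2)$, fix $D\in\mathcal D(X)$. Since $X$ is a dcpo, $x=\bigvee D$ exists. From $d\leq_X x$ for all $d\in D$ I get $D\subseteq\da x=\overline{\{x\}}$, hence $\overline{D}\subseteq\overline{\{x\}}$. For the reverse inclusion it suffices to show $x\in\overline{D}$: if not, then $U=X\setminus\overline{D}$ is an open set containing $x$ and disjoint from $D$; but $\mathcal O(X)\subseteq\sigma(X)$ makes $U$ Scott open, and $\bigvee D=x\in U$ then forces $D\cap U\neq\emptyset$, a contradiction. Thus $\overline{\{x\}}\subseteq\overline{D}$ and equality holds; uniqueness of $x$ is immediate from $T_0$-ness, since $\overline{\{x\}}=\overline{\{y\}}$ implies $x=y$.

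For $(2)\Rightarrow(1)$, given $D\in\mathcal D(X)$ take the point $x$ with $\overline{D}=\overline{\{x\}}=\da x$. From $D\subseteq\da x$ I see $x$ is an upper bound of $D$, and any other upper bound $y$ satisfies $D\subseteq\da y=\overline{\{y\}}$, whence $\overline{D}=\da x\subseteq\da y$ and $x\leq_X y$; so $x=\bigvee D$ and $X$ is a dcpo. It then remains to verify $\mathcal O(X)\subseteq\sigma(X)$. An open set $U$ is already an upper set, so only the directed-sup condition needs checking: if $D\in\mathcal D(X)$ and $\bigvee D=x\in U$, then by the choice of $x$ we have $x\in\overline{\{x\}}=\overline{D}$, so $U$ is an open neighbourhood of a point of $\overline{D}$ and therefore $U\cap D\neq\emptyset$. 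This is exactly Scott-openness of $U$.

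The argument is essentially bookkeeping, so I do not expect a genuine obstacle; the only point demanding care is the interlock, in each direction, between the statements ``$x=\bigvee D$'' and ``$\overline{D}=\overline{\{x\}}$''. In particular, the crux in $(1)\Rightarrow(2)$ is invoking $\mathcal O(X)\subseteq\sigma(X)$ to turn membership of the supremum in an open set into the directed set actually meeting it, and the mirror-image step in $(2)\Rightarrow(1)$ is re-deriving that same Scott-open property from the closure hypothesis.
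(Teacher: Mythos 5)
Your proof is correct. The paper states this proposition without proof (it merely remarks ``one can directly get the following result'' and cites a reference), so there is no in-paper argument to compare against; your verification, which splits the definition of a $d$-space into dcpo-ness plus $\mathcal O(X)\subseteq\sigma(X)$ and matches each half against the closure condition $\overline{D}=\overline{\{x\}}$, is exactly the routine argument the author is alluding to, and the one delicate step --- identifying the point supplied by (2) with $\bigvee D$ before using it to check Scott-openness --- is handled properly.
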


\begin{lemma}\label{d-space max point}
Let $X$ be a $d$-space and $A$ a nonempty closed subset of $X$. Then $A=\da \mathrm{max}(A)$, and hence $\mathrm{max}(A)\neq\emptyset$.
\end{lemma}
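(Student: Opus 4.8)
The plan is to establish the two inclusions $A\subseteq\da\mathrm{max}(A)$ and $\da\mathrm{max}(A)\subseteq A$ separately; the latter is routine, and the former carries all the content. The reverse inclusion is immediate once I note that a closed set in a $T_0$ space is a lower set for the specialization order: if $x\leq_X y\in A$, then $x\in\overline{\{y\}}\subseteq\overline{A}=A$, so $A=\da A$, and since $\mathrm{max}(A)\subseteq A$ we get $\da\mathrm{max}(A)\subseteq\da A=A$.

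For the substantive inclusion I would first record the second structural fact I need: because $X$ is a $d$-space its open sets are Scott open, so the closed set $A$ is closed under directed joins. Indeed, $X$ being a $d$-space is a dcpo, so for any directed $D\subseteq A$ the join $\bigvee D$ exists; and if $\bigvee D\notin A$, then $\bigvee D$ would lie in the Scott-open set $X\setminus A$, forcing $D\cap(X\setminus A)\neq\emptyset$, contradicting $D\subseteq A$. Hence $\bigvee D\in A$. The core step is then to produce, for each $a\in A$, a maximal element of $A$ lying above $a$, and for this I would apply Zorn's Lemma to the poset $P_a=\ua a\cap A$ ordered by $\leq_X$. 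This set is nonempty since $a\in P_a$. Given a nonempty chain $C\subseteq P_a$, the set $C$ is directed and contained in $A$, so by the previous observation $\bigvee C$ exists and lies in $A$; moreover $\bigvee C\geq_X a$ because every element of $C$ dominates $a$, whence $\bigvee C\in P_a$ is an upper bound of $C$ in $P_a$ (the empty chain being bounded by $a$). Zorn's Lemma thus yields a maximal element $m$ of $P_a$. Finally $m$ is maximal in all of $A$: if $x\in A$ satisfies $m\leq_X x$, then $x\geq_X a$ as well, so $x\in P_a$, and maximality of $m$ in $P_a$ gives $x=m$. Therefore $a\leq_X m\in\mathrm{max}(A)$, proving $A\subseteq\da\mathrm{max}(A)$. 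Since $A\neq\emptyset$, choosing any $a\in A$ already exhibits a maximal element, so $\mathrm{max}(A)\neq\emptyset$.

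This argument is elementary, so I do not anticipate a genuine obstacle; the one point that must be handled carefully is to apply Zorn to $\ua a\cap A$ rather than to $A$ itself, since a direct application to $A$ would not deliver a maximal element \emph{above a prescribed} $a$ and hence would not yield the equality $A=\da\mathrm{max}(A)$. It is precisely the combination of $A$ being a lower set and being closed under directed (in particular, chain) joins that makes $\ua a\cap A$ chain-complete; by Corollary~\ref{dcpo=chain comp} the requisite joins of chains exist, so one could equally phrase the upper-bound verification in terms of chain-completeness without invoking directedness explicitly.
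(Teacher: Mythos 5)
Your proof is correct and takes essentially the same route as the paper's: both rest on Zorn's Lemma together with the observation that a nonempty closed subset of a $d$-space is a lower set closed under directed (hence chain) suprema. The only cosmetic difference is that the paper extracts a maximal chain in $A$ through the given point and takes its supremum, whereas you apply Zorn directly to the subposet $\mathord{\uparrow}a\cap A$; the two arguments are interchangeable.
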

\begin{proof}
 For $x\in A$, by Zorn's Lemma there is a maximal chain $C_x$ in $A$ with $x\in C_x$. Since $X$ is a $d$-space, $c_x=\bigvee C_x$ exists and $c_x\in A$. By the maximality of $C_x$, we have $c_x\in \mathrm{max}(A)$. Clearly, $x\leq c_x$. Therefore, $A\subseteq \da \mathrm{max}(A)\subseteq \da A=A$, and hence $A=\da \mathrm{max}(A)$.
\end{proof}

\begin{remark}\label{C up=cl C up} Let $X$ be a $T_0$ space, $C\subseteq X$ and $x\in X$. Then the following four conditions are equivalent:
\begin{enumerate}[\rm (1)]
   \item $x\in C^{\ua}$;
   \item $C\subseteq \da x$;
   \item $\overline{C}\subseteq \da x$;
   \item $x\in \overline{C}^{\ua}$.
\end{enumerate}
Therefore, $\bigcap_{c\in C}\ua c=C^{\ua}=\overline{C}^{\ua}=\bigcap_{b\in \overline{C}}\ua b$, and $ C^{\delta}=\bigcap \{\da x : C\subseteq \da x\}=\bigcap \{\da x : \overline{C}\subseteq \da x\}=\overline{C}^{\delta}.$ Furthermore, $\bigvee C$ exists in $X$ if{}f $\bigvee \overline{C}$ exists in $X$, and $\bigvee C =\bigvee \overline{C}$ if they exist in $X$.
\end{remark}

For a $T_0$ space $X$ and a nonempty subset $A$ of $X$, $A$ is \emph{irreducible} if for any $\{F_1, F_2\}\subseteq \Gamma (X)$, $A \subseteq F_1\cup F_2$ implies $A \subseteq F_1$ or $A \subseteq  F_2$.  Denote by $\ir(X)$ (resp., $\ir_c(X)$) the set of all irreducible (resp., irreducible closed) subsets of $X$. Clearly, every subset of $X$ that is directed under $\leq_X$ is irreducible.

The following two lemmas on irreducible sets are well-known (cf. \cite{Klause-Heckmann}).

\begin{lemma}\label{irrsubspace}
Let $X$ be a space and $Y$ a subspace of $X$. Then the following conditions are equivalent for a
subset $A\subseteq Y$:
\begin{enumerate}[\rm (1)]
	\item $A$ is an irreducible subset of $Y$.
	\item $A$ is an irreducible subset of $X$.
	\item ${\rm cl}_X A$ is an irreducible closed subset of $X$.
\end{enumerate}
\end{lemma}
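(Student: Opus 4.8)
The plan is to establish the chain of equivalences by separately proving $(1)\Leftrightarrow(2)$ and $(2)\Leftrightarrow(3)$, each time arguing directly from the definition of irreducibility in terms of closed sets. Two elementary facts will do all the work: first, that the closed subsets of the subspace $Y$ are exactly the sets of the form $F\cap Y$ with $F$ closed in $X$; and second, that for any subset $A$ of a space, $A$ is irreducible if and only if its closure is irreducible, the point being that a union $F_1\cup F_2$ of two closed sets is itself closed and hence contains $A$ precisely when it contains $\overline{A}$. I would arrange the two equivalences around $(2)$ as a common hub.

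For $(1)\Leftrightarrow(2)$ I would argue as follows. Assume first that $A$ is irreducible in $X$, and take closed sets $G_1,G_2$ of $Y$ with $A\subseteq G_1\cup G_2$. Writing $G_i=F_i\cap Y$ with $F_i\in\Gamma(X)$, we get $A\subseteq F_1\cup F_2$, so irreducibility in $X$ yields $A\subseteq F_i$ for some $i$; since $A\subseteq Y$ this gives $A\subseteq F_i\cap Y=G_i$, proving $A$ is irreducible in $Y$. Conversely, assume $A$ is irreducible in $Y$ and take $F_1,F_2\in\Gamma(X)$ with $A\subseteq F_1\cup F_2$. Then $A=A\cap Y\subseteq (F_1\cap Y)\cup(F_2\cap Y)$, a union of two closed sets of $Y$, so $A\subseteq F_i\cap Y\subseteq F_i$ for some $i$. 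This settles $(1)\Leftrightarrow(2)$.

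For $(2)\Leftrightarrow(3)$, note that $\mathrm{cl}_X A$ is by definition closed in $X$, so it remains only to show that $A$ is irreducible in $X$ iff $\mathrm{cl}_X A$ is. If $A$ is irreducible and $\mathrm{cl}_X A\subseteq F_1\cup F_2$ with the $F_i$ closed, then $A\subseteq F_1\cup F_2$, hence $A\subseteq F_i$ for some $i$, and taking closures gives $\mathrm{cl}_X A\subseteq F_i$. Conversely, if $\mathrm{cl}_X A$ is irreducible and $A\subseteq F_1\cup F_2$ with the $F_i$ closed, then $\mathrm{cl}_X A\subseteq F_1\cup F_2$ (the right-hand side being closed), so $\mathrm{cl}_X A\subseteq F_i$ and a fortiori $A\subseteq F_i$. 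Since $A\neq\emptyset$ iff $\mathrm{cl}_X A\neq\emptyset$, the nonemptiness clause of irreducibility transfers as well.

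I do not expect a genuine obstacle here; the argument is entirely formal. The only points requiring a little care are to invoke the correct description of the closed sets of a subspace in the step relating $Y$ and $X$, and to remember that irreducibility builds in nonemptiness, which must be checked (trivially) when passing between $A$ and $\mathrm{cl}_X A$. Presenting the equivalences in the order above keeps both directions of each implication short and avoids any need to reprove the closure-irreducibility fact more than once.
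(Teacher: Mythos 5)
Your argument is correct and complete; it is the standard proof of this well-known fact, and the paper itself offers no proof (it simply cites the lemma as well-known from Heckmann--Keimel). Both equivalences are handled properly, including the transfer of nonemptiness between $A$ and $\mathrm{cl}_X A$, so there is nothing to add.
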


\begin{lemma}\label{irrimage}
	If $f : X \longrightarrow Y$ is continuous and $A\in\ir (X)$, then $f(A)\in \ir (Y)$.
\end{lemma}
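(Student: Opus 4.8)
The plan is to verify the defining condition of irreducibility for $f(A)$ directly, transporting any covering of $f(A)$ by two closed sets of $Y$ back to $X$ through the continuity of $f$. First I would record that $f(A)$ is nonempty, which is immediate since $A$, being irreducible, is nonempty by definition, and so $f(A)$ is a legitimate candidate to test against the definition.

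The heart of the argument is the pullback step. Suppose $\{G_1, G_2\}\subseteq \Gamma(Y)$ with $f(A)\subseteq G_1\cup G_2$. Since $f$ is continuous, the preimages $f^{-1}(G_1)$ and $f^{-1}(G_2)$ are closed subsets of $X$. The covering of $f(A)$ then pulls back to a covering of $A$: for each $a\in A$ we have $f(a)\in G_1\cup G_2$, whence $a\in f^{-1}(G_1)\cup f^{-1}(G_2)$, so that $A\subseteq f^{-1}(G_1)\cup f^{-1}(G_2)=f^{-1}(G_1\cup G_2)$.

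Now I would invoke the irreducibility of $A$ against the two closed sets $f^{-1}(G_1)$ and $f^{-1}(G_2)$, obtaining $A\subseteq f^{-1}(G_1)$ or $A\subseteq f^{-1}(G_2)$. In the first case $f(A)\subseteq G_1$ and in the second $f(A)\subseteq G_2$, which is precisely the conclusion required by the definition of irreducibility in $Y$. Hence $f(A)\in\ir(Y)$.

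I do not anticipate a genuine obstacle: the statement is a routine consequence of the definition together with the fact that continuous maps pull closed sets back to closed sets. The only points needing mild care are noting that irreducible sets are nonempty, so that $f(A)$ qualifies at all, and keeping the direction of the pullback straight, namely that $f(A)\subseteq G_1\cup G_2$ is equivalent to $A\subseteq f^{-1}(G_1)\cup f^{-1}(G_2)$ rather than the reverse inclusion.
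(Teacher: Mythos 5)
Your proof is correct and is the standard argument; the paper itself states this lemma without proof as a well-known fact, and the pullback-of-closed-sets argument you give is exactly the expected one.
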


\begin{remark}\label{subspaceirr}  If $Y$ is a subspace of a space $X$ and $A\subseteq Y$, then by Lemma \ref{irrsubspace}, $\ir (Y)=\{B\in \ir(X) : B\subseteq Y\}\subseteq \ir (X)$ and  $\ir_c (Y)=\{B\in \ir(X) : B\in \Gamma(Y)\}\subseteq \ir (X)$. If $Y\in \Gamma(X)$, then $\ir_c(Y)=\{C\in \ir_c(X) : C\subseteq Y\}\subseteq \ir_c (X)$.
\end{remark}

\begin{lemma}\label{irrprod}\emph{(\cite{Shenchon})}
	Let	$\{X_i : i\in I\}$ be a family of $T_0$ spaces and $X=\prod_{i\in I}X_i$ the product space. If  $A$ is an irreducible subset of $X$, then $\cl_X(A)=\prod_{i\in I}\cl_{X_i}~(p_i(A))$, where $p_i : X \longrightarrow X_i$ is the $i$th projection for each $i\in I$.
\end{lemma}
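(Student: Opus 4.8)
The plan is to prove the equality by two inclusions, only one of which uses irreducibility. Write $B=\prod_{i\in I}\cl_{X_i}(p_i(A))$ and note that, as a subset of $X$, $B=\bigcap_{i\in I}p_i^{-1}(\cl_{X_i}(p_i(A)))$. The inclusion $\cl_X(A)\subseteq B$ holds for every subset $A$, irreducible or not: each projection $p_i$ is continuous, so $p_i^{-1}(\cl_{X_i}(p_i(A)))$ is closed in $X$ and contains $A$; hence $B$ is a closed set containing $A$ and therefore contains $\cl_X(A)$.

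The real content is the reverse inclusion $B\subseteq\cl_X(A)$. I would fix a point $x=(x_i)_{i\in I}\in B$ and show $x\in\cl_X(A)$ by checking that every basic open neighborhood of $x$ meets $A$. Such a neighborhood has the form $U=\bigcap_{j\in J}p_j^{-1}(U_j)$ with $J\subseteq I$ finite and each $U_j$ open in $X_j$ containing $x_j$. For each $j\in J$, the point $x_j$ lies in $\cl_{X_j}(p_j(A))$ and in the open set $U_j$, so $U_j\cap p_j(A)\neq\emptyset$; equivalently, $A\not\subseteq X\setminus p_j^{-1}(U_j)$, where each $X\setminus p_j^{-1}(U_j)$ is closed in $X$.

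Now irreducibility enters. First I would upgrade the two-set defining condition to finite unions by an easy induction: if $A$ is irreducible and $A\subseteq F_1\cup\cdots\cup F_n$ with all $F_k$ closed, then $A\subseteq F_k$ for some $k$. Applying the contrapositive to the finitely many closed sets $\{X\setminus p_j^{-1}(U_j):j\in J\}$, none of which contains $A$, we conclude $A\not\subseteq\bigcup_{j\in J}(X\setminus p_j^{-1}(U_j))=X\setminus U$, so $A\cap U\neq\emptyset$. Since $U$ was an arbitrary basic neighborhood, this gives $x\in\cl_X(A)$, finishing the second inclusion and the proof.

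The main obstacle, or rather the precise place where the hypothesis is indispensable, is matching the \emph{finite} intersections that define the basic open sets of the product topology against the two-set irreducibility condition. Finiteness of $J$ is exactly what allows that condition to be boosted to the finite union needed above; with infinitely many factors the argument would break, which is consistent with the familiar fact that for a non-irreducible $A$ the product $\prod_{i\in I}\cl_{X_i}(p_i(A))$ can be strictly larger than $\cl_X(A)$.
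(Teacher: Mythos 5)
Your proof is correct. The paper itself states this lemma with only a citation to \cite{Shenchon} and gives no proof, so there is nothing internal to compare against; your argument --- the easy inclusion from continuity of the projections, and the reverse inclusion by testing basic open neighborhoods and boosting the two-set irreducibility condition to finite unions --- is the standard one, and it matches the technique the paper uses in the adjacent Lemma~\ref{prodirr}.
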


\begin{lemma}\label{prodirr}
	Let	$X=\prod_{i\in I}X_i$ be the product space of a family $\{X_i : i\in I\}$ of $T_0$ spaces and and $A_i\subseteq X_i$ for each $i\in I$. Then the following two conditions are equivalent:
\begin{enumerate}[\rm (1)]
	\item $\prod_{i\in I}A_i\in \ir (X)$.
	\item $A_i\in \ir (X_i)$ for each $i\in I$.
\end{enumerate}
\end{lemma}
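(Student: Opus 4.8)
The plan is to prove the two implications separately, working throughout with the open-set reformulation of irreducibility, since it meshes cleanly with the product topology. Recall that a nonempty subset $A$ of a space is irreducible precisely when any two open sets that both meet $A$ have an intersection that also meets $A$; this is just the contrapositive of the closed-set definition. Both directions will rest on Lemma \ref{irrimage} (a continuous image of an irreducible set is irreducible) and on the special shape of the basic open sets of a product.

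For the implication $(1)\Rightarrow(2)$, I would first observe that since $\prod_{i\in I}A_i$ is irreducible it is in particular nonempty, so every factor $A_i$ is nonempty and the $j$th projection satisfies $p_j\!\left(\prod_{i\in I}A_i\right)=A_j$ (filling in the remaining coordinates). Because $p_j\colon X\longrightarrow X_j$ is continuous, Lemma \ref{irrimage} gives $A_j=p_j\!\left(\prod_{i\in I}A_i\right)\in\ir(X_j)$ at once. This direction is therefore immediate.

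For $(2)\Rightarrow(1)$ — the substantive direction — I would note first that $\prod_{i\in I}A_i$ is nonempty because each $A_i$ is. It then suffices to verify the open-set criterion, and here I would reduce to basic open sets: if two arbitrary open sets meet $\prod_{i\in I}A_i$, then inside each one sits a basic open neighbourhood of a chosen point that still meets $\prod_{i\in I}A_i$, so it is enough to treat basic open sets. Writing a basic open set as $\prod_{i\in I}O_i$ with $O_i=X_i$ for all but finitely many $i$, and noting that $\prod_{i\in I}O_i$ meets $\prod_{i\in I}A_i$ iff $O_i\cap A_i\neq\emptyset$ for every $i$, I must produce, given two such basic open sets $\prod_{i\in I}O_i$ and $\prod_{i\in I}O_i'$ each meeting $\prod_{i\in I}A_i$, a point of $A_i\cap O_i\cap O_i'$ for each $i$. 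On the cofinitely many coordinates where $O_i=X_i$ or $O_i'=X_i$ this is trivial; on the finitely many remaining coordinates both $O_i$ and $O_i'$ meet the irreducible set $A_i$, so irreducibility of $A_i$ yields $A_i\cap O_i\cap O_i'\neq\emptyset$. Assembling these coordinatewise choices (using the axiom of choice) gives a point of $\left(\prod_{i\in I}O_i\right)\cap\left(\prod_{i\in I}O_i'\right)$ lying in $\prod_{i\in I}A_i$, which is what is required.

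The only point demanding care — the ``main obstacle'', such as it is — is the passage to basic open sets together with the fact that the finite-intersection criterion for irreducibility need only be checked on them. This works here precisely because basic open sets of a product are closed under finite intersection and because ``meeting $\prod_{i\in I}A_i$'' decouples into independent coordinatewise conditions. Once this bookkeeping is in place, irreducibility of each $A_i$ is exactly what is needed on the finitely many nontrivial coordinates, and everything else is routine.
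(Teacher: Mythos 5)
Your proposal is correct and follows essentially the same route as the paper: the forward direction via Lemma \ref{irrimage} applied to the projections, and the converse by reducing to basic open sets (the paper writes them as finite intersections $\bigcap_{i\in I_1}p_i^{-1}(U_i)$, padding the remaining coordinates with $X_i$), applying irreducibility of each $A_i$ on the finitely many nontrivial coordinates, and assembling a witness coordinatewise. No substantive difference.
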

\begin{proof}  (1) $\Rightarrow$ (2): By Lemma \ref{irrimage}.

(2) $\Rightarrow$ (1): Let $A=\prod_{i\in I}A_i$. For $U, V\in \mathcal O(X)$, if $A\cap U\neq\emptyset\neq A\cap V$, then there exist $I_1, I_2\in I^{(<\omega)}$ and $(U_i, V_j)\in \mathcal O(X_i)\times \mathcal O(X_j)$ for all $(i, j)\in I_1\times I_2$ such that $\bigcap_{i\in I_1}p_i^{-1}(U_i)\subseteq U$, $\bigcap_{j\in I_2}p_j^{-1}(V_j)\subseteq V$ and $A\cap \bigcap_{i\in I_1}p_i^{-1}(U_i)\neq\emptyset\neq A\cap \bigcap_{j\in I_2}p_i^{-1}(V_j)$. Let $I_3=I_1\cup I_2$. Then $I_3$ is finite. For $i\in I_3\setminus I_1$ and $j\in I_3\setminus I_2$, let $U_i=X_i$ and $V_j=X_j$. Then for each $i\in I_3$, we have $A_i\cap U_i\neq\emptyset\neq A_i\cap V_i$, and whence $A_i\cap U_i\cap V_i\neq\emptyset$ by $A_i\in \ir (X_i)$. It follows that $A\cap \bigcap_{i\in I_1}p_i^{-1}(U_i)\cap \bigcap_{j\in I_2}p_i^{-1}(V_j)\neq\emptyset$, and consequently, $A\cap U\cap V\neq \emptyset$. Thus $A\in \ir (X)$.

\end{proof}

By Lemma \ref{irrprod} and Lemma \ref{prodirr}, we obtain the following corollary.

\begin{corollary}\label{irrcprod} Let $\{X_i : i\in I\}$ be a family of $T_0$ spaces and $X=\prod_{i\in I}X_i$ the product space. If  $A\in \ir_c(X)$, then $A=\prod_{i\in I}p_i(A)$ and $p_i(A)\in \ir_c (X_i)$ for each $i\in I$.
\end{corollary}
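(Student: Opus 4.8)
The plan is to read off both conclusions almost immediately from the two preceding lemmas, with the closedness of $A$ doing the work of collapsing a closure. First I would record the two hypotheses separately: since $A\in\ir_c(X)$, the set $A$ is irreducible (in particular nonempty, by the definition of irreducibility) and $A\in\Gamma(X)$.

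The key step is to feed the irreducibility into Lemma \ref{irrprod}, which gives $\cl_X(A)=\prod_{i\in I}\cl_{X_i}(p_i(A))$. Because $A$ is closed we have $\cl_X(A)=A$, so this reads $A=\prod_{i\in I}\cl_{X_i}(p_i(A))$. Next I would project back: taking $p_i$ of both sides of this product equation recovers $\cl_{X_i}(p_i(A))$ on the right, while the left gives $p_i(A)$, so $p_i(A)=\cl_{X_i}(p_i(A))$ for each $i$. This simultaneously shows that each $p_i(A)$ is closed in $X_i$ and that $A=\prod_{i\in I}p_i(A)$, which is the first assertion of the corollary.

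For the irreducibility of the factors I would invoke Lemma \ref{prodirr}: having just established the product decomposition $A=\prod_{i\in I}p_i(A)$ and knowing $A\in\ir(X)$, the implication (1)$\Rightarrow$(2) of that lemma yields $p_i(A)\in\ir(X_i)$ for every $i\in I$. (Alternatively one could apply Lemma \ref{irrimage} to the continuous projection $p_i$, but routing through Lemma \ref{prodirr} keeps the argument self-contained with the decomposition already in hand.) Combining the two facts, $p_i(A)$ is both irreducible and closed, i.e.\ $p_i(A)\in\ir_c(X_i)$, completing the proof.

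The only point that needs a moment's care—and the closest thing to an obstacle—is the passage from the product identity $A=\prod_{i\in I}\cl_{X_i}(p_i(A))$ to the factorwise equalities $p_i(A)=\cl_{X_i}(p_i(A))$. This uses that every factor is nonempty, so that projecting the product onto the $i$th coordinate returns exactly the $i$th factor; nonemptiness is guaranteed because $A$ is irreducible and hence nonempty, forcing each $p_i(A)$ to be nonempty as well. Everything else is a direct substitution into the two cited lemmas.
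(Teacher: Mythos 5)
Your proof is correct and follows exactly the route the paper intends: the paper gives no written argument beyond citing Lemma \ref{irrprod} and Lemma \ref{prodirr}, and your write-up supplies precisely the details (closedness collapsing the closure, nonemptiness justifying the factorwise projection, and Lemma \ref{prodirr} for irreducibility of the factors) that make that citation work.
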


A topological space $X$ is called \emph{sober}, if for any  $F\in\ir_c(X)$, there is a unique point $a\in X$ such that $F=\overline{\{a\}}$. The category of all sober spaces with continuous mappings is denoted by $\mathbf{Sob}$. Let $\mathrm{OFilt(\mathcal O(X))}=\sigma (\mathcal O(X))\bigcap \mathrm{Filt}(\mathcal O(X))$. A set $\mathcal{U}\subseteq \mathcal O(X)$ is called an \emph{open filter} if $\mathcal U\in \mathrm{OFilt(\mathcal O(X))}$. For a $K\in \mk (X)$, let $\Phi (K)=\{U\in \mathcal O(X) : K\subseteq U\}$. Then $\Phi (K)\in \mathrm{OFilt(\mathcal O(X))}$ and $K=\bigcap \Phi (K)$. Obviously, $\Phi : \mk (X) \longrightarrow \mathrm{OFilt(\mathcal O(X))}, K\mapsto \Phi (K)$, is an order embedding.

The single most important result about sober spaces is the Hofmann-Mislove Theorem (see \cite[Theorem 2.16]{Hofmann-Mislove} or \cite[Theorem II-1.20 and Theorem II-1.21]{redbook}).

\begin{theorem}\label{Hofmann-Mislove theorem} \emph{(Hofmann-Mislove Theorem)} For a $T_0$ space $X$, the following conditions are equivalent:
\begin{enumerate}[\rm (1)]
            \item $X$ is a sober space.
            \item  For any $\mathcal F\in \mathrm{OFilt}(\mathcal O(X))$, there is a $K\in \mk (X)$ such that $\mathcal F=\Phi (K)$.
            \item  For any $\mathcal F\in \mathrm{OFilt}(\mathcal O(X))$, $\mathcal F=\Phi (\bigcap \mathcal F)$.
\end{enumerate}
\end{theorem}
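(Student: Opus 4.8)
The plan is to prove the cycle (1) $\Rightarrow$ (3) $\Rightarrow$ (2) $\Rightarrow$ (1), isolating the substantial content in the first implication. I would first record two facts valid for every open filter $\mathcal F$, writing $K=\bigcap\mathcal F$: the inclusion $\mathcal F\subseteq\Phi(K)$ is automatic because each member of $\mathcal F$ contains $K$, and $K$ is saturated as an intersection of saturated sets. With this, (2) $\Rightarrow$ (3) is immediate, since $\mathcal F=\Phi(K)$ forces $\bigcap\mathcal F=\bigcap\Phi(K)=K$ and hence $\Phi(\bigcap\mathcal F)=\mathcal F$. I would also dispose of the improper case $\mathcal F=\mathcal O(X)$ separately (then $K=\emptyset\in\mk(X)$ and all three conditions hold trivially), so that in the main argument every member of $\mathcal F$ may be taken nonempty.

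For (1) $\Rightarrow$ (3) I would fix $U$ open with $K\subseteq U$ and aim to show $U\in\mathcal F$; assuming not, I would derive a contradiction from soberness. Setting $D=X\setminus U$, upward closure of $\mathcal F$ shows $D$ meets every member of $\mathcal F$. I would then apply Zorn's Lemma to the family $\mathcal A$ of closed subsets of $D$ meeting every member of $\mathcal F$, ordered by reverse inclusion, to extract a minimal such set $M$. Two things then need checking. First, that $\mathcal A$ is closed under intersections of chains, so that Zorn applies: if the intersection of a chain missed some $V\in\mathcal F$, then the directed family of complements would have union in $\mathcal F$, and Scott-openness of $\mathcal F$ would place one complement $X\setminus C_i$ in $\mathcal F$, contradicting that $C_i$ meets every member of $\mathcal F$. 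Second, that $M$ is irreducible: if $M\subseteq F_1\cup F_2$ properly, minimality gives $V_i\in\mathcal F$ with $M\cap F_i\cap V_i=\emptyset$, and then $V_1\cap V_2\in\mathcal F$ would miss $M$. Soberness now supplies $x$ with $M=\overline{\{x\}}=\da x$; since $M$ meets every open $V\in\mathcal F$ and such $V$ are upper sets, $x\in V$ for all $V\in\mathcal F$, i.e. $x\in K\subseteq U$, while $x\in M\subseteq X\setminus U$ --- a contradiction. This is the heart of the argument, and the step I expect to be the main obstacle is precisely the chain-stability of $\mathcal A$, where Scott-openness is indispensable; it is essentially the topological Rudin lemma in disguise.

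For (3) $\Rightarrow$ (2) only the compactness of $K$ remains. I would take a directed family $\{V_i\}$ of open sets with $K\subseteq\bigcup_i V_i$; the union is a directed join in $\mathcal O(X)$ lying in $\Phi(K)=\mathcal F$ by (3), so Scott-openness yields some $V_i\in\mathcal F=\Phi(K)$, i.e. $K\subseteq V_i$. Since this single-member covering property for directed families is equivalent to compactness, $K\in\mk(X)$ and $\mathcal F=\Phi(K)$, giving (2).

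Finally, for (2) $\Rightarrow$ (1) I would take $A\in\ir_c(X)$ and seek a generic point, uniqueness being automatic in a $T_0$ space. I would form $\mathcal F_A=\{U\in\mathcal O(X):U\cap A\neq\emptyset\}$ and verify it is an open filter --- upward closed, closed under binary meets by irreducibility of $A$, and Scott-open because a point witnessing that a union meets $A$ already lies in one member. By (2), $\mathcal F_A=\Phi(K)$ with $K=\bigcap\mathcal F_A$. The key observation is that $X\setminus A\notin\mathcal F_A$, so $K\not\subseteq X\setminus A$ and thus $K\cap A\neq\emptyset$; any $x\in K\cap A$ is generic, because $x\in A$ gives $\overline{\{x\}}\subseteq A$, while $x\in K$ forces $x$ into every open set meeting $A$, hence into no closed set omitting $A$, so $A\subseteq\overline{\{x\}}$. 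Thus $A=\overline{\{x\}}$ and $X$ is sober, closing the cycle.
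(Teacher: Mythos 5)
The paper does not prove this theorem at all: it is quoted with a citation to Hofmann--Mislove and to \cite[Theorems II-1.20, II-1.21]{redbook}, so there is no internal proof to compare against. Your argument is a correct, self-contained proof and is essentially the standard one: the cycle is well chosen, the Zorn extraction of a minimal closed set meeting all members of $\mathcal F$ (with Scott-openness guaranteeing chain-stability) is exactly the Rudin-style device you identify, the minimality-implies-irreducibility step is right, and the directed-cover characterization of compactness in (3) $\Rightarrow$ (2) and the generic-point construction in (2) $\Rightarrow$ (1) via the open filter $\mathcal F_A$ are both sound. In effect you have reproved, for families of open sets, the same minimal-closed-set lemma that the paper imports as the Topological Rudin Lemma (Lemma \ref{t Rudin}); you could not literally invoke that lemma here since it is stated for subsets of $P_S(X)$, so doing the Zorn argument by hand is the right call.

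The one point of friction is your treatment of the improper filter. You set aside $\mathcal F=\mathcal O(X)$ by declaring $\emptyset\in\mk(X)$, but the paper defines $\mk(X)$ to consist of \emph{nonempty} compact saturated sets, and $\mathcal O(X)$ does belong to $\mathrm{OFilt}(\mathcal O(X))$ as the paper defines it. Under the paper's conventions, condition (2) literally fails for $\mathcal F=\mathcal O(X)$ even when $X$ is sober, since $\Phi(K)=\mathcal O(X)$ forces $K=\emptyset$. So the blemish is in the statement rather than in your proof; your convention is one standard repair (the other being to restrict to proper filters), but you should flag explicitly that it departs from the paper's definition of $\mk(X)$ rather than asserting $\emptyset\in\mk(X)$ as if it were the paper's convention. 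Everything else goes through verbatim: in the proper case, hypothesis (3) forces $\bigcap\mathcal F\neq\emptyset$ exactly as you argue, so no further emptiness issues arise.
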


By the Hofmann-Mislove Theorem, $\Phi : \mk (X) \longrightarrow \mathrm{OFilt(\mathcal O(X))}$ is an order isomorphism if and only if $X$ is sober.

For locally compact well-filtered spaces, we have the following well-known result (see, e.g., \cite{redbook, Jean-2013, Kou}).

\begin{theorem}\label{SoberLC=CoreC}  For a $T_0$ space $X$, the following conditions are equivalent:
\begin{enumerate}[\rm (1)]
	\item $X$ locally compact and sober.
	\item $X$ is locally compact and well-filtered.
	\item $X$ is core compact and sober.
\end{enumerate}
\end{theorem}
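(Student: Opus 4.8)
The plan is to prove the cycle $(1)\Rightarrow(2)\Rightarrow(3)\Rightarrow(1)$, which isolates exactly the four standard facts one needs: (a) local compactness implies core compactness; (b) sobriety implies well-filteredness; (c) a locally compact well-filtered space is sober; and (d) a core compact sober space is locally compact. Granting these, $(1)\Rightarrow(2)$ keeps local compactness and applies (b); $(2)\Rightarrow(3)$ uses (a) to pass to core compactness and (c) to recover sobriety; and $(3)\Rightarrow(1)$ keeps sobriety and applies (d), closing the loop.

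Fact (a) is immediate from the definitions: if every point has a neighbourhood base of compact saturated sets, then for $x\in U$ one finds $x\in V\subseteq Q\subseteq U$ with $Q$ compact saturated and $V$ open, and a routine compactness argument gives $V\ll U$ in $\mathcal O(X)$, so $U=\bigcup\{V:V\ll U\}$ and $\mathcal O(X)$ is continuous. For fact (b) I would argue through the Hofmann--Mislove Theorem \ref{Hofmann-Mislove theorem}. Given a filtered family $\mathcal K\subseteq\mk(X)$, the union $\mathcal F=\bigcup_{K\in\mathcal K}\Phi(K)$ is a directed union of open filters (since $K\mapsto\Phi(K)$ is order-reversing and $\mathcal K$ is filtered), hence $\mathcal F\in\mathrm{OFilt}(\mathcal O(X))$; using $K=\bigcap\Phi(K)$ one checks $\bigcap\mathcal F=\bigcap_{K}\bigcap\Phi(K)=\bigcap\mathcal K$. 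If $\bigcap\mathcal K\subseteq U$, then by Theorem \ref{Hofmann-Mislove theorem}(3) we get $U\in\Phi(\bigcap\mathcal F)=\mathcal F$, so $U\in\Phi(K)$ for some $K\in\mathcal K$, i.e. $K\subseteq U$; this is precisely well-filteredness.

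The main obstacle is fact (c). Here I would take a nonempty $C\in\ir_c(X)$ and, using local compactness, manufacture a filtered family $\mathcal K\subseteq\mk(X)$ of compact saturated sets each meeting $C$: for every open $U$ with $U\cap C\neq\emptyset$, local compactness supplies an open $V$ and a compact saturated $Q$ with $V\cap C\neq\emptyset$ and $V\subseteq Q\subseteq U$. The topological Rudin Lemma then yields a closed set minimal among those meeting every member of $\mathcal K$, and such a minimal set is irreducible; combined with well-filteredness applied to $\mathcal K$, the intersection $\bigcap\mathcal K$ is forced to carry a generic point $a$ of $C$, so that $C=\overline{\{a\}}$, uniqueness being automatic from the $T_0$ separation. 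The delicate part is organising the family $\mathcal K$ so that well-filteredness delivers exactly a point of $C$ rather than merely a nonempty compact set, and this is where the Rudin Lemma does the real work.

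Finally, for fact (d) I would show that in a sober core compact space every way-below pair $V\ll U$ in $\mathcal O(X)$ is realized by a compact saturated set, i.e. there is $Q\in\mk(X)$ with $V\subseteq Q\subseteq U$. Using interpolation in the continuous lattice $\mathcal O(X)$ one assembles a Scott-open filter of open sets squeezed between $V$ and $U$; by the Hofmann--Mislove Theorem \ref{Hofmann-Mislove theorem} this open filter equals $\Phi(Q)$ for a unique $Q\in\mk(X)$, and $V\subseteq Q\subseteq U$ follows. Then for any $x\in U$, core compactness gives $x\in V\ll U$ for some open $V$, whence $x\in V\subseteq Q\subseteq U$ exhibits a compact saturated neighbourhood of $x$ inside $U$; thus $X$ is locally compact, which completes the cycle.
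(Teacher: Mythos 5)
The paper states Theorem \ref{SoberLC=CoreC} without proof, citing it as well known, so there is no internal argument to compare against. Judged on its own terms, your cycle $(1)\Rightarrow(2)\Rightarrow(3)\Rightarrow(1)$ is the right decomposition, and your treatments of (a) local compactness implies core compactness, (b) sobriety implies well-filteredness via the Hofmann--Mislove Theorem, and (d) core compact sober implies locally compact via interpolation in $\mathcal O(X)$ and Hofmann--Mislove are all correct and standard.

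The gap is in (c), precisely the step you flag as delicate. Two verifications are missing, and the tool you invoke does not supply them. First, the family $\mathcal K=\{Q_U : U\in\mathcal O(X),\ U\cap C\neq\emptyset\}$ is not automatically filtered: given $Q_{U_1},Q_{U_2}$ you must use the irreducibility of $C$ to see that $V_{U_1}\cap V_{U_2}$ still meets $C$, and then take $Q_{V_{U_1}\cap V_{U_2}}\subseteq V_{U_1}\cap V_{U_2}\subseteq Q_{U_1}\cap Q_{U_2}$; without the interpolating opens $V_U$ and irreducibility this fails, and without filteredness neither well-filteredness nor Lemma \ref{t Rudin} applies. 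Second, the claim that ``$\bigcap\mathcal K$ is forced to carry a generic point of $C$'' is exactly what must be proved, and the topological Rudin Lemma does not deliver it: applied to $C$ it only yields a minimal irreducible closed $A\subseteq C$ meeting all members of $\mathcal K$, and one still has to show $A=C$. What actually closes the argument is the defining property of your family that you never exploit: for \emph{every} open $W$ with $W\cap C\neq\emptyset$ there is some $Q\in\mathcal K$ with $Q\subseteq W$. Given that, well-filteredness yields a point $a\in\bigcap\mathcal K\cap C$ (since $\bigcap\mathcal K\subseteq X\setminus C$ would force some $Q\subseteq X\setminus C$, contradicting $Q\cap C\neq\emptyset$), and then $a\in Q_W\subseteq W$ for every open $W$ meeting $C$, whence $C\subseteq\overline{\{a\}}\subseteq C$. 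No appeal to Rudin's Lemma is needed in the locally compact case; the Rudin machinery only becomes essential for the stronger core compact statement (Theorem \ref{corecwellf sober}), which the paper derives later through $\mathcal D$-Rudin sets.
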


For a topological space $X$, $\mathcal G\subseteq 2^{X}$ and $A\subseteq X$, let $\Diamond_{\mathcal G} A=\{G\in \mathcal G : G\bigcap A\neq\emptyset\}$ and $\Box_{\mathcal G} A=\{G\in \mathcal G : G\subseteq  A\}$. The symbols $\Diamond_{\mathcal G} A$ and $\Box_{\mathcal G} A$ will be simply written as $\Diamond A$  and $\Box A$ respectively if there is no confusion. The \emph{lower Vietoris topology} on $\mathcal{G}$ is the topology that has $\{\Diamond U : U\in \mathcal O(X)\}$ as a subbase, and the resulting space is denoted by $P_H(\mathcal{G})$. If $\mathcal{G}\subseteq \ir (X)$, then $\{\Diamond_{\mathcal{G}} U : U\in \mathcal O(X)\}$ is a topology on $\mathcal{G}$. The space $P_H(\Gamma(X)\setminus \{\emptyset\})$ is called the \emph{Hoare power space} or \emph{lower space} of $X$ and is denoted by $P_H(X)$ for short (cf. \cite{Schalk}). Clearly, $P_H(X)=(\Gamma(X)\setminus \{\emptyset\}, \upsilon(\Gamma(X)\setminus \{\emptyset\}))$. So $P_H(X)$ is always sober by Proposition \ref{uppertop H-Sober} below (or \cite[Corollary 4.10]{ZhaoHo}). The \emph{upper Vietoris topology} on $\mathcal{G}$ is the topology that has $\{\Box_{\mathcal{G}} U : U\in \mathcal O(X)\}$ as a base, and the resulting space is denoted by $P_S(\mathcal{G})$.

\begin{remark} \label{eta continuous} Let $X$ be a $T_0$ space.
\begin{enumerate}[\rm (1)]
	\item If $\mathcal{S}_c(X)\subseteq \mathcal{G}$, then the specialization order on $P_H(\mathcal{G})$ is the order of set inclusion, and the \emph{canonical mapping} $\eta_{X}: X\longrightarrow P_H(\mathcal{G})$, given by $\eta_X(x)=\overline {\{x\}}$, is an order and topological embedding (cf. \cite{redbook, Jean-2013, Schalk}).
    \item The space $X^s=P_H(\ir_c(X))$ with the canonical mapping $\eta_{X}: X\longrightarrow X^s$ is the \emph{sobrification} of $X$ (cf. \cite{redbook, Jean-2013}).
\end{enumerate}
\end{remark}

For a space $X$, a subset $A$ of $X$ is called \emph{saturated} if $A$ equals the intersection of all open sets containing it (equivalently, $A$ is an upper set in the specialization order). Let $\mathcal S^u(X)$ denote the set of all principal filters, namely, $\mathcal S^u(X)=\{\uparrow x : x\in X\}$. We shall use $\mathord{\mathsf{K}}(X)$ to
denote the set of all nonempty compact saturated subsets of $X$ and endow it with the \emph{Smyth preorder}, that is, for $K_1,K_2\in \mathord{\mathsf{K}}(X)$, $K_1\sqsubseteq K_2$ if{}f $K_2\subseteq K_1$. A space $X$ is called \emph{well-filtered} (resp., \emph{$\omega$-well-filtered}) if it is $T_0$, and for any open set $U$ and any filtered family (resp., any countable filtered family) $\mathcal{K}\subseteq \mathord{\mathsf{K}}(X)$, $\bigcap\mathcal{K}{\subseteq} U$ implies $K{\subseteq} U$ for some $K{\in}\mathcal{K}$. The category of all well-filtered spaces with continuous mappings is denoted by $\mathbf{Top}_w$.
The space $P_S(\mathord{\mathsf{K}}(X))$, denoted shortly by $P_S(X)$, is called the \emph{Smyth power space} or \emph{upper space} of $X$ (cf. \cite{Heckmann, Klause-Heckmann, Schalk}).

\begin{remark} \label{xi continuous} Let $X$ be a $T_0$ space. Then
\begin{enumerate}[\rm (1)]
	\item the specialization order on $P_S(X)$ is the Smyth order, that is, $\leq_{P_S(X)}=\sqsubseteq$.
    \item the \emph{canonical mapping} $\xi_X: X\longrightarrow P_S(X)$, $x\mapsto\ua x$, is an order and topological embedding (cf. \cite{Heckmann, Klause-Heckmann, Schalk}).
        \item $P_S(\mathcal S^u(X))$ is a subspace of $P_S(X)$ and $X$ is homeomorphic to $P_S(\mathcal S^u(X))$.
\end{enumerate}
\end{remark}

\begin{lemma}\label{Scott compact closed} For a poset $P$ and $A\in \Gamma (\Sigma~\!\!P)$, the following conditions are equivalent:
 \begin{enumerate}[\rm (1)]
 \item $\da (\ua x\cap A)\in \Gamma (\Sigma~\!\!P)$ for all $x\in P$.
 \item $\da (K\cap A)=\bigcup_{k\in K}\da (\ua k\cap A)\in \Gamma (\Sigma ~\!\!P)$ for all $K\in \mk (\Sigma ~\!\!P)$.
 \end{enumerate}
If $P$ is a sup semilattice, then condition \emph{(1)} \emph{(}and hence condition \emph{(2))} holds for any $x\in P$ and any $A\in \Gamma (\Sigma~\!\!P)$.
\end{lemma}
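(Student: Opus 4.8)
The plan is to treat the set identity displayed in condition (2) as automatic, deduce (2) $\Rightarrow$ (1) by a one-line specialization, and concentrate all the work on (1) $\Rightarrow$ (2). First I would record that the equation in (2) always holds: since every $K\in\mk(\Sigma P)$ is an upper set, $K=\bigcup_{k\in K}\ua k$, so $K\cap A=\bigcup_{k\in K}(\ua k\cap A)$ and hence $\da(K\cap A)=\bigcup_{k\in K}\da(\ua k\cap A)$. Thus (2) asserts exactly that this union is Scott-closed. For (2) $\Rightarrow$ (1) it then suffices to take $K=\ua x$, which is a nonempty compact saturated set (any Scott-open cover has a member containing $x$, and that member, being an upper set, already contains $\ua x$); for this $K$ one has $\da(K\cap A)=\da(\ua x\cap A)$, so (1) is immediate.

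The substance is (1) $\Rightarrow$ (2). Fix $K\in\mk(\Sigma P)$; by (1) each $\da(\ua k\cap A)$ is Scott-closed, and their union $\da(K\cap A)$ is visibly a lower set, so I only need closure under existing directed sups. Let $D$ be directed in $\da(K\cap A)$ with $d^*=\bigvee D$, and suppose toward a contradiction that $d^*\notin\da(K\cap A)$. The pivot is the symmetry $k\in\da(\ua d\cap A)\iff \ua k\cap\ua d\cap A\neq\emptyset\iff d\in\da(\ua k\cap A)$. Using it I would show that the Scott-open sets $W_d:=P\setminus\da(\ua d\cap A)$ (open by (1)) cover $K$: if some $k\in K$ lay in every $\da(\ua d\cap A)$, then by the symmetry $D\subseteq\da(\ua k\cap A)$, and since $\da(\ua k\cap A)$ is Scott-closed we would get $d^*\in\da(\ua k\cap A)\subseteq\da(K\cap A)$ (using $\ua k\subseteq K$ by saturation), contradicting the choice of $d^*$.

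Next I would observe that the family $\{W_d:d\in D\}$ is directed: given $d_1,d_2\in D$ pick $d_3\geq d_1,d_2$ in $D$, and since $d\mapsto\da(\ua d\cap A)$ is decreasing we get $W_{d_3}\supseteq W_{d_1}\cup W_{d_2}$. Now compactness of $K$ gives a finite subcover, and directedness collapses it to a single $d_0\in D$ with $K\subseteq W_{d_0}$, that is, $K\cap\da(\ua d_0\cap A)=\emptyset$. But $d_0\in D\subseteq\da(K\cap A)$ produces some $y\in K\cap A$ with $d_0\leq y$; then $y\in\ua d_0\cap A\subseteq\da(\ua d_0\cap A)$ while $y\in K$, a contradiction. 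Hence $d^*\in\da(K\cap A)$, and (2) follows. The main obstacle, and the whole reason the hypothesis is needed, is that a union of Scott-closed sets is not Scott-closed in general; compactness of $K$, together with the directedness of the complements $W_d$, is precisely what lets me replace the infinite union $\bigcup_k\da(\ua k\cap A)$ by a single member.

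Finally, for the sup-semilattice addendum I would exhibit $\da(\ua x\cap A)$ as a preimage. The map $f:P\to P$ given by $f(p)=p\vee x$ is Scott-continuous, since $u\geq d\vee x$ for all $d$ in a directed $D$ iff $u\geq(\bigvee D)\vee x$, so $\bigvee_{d\in D}(d\vee x)=(\bigvee D)\vee x$ whenever $\bigvee D$ exists. One checks $f^{-1}(A)=\da(\ua x\cap A)$: if $p\vee x\in A$ then $y=p\vee x\in\ua x\cap A$ witnesses $p\in\da(\ua x\cap A)$, and conversely if $p\leq y$ with $y\in\ua x\cap A$ then $p\vee x\leq y$, so $p\vee x\in A$ because $A$ is a lower set. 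As $A$ is Scott-closed and $f$ Scott-continuous, $f^{-1}(A)=\da(\ua x\cap A)$ is Scott-closed, which is condition (1) for every $x$ and every $A\in\Gamma(\Sigma P)$.
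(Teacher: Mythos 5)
Your proof is correct and, for the main equivalence, follows essentially the same route as the paper: the implication (1) $\Rightarrow$ (2) is proved by contradiction using the symmetry $k\in\da(\ua d\cap A)\iff \ua k\cap\ua d\cap A\neq\emptyset\iff d\in\da(\ua k\cap A)$ to turn the hypothesis $\bigvee D\notin\da(K\cap A)$ into a Scott-open cover $\{P\setminus\da(\ua d\cap A):d\in D\}$ of $K$, which compactness plus the directedness of $D$ collapses to a single $d_0$, contradicting $d_0\in\da(K\cap A)$; your observation that the set identity in (2) is automatic and that (2) $\Rightarrow$ (1) is just the case $K=\ua x$ matches the paper's ``trivial'' direction. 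The only point of divergence is the sup-semilattice addendum, which you package as Scott-continuity of $p\mapsto p\vee x$ together with the identification $f^{-1}(A)=\da(\ua x\cap A)$, whereas the paper checks closure under existing directed sups directly; both arguments rest on the same two facts, namely $\bigvee_{d\in D}(d\vee x)=(\bigvee D)\vee x$ and $p\in\da(\ua x\cap A)\iff p\vee x\in A$, so this is a cosmetic rather than substantive difference.
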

\begin{proof} (1) $\Rightarrow$ (2): Let $D\in \mathcal D(P)$ such that $D\subseteq \da (K\cap A)$ and $\bigvee D$ exists. If $\bigvee D\not\in  \da (K\cap A)$, then for each $k\in K$, $\bigvee D\not\in  \da (\ua k\cap A)$, and hence by $\da (\ua k\cap A)\in \Gamma (\Sigma~\!\!X)$, there is a $d_k\in D$ such that $d_k\not\in\da (\ua k\cap A)$, and consequently, $k\in P\setminus \da (\ua d_k \cap A)$ and $\da (\ua d_k \cap A)\in \Gamma (\Sigma~\!\!P)$ by condition (1). By the compactness of $K$ in $\Sigma ~\!\!P$, there exists a finite subset $\{d_{k_1}, ..., d_{k_n}\}\subseteq D$ such that $K\subseteq \bigcup_{i=1}^{n} (X\setminus \da (\ua d_{k_i} \cap A))$. Since $D$ is directed, there is a $d_0$ such that $\ua d_0\subseteq \bigcap_{i=1}^{n}\ua d_{k_i}$. It follows that $K\subseteq P\setminus \da (\ua d_o\cap A)$, which contradicts $d_o\in \da (K\cap A)$. Thus $\bigvee D\in  \da (K\cap A)$, proving $\da (K\cap A)\in \Gamma (\Sigma ~\!\!P)$.

(2) $\Rightarrow$ (1): Trivial.

Now we suppose that $P$ is a sup semilattice. Let $D\in \mathcal D(P)$ such that $D\subseteq \da (\ua x\cap A)$ and $\bigvee D$ exists. If $\bigvee D\not\in  \da (\ua x\cap A)$, then $\bigvee D\vee x=\bigvee\limits_{d\in D} d\vee x\not\in  A$, and hence by $A\in \Gamma (\Sigma~\!\!X)$, there is a $d_x\in D$ such that $d_x\vee x\not\in A$, and consequently, $d_x\not\in \da (\ua x\cap A)$, which contradicts $D\subseteq \da (\ua x\cap A)$. Therefore, $\da (\ua x\cap A)\in \Gamma (\Sigma ~\!\!P)$.
\end{proof}

\begin{corollary}\label{complete lattice Scott compact closed} \emph{(\cite{Xi-Lawson-2017})} Let $L$ be a complete lattice. Then $\da (K\cap A)\in \Gamma (\Sigma ~\!\!L)$ for any $K\in \mk (\Sigma ~\!\!L)$ and any $A\in \Gamma (\Sigma~\!\!L)$.
\end{corollary}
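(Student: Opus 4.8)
The plan is to obtain this statement as an immediate specialization of Lemma \ref{Scott compact closed}, so that no fresh argument is needed. First I would record the trivial but crucial observation that a complete lattice $L$ is in particular a sup semilattice: binary joins $x\vee y$ exist because arbitrary joins do. This is exactly the hypothesis required by the final clause of Lemma \ref{Scott compact closed}.

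Next, fixing an arbitrary $A\in \Gamma(\Sigma~\!\!L)$, I would invoke that final clause to conclude that condition (1) of the lemma holds, i.e. $\da(\ua x\cap A)\in \Gamma(\Sigma~\!\!L)$ for every $x\in L$. Then, applying the equivalence (1) $\Leftrightarrow$ (2) of Lemma \ref{Scott compact closed} to this same $A$, condition (2) holds as well; and condition (2) is precisely the assertion $\da(K\cap A)=\bigcup_{k\in K}\da(\ua k\cap A)\in \Gamma(\Sigma~\!\!L)$ for every $K\in \mk(\Sigma~\!\!L)$, which is what the corollary claims.

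There is essentially no obstacle: all the mathematical content is already carried by Lemma \ref{Scott compact closed}, whose sup-semilattice case rests on the identity $\bigvee_{d\in D}(d\vee x)=(\bigvee D)\vee x$ together with the fact that a Scott-closed set is a lower set. If a self-contained derivation were wanted, I would simply transcribe that directed-join argument with $L$ in place of $P$, using that in a complete lattice the map $x\vee(-)$ preserves all existing joins; but given the lemma, the result follows in one line. The only point I would take care to check is the quantifier structure: the lemma supplies condition (1) for \emph{every} $A$, and the equivalence then upgrades it to condition (2) for that same $A$, so the conclusion is indeed uniform in both $K$ and $A$, as required.
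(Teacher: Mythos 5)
Your proposal is correct and coincides with the paper's own (implicit) derivation: the corollary is stated as an immediate consequence of Lemma \ref{Scott compact closed}, obtained exactly by observing that a complete lattice is a sup semilattice, so the final clause of that lemma yields condition (1) for every $x$ and every $A$, and the equivalence (1) $\Leftrightarrow$ (2) then gives $\da(K\cap A)\in\Gamma(\Sigma~\!\!L)$ for all $K\in\mk(\Sigma~\!\!L)$. Your care about the quantifier structure is exactly the right check, and nothing further is needed.
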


\begin{lemma}\label{Ps functor} $P_S : \mathbf{Top}_0 \longrightarrow \mathbf{Top}_0$ is a covariant functor, where for any $f : X \longrightarrow Y$ in $\mathbf{Top}_0$, $P_S(f) : P_S(X) \longrightarrow P_S(Y)$ is defined by $P_S(f)(K)=\ua f(K)$ for all $ K\in\mk(X)$.
\end{lemma}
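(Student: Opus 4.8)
The plan is to verify the three things that make $P_S$ a covariant functor: that $P_S(f)$ is a well-defined map $\mk(X)\to\mk(Y)$, that it is continuous, and that $P_S$ preserves identities and composition.

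First I would check well-definedness. Fix a continuous $f:X\to Y$ and $K\in\mk(X)$. Since $K\neq\emptyset$ we have $f(K)\neq\emptyset$, so $\ua f(K)\neq\emptyset$; and $\ua f(K)$ is an upper set in the specialization order of $Y$, hence saturated. For compactness, $f(K)$ is compact as the continuous image of the compact set $K$, and the saturation $\ua C$ of any compact set $C$ is again compact: any open cover of $\ua C$ covers $C$, which has a finite subcover $U_1,\ldots,U_n$, and since each $U_i$ is an upper set one gets $\ua C\subseteq U_1\cup\cdots\cup U_n$. Therefore $\ua f(K)\in\mk(Y)$, so $P_S(f)$ takes values in $\mk(Y)$.

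Next I would prove continuity of $P_S(f)$ by testing it against the base $\{\Box_{\mk(Y)}V : V\in\mathcal O(Y)\}$ of the upper Vietoris topology. Because open sets are upper sets, for $V\in\mathcal O(Y)$ one has $\ua f(K)\subseteq V$ iff $f(K)\subseteq V$ iff $K\subseteq f^{-1}(V)$, and hence
\[
P_S(f)^{-1}(\Box_{\mk(Y)}V)=\{K\in\mk(X):K\subseteq f^{-1}(V)\}=\Box_{\mk(X)}f^{-1}(V).
\]
Since $f$ is continuous, $f^{-1}(V)\in\mathcal O(X)$, so the right-hand side is a basic open set of $P_S(X)$. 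As the preimage of every basic open set is open, $P_S(f)$ is continuous.

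Finally I would verify the functor laws. For identities, $P_S(\mathrm{id}_X)(K)=\ua K=K$ since $K$ is saturated, so $P_S(\mathrm{id}_X)=\mathrm{id}_{P_S(X)}$. For composition, given $f:X\to Y$, $g:Y\to Z$ and $K\in\mk(X)$, it suffices to show $\ua g(\ua f(K))=\ua g(f(K))$; the inclusion $\supseteq$ is immediate from $f(K)\subseteq\ua f(K)$. The one step that is not pure set theory is $\subseteq$: here I use that a continuous map is monotone for the specialization orders, so $g$ sends $\ua f(K)$ into $\ua g(f(K))$, and applying $\ua$ together with idempotence of $\ua$ yields the claim. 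Thus $P_S(g\circ f)=P_S(g)\circ P_S(f)$. Everything here is bookkeeping except this last equality, so I expect the crux to be exactly the observation that continuous maps are monotone (equivalently, that open sets are upper sets), which is also precisely what powers the continuity computation.
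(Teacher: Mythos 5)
Your proposal is correct and follows essentially the same route as the paper: the same computation $P_S(f)^{-1}(\Box V)=\Box f^{-1}(V)$ for continuity, and the same identity and composition checks (the paper also asserts $\ua g(f(K))=\ua g(\ua f(K))$, which you justify via monotonicity of $g$). You additionally verify that $\ua f(K)\in\mk(Y)$, a well-definedness step the paper leaves implicit, while omitting the paper's Claim~1 about commuting with the canonical embeddings $\xi_X$, which is not needed for functoriality.
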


\begin{proof} For any continuous mapping $f : X \longrightarrow Y$ to a $T_0$ space $Y$, one can easily deduce that $P_S(X)$ and $P_S(Y)$ are $T_0$.

	{Claim 1:} $P_S(f)\circ\xi_X=\xi_Y\circ f$.
	
	For each $ x\in X$, we have
	$$P_S(f)\circ\xi_X(x)=P_S(f)(\ua x)=\ua f(\ua x)=\ua f(x)=\xi_Y\circ f(x),$$
	that is, the following diagram commutes.
	\begin{equation*}
	\xymatrix{
		X \ar[d]_-{f} \ar[r]^-{\xi_X} &P_S(X)\ar[d]^-{P_S(f)}\\
		Y \ar[r]^-{\xi_Y} &P_S(Y)	}
	\end{equation*}

{Claim 2:} $P_S(f): P_S(X)\longrightarrow P_S(Y)$ is continuous.

For $V\in\mathcal O(Y)$, we have
$$\begin{array}{lll}
P_S(f)^{-1}(\Box V)&=&\{K\in\mk(X): P_S(f)(K)=\ua f(K)\subseteq V\}\\
&=&\{K\in\mk(X): K\subseteq f^{-1}(V)\}\\
&=&\Box f^{-1}(V),
\end{array}$$
which is open in $P_S(X)$. This implies that $P_S(f)$ is continuous.

{Claim 3:} $P_S(id_X)=id_{P_S(X)}$

For each $K\in\mk(X)$, $P_S(id_X)(K)=\ua id_X(K)=\ua K=K$.

{Claim 4:} For any continuous mapping $g : Y \longrightarrow Z$ in $\mathbf{Top}_0$, $P_S(g\circ f)=P_S(g)\circ P_S(f)$.

For any $ K\in\mk(X)$, $P_S(g\circ f)(K)=\ua g\circ f(K)=\ua g(f(K))=\ua g(\ua f(K))=P_S(g)\circ P_S(f)(K)$.

\noindent Thus $P_S : \mathbf{Top}_0 \longrightarrow \mathbf{Top}_0$ is a covariant functor.
\end{proof}

\begin{corollary}\label{Ps retract} Let $X$ and $Y$ be two $T_0$ spaces. If $Y$ is a retract of $X$, then $P_S(Y)$ is a retract of $P_S(X)$.
\end{corollary}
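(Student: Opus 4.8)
The plan is to read the statement categorically and let it fall straight out of the functoriality recorded in Lemma~\ref{Ps functor}. A covariant functor always carries a splitting pair of morphisms to a splitting pair, so once the hypothesis ``$Y$ is a retract of $X$'' is expressed as the existence of continuous maps that compose to an identity, essentially nothing remains to be done.

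First I would unwind the hypothesis: $Y$ being a retract of $X$ furnishes continuous maps $s : Y \longrightarrow X$ and $r : X \longrightarrow Y$ (with $s$ the inclusion of $Y$ as a subspace and $r$ the retraction, in the topological reading) satisfying $r \circ s = \mathrm{id}_Y$. Next I would simply apply $P_S$. By Lemma~\ref{Ps functor}, $P_S$ is a covariant endofunctor of $\mathbf{Top}_0$; it therefore produces continuous maps $P_S(s) : P_S(Y) \longrightarrow P_S(X)$ and $P_S(r) : P_S(X) \longrightarrow P_S(Y)$ and respects composition and identities, so that
\[
P_S(r) \circ P_S(s) = P_S(r \circ s) = P_S(\mathrm{id}_Y) = \mathrm{id}_{P_S(Y)} .
\]
Thus $P_S(s)$ is a section of $P_S(r)$, which is exactly the assertion that $P_S(Y)$ is a retract of $P_S(X)$.

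I do not expect a genuine obstacle here; the whole content sits inside Lemma~\ref{Ps functor}. The only point deserving a word is whether one wants the conclusion in the topological (subspace) sense rather than the purely categorical one. This is automatic: in $\mathbf{Top}_0$ any split monomorphism is a topological embedding, since if $P_S(r)\circ P_S(s)=\mathrm{id}_{P_S(Y)}$ then for every open $\mathcal{W}\subseteq P_S(Y)$ one has $P_S(s)(\mathcal{W}) = P_S(s)(P_S(Y)) \cap P_S(r)^{-1}(\mathcal{W})$, so $P_S(s)$ maps $P_S(Y)$ homeomorphically onto a subspace of $P_S(X)$ that is retracted by $P_S(r)$. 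Hence $P_S(Y)$ is literally a retract subspace of $P_S(X)$, completing the argument.
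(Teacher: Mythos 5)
Your proof is correct and is exactly the argument the paper intends: the corollary is stated as an immediate consequence of Lemma~\ref{Ps functor}, and applying the functor $P_S$ to the pair $r\circ s=\mathrm{id}_Y$ is the whole content. The closing remark about split monomorphisms being embeddings is a harmless bonus but not required.
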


Similarly, we can show that $P_H : \mathbf{Top}_0 \longrightarrow \mathbf{Top}_0$ is a covariant functor (cf. Lemma \ref{Ph functor} below), where for any $f : X \longrightarrow Y$ in $\mathbf{Top}_0$, $P_H(f) : P_H(X) \longrightarrow P_H(Y)$ is defined by $P_H(f)(K)=\overline{f(A)}$ for all $ A\in\Gamma(X)\setminus \{\emptyset\}$.

 \begin{lemma}\label{X-Smyth-irr} Let $X$ be a $T_0$ space and $A\subseteq X$. Then the following four conditions are equivalent:
 \begin{enumerate}[\rm (1)]
	\item $A\in\ir (X)$.
	\item $\xi_X(A)\in \ir (P_S(X))$.
    \item $\Diamond_{\mk (X)}A=\da_{\mk (X)}\xi_X(A)\in \ir (P_S(X))$.
    \item $\xi_X(A)\in \ir (P_S(\mathcal S^u(X)))$.
\end{enumerate}

\noindent Moreover, the following three conditions are equivalent:
 \begin{enumerate}[\rm (a)]
	\item $A\in\ir_c (X)$.
	\item $\Diamond_{\mk (X)}A\in \ir_c(P_S(X))$.
    \item $\xi_X(A)\in \ir_c (P_S(\mathcal S^u(X)))$.
\end{enumerate}
\end{lemma}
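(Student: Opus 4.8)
The plan is to reduce all seven conditions to three ingredients: the set identity $\Diamond_{\mk(X)}A=\da_{\mk(X)}\xi_X(A)$; the fact, recorded in Remark~\ref{xi continuous}, that $\xi_X$ is a topological embedding of $X$ onto the subspace $P_S(\mathcal S^u(X))$ of $P_S(X)$; and the elementary observation that in any $T_0$ space the closure of a set coincides with the closure of its down-set, since $\da y=\overline{\{y\}}$ for every point $y$. First I would check the displayed identity: as each $K\in\mk(X)$ is saturated, hence an upper set, the relation $\ua a\subseteq K$ is equivalent to $a\in K$, whence $\da_{\mk(X)}\xi_X(A)=\{K\in\mk(X):\ua a\subseteq K\text{ for some }a\in A\}=\{K\in\mk(X):K\cap A\neq\emptyset\}=\Diamond_{\mk(X)}A$.

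For the four equivalences I would argue $(1)\Leftrightarrow(4)\Leftrightarrow(2)\Leftrightarrow(3)$. Here $(1)\Leftrightarrow(4)$ is immediate from Remark~\ref{xi continuous}, because $\xi_X\colon X\to P_S(\mathcal S^u(X))$ is a homeomorphism and homeomorphisms preserve and reflect irreducibility (Lemma~\ref{irrimage} applied to $\xi_X$ and to $\xi_X^{-1}$). Next $(4)\Leftrightarrow(2)$ is Lemma~\ref{irrsubspace}: since $\xi_X(A)\subseteq\mathcal S^u(X)$, the set $\xi_X(A)$ is irreducible in the subspace $P_S(\mathcal S^u(X))$ iff it is irreducible in $P_S(X)$. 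Finally, putting $B=\xi_X(A)$, the identity above gives $\Diamond_{\mk(X)}A=\da B$, and $\overline{\da B}=\overline B$ because $\da B=\bigcup_{b\in B}\overline{\{b\}}$ lies between $B$ and $\overline B$; hence $B$ and $\da B$ have the same closure, and the equivalence of ``$B$ irreducible'' with ``$\cl_{P_S(X)} B$ irreducible closed'' (Lemma~\ref{irrsubspace}) yields $(2)\Leftrightarrow(3)$.

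For the ``moreover'' part, $(a)\Leftrightarrow(c)$ is once more the homeomorphism $\xi_X\colon X\cong P_S(\mathcal S^u(X))$, which preserves and reflects closedness as well as irreducibility, so $A\in\ir_c(X)$ iff $\xi_X(A)\in\ir_c(P_S(\mathcal S^u(X)))$. The direction $(a)\Rightarrow(b)$ is then easy: if $A\in\Gamma(X)$, the complement of $\Diamond_{\mk(X)}A$ in $\mk(X)$ is $\{K:K\subseteq X\setminus A\}=\Box_{\mk(X)}(X\setminus A)$, a basic open set of $P_S(X)$, so $\Diamond_{\mk(X)}A$ is closed; and it is irreducible by the four equivalences already established, giving $\Diamond_{\mk(X)}A\in\ir_c(P_S(X))$.

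The step I expect to be the main obstacle is the converse $(b)\Rightarrow(a)$, i.e. recovering closedness of $A$ in $X$ from closedness of $\Diamond_{\mk(X)}A$ in $P_S(X)$. My approach would be to first prove the closure formula $\overline{\Diamond_{\mk(X)}A}=\Diamond_{\mk(X)}\overline A$ in $P_S(X)$: the inclusion $\subseteq$ holds because $\Diamond_{\mk(X)}\overline A$ is closed (its complement is $\Box_{\mk(X)}(X\setminus\overline A)$) and contains $\Diamond_{\mk(X)}A$; for $\supseteq$, given $K$ with $K\cap\overline A\neq\emptyset$ and any basic neighbourhood $\Box_{\mk(X)}U$ of $K$, I would pick $y\in K\cap\overline A\subseteq U$, use $y\in\overline A$ to produce $a\in U\cap A$, and note that $\ua a\in\Box_{\mk(X)}U\cap\Diamond_{\mk(X)}A$, so $K\in\overline{\Diamond_{\mk(X)}A}$. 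Closedness of $\Diamond_{\mk(X)}A$ then gives $\Diamond_{\mk(X)}A=\Diamond_{\mk(X)}\overline A$, and evaluating both sides on principal filters $\ua x$ forces $\overline A=\da A$. The genuinely delicate point, which I expect to be the crux, is to pass from $\overline A=\da A$ to $A=\overline A$: this is subtle because $\overline A=\da A$ already holds for sets that are not closed (for instance any singleton $\{a\}$ satisfies $\da\{a\}=\overline{\{a\}}$), so closedness of $A$ cannot come from the closure formula alone, and the missing ingredient is precisely that $A$ be a lower set; isolating what forces this is where the argument must do its real work.
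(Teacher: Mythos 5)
Your treatment of the four-way equivalence (1)--(4) and of (a)$\Leftrightarrow$(c) and (a)$\Rightarrow$(b) is correct and uses exactly the ingredients the paper uses: Lemma \ref{irrimage}, Lemma \ref{irrsubspace}, the homeomorphism $\xi_X : X \longrightarrow P_S(\mathcal S^u(X))$ from Remark \ref{xi continuous}, the order computation $\da_{\mk (X)}\xi_X(A)=\Diamond_{\mk (X)}A$ (which indeed rests on each $K\in\mk(X)$ being an upper set and on $\leq_{P_S(X)}$ being reverse inclusion), and the closure formula $\cl_{P_S(X)}\xi_X(B)=\Diamond_{\mk (X)}\cl_X B$, which the paper only records parenthetically and you actually prove. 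Up to that point you are reproducing the paper's argument with the details filled in.

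The step you flag as the crux, (b)$\Rightarrow$(a), is a genuine gap in your write-up --- but it is not one you could have closed, because that implication is false as stated. Your computation is sharp: closedness of $\Diamond_{\mk (X)}A$ yields exactly that $A$ is irreducible with $\overline{A}=\da A$, and nothing more. For a concrete failure take any non-closed point $x$ of a $T_0$ space (e.g.\ the open point of the Sierpi\'{n}ski space) and $A=\{x\}$: since every $K\in\mk(X)$ is an upper set, $\Diamond_{\mk (X)}\{x\}=\Diamond_{\mk (X)}\overline{\{x\}}=\cl_{P_S(X)}\{\ua x\}$, which is a point closure and hence lies in $\ir_c(P_S(X))$, yet $\{x\}\notin\Gamma(X)$, so (b) holds while (a) fails. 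The paper's own one-line justification of (a)$\Leftrightarrow$(b) ("by the equivalences of (1) and (3)" plus the closure formula) has the same defect: it establishes (a)$\Rightarrow$(b), and for the converse only that $\overline{A}=\da A$ with $A$ irreducible. The equivalence becomes correct if one additionally assumes $A=\da A$ (for then $\da A=\overline{A}$ forces $A$ closed), or reformulates (b) as ``$A\in\Gamma(X)$ and $\Diamond_{\mk (X)}A\in\ir_c(P_S(X))$''. Since the lemma is invoked later (in the proof of Theorem \ref{super H-sober cha equa H-set}) only in the direction (a)$\Rightarrow$(b) and only for closed $A$, nothing downstream is affected; but your instinct that the missing ingredient is precisely that $A$ be a lower set is exactly right, and no argument can supply it from the hypotheses as given.
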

\begin{proof} (1) $\Rightarrow$ (2): By Lemma \ref{irrimage}.

(2) $\Leftrightarrow$ (3): Trivial.

(2) $\Rightarrow$ (4): By Remark \ref{subspaceirr} and $P_S(\mathcal S^u(X)))$ is a subspace of $P_S(X)$.

(4) $\Rightarrow$ (1) and (a) $\Leftrightarrow$ (c): Since $x\mapsto \ua x : X \longrightarrow P_S(\mathcal S^u(X))$ is a homeomorphism.

(a) $\Leftrightarrow$ (b): By the equivalences of (1) and (3) (note that $\cl_{\mk (X)} \xi_X(B)=\Diamond_{\mk (X)}\cl_X B$ for any subset $B$ of $X$).

(a) $\Leftrightarrow$ (c):  By the homeomorphism $x\mapsto \ua x : X \longrightarrow P_S(\mathcal S^u(X))$.
\end{proof}

\begin{remark}\label{intersection=closure intersection in Smyth} Let $X$ be a $T_0$ space and $\mathcal A\subseteq \mk (X)$. Then $\bigcap \mathcal A=\bigcap \overline{\mathcal A}$, here the closure of $\mathcal A$ is taken in $P_S(X)$. Clearly, $\bigcap \overline{\mathcal A}\subseteq\bigcap \mathcal A$. On the other hand, for any $K\in \overline{\mathcal A}$ and $U\in \mathcal O(X)$ with $K\subseteq U$ (that is, $K\in \Box U$), we have $\mathcal A\bigcap\Box U\neq\emptyset$, and hence there is a $K_U\in \mathcal A\bigcap\Box U$. Therefore, $K=\bigcap \{U\in \mathcal O(X) : K\subseteq U\}\supseteq\bigcap \{K_U : U\in \mathcal O(X) \mbox{ and } K\subseteq U\}\supseteq\bigcap \mathcal A$. It follows that $\bigcap \overline{\mathcal A}\supseteq\bigcap \mathcal A$. Thus $\bigcap \mathcal A=\bigcap \overline{\mathcal A}$.
\end{remark}

\begin{lemma}\label{irr-induced-opne filter} Suppose that $X$ is a $T_0$ space and $\mathcal A\subseteq \mk (X)$. Then the following conditions are equivalent:
\begin{enumerate}[\rm (1)]
\item $\mathcal A\in \ir (P_S(X))$.
\item $\mathcal F_{\mathcal A}=\bigcup_{K\in \mathcal A} \Phi (K)\in \mathrm{OFilt}(\mathcal O(X))$.
\end{enumerate}
\end{lemma}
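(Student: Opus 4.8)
The plan is to set up a dictionary translating the topology of $P_S(X)$ into membership in $\mathcal F_{\mathcal A}$, and then observe that the open-filter conditions split into parts that hold automatically and a single part that is exactly irreducibility. The starting observation is that for any $U\in\mathcal O(X)$ one has $U\in\mathcal F_{\mathcal A}$ if and only if $K\subseteq U$ for some $K\in\mathcal A$, that is, if and only if $\mathcal A\cap\Box U\neq\emptyset$; thus $\mathcal F_{\mathcal A}$ records precisely which basic open sets $\Box U$ of $P_S(X)$ meet $\mathcal A$. I would also record the elementary identity $\Box U\cap\Box V=\Box(U\cap V)$, which makes the canonical base $\{\Box U : U\in\mathcal O(X)\}$ of $P_S(X)$ closed under finite intersections.

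Next I would dispose of the parts of ``$\mathcal F_{\mathcal A}\in\mathrm{OFilt}(\mathcal O(X))$'' that come for free. Since $\mathcal F_{\mathcal A}=\bigcup_{K\in\mathcal A}\Phi(K)$ is a union of the upper sets $\Phi(K)$, it is always an upper set of $\mathcal O(X)$. Moreover it is always Scott open: if $\mathcal D\subseteq\mathcal O(X)$ is directed with $\bigcup\mathcal D\in\mathcal F_{\mathcal A}$, then $K\subseteq\bigcup\mathcal D$ for some $K\in\mathcal A$, and the compactness of $K$ together with the directedness of $\mathcal D$ yields a single $D\in\mathcal D$ with $K\subseteq D$, whence $D\in\Phi(K)\subseteq\mathcal F_{\mathcal A}$. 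Consequently the only content of $\mathcal F_{\mathcal A}\in\mathrm{OFilt}(\mathcal O(X))$ beyond these automatic facts is that $\mathcal F_{\mathcal A}$ be a nonempty filter, i.e. nonempty and closed under binary intersections; here nonemptiness is equivalent to $\mathcal A\neq\emptyset$, since each $\Phi(K)$ contains $X$ (and, incidentally, $\emptyset\notin\mathcal F_{\mathcal A}$ always, as members of $\mk(X)$ are nonempty).

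Finally I would match the binary-intersection condition with irreducibility. Because the base $\{\Box U\}$ is closed under finite intersection, a nonempty $\mathcal A$ is irreducible in $P_S(X)$ exactly when, for all $U,V\in\mathcal O(X)$, $\mathcal A\cap\Box U\neq\emptyset$ and $\mathcal A\cap\Box V\neq\emptyset$ imply $\mathcal A\cap\Box(U\cap V)\neq\emptyset$. Translating through the dictionary of the first step, this reads: $U,V\in\mathcal F_{\mathcal A}$ imply $U\cap V\in\mathcal F_{\mathcal A}$, which is precisely closure of $\mathcal F_{\mathcal A}$ under binary intersections. Recalling that irreducible sets are nonempty by definition, and combining with the nonemptiness equivalence above, this gives $(1)\Leftrightarrow(2)$.

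I expect the only genuinely delicate points to be the reduction of irreducibility to the base $\{\Box U\}$ (legitimate precisely because that base is closed under finite intersection, via $\Box U\cap\Box V=\Box(U\cap V)$) and the realization that the Scott-openness of $\mathcal F_{\mathcal A}$ is automatic from the compactness of the members of $\mk(X)$. Once these two observations are in hand, the equivalence is a direct translation requiring neither sobriety nor well-filteredness of $X$.
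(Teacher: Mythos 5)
Your proof is correct and follows essentially the same route as the paper: the filter property of $\mathcal F_{\mathcal A}$ corresponds to irreducibility via the identity $\Box U\cap\Box V=\Box(U\cap V)$, the Scott-openness comes from compactness of the members of $\mathcal A$ (the paper simply cites $\Phi(K)\in\sigma(\mathcal O(X))$), and the converse reduces irreducibility to the basic opens $\Box U$ exactly as the paper does by decomposing arbitrary opens of $P_S(X)$ into unions of basic ones. Your explicit handling of nonemptiness is a small tidy addition the paper leaves implicit.
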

\begin{proof} (1) $\Rightarrow$ (2): Clearly, $\mathcal F_{\mathcal A}\in \sigma (\mathcal O(X))$ since $\Phi (K)\in \sigma (\mathcal O(X))$ for all $K\in \mk (X)$. Now we show that $\mathcal F_{\mathcal A}\in \mathrm{Filt}(\mathcal O(X))$. Suppose $U, V\in \mathcal F_{\mathcal A}$. Then $\mathcal A\bigcap\Box U\neq\emptyset$ and $\mathcal A\bigcap\Box V\neq\emptyset$, and hence $\mathcal A\bigcap \Box (U\cap V)=\mathcal A\bigcap \Box U\bigcap\Box V\neq\emptyset$ by $\mathcal A\in \ir (P_S(X))$. Therefore, $U\cap V\in \mathcal F_{\mathcal A}$.

(2) $\Rightarrow$ (1): Suppose that $\mathcal U, \mathcal V\in \mathcal O(P_S(X))$ for which $\mathcal A\cap \mathcal U\neq\emptyset \neq\mathcal A\cap\mathcal V$. Then there exist $\{U_i : i\in I\}\subseteq \mathcal O(X)$ and $\{V_j : j\in J\}\subseteq \mathcal O(X)$ with $\mathcal U=\bigcup_{i\in I}\Box U_i$ and $\mathcal V=\bigcup_{j\in J}\Box V_j$, and consequently, there are $K_1, K_2\in \mathcal A$ and $(i_0, j_0)\in I\times J$ such that $K_1\subseteq U_{i_0}$ and $K_2\subseteq U_{j_0}$. By condition (2), there is $K_3\in \mathcal A$ with $K_3\subseteq U_{i_0}\cap V_{j_0}$, that is, $K_3\in \Box U_{i_0}\cap \Box V_{j_0}\subseteq \mathcal U\cap \mathcal V$, and whence $\mathcal A\cap \mathcal U\cap\mathcal V\neq\emptyset$. Thus $\mathcal A\in \ir (P_S(X))$.
\end{proof}

\begin{remark}\label{K-union-closure} For a $T_0$ space $X$ and $\mathcal A\in \ir (P_S(X))$, $\mathcal F_{\mathcal A}=\mathcal F_{\mathrm{cl} \mathcal A}$. In fact, if $U\in \mathcal O(X)$ and $U\in \mathcal F_{\mathrm{cl} \mathcal A}$, then $\mathrm{cl}\mathcal A\bigcap \Box U\neq\emptyset$, and whence $\mathcal A\bigcap \Box U\neq\emptyset$. It follows $U\in \mathcal F_{\mathcal A}$.
\end{remark}

\begin{lemma}\label{sups in Smyth}\emph{(\cite{redbook})}  Let $X$ be a $T_0$ space. For a nonempty family $\{K_i : i\in I\}\subseteq \mk (X)$, $\bigvee_{i\in I} K_i$ exists in $\mk (X)$ if{}f~$\bigcap_{i\in I} K_i\in \mk (X)$. In this case $\bigvee_{i\in I} K_i=\bigcap_{i\in I} K_i$.
\end{lemma}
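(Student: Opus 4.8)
The plan is to work directly with the Smyth preorder, recalling that $K_1 \sqsubseteq K_2$ means $K_2 \subseteq K_1$, so that in $(\mk(X), \sqsubseteq)$ an upper bound of the family $\{K_i : i \in I\}$ is precisely a member $M \in \mk(X)$ with $M \subseteq K_i$ for every $i$, i.e.\ $M \subseteq \bigcap_{i\in I} K_i$. The first observation I would record is that $\bigcap_{i\in I} K_i$ is always saturated (an intersection of upper sets is an upper set), so the only thing that can fail is compactness or nonemptiness. With this translation in hand the ``if'' direction is immediate: if $\bigcap_{i\in I} K_i \in \mk(X)$, then it is itself an upper bound of the family, and since every upper bound $M$ satisfies $M \subseteq \bigcap_{i\in I}K_i$, that is $\bigcap_{i\in I}K_i \sqsubseteq M$, it is the least upper bound; hence $\bigvee_{i\in I}K_i$ exists and equals $\bigcap_{i\in I}K_i$.

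For the ``only if'' direction I would assume $K := \bigvee_{i\in I} K_i$ exists in $\mk(X)$ and prove $K = \bigcap_{i\in I}K_i$. Since $K$ is an upper bound we get $K \subseteq K_i$ for all $i$, hence $K \subseteq \bigcap_{i\in I}K_i$; the content is the reverse inclusion. Here the key device is that for each point $x \in X$ the principal filter $\ua x$ lies in $\mk(X)$: it is saturated, and it is compact because every open set is an upper set in the specialization order, so any open set containing $x$ already contains all of $\ua x$. Now suppose, toward a contradiction, that some $x \in \bigcap_{i\in I}K_i$ fails to lie in $K$. Then $\ua x \subseteq K_i$ for every $i$ (each $K_i$ is an upper set containing $x$), so the set $K \cup \ua x$ is a nonempty compact saturated set contained in every $K_i$, i.e.\ it is another upper bound of the family. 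By the leastness of $K$ we get $K \sqsubseteq K \cup \ua x$, that is $K \cup \ua x \subseteq K$, forcing $x \in K$ --- a contradiction. Thus $\bigcap_{i\in I}K_i \subseteq K$, and combining the two inclusions gives $\bigcap_{i\in I}K_i = K \in \mk(X)$, as required.

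The routine parts are the order-theoretic bookkeeping and the two standard closure facts: the intersection of upper sets is an upper set, and a finite union of compact saturated sets is compact saturated. The one genuinely load-bearing step --- and the place I would be most careful --- is the construction in the forward direction: one must manufacture a compact saturated upper bound that strictly enlarges $K$ whenever $\bigcap_{i\in I}K_i \not\subseteq K$, and the right gadget is $K \cup \ua x$, which works precisely because $\ua x$ is itself compact saturated and is absorbed by every $K_i$. Verifying that $\ua x \in \mk(X)$ is the small lemma on which the whole argument turns.
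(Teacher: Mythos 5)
Your proof is correct. The paper gives no proof of this lemma (it simply cites \cite{redbook}), and your argument is the standard one: the only nontrivial point is the reverse inclusion $\bigcap_{i\in I}K_i\subseteq\bigvee_{i\in I}K_i$ in the ``only if'' direction, which you handle properly by observing that $\ua x\in\mk(X)$ for every $x$ (open sets are upper sets, so any open set containing $x$ covers $\ua x$) and that $K\cup\ua x$ would be a strictly larger upper bound in the Smyth order, contradicting leastness.
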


\begin{remark}\label{two meets in Smyth} For a nonempty family $\{K_i : i\in I\}\subseteq \mk (X)$, $\bigcap\limits_{i\in I} \ua_{\mk (X)}K_i\neq\emptyset$ in $\mk (X)$ if{}f $\bigcap\limits_{i\in I} K_i\neq\emptyset$. Furthermore, if $\bigcap\limits_{i\in I} K_i\neq\emptyset$, then $\bigcap\limits_{i\in I} \ua_{\mk (X)}K_i=\{K\in \mk (X) : K\subseteq \bigcap\limits_{i\in I} K_i\}$. In fact, if $K\in \mk (X)$ and $K\in \bigcap\limits_{i\in I} \ua_{\mk (X)}K_i$, then $\emptyset \neq K\subseteq \bigcap\limits_{i\in I} K_i$. Conversely, if $\bigcap\limits_{i\in I} K_i\neq\emptyset$, we can select an $x\in \bigcap\limits_{i\in I} K_i\neq\emptyset$. Then $\ua x\in \mk (X)$ and $\ua x\in \bigcap\limits_{i\in I} \ua_{\mk (X)}K_i$. In the case that $\bigcap\limits_{i\in I} K_i\neq\emptyset$, one can directly get that $\bigcap\limits_{i\in I} \ua_{\mk (X)}K_i=\{K\in \mk (X) : K\subseteq \bigcap\limits_{i\in I} K_i\}$.
\end{remark}

\begin{lemma}\label{K union} \emph{(\cite{jia-Jung-2016, Schalk})}  Let $X$ be a $T_0$ space. If $\mathcal K\in\mk(P_S(X))$, then  $\bigcup \mathcal K\in\mk(X)$.
\end{lemma}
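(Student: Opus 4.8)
The plan is to verify the three defining conditions for membership in $\mk(X)$ for the set $\bigcup \mathcal K = \bigcup_{K \in \mathcal K} K$, where compactness is the only substantial point. Nonemptiness is immediate, since $\mathcal K$ is a nonempty member of $\mk(P_S(X))$ and every element of $\mk(X)$ is itself nonempty. Saturation is equally routine: each $K \in \mathcal K$ is an upper set in the specialization order of $X$ (being saturated), and an arbitrary union of upper sets is an upper set, so $\bigcup \mathcal K$ is saturated.

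The heart of the argument is the compactness of $\bigcup \mathcal K$ in $X$, and the key idea is to transport a given open cover of $\bigcup \mathcal K$ in $X$ into an open cover of $\mathcal K$ in $P_S(X)$ by passing to finite unions. First I would take an arbitrary family $\{U_i : i \in I\} \subseteq \mathcal O(X)$ with $\bigcup \mathcal K \subseteq \bigcup_{i \in I} U_i$. Fixing $K \in \mathcal K$, we have $K \subseteq \bigcup \mathcal K \subseteq \bigcup_{i \in I} U_i$, and since $K$ is compact in $X$ there is a finite $F_K \subseteq I$ with $K \subseteq \bigcup_{i \in F_K} U_i$, that is, $K \in \Box\left(\bigcup_{i \in F_K} U_i\right)$. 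As $\bigcup_{i \in F_K} U_i \in \mathcal O(X)$, the set $\Box\left(\bigcup_{i \in F_K} U_i\right)$ is a basic open subset of $P_S(X)$.

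Next I would observe that the family $\left\{\Box\left(\bigcup_{i \in F} U_i\right) : F \subseteq I \text{ finite}\right\}$ is an open cover of $\mathcal K$ in $P_S(X)$, since the previous step supplies, for each $K \in \mathcal K$, a finite $F_K$ with $K$ in the corresponding basic open set. Compactness of $\mathcal K$ in $P_S(X)$ then yields finitely many finite sets $F_1, \ldots, F_n \subseteq I$ with $\mathcal K \subseteq \bigcup_{j=1}^{n} \Box\left(\bigcup_{i \in F_j} U_i\right)$. Putting $F = \bigcup_{j=1}^{n} F_j$, a finite subset of $I$, each $K \in \mathcal K$ lies in some $\Box\left(\bigcup_{i \in F_j} U_i\right)$ and hence satisfies $K \subseteq \bigcup_{i \in F} U_i$; taking the union over all $K \in \mathcal K$ gives $\bigcup \mathcal K \subseteq \bigcup_{i \in F} U_i$. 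Thus $\{U_i : i \in F\}$ is a finite subcover, and $\bigcup \mathcal K$ is compact.

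The hard part will be precisely this passage from a cover in $X$ to a cover in $P_S(X)$: one cannot directly use the sets $\Box U_i$, because an individual $K \in \mathcal K$ need not be contained in any single $U_i$. The device of passing to all finite unions $\bigcup_{i \in F} U_i$, whose $\Box$-images are still basic open in $P_S(X)$, is exactly what forces each compact $K$ into one basic open set, after which the compactness of $\mathcal K$ finishes the argument. Everything else is bookkeeping, so I expect no further obstacle.
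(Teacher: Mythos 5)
Your proof is correct and complete: nonemptiness and saturation are handled properly, and the compactness argument --- refining a cover $\{U_i\}$ of $\bigcup\mathcal K$ to the cover of $\mathcal K$ by the basic open sets $\Box\bigl(\bigcup_{i\in F}U_i\bigr)$ over finite $F$, then applying compactness of $\mathcal K$ in $P_S(X)$ --- is exactly the standard device needed here. The paper itself gives no proof of this lemma (it is quoted from Schalk and Jia--Jung), but your argument matches the classical one found in those sources, so there is nothing to correct.
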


\begin{corollary}\label{Smythunioncont} \emph{(\cite{jia-Jung-2016, Schalk})}  For any $T_0$ space $X$, the mapping $\bigcup : P_S(P_S(X)) \longrightarrow P_S(X)$, $\mathcal K\mapsto \bigcup \mathcal K$, is continuous.
\end{corollary}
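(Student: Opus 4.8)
The plan is to verify continuity directly against a base of the topology on the codomain. First I would record that the map is well-defined: by Lemma \ref{K union}, $\bigcup\mathcal K\in\mk(X)$ for every $\mathcal K\in\mk(P_S(X))$, so $\bigcup$ indeed carries $P_S(P_S(X))$ into $P_S(X)$. Recall next that the topology on $P_S(X)$ has $\{\Box_{\mk(X)}U : U\in\mathcal O(X)\}$ as a base (upper Vietoris topology), and likewise the topology on $P_S(P_S(X))$ has $\{\Box_{\mk(P_S(X))}\mathcal U : \mathcal U\in\mathcal O(P_S(X))\}$ as a base. Hence it suffices to show that $\bigcup^{-1}(\Box_{\mk(X)}U)$ is open in $P_S(P_S(X))$ for each $U\in\mathcal O(X)$.

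The key step is the chain of equivalences, valid for any $\mathcal K\in\mk(P_S(X))$:
$$\bigcup\mathcal K\in\Box_{\mk(X)}U \iff \bigcup\mathcal K\subseteq U \iff (\forall K\in\mathcal K)\,K\subseteq U \iff \mathcal K\subseteq \Box_{\mk(X)}U.$$
The middle equivalence is the whole content: $\bigcup\mathcal K\subseteq U$ holds precisely when every member $K$ of $\mathcal K$ is contained in $U$. Since $\Box_{\mk(X)}U\in\mathcal O(P_S(X))$, the last condition $\mathcal K\subseteq\Box_{\mk(X)}U$ says exactly that $\mathcal K\in\Box_{\mk(P_S(X))}(\Box_{\mk(X)}U)$. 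Therefore
$$\bigcup{}^{-1}(\Box_{\mk(X)}U)=\Box_{\mk(P_S(X))}(\Box_{\mk(X)}U),$$
which is a basic open subset of $P_S(P_S(X))$, and continuity follows.

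I do not expect a serious obstacle here; the argument reduces to a one-line manipulation of the $\Box$-operator, with well-definedness handed to us by Lemma \ref{K union}. The only point demanding care is the bookkeeping across the two nested levels of the Smyth construction, namely checking that $\Box_{\mk(X)}U$ is genuinely an open subset of $P_S(X)$ (it is, being a base element of the upper Vietoris topology) so that $\Box_{\mk(P_S(X))}(\Box_{\mk(X)}U)$ is a legitimate base element of $P_S(P_S(X))$. With that confirmed, the preimage formula above yields the result.
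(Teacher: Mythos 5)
Your proposal is correct and follows essentially the same route as the paper: well-definedness from Lemma \ref{K union}, then the computation $\bigcup^{-1}(\Box_{\mk(X)}U)=\Box_{\mk(P_S(X))}(\Box_{\mk(X)}U)$ on the base of the upper Vietoris topology. The paper's proof is the same one-line $\Box$-manipulation (its expression $\eta_{P_S(X)}^{-1}(\Box(\Box U))$ is just a typographical slip for the set $\Box(\Box U)$ you identify).
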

\begin{proof} For $\mathcal K\in\mk(P_S(X))$, $\bigcup \mathcal K=\bigcup \mathcal K\in\mk(X)$ by Lemma \ref{K union}. For $U\in \mathcal O(X)$, we have $\bigcup^{-1}(\Box U)=\{\mathcal K\in \mk (P_S(X)) : \bigcup \mathcal K\in \Box U\}=\{\mathcal K\in \mk (P_S(X)) : \mathcal K\subseteq \Box U\}=\eta_{P_S(X)}^{-1}(\Box (\Box U))\in \mathcal O(P_S(P_S(X)))$. Thus  $\bigcup : P_S(P_S(X)) \longrightarrow P_S(X)$ is continuous.
\end{proof}

\begin{lemma}\label{many  meets in Smyth}  For a $T_0$ space $X$ and a nonempty family $\mathcal K\subseteq \mk (X)$, the following conditions are equivalent:
\begin{enumerate}[(1)]
\item For any $\mathcal U\in \mathcal O(P_S(X))$, $\bigcap\limits_{K\in \mathcal K}\ua_{\mk (X)}K\subseteq \mathcal U$ implies $\ua_{\mk (X)}K\subseteq \mathcal U$ for some $K\in \mathcal K$.

    \item For any $U\in \mathcal O(X)$, $\bigcap\limits_{K\in \mathcal K}\ua_{\mk (X)}K\subseteq \Box U$ implies $\ua_{\mk (X)}K\subseteq \Box U$ for some $K\in \mathcal K$.
        \item $\bigcap \mathcal K\in \mk (X)$, and for any $U\in \mathcal O(X)$, $\bigcap \mathcal K\subseteq U $ implies $ K\subseteq U$ for some $K\in \mathcal K$.
            \item For any $U\in \mathcal O(X)$, $\bigcap \mathcal K\subseteq U $ implies $ K\subseteq U$ for some $K\in \mathcal K$.
\end{enumerate}
\end{lemma}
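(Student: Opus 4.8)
The four conditions fall into two levels: (1) and (2) are stated inside the Smyth power space $P_S(X)$ (about members of $\mk(X)$ and open sets of $P_S(X)$), while (3) and (4) are stated inside $X$ itself (about $\bigcap\mathcal K$ and open sets of $X$). The plan is to run the cycle $(1)\Rightarrow(2)\Rightarrow(4)\Rightarrow(3)\Rightarrow(1)$, using Remark \ref{two meets in Smyth} as the dictionary that moves between the two levels. Throughout write $\mathcal M=\bigcap_{K\in\mathcal K}\ua_{\mk(X)}K$.

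First I would record two translation facts. By Remark \ref{xi continuous} the specialization order of $P_S(X)$ is the Smyth order, so an open set of $P_S(X)$ is an upper set for $\sqsubseteq$; since $K$ is the $\sqsubseteq$-smallest element of $\ua_{\mk(X)}K$, for any open $\mathcal U\subseteq P_S(X)$ one has $\ua_{\mk(X)}K\subseteq\mathcal U$ iff $K\in\mathcal U$, and in particular $\ua_{\mk(X)}K\subseteq\Box U$ iff $K\subseteq U$. Secondly, by Remark \ref{two meets in Smyth}, $\mathcal M=\{K'\in\mk(X):K'\subseteq\bigcap\mathcal K\}$; combining this with the facts that $\bigcap\mathcal K$ is saturated (an intersection of upper sets) and that $\ua x\in\mk(X)$ with $\ua x\subseteq\bigcap\mathcal K$ for each $x\in\bigcap\mathcal K$, one gets the crucial equivalence $\mathcal M\subseteq\Box U\Leftrightarrow\bigcap\mathcal K\subseteq U$ for every $U\in\mathcal O(X)$ (the forward direction tests $\mathcal M$ against the compact sets $\ua x$).

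With these in hand the easy implications fall out. $(1)\Rightarrow(2)$ is the special case $\mathcal U=\Box U$. $(2)\Rightarrow(4)$ is a direct transcription: if $\bigcap\mathcal K\subseteq U$ then $\mathcal M\subseteq\Box U$, so (2) yields some $K\in\mathcal K$ with $\ua_{\mk(X)}K\subseteq\Box U$, i.e. $K\subseteq U$. For $(3)\Rightarrow(1)$, the compactness clause of (3) makes $\bigcap\mathcal K$ a member of $\mk(X)$, hence of $\mathcal M$; so if $\mathcal M\subseteq\mathcal U$ and we write $\mathcal U=\bigcup_{i}\Box U_i$ (using that $\{\Box U:U\in\mathcal O(X)\}$ is a base for $P_S(X)$), then $\bigcap\mathcal K\in\Box U_{i_0}$ for some $i_0$, the second clause of (3) gives $K\in\mathcal K$ with $K\subseteq U_{i_0}$, and $K\in\Box U_{i_0}\subseteq\mathcal U$ yields $\ua_{\mk(X)}K\subseteq\mathcal U$.

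The one step carrying real content is $(4)\Rightarrow(3)$, namely that $\bigcap\mathcal K$ is compact; that it is nonempty is forced by testing $U=\emptyset$ in (4), and it is saturated as noted above. The idea is to pull a finite subcover back through a single member of $\mathcal K$: given an open cover $\bigcap\mathcal K\subseteq\bigcup_i U_i$, put $U=\bigcup_i U_i$, use (4) to produce $K\in\mathcal K$ with $K\subseteq U$, invoke the compactness of $K$ to extract a finite subcover $K\subseteq U_{i_1}\cup\cdots\cup U_{i_n}$, and conclude via $\bigcap\mathcal K\subseteq K$ that this same finite family covers $\bigcap\mathcal K$. I expect this to be the main obstacle, though it is short once the right move is seen: condition (4) is precisely what lets one replace the possibly non-compact $\bigcap\mathcal K$ by a genuinely compact $K\in\mathcal K$ lying above it.
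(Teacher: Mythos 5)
Your proof is correct, and its two substantive ingredients are the same ones the paper uses: the dictionary identifying $\bigcap_{K\in \mathcal K}\ua_{\mk (X)}K$ with $\{K'\in \mk (X) : K'\subseteq \bigcap \mathcal K\}$ (Remark \ref{two meets in Smyth}), and the compactness argument that pulls a finite subcover of $\bigcap\mathcal K$ back through a single $K\in\mathcal K$. The organization differs, though, in a way worth noting. The paper runs $(1)\Rightarrow(2)\Rightarrow(3)\Leftrightarrow(4)$ and $(3)\Rightarrow(1)$, and in its step $(2)\Rightarrow(3)$ it obtains compactness of $\bigcap\mathcal K$ by first arguing that $\bigcap_{K\in\mathcal K}\ua_{\mk (X)}K$ is a nonempty compact saturated subset of $P_S(X)$ and then invoking Lemma \ref{K union} ($\bigcup\mathcal K\in\mk(X)$ for $\mathcal K\in\mk(P_S(X))$). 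Your cycle $(1)\Rightarrow(2)\Rightarrow(4)\Rightarrow(3)\Rightarrow(1)$ reduces $(2)\Rightarrow(4)$ to a pure translation via the equivalence $\mathcal M\subseteq\Box U\Leftrightarrow\bigcap\mathcal K\subseteq U$ (whose forward direction, testing against the principal filters $\ua x$ for $x\in\bigcap\mathcal K$, you correctly flag as the point needing an argument), and concentrates all the compactness work in $(4)\Rightarrow(3)$ — which is exactly the paper's own proof of $(4)\Rightarrow(3)$, including the nonemptiness check with $U=\emptyset$. The net effect is a slightly more self-contained and more elementary proof: Lemma \ref{K union} is never needed, and the only external inputs are Remark \ref{xi continuous} (open sets of $P_S(X)$ are $\sqsubseteq$-upper) and the elementary description of $\bigcap_{K\in\mathcal K}\ua_{\mk (X)}K$. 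Both proofs are sound; yours trades the appeal to the union lemma for one extra, but trivial, link in the implication cycle.
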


\begin{proof} (1) $\Rightarrow$ (2): Trivial.

(2) $\Rightarrow$ (3): By condition (2), $\bigcap\limits_{K\in \mathcal K}\ua_{\mk (X)}K\neq\emptyset$ and $\bigcap\limits_{K\in \mathcal K}\ua_{\mk (X)}K\in \mk (P_S(X))$. By Remark \ref{two meets in Smyth} and Lemma \ref{K union}, $\bigcap \mathcal K=\bigcup \bigcap\limits_{K\in \mathcal K}\ua_{\mk (X)}K\in \mk (X)$. For any $U\in \mathcal O(X)$, if $\bigcap \mathcal K\subseteq U$, then $\bigcap\limits_{K\in \mathcal K}\ua_{\mk (X)}K=\ua_{\mk (X)}\bigcap \mathcal K\subseteq \Box U$, and hence by condition (2), there is some $K\in \mathcal K$ such that $\ua_{\mk (X)}K\subseteq \Box U$, that is, $K\subseteq U$.

(3) $\Leftrightarrow$ (4): We only need to show that under condition (4), $\bigcap \mathcal K\in \mk (X)$. First, $\bigcap \mathcal K\neq \emptyset$ (for otherwise $\bigcap \mathcal K=\emptyset$ implies $K=\emptyset$ for some $K\in \mathcal K$). If $\{V_j : j\in J\}\subseteq \mathcal O(X)$ is an open cover of $\bigcap \mathcal K$, then by condition (4), $K\subseteq \bigcup\limits_{j\in J}U_j$ for some $K\in \mathcal K$. By the compactness of $K$, there is a $J_0\in J^{(<\omega)}$ with $K\subseteq \bigcup\limits_{j\in J_0}U_j$, and whence $\bigcap \mathcal K\subseteq K\subseteq \bigcup\limits_{j\in J_0}U_j$. Thus $\bigcap \mathcal K\in \mk (X)$.

(3) $\Rightarrow$ (1): Suppose that $\mathcal U\in \mathcal O(P_S(X))$ and $\bigcap\limits_{K\in \mathcal K}\ua_{\mk (X)}K\subseteq \mathcal U$. Then there exists $\{U_i : i\in I\}\subseteq \mathcal O(X)$ with $\mathcal U=\bigcup\limits_{i\in I}\Box U_i$. Let $H=\bigcap \mathcal K$. Then by condition (3), $H\in \mk (X)$, and whence by Remark \ref{two meets in Smyth}, $\bigcap\limits_{K\in \mathcal K}\ua_{\mk (X)}K=\ua_{\mk (X)}H$. Therefore, $H\in \mathcal U=\bigcup\limits_{i\in I}\Box U_i$, and hence for some $i\in I$, $H\in \Box U_i$, i.e., $\bigcap \mathcal K=H\subseteq U_j$. By condition (3) again, $K\subseteq U_i$ for some $K\in \mathcal K$, and consequently, $\ua_{\mk (X)}K\subseteq \Box U_i\subseteq\mathcal U$.
\end{proof}

As in \cite{E_20182}, a topological space $X$ is \emph{locally hypercompact} if for each $x\in X$ and each open neighborhood $U$ of $x$, there is  $\ua F\in \mathbf{Fin}~X$ with $x\in\ii\,\ua F\subseteq\ua F\subseteq U$. A space $X$ is called \emph{core compact} if $\mathcal O(X)$ is a \emph{continuous lattice} (see \cite{redbook}).

For a nonempty subset $C$ of a $T_0$ space $X$, it is easy to see that $C$ is compact if{}f $\ua C\in \mk (X)$. Furthermore, we have the following useful result (see, e.g., \cite[pp.2068]{E_2009}).

\begin{lemma}\label{COMPminimalset} Let $X$ be a $T_0$ space and $C\in \mk (X)$. Then $C=\ua \mathrm{min}(C)$ and  $\mathrm{min}(C)$ is compact.
\end{lemma}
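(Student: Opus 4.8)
The plan is to establish the two assertions separately, with the first—that $C=\ua\,\mathrm{min}(C)$—carrying essentially all of the work. Since $C$ is saturated, $C=\ua C$, so the inclusion $\ua\,\mathrm{min}(C)\subseteq C$ is immediate from $\mathrm{min}(C)\subseteq C$. For the reverse inclusion I would fix $x\in C$ and produce a minimal element of $C$ lying below $x$. To this end, consider the poset $C\cap\da x$ under the specialization order and aim to apply Zorn's Lemma in its dual form to extract a minimal element.

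The key step is showing that every chain $\{c_i : i\in I\}$ in $C\cap\da x$ (descending in the specialization order) has a lower bound inside $C\cap\da x$. Recall that in a $T_0$ space $\da c_i=\overline{\{c_i\}}\in\Gamma(X)$, so $\{\da c_i : i\in I\}$ is a family of closed sets. Because the $c_i$ form a chain, any finite subfamily has a least element $c_{i_0}$, and $c_{i_0}\in C\cap\bigcap_{\mathrm{finite}}\da c_i$; thus every finite subfamily meets $C$. Invoking the finite-intersection characterization of compactness of $C$, the whole family meets $C$: there is $b\in C$ with $b\leq c_i$ for all $i$, and since each $c_i\leq x$ we get $b\in C\cap\da x$. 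Hence Zorn's Lemma yields a minimal element $m$ of $C\cap\da x$. A short check shows $m\in\mathrm{min}(C)$: if $y\in C$ and $y\leq m$, then $y\leq x$, so $y\in C\cap\da x$ and the minimality of $m$ forces $y=m$. As $m\leq x$, we obtain $x\in\ua\,\mathrm{min}(C)$, completing $C\subseteq\ua\,\mathrm{min}(C)$ and hence $C=\ua\,\mathrm{min}(C)$.

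For the compactness of $\mathrm{min}(C)$, I would reduce it to that of $C$ using the first part together with saturation. Let $\{U_j : j\in J\}\subseteq\mathcal O(X)$ be an open cover of $\mathrm{min}(C)$. Each $U_j$ is an upper set, so from $\mathrm{min}(C)\subseteq\bigcup_{j\in J}U_j$ we get $C=\ua\,\mathrm{min}(C)\subseteq\bigcup_{j\in J}\ua U_j=\bigcup_{j\in J}U_j$; thus $\{U_j : j\in J\}$ covers $C$. The compactness of $C$ provides a finite $J_0\subseteq J$ with $C\subseteq\bigcup_{j\in J_0}U_j$, and a fortiori $\mathrm{min}(C)\subseteq\bigcup_{j\in J_0}U_j$. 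Therefore $\mathrm{min}(C)$ is compact.

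The main obstacle is the lower-bound step for descending chains: the argument succeeds precisely because closures of singletons are closed and compactness can be cast in its finite-intersection-property form, which is what converts ``finite subchains have lower bounds in $C$'' into ``the whole chain does.'' Everything else—the passage from minimality in $C\cap\da x$ to minimality in $C$, and the covering reduction for $\mathrm{min}(C)$—is routine once this is in hand.
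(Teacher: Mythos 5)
Your proof is correct. The paper does not supply its own argument for this lemma (it is quoted from Ern\'e's paper, cited as \cite[pp.~2068]{E_2009}), but your route --- Zorn's Lemma on $C\cap\da x$ with the lower-bound step secured by the finite-intersection-property form of compactness applied to the closed sets $\da c_i=\overline{\{c_i\}}$, followed by the covering reduction for $\mathrm{min}(C)$ using that open sets are upper sets --- is exactly the standard proof of this fact, and every step checks out.
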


A subset system $\Omega$ of a set $X$ is called an \emph{open system} on $X$ provided that (1) $\{\emptyset, X\}\subseteq \Omega$, and (2) $\bigcup_{i\in I}U_i\in \Omega$ for any $\{U_i : i\in I\}\subseteq \Omega$. The sets in $\Gamma(\Omega)=\{X\setminus U : U\in \Omega\}$ are called \emph{closed}. As usual, for $A\subseteq X$, define $\ii_{\Omega} A=\bigcup\{U\in \Omega : U\subseteq A\}$ and $\mathrm{cl}_{\Omega} A=\bigcap\{C\in \Gamma(\Omega) : A\subseteq C\}$. Suppose that $\Omega$ and $\Psi$ are open systems on sets $X$and $Y$ respectively. A mapping $f : (X, \Omega) \longrightarrow (Y, \Psi)$ is called \emph{continuous}, if $f^{-1}(V)\in \Omega$ for any $V\in \Psi$ or, equivalently, if $f^{-1}(C)$ is closed for any closed set $C$ of $(Y, \Psi)$.

\section{Irreducible subset systems and topological Rudin's lemma}
In order to provide a uniform approach to $d$-spaces, sober spaces and well-filtered spaces and develop a general framework for dealing with all these spaces, inspired by the work of Wright, Wagner and Thatcher \cite{Wright78} on inductive posets and inductive closures, we introduce the following two concepts.

\begin{definition}\label{subset system} A covariant functor ${\rm H} : \mathbf{Top}_0 \longrightarrow \mathbf{Set}$ is called a \emph{subset system} on $\mathbf{Top}_0$ provided that the following two conditions are satisfied:
\begin{enumerate}[\rm (1)]
\item $\mathcal S(X)\subseteq {\rm H}(X)\subseteq 2^X$ (the set of all subsets of $X$) for each $X\in$ \emph{ob}($\mathbf{Top}_0$).
\item For any continuous mapping $f : X \longrightarrow Y$ in $\mathbf{Top}_0$, ${\rm H}(f)(A)=f(A)\in {\rm H}(Y)$ for all $A\in H(X)$.
\end{enumerate}
\end{definition}

For a subset system ${\rm H} : \mathbf{Top}_0 \longrightarrow \mathbf{Set}$ and a $T_0$ space $X$, let ${\rm H}_c(X)=\{\overline{A} : A\in {\rm H}(X)\}$. We call $A\subseteq X$ an \emph{H}-\emph{set} if $A\in {\rm H}(X)$. The sets in ${\rm H}_c(X)$ are called \emph{closed H}-\emph{sets}.

\begin{definition}\label{R-subset system} A subset system ${\rm H} : \mathbf{Top}_0 \longrightarrow \mathbf{Set}$ is called an \emph{irreducible subset system}, or an \emph{R-subset system} for short, if ${\rm H}(X)\subseteq \ir (X)$ for all $X\in$ \emph{ob}($\mathbf{Top}_0$). The set of all R-subset systems is denoted by $\mathcal H$. Define a partial order $\leq$ on $\mathcal H$ by ${\rm H}_1\leq {\rm H}_2$ if{}f ${\rm H}_1(X)\subseteq {\rm H}_2(X)$ for all $X\in$ \emph{ob}($\mathbf{Top}_0)$.
\end{definition}

In what follows the capital letter {\rm H} always stands for an R-subset system ${\rm H} : \mathbf{Top}_0 \longrightarrow \mathbf{Set}$. Here are some important
examples of $R$-subset systems:
\begin{enumerate}[\rm (1)]
    \item $\mathcal S$ ($\mathcal S(X)$ is the set of all single point subsets of $X$).
    \item $\mathcal C$ ($\mathcal C(X)$ is the set of all chains of $X$).
    \item $\mathcal C^\omega$ ($\mathcal C^\omega(X)$ is the set of all countable chains of $X$).
    \item $\mathcal D$ ($\mathcal D(X)$ is the set of all directed subsets of $X$).
    \item $\mathcal D^\omega$ ($\mathcal D^\omega(X)$ is the set of all countable directed subsets of $X$).
    \item $\mathcal R$ ($\mathcal R(X)$ is the set of all irreducible subsets of $X$).
    \item $\mathcal R^\omega$ ($\mathcal R^\omega(X)$ is the set of all countable irreducible subsets of $X$).
\end{enumerate}

\begin{remark}\label{R-SS relatons}  $\mathcal S\leq  \mathcal C^\omega\leq\mathcal C\leq \mathcal D\leq \mathcal R$ and $\mathcal C^\omega\leq \mathcal D^\omega \leq \mathcal R^\omega$.
\end{remark}

By Lemma \ref{Ps functor}, we get the following result.

\begin{proposition}\label{subspace H-set} Let $X, Y, Z$ be $T_0$ spaces and $f : X \longrightarrow Y$ a continuous mapping.
\begin{enumerate}[\rm (1)]
\item If $Z$ is a nonempty subspace of $X$, then $H(Z)\subseteq H(X)$.
\item For any $\mathcal K \in H(P_S(X))$, $\{\ua f(K) : K\in \mathcal K \}\in H(P_S(Y))$.
\end{enumerate}
\end{proposition}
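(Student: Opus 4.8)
The plan is to observe that both parts are immediate consequences of the functoriality of $\mathrm{H}$ together with condition (2) of Definition~\ref{subset system}, which states that $\mathrm{H}$ acts on morphisms by direct image: for any continuous $f : X \longrightarrow Y$ in $\mathbf{Top}_0$ one has $\mathrm{H}(f)(A) = f(A) \in \mathrm{H}(Y)$ whenever $A \in \mathrm{H}(X)$. The only creative step is to select, in each part, the appropriate continuous map to feed into this condition; everything else is a one-line unwinding.

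For part (1), I would first note that the inclusion $i : Z \longrightarrow X$ is continuous, since $Z$ carries the subspace topology, and both $Z$ and $X$ lie in $\mathbf{Top}_0$, so $i$ is a morphism there. Then for any $A \in \mathrm{H}(Z)$, condition (2) of Definition~\ref{subset system} gives $\mathrm{H}(i)(A) = i(A) \in \mathrm{H}(X)$. Because $i$ is the inclusion, $i(A) = A$ as subsets of $X$, so $A \in \mathrm{H}(X)$. As $A$ was arbitrary, this yields $\mathrm{H}(Z) \subseteq \mathrm{H}(X)$.

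For part (2), I would invoke Lemma~\ref{Ps functor}, which guarantees that $P_S(f) : P_S(X) \longrightarrow P_S(Y)$ is a continuous map (hence a morphism in $\mathbf{Top}_0$) with $P_S(f)(K) = \ua f(K)$ for each $K \in \mk(X)$. Applying condition (2) of Definition~\ref{subset system} to this morphism and the $\mathrm{H}$-set $\mathcal K \in \mathrm{H}(P_S(X))$ gives $\mathrm{H}(P_S(f))(\mathcal K) = P_S(f)(\mathcal K) \in \mathrm{H}(P_S(Y))$. Unwinding the direct image, $P_S(f)(\mathcal K) = \{P_S(f)(K) : K \in \mathcal K\} = \{\ua f(K) : K \in \mathcal K\}$, which is therefore an element of $\mathrm{H}(P_S(Y))$, as required.

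I expect essentially no genuine obstacle here: the entire content has been pre-packaged into the functor axioms for an R-subset system, so the argument reduces to choosing the right morphism and unwinding. The only points requiring a moment's attention are the identification $i(A) = A$ for the inclusion map in part (1) and the explicit description of $P_S(f)$ on points supplied by Lemma~\ref{Ps functor} in part (2); once these are recorded, both membership claims follow at once.
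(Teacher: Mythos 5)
Your proof is correct and follows exactly the route the paper intends: the paper gives no written proof, merely remarking ``By Lemma \ref{Ps functor}, we get the following result,'' and your argument is precisely the unwinding of that remark — apply condition (2) of Definition \ref{subset system} to the continuous inclusion $i : Z \longrightarrow X$ for part (1) and to the continuous map $P_S(f)$ supplied by Lemma \ref{Ps functor} for part (2).
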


Rudin's Lemma plays a crucial role in domain theory and is a useful tool in studying the various aspects of well-filtered spaces and sober spaces (see [6-11, 23, 24, 30-34]). In \cite{Rudin}, Rudin proved her lemma by transfinite methods, using the Axiom of Choice.
In \cite{Klause-Heckmann}, Heckman and Keimel presented the following topological variant of Rudin's Lemma.

\begin{lemma}\label{t Rudin} \emph{(Topological Rudin's Lemma)} \emph{(\cite{Klause-Heckmann})} Let $X$ be a topological space and $\mathcal{A}$ an
irreducible subset of the Smyth power space $P_S(X)$. Then every closed set $C {\subseteq} X$  that
meets all members of $\mathcal{A}$ contains a minimal irreducible closed subset $A$ that still meets all
members of $\mathcal{A}$.
\end{lemma}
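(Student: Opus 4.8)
The plan is to produce the minimal set by a Zorn's Lemma argument on the family of candidate closed sets, and then to verify irreducibility by playing the minimality off against the irreducibility of $\mathcal{A}$ in $P_S(X)$. Throughout I would write $\Box U=\{K\in\mk(X):K\subseteq U\}$ for the basic open sets of $P_S(X)$, and I would use the standard open-set reformulation of irreducibility: a nonempty set is irreducible iff any two open sets each meeting it have a common point meeting it.

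First I would set
\[
\mathcal{C}=\{D\in\Gamma(X):D\subseteq C \text{ and } D\cap K\neq\emptyset \text{ for all } K\in\mathcal{A}\},
\]
ordered by reverse inclusion. Since $\mathcal{A}\neq\emptyset$ (being irreducible) and $C$ meets every member of $\mathcal{A}$, we have $C\in\mathcal{C}$, so the family is nonempty. To apply Zorn's Lemma I must check that every chain $\{D_i:i\in I\}\subseteq\mathcal{C}$ has an upper bound, i.e. that $D=\bigcap_{i\in I}D_i$ again lies in $\mathcal{C}$. Clearly $D$ is closed and $D\subseteq C$; the real point is to show $D\cap K\neq\emptyset$ for each $K\in\mathcal{A}$. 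Fixing such a $K$, the family $\{D_i\cap K:i\in I\}$ is a chain of nonempty closed subsets of the compact space $K$, so it has the finite intersection property, and compactness of $K$ forces $\bigcap_{i\in I}(D_i\cap K)=D\cap K\neq\emptyset$. Thus $D\in\mathcal{C}$, and Zorn's Lemma yields a minimal member $A$ of $\mathcal{C}$ (note $A\neq\emptyset$, since it meets members of $\mathcal{A}$).

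It remains to show $A$ is irreducible, which I would argue by contradiction. If $A$ were reducible, then $A=B_1\cup B_2$ for some closed sets $B_1,B_2$ with $B_1\subsetneq A$ and $B_2\subsetneq A$. As proper closed subsets of the minimal set $A$, neither $B_1$ nor $B_2$ can meet every member of $\mathcal{A}$, so there are $K_1,K_2\in\mathcal{A}$ with $K_1\cap B_1=\emptyset$ and $K_2\cap B_2=\emptyset$, i.e. $K_1\subseteq X\setminus B_1$ and $K_2\subseteq X\setminus B_2$. Writing $U_j=X\setminus B_j$, this says $\mathcal{A}\cap\Box U_1\neq\emptyset\neq\mathcal{A}\cap\Box U_2$. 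Here the irreducibility of $\mathcal{A}$ enters: since $\Box U_1\cap\Box U_2=\Box(U_1\cap U_2)$, there is $K_3\in\mathcal{A}$ with $K_3\subseteq U_1\cap U_2=X\setminus(B_1\cup B_2)=X\setminus A$, so $K_3\cap A=\emptyset$. This contradicts $A\in\mathcal{C}$, so $A$ must be irreducible.

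I expect the compactness step in the Zorn verification to be the crux of the argument: it is precisely the hypothesis that the members of $\mathcal{A}$ are compact (built into $P_S(X)=P_S(\mk(X))$) that guarantees a decreasing intersection of closed sets still meets each $K$, and without it a minimal set need not exist at all. The irreducibility half is then a clean bookkeeping argument that converts the two ``escaping'' compact sets $K_1,K_2$ into a single $K_3$ via the irreducibility of $\mathcal{A}$ together with the identity $\Box U_1\cap\Box U_2=\Box(U_1\cap U_2)$.
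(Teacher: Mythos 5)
Your proof is correct and is essentially the standard argument of Heckmann--Keimel, which this paper simply cites rather than reproving: Zorn's Lemma on the reverse-inclusion-ordered family of closed subsets of $C$ meeting all members of $\mathcal{A}$, with compactness of each $K\in\mathcal{A}$ securing chain bounds, and irreducibility of $\mathcal{A}$ together with $\Box U_1\cap\Box U_2=\Box(U_1\cap U_2)$ yielding irreducibility of the minimal set. No gaps.
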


\begin{corollary}\label{H-rudin}  Let $H : \mathbf{Top}_0 \longrightarrow \mathbf{Set}$ be an R-subset system, $X$ a $T_0$ space and $\mathcal{A}\in {\rm H}(P_S(X))$. Then every closed set $C {\subseteq} X$  that
meets all members of $\mathcal{A}$ contains a minimal irreducible closed subset $A$ that still meets all
members of $\mathcal{A}$.
\end{corollary}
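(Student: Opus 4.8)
The plan is to recognize that this corollary is an immediate specialization of the Topological Rudin's Lemma (Lemma \ref{t Rudin}), and that the only point requiring verification is that the hypothesis $\mathcal{A}\in {\rm H}(P_S(X))$ already forces $\mathcal{A}$ to be an irreducible subset of the Smyth power space $P_S(X)$.

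First I would invoke the defining property of an R-subset system. By Definition \ref{R-subset system}, every R-subset system ${\rm H}$ satisfies ${\rm H}(Y)\subseteq \ir(Y)$ for all $T_0$ spaces $Y$. Applying this with $Y=P_S(X)$, which is itself a $T_0$ space by Lemma \ref{Ps functor}, yields ${\rm H}(P_S(X))\subseteq \ir(P_S(X))$. Consequently the given $\mathcal{A}\in {\rm H}(P_S(X))$ is an irreducible subset of $P_S(X)$, i.e.\ $\mathcal{A}\in \ir(P_S(X))$.

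Having secured $\mathcal{A}\in \ir(P_S(X))$, I would then apply Lemma \ref{t Rudin} verbatim to the topological space $X$ and the irreducible subset $\mathcal{A}$: every closed set $C\subseteq X$ that meets all members of $\mathcal{A}$ contains a minimal irreducible closed subset $A$ that still meets all members of $\mathcal{A}$. This is precisely the asserted conclusion, so the argument terminates here.

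There is essentially no obstacle in this proof: all of the genuine content is supplied by the Topological Rudin's Lemma, while the R-subset-system machinery contributes only the irreducibility of $\mathcal{A}$ via the containment ${\rm H}(P_S(X))\subseteq \ir(P_S(X))$. In this sense the corollary is merely the reformulation of Rudin's Lemma in the language of R-subset systems, trading the ad hoc hypothesis ``$\mathcal{A}$ irreducible'' for the structural hypothesis ``$\mathcal{A}$ is an H-set of the Smyth power space.''
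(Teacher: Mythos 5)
Your proof is correct and matches the paper's (implicit) argument: the paper states this as an immediate corollary of the Topological Rudin's Lemma precisely because ${\rm H}(P_S(X))\subseteq \ir(P_S(X))$ by the definition of an R-subset system. Nothing further is needed.
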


Applying Lemma \ref{t Rudin} to the Alexandroff topology on a poset $P$, one obtains  the original Rudin's Lemma.

\begin{corollary}\label{rudin} \emph{(Rudin's Lemma)} Let $P$ be a poset, $C$ a nonempty lower subset of $P$ and $\mathcal F\in \mathbf{Fin}~P$ a filtered family with $\mathcal F\subseteq\Diamond C$. Then there exists a directed subset $D$ of $C$ such that $\mathcal F\subseteq \Diamond\da D$.
\end{corollary}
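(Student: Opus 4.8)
The plan is to deduce this from the Topological Rudin's Lemma (Lemma \ref{t Rudin}) applied to the \emph{upper Alexandroff space} $X=(P,\alpha(P))$, whose open sets are exactly the upper sets of $P$ and whose closed sets are exactly the lower sets. First I would record the two dictionary facts that make the translation possible. Each $\ua F$ with $F\in P^{(<\omega)}$ is compact saturated in $X$: it is an upper set, and from any open (hence upper) cover one extracts a finite subcover by picking, for each of the finitely many points of $F$, one member of the cover containing it. Hence $\mathbf{Fin}~P\subseteq\mk(X)$, so $\mathcal F$ is a family of compact saturated subsets of $X$.

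Next I would show that the filtered family $\mathcal F$ is an irreducible subset of the Smyth power space, i.e. $\mathcal F\in\ir(P_S(X))$. For basic open sets $\Box U,\Box V$ of $P_S(X)$, if $\mathcal F\cap\Box U\neq\emptyset\neq\mathcal F\cap\Box V$ then there are $\ua F_1\subseteq U$ and $\ua F_2\subseteq V$ in $\mathcal F$, and filteredness yields $\ua F_3\in\mathcal F$ with $\ua F_3\subseteq\ua F_1\cap\ua F_2\subseteq U\cap V$, so $\ua F_3\in\mathcal F\cap\Box(U\cap V)$; the general case follows because every open set of $P_S(X)$ is a union of such $\Box U$, so irreducibility with respect to the basic opens suffices.

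With these in hand, observe that $C$ is a nonempty lower set, hence a nonempty closed subset of $X$, and that the hypothesis $\mathcal F\subseteq\Diamond C$ says exactly that $C$ meets every member of $\mathcal F$. Lemma \ref{t Rudin} then produces a minimal irreducible closed subset $A$ of $X$ with $A\subseteq C$ that still meets every member of $\mathcal F$. It remains to see that such an $A$ is directed: being closed in $X$ it is a lower set, and for $a,b\in A$ the open sets $\ua a$ and $\ua b$ both meet $A$, so irreducibility gives $c\in A\cap\ua a\cap\ua b$, an upper bound of $a$ and $b$ lying in $A$; since $A$ is nonempty this makes $A$ directed. Taking $D=A$ we obtain a directed subset $D\subseteq C$ with $\da D=A$, and the fact that $A$ meets every member of $\mathcal F$ becomes precisely $\mathcal F\subseteq\Diamond\da D$.

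The only real content is the triple of translations in the first three paragraphs: \emph{finitely generated upper set} $\leftrightarrow$ compact saturated set of $X$, \emph{filtered family} $\leftrightarrow$ irreducible subset of $P_S(X)$, and \emph{irreducible closed subset of $X$} $\leftrightarrow$ directed lower set. Once these are established the statement is an immediate specialization of Lemma \ref{t Rudin}, so I do not anticipate any genuine obstacle; the only point worth watching is the degenerate case of an empty $\mathcal F$, which is covered by the convention that irreducible sets are nonempty, so the $A$ (and hence $D$) supplied by the lemma is automatically nonempty.
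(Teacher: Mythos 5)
Your proposal is correct and follows exactly the route the paper indicates: it applies the Topological Rudin Lemma to the Alexandroff space $(P,\alpha(P))$, and your three translation steps (finitely generated upper sets are compact saturated, filtered families are irreducible in $P_S(X)$, minimal irreducible closed sets are directed lower sets) are precisely the details the paper leaves implicit. No issues.
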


For a $T_0$ space $X$ and $\mathcal{K}\subseteq \mathord{\mathsf{K}}(X)$, let $M(\mathcal{K})=\{A\in \Gamma(X) : K\bigcap A\neq\emptyset \mbox{~for all~} K\in \mathcal{K}\}$ (that is, $\mathcal A\subseteq \Diamond A$) and $m(\mathcal{K})=\{A\in \Gamma(X) : A \mbox{~is a minimal menber of~} M(\mathcal{K})\}$.

By the proof of \cite[Lemma 3.1]{Klause-Heckmann}, we have the following result.

\begin{lemma}\label{t ruding}  Let $X$ be a $T_0$ space and $\mathcal{K}\subseteq \mathord{\mathsf{K}}(X)$. If $C\in M(\mathcal{K})$, then there is a closed subset $A$ of $C$ such that $A\in m(\mathcal{K})$.
\end{lemma}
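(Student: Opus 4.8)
The plan is to obtain $A$ by a straightforward Zorn's Lemma argument applied to the closed subsets of $C$ that still meet every member of $\mathcal K$. This is precisely the order-theoretic core of the proof of the Topological Rudin's Lemma (Lemma \ref{t Rudin}), stripped of the irreducibility bookkeeping, since here we only need a minimal element of $M(\mathcal K)$ and make no assumption on $\mathcal K$.

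First I would introduce the poset $\mathcal M_C=\{B\in\Gamma(X): B\subseteq C \text{ and } B\in M(\mathcal K)\}$, ordered by set inclusion. It is nonempty, since $C\in\mathcal M_C$ by hypothesis. To produce a minimal element via Zorn's Lemma, I need every chain in $\mathcal M_C$ to possess a lower bound in $\mathcal M_C$. For a nonempty chain $\mathcal B\subseteq\mathcal M_C$, the obvious candidate is $A_0=\bigcap\mathcal B$; it is closed as an intersection of closed sets and is contained in $C$, so the only nontrivial requirement is that $A_0\in M(\mathcal K)$, that is, $A_0\cap K\neq\emptyset$ for every $K\in\mathcal K$.

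This is the crux of the argument and the sole place where compactness is used. Fix $K\in\mathcal K\subseteq\mk(X)$ and suppose toward a contradiction that $A_0\cap K=\emptyset$. Then $K\subseteq X\setminus A_0=\bigcup_{B\in\mathcal B}(X\setminus B)$. Since $\mathcal B$ is a chain under inclusion, the open sets $\{X\setminus B: B\in\mathcal B\}$ form a chain, hence a directed family of opens covering the compact set $K$; by compactness $K$ is covered by finitely many of them, and as they form a chain the largest already contains $K$, so $K\subseteq X\setminus B_0$ for some $B_0\in\mathcal B$. This gives $B_0\cap K=\emptyset$, contradicting $B_0\in M(\mathcal K)$. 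Therefore $A_0\cap K\neq\emptyset$ for all $K\in\mathcal K$, and $A_0\in\mathcal M_C$ is the desired lower bound.

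By Zorn's Lemma, $\mathcal M_C$ then has a minimal element $A$, which is a closed subset of $C$ lying in $M(\mathcal K)$. It remains to upgrade minimality in $\mathcal M_C$ to minimality in all of $M(\mathcal K)$: if $A'\in M(\mathcal K)$ with $A'\subseteq A$, then $A'\subseteq A\subseteq C$ forces $A'\in\mathcal M_C$, whence $A'=A$ by minimality of $A$ in $\mathcal M_C$. Thus $A\in m(\mathcal K)$ and $A\subseteq C$, as required. The main (and essentially the only) obstacle is the compactness step establishing that descending chains of closed members of $M(\mathcal K)$ intersect to a member of $M(\mathcal K)$; everything else is routine.
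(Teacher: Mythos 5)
Your proof is correct and is essentially the argument the paper intends: the paper simply cites "the proof of [Heckmann--Keimel, Lemma 3.1]," whose order-theoretic core is exactly your Zorn's Lemma argument on the closed subsets of $C$ in $M(\mathcal K)$, with compactness of each $K\in\mathcal K$ guaranteeing that descending chains have intersections still in $M(\mathcal K)$ (the irreducibility part of that proof being irrelevant here, as you note). The final step upgrading minimality within $\mathcal M_C$ to minimality in $M(\mathcal K)$ is also handled correctly.
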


\begin{definition}\label{R-SS property M} A R-subset system ${\rm H} : \mathbf{Top}_0 \longrightarrow \mathbf{Set}$ is said to satisfy  \emph{property M} if for any $T_0$ space $X$, $\mathcal K\in {\rm H}(P_S(X))$ and $A\in M(\mathcal{K})$, we have $\{\uparrow(K\bigcap A) : K\in \mathcal K\}\in {\rm H}(P_S(X))$.
\end{definition}

\begin{remark}\label{C D property M} The R-subset systems $\mathcal S$, $\mathcal C^\omega$, $\mathcal C$, $\mathcal D^\omega$ and $\mathcal D$ satisfy property M.
\end{remark}

\begin{lemma}\label{R has property M}  The R-subset system $\mathcal R$ satisfies property M.
\end{lemma}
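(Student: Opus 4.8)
The plan is to show directly that the set $\mathcal B=\{\ua(K\cap A):K\in\mathcal K\}$ is an irreducible subset of $P_S(X)$, since for $\mathrm H=\mathcal R$ this is exactly the conclusion required by \emph{property M}. First I would check that $\mathcal B$ genuinely lives in $\mk(X)$, so that it is a legitimate subset of $P_S(X)$: for each $K\in\mathcal K$ the set $K\cap A$ is a closed subset of the compact set $K$, hence compact, and it is nonempty because $A\in M(\mathcal K)$; therefore $\ua(K\cap A)\in\mk(X)$. Since $\mathcal K\neq\emptyset$ (irreducible sets are nonempty by definition), $\mathcal B\neq\emptyset$ as well.

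The key observation, which does all the work, is a translation between membership conditions in $\mathcal B$ and in $\mathcal K$. For any $K\in\mk(X)$ and $U\in\mathcal O(X)$, using that open sets are upper sets and that $A$ is closed (so $X\setminus A\in\mathcal O(X)$), one has
\[
\ua(K\cap A)\subseteq U \iff K\cap A\subseteq U \iff K\subseteq U\cup(X\setminus A).
\]
In the language of the basic open sets $\Box U$ of $P_S(X)$ this says $\ua(K\cap A)\in\Box U$ iff $K\in\Box(U\cup(X\setminus A))$, and hence, for every $U\in\mathcal O(X)$,
\[
\mathcal B\cap\Box U\neq\emptyset \iff \mathcal K\cap\Box\big(U\cup(X\setminus A)\big)\neq\emptyset.
\]

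Now I would verify irreducibility on basic open sets, which suffices because the sets $\Box U$ form a base of $P_S(X)$. Suppose $\mathcal B\cap\Box U\neq\emptyset$ and $\mathcal B\cap\Box V\neq\emptyset$. By the displayed equivalence, $\mathcal K$ meets both $\Box(U\cup(X\setminus A))$ and $\Box(V\cup(X\setminus A))$. Since $\mathcal K\in\ir(P_S(X))$ and $\Box W_1\cap\Box W_2=\Box(W_1\cap W_2)$, the set $\mathcal K$ meets
\[
\Box\big(U\cup(X\setminus A)\big)\cap\Box\big(V\cup(X\setminus A)\big)=\Box\big((U\cap V)\cup(X\setminus A)\big).
\]
Applying the equivalence once more, with $U\cap V$ in place of $U$, yields $\mathcal B\cap\Box(U\cap V)=\mathcal B\cap\Box U\cap\Box V\neq\emptyset$. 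Thus $\mathcal B\in\ir(P_S(X))=\mathcal R(P_S(X))$, establishing \emph{property M} for $\mathcal R$.

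I do not expect any serious obstacle here; the only point requiring genuine care is the identity $\ua(K\cap A)\subseteq U\iff K\subseteq U\cup(X\setminus A)$ together with the remark that $U\cup(X\setminus A)$ is open precisely because $A$ is closed, which is exactly what lets me transfer irreducibility from $\mathcal K$ to $\mathcal B$. As an alternative route, one could package the assignment $K\mapsto\ua(K\cap A)$ as a continuous map from the subspace $\{K\in\mk(X):K\cap A\neq\emptyset\}$ of $P_S(X)$ into $P_S(X)$ and then invoke Lemma \ref{irrimage} and Lemma \ref{irrsubspace}; but the direct computation above avoids introducing the subspace and makes the role of the openness of $X\setminus A$ transparent.
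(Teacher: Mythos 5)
Your proof is correct and follows essentially the same route as the paper's: both rest on the translation $\ua(K\cap A)\in\Box U\iff K\in\Box\bigl(U\cup(X\setminus A)\bigr)$ and then transfer irreducibility of $\mathcal K$ across this equivalence via $\Box W_1\cap\Box W_2=\Box(W_1\cap W_2)$. Your additional check that each $\ua(K\cap A)$ actually lies in $\mk(X)$ is a welcome bit of care that the paper leaves implicit.
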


\begin{proof} Suppose that $X$ is a $T_0$ space, $\mathcal K\in \ir (P_S(X))$ and $A\in M(\mathcal{K})$. For any $U, V\in \mathcal O(X)$, if $\{\uparrow(K\bigcap A) : K\in \mathcal K\}\bigcap \Box U \neq \emptyset \neq \{\uparrow(K\bigcap A) : K\in \mathcal K\}\bigcap \Box V$, then there exist $K_1, K_2\in \mathcal K$ such that $\uparrow (K_1\bigcap A)\subseteq U$ and $\uparrow (K_1\bigcap A)\subseteq V$ or, equivalently, $K_1\in \Box ((X\setminus A)\bigcup U)$ and $K_1\in \Box ((X\setminus A)\bigcup V)$. By $\mathcal K\in \ir (P_S(X))$, $\mathcal K\bigcap  \Box ((X\setminus A)\bigcup (U\bigcap V))=\mathcal K\bigcap  \Box ((X\setminus A)\bigcup V)\bigcap \Box ((X\setminus A)\bigcup V)\neq \emptyset$, and hence there is a $K_3\in \mathcal K$ such that $K_3\in \Box ((X\setminus A)\bigcup (U\bigcap V))$. Therefore, $\uparrow (K_3\bigcap A)\subseteq U\bigcap V$, i.e., $\uparrow (K_3\bigcap A)\in \Box U\bigcap \Box V$, and consequently, $\{\uparrow(K\bigcap A) : K\in \mathcal K\}\bigcap \Box U\bigcap \Box V\neq \emptyset$. Thus $\{\uparrow(K\bigcap A) : K\in \mathcal K\}\in \ir (P_S(X))$.
 \end{proof}

\begin{lemma}\label{R property M charact} For an R-subset system ${\rm H} : \mathbf{Top}_0 \longrightarrow \mathbf{Set}$, the following two conditions are equivalent:
\begin{enumerate}[\rm (1)]
\item H satisfies property M.
\item For any continuous mapping $f:X\longrightarrow Y$ in $\mathbf{Top}_0$, $\mathcal K\in H(P_S(X))$ and $A\in M(\mathcal{K})$, $\{\uparrow f(K\bigcap A) : K\in \mathcal K\}\in H(P_S(Y))$.
    \end{enumerate}
\end{lemma}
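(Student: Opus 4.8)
The plan is to establish the two implications separately; the reverse direction is immediate, and only the forward direction requires any work. For (2) $\Rightarrow$ (1), I would specialize condition (2) to the identity map $\mathrm{id}_X : X\longrightarrow X$ (taking $Y=X$). Since $\mathrm{id}_X(K\cap A)=K\cap A$, condition (2) reads $\{\uparrow(K\cap A) : K\in\mathcal K\}\in H(P_S(X))$, which is exactly the defining requirement of property M in Definition \ref{R-SS property M}. Hence (2) $\Rightarrow$ (1) holds trivially.

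For (1) $\Rightarrow$ (2), the strategy is to factor the passage to $\{\uparrow f(K\cap A):K\in\mathcal K\}$ as \emph{first apply property M, then push forward along the functor $P_S$}. First I would observe that all the sets involved are genuine compact saturated sets: for $A\in M(\mathcal K)$ and $K\in\mathcal K$, the set $K\cap A$ is a closed subset of the compact set $K$, hence compact, so $\uparrow(K\cap A)\in\mk(X)$, and being a continuous image, $f(K\cap A)$ is compact, so $\uparrow f(K\cap A)\in\mk(Y)$. Applying property M to $X$, $\mathcal K$ and $A$ gives $\mathcal K'=\{\uparrow(K\cap A):K\in\mathcal K\}\in H(P_S(X))$. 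Invoking Proposition \ref{subspace H-set}(2) for the continuous mapping $f$ and the H-set $\mathcal K'$ then yields $\{\uparrow f(K') : K'\in\mathcal K'\}\in H(P_S(Y))$.

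It then remains only to identify this pushed-forward set with the desired one, i.e., to check $\uparrow f(\uparrow(K\cap A))=\uparrow f(K\cap A)$ for each $K\in\mathcal K$. The inclusion $\supseteq$ is clear from $K\cap A\subseteq\uparrow(K\cap A)$. For $\subseteq$, the decisive point is that every continuous map is order-preserving for the specialization order: if $z\geq w$ with $w\in K\cap A$, then $f(z)\geq f(w)\in f(K\cap A)$, so $\uparrow f(\uparrow(K\cap A))\subseteq\uparrow f(K\cap A)$. With this equality in hand, $\{\uparrow f(K') : K'\in\mathcal K'\}=\{\uparrow f(K\cap A):K\in\mathcal K\}$, completing the argument. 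I expect no real obstacle here: the whole proof rests on the functoriality of H (Definition \ref{subset system}(2)) combined with the description $P_S(f)(K')=\uparrow f(K')$ from Lemma \ref{Ps functor}, and the only mildly delicate verification is the monotonicity identity, which is routine.
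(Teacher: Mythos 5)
Your proposal is correct and follows essentially the same route as the paper: the reverse direction by specializing to $\mathrm{id}_X$, and the forward direction by applying property M and then pushing forward along $P_S(f)$ (your appeal to Proposition \ref{subspace H-set}(2) is just the packaged form of the paper's direct use of Lemma \ref{Ps functor} together with the functoriality of ${\rm H}$). The only difference is that you spell out the identity $\uparrow f(\uparrow(K\cap A))=\uparrow f(K\cap A)$, which the paper uses without comment; your verification of it via monotonicity is correct.
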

\begin{proof}  (1) $\Rightarrow$ (2): Since ${\rm H}$ satisfies property M, $\{\uparrow(K\bigcap A) : K\in \mathcal K\}\in {\rm H}(P_S(X))$, and hence by Lemma \ref{Ps functor}, $\{\ua f(K\bigcap A) : K\in \mathcal K\}=\{\ua f(\uparrow(K\bigcap A)) : K\in \mathcal K\}={\rm H}(f)(\{\uparrow(K\bigcap A) : K\in \mathcal K\})\in {\rm H}(P_S(Y))$.

(2) $\Rightarrow$ (1): Applying condition (2) to the identity $id_X : X \longrightarrow X$.

\end{proof}

\section{H-sober spaces}

\begin{definition}\label{def H-sober} Let ${\rm H} : \mathbf{Top}_0 \longrightarrow \mathbf{Set}$ be an $R$-subset system. A $T_0$ space $X$ is called \emph{H}-\emph{sober} if for any $A\in{\rm H}(X)$, there is a (unique) point $x\in X$ such that $\overline{A}=\overline{\{x\}}$ or, equivalently, if ${\rm H}_c(X)=\mathcal S_c(X)$. The category of all {\rm H}-sober spaces with continuous mappings is denoted by $\mathbf{H}$-$\mathbf{Sob}$.
\end{definition}

The $\mathcal D$-sober spaces and the $\mathcal R$-sober spaces are exactly $d$-spaces and sober spaces respectively. Therefore, $\mathcal D$-$\mathbf{Sob}=\mathbf{Top}_d$ and $\mathcal R$-$\mathbf{Sob}=\mathbf{Sob}$.

\begin{definition}\label{H-complete} Let ${\rm H} : \mathbf{Top}_0 \longrightarrow \mathbf{Set}$. A $T_0$ space $X$ is called \emph{H}-\emph{complete}, if for any $A\in {\rm H}(X)$, $\bigvee A$ exists in $X$, or equivalently, for any $B\in {\rm H}_c(X)$, $\bigvee B$ exists in $X$ (see Remark \ref{C up=cl C up}). The $\mathcal R$-complete spaces are also called \emph{irreducible complete} (cf. \cite{xu-shen-xi-zhao1}).
\end{definition}

\begin{definition}\label{H-bound} Let ${\rm H} : \mathbf{Top}_0 \longrightarrow \mathbf{Set}$. A $T_0$ space $X$ is called \emph{H}-\emph{bounded}, if for any $A\in {\rm H}(X)$, $A^{\uparrow}\neq\emptyset$, that is, $A$ has an upper bound in $X$.
\end{definition}

\begin{remark}\label{H-S H-C H-B} We have the following implications (which can not be reversed):
\begin{center}
{\rm H}-sobriety $\Rightarrow$ {\rm H}-completeness $\Rightarrow$ {\rm H}-boundedness.
\end{center}

In fact, if $X$ is an {\rm H}-sober space and $A\in {\rm H}(X)$, then there is an $x\in X$ such that $\overline{A}=\overline{\{x\}}$, and hence $\bigvee A=\bigvee\overline{A}=\bigvee \overline{\{x\}}=x$. So {\rm H}-sobriety $\Rightarrow$ {\rm H}-completeness. Clearly, {\rm H}-completeness $\Rightarrow$ {\rm H}-boundedness.
\end{remark}

Let $L$ be the complete lattice constructed by Isbell in \cite{isbell}. Then $\Sigma L$ is $\mathcal R$-complete, but is not $\mathcal R$-sober (i.e., is non-sober). For a poset $P$ with a largest element $\top$, any \emph{order compatible} topology $\tau$ on $P$ (that is, $\leq_{\tau}$ agrees with the original order on $P$) is ${\rm H}$-bounded, but $(P, \tau)$ may not be ${\rm H}$-complete. For example, let $P$ be any non-dcpo with a largest element $\top$, then $\Sigma~\!\! P$ is ${\rm H}$-bounded for any R-subset system ${\rm H}$, but $\Sigma~\!\! P$ is not $\mathcal D$-complete.

By Corollary \ref{dcpo=chain comp} and Lemma \ref{D countable chain}, we get the following result.

\begin{proposition}\label{D complete or bounded=Cs} Let $P$
be a poset. Then
\begin{enumerate}[\rm (1)]
\item $P$ is $\mathcal D$-complete if{}f $P$ is $\mathcal C$-complete.
\item $P$ is $\mathcal D^\omega$-bounded if{}f $P$ is $\mathcal C^\omega$-bounded.
\item $P$ is $D^\omega$-complete if{}f $P$ is $\mathcal C^\omega$-complete.
\end{enumerate}
\end{proposition}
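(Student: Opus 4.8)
The plan is to handle all three equivalences by a single strategy: in each case one implication is free from the inclusions of Remark \ref{R-SS relatons}, and the converse comes from a cofinal-chain argument. First I would dispose of the easy directions. Since every chain is directed, we have $\mathcal C^\omega \le \mathcal D^\omega$ and $\mathcal C \le \mathcal D$, so a countable chain is a countable directed set and an arbitrary chain is a directed set. Consequently $\mathcal D$-completeness implies $\mathcal C$-completeness, $\mathcal D^\omega$-boundedness implies $\mathcal C^\omega$-boundedness, and $\mathcal D^\omega$-completeness implies $\mathcal C^\omega$-completeness, with no further work, since the defining condition is then being demanded of a smaller class of subsets.

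For the converse in (1), note that $\mathcal D$-completeness is by definition the dcpo condition and $\mathcal C$-completeness is chain-completeness; so the statement that $\mathcal C$-completeness implies $\mathcal D$-completeness is literally Markowsky's theorem as recorded in Corollary \ref{dcpo=chain comp}, and (1) follows at once.

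For the converses in (2) and (3) I would invoke Lemma \ref{D countable chain}. Given a countable directed set $D$, that lemma produces a countable chain $C \subseteq D$ with $\da D = \da C$. The key observation is that the down-closure determines the set of upper bounds: for any $x$ we have $x \in D^{\ua}$ iff $D \subseteq \da x$ iff $\da D \subseteq \da x$ iff $\da C \subseteq \da x$ iff $x \in C^{\ua}$, so $D^{\ua} = C^{\ua}$. Hence $D$ has an upper bound precisely when $C$ does, and (as already noted in Lemma \ref{D countable chain}) $\bigvee D$ exists and equals $\bigvee C$ exactly when $\bigvee C$ exists. Thus if $P$ is $\mathcal C^\omega$-bounded then $D^{\ua} = C^{\ua} \neq \emptyset$, giving (2); and if $P$ is $\mathcal C^\omega$-complete then $\bigvee C$ exists, whence $\bigvee D = \bigvee C$ exists, giving (3).

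There is no genuine obstacle here, as everything reduces to the two cited lemmas. The only point deserving care is that passing to the cofinal chain $C$ preserves not merely the existence of a supremum but also mere boundedness; this is exactly the content of the identity $D^{\ua} = C^{\ua}$ extracted from $\da D = \da C$, and it is what allows the same device to settle both (2) and (3).
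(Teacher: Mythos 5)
Your proof is correct and follows essentially the same route as the paper, which derives the proposition directly from Corollary \ref{dcpo=chain comp} (for part (1)) and Lemma \ref{D countable chain} (for parts (2) and (3)). The extra observation that $\da D=\da C$ forces $D^{\ua}=C^{\ua}$ is exactly the point needed to make the boundedness case (2) go through, and it is the same content as Remark \ref{C up=cl C up}.
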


Clearly, $\mathcal D$-boundedness $\Rightarrow$ $\mathcal C$-boundedness. But we do not know whether the converse holds.

\begin{proposition}\label{D-sober=C-sober} For a $T_0$ space $X$, the following conditions are equivalent:
\begin{enumerate}[\rm (1)]
\item $X$ is $\mathcal D$-sober \emph{(}i.e., a $d$-space\emph{)}.
\item $X$ is $\mathcal C$-sober.
\end{enumerate}
\end{proposition}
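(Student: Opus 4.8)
The plan is to treat the two implications asymmetrically: $(1)\Rightarrow(2)$ is immediate, while $(2)\Rightarrow(1)$ is the substantive direction and will be handled by a transfinite induction that reduces directed sets to chains. For $(1)\Rightarrow(2)$, recall from Remark \ref{R-SS relatons} that $\mathcal C\leq\mathcal D$, i.e.\ $\mathcal C(X)\subseteq\mathcal D(X)$ since every chain is directed. Hence the defining condition of $\mathcal D$-sobriety, applied only to chains, is exactly $\mathcal C$-sobriety, and there is nothing more to prove.

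For $(2)\Rightarrow(1)$, suppose $X$ is $\mathcal C$-sober. First I would observe that $\mathcal C$-sobriety forces $X$ to be chain-complete: for a chain $C$ the unique $x_C$ with $\overline C=\overline{\{x_C\}}=\da x_C$ is easily seen to be $\bigvee C$ (it is an upper bound of $C$, and any upper bound $y$ satisfies $C\subseteq\da y$, so $\overline C\subseteq\da y$ and thus $x_C\le y$). Therefore every chain has a supremum, and by Corollary \ref{dcpo=chain comp} the poset $X$ is a dcpo. By Proposition \ref{d-spacecharac1} it then suffices to prove that for every directed $D$ one has $\overline D=\overline{\{\bigvee D\}}$; since $D\subseteq\da\bigvee D$ and closures are lower sets, this reduces to the single assertion $\bigvee D\in\overline D$.

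The core of the argument is a transfinite induction on $|D|$ establishing $\bigvee D\in\overline D$, using Markowsky's decomposition (Lemma \ref{Markowsky}). When $D$ is finite it has a greatest element and the claim is trivial; when $D$ is infinite, write $D=\bigcup_{\alpha<|D|}D_\alpha$ as in Lemma \ref{Markowsky}, where each $D_\alpha$ is directed with $|D_\alpha|<|D|$ and $D_\alpha\subset D_\beta$ for $\alpha<\beta$. By the induction hypothesis $x_\alpha:=\bigvee D_\alpha$ lies in $\overline{D_\alpha}\subseteq\overline D$ for every $\alpha$, and the monotonicity $D_\alpha\subseteq D_\beta$ gives $x_\alpha\le x_\beta$, so $C:=\{x_\alpha:\alpha<|D|\}$ is a chain contained in $\overline D$. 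A routine cofinality check shows $\bigvee C=\bigvee D$, and applying $\mathcal C$-sobriety to the chain $C$ yields $\bigvee C\in\overline C\subseteq\overline D$. Hence $\bigvee D=\bigvee C\in\overline D$, completing the induction; uniqueness of the point is automatic since $X$ is $T_0$.

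I expect the only real obstacle to be this passage from chains to general directed sets, i.e.\ lifting the closure identity $\overline C=\overline{\{\bigvee C\}}$ from chains to directed sets. Markowsky's lemma is precisely the tool that resolves it, by replacing a directed set with an increasing transfinite family of strictly smaller directed pieces whose suprema assemble into a single chain, to which the $\mathcal C$-sobriety hypothesis can be applied directly.
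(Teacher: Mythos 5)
Your proof is correct and follows essentially the same route as the paper: the substantive direction $(2)\Rightarrow(1)$ is handled by transfinite induction on $|D|$ via Markowsky's decomposition (Lemma \ref{Markowsky}), with the suprema $x_\alpha=\bigvee D_\alpha$ of the pieces assembling into a chain to which $\mathcal C$-sobriety is applied. The only difference is presentational: you first extract chain-completeness to get a dcpo and then reduce to showing $\bigvee D\in\overline D$, whereas the paper argues by minimal counterexample and concludes with the closure identity $\overline D=\overline{\bigcup_{\alpha}\overline{D_\alpha}}=\overline{\{x\}}$ directly; the two are interchangeable.
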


\begin{proof} (1) $\Rightarrow$ (2): Trivial.

(2) $\Rightarrow$ (1): We prove it by transfinite induction on the cardinalities of directed subsets of $X$. Suppose the conclusion is false. Let $D$ be a directed subset of $X$ such that: 1) $|D|\leq \gamma$, 2) $\overline{D} \notin \mathcal S_c(X)$, and 3) for all directed sets $E\subset X$ with $|E|<|D|$, $\overline{E} \in \mathcal S_c(X)$. Obviously, $D$ cannot be finite. Let $D=\bigcup_{\alpha<|D|}D_\alpha$, where the $D_\alpha$ are as in Lemma \ref{Markowsky}. For each $\alpha<|D|$, by 3), there is a unique $x_\alpha\in X$ such that $\overline{D_\alpha}=\overline{\{x_\alpha\}}$. For $\alpha<\beta<|D|$, by condition (2) in Lemma \ref{Markowsky}, $\overline{\{x_\alpha\}}=\overline{D_\alpha}\subseteq \overline{D_\beta}=\overline {\{x_\beta\}}$. Therefore, $\{x_\alpha : \alpha<|D|\}\in \mathcal C(X)$. Since $X$ is $\mathcal C$-sober, there is an $x\in X$ such that
$\overline{\{x_\alpha : \alpha<|D|\}}=\overline{\{x\}}$. It follows that
$$\begin{array}{lll}
\overline{D}&=&\overline{\bigcup_{\alpha<|D|}D_\alpha}\\
&=&\overline{\bigcup_{\alpha<|D|}D_\alpha}\\
&=&\overline{\bigcup_{\alpha<|D|}\overline{D_\alpha}}\\
&=&\overline{\bigcup_{\alpha<|D|}\overline{\{x_\alpha\}}}\\
&=&\overline{\bigcup_{\alpha<|D|}\{x_\alpha\}}\\
&=&\overline{\{x\}}.
\end{array}$$
This contradiction proves the implication of (2) $\Rightarrow$ (1).
\end{proof}

Similarly, we have the following result.

\begin{proposition}\label{D omega-sober=C omega-sober} For a $T_0$ space $X$, the following conditions are equivalent:
\begin{enumerate}[\rm (1)]
\item $X$ is $\mathcal D^\omega$-sober.
\item $X$ is $\mathcal C^\omega$-sober.
\end{enumerate}
\end{proposition}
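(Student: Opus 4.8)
The plan is to follow the shape of Proposition~\ref{D-sober=C-sober}, but to exploit that in the countable setting a single cofinal chain is available outright, so that no transfinite induction is needed. The implication (1)~$\Rightarrow$~(2) is immediate: every countable chain is a countable directed set, so $\mathcal C^\omega\leq\mathcal D^\omega$ by Remark~\ref{R-SS relatons}, and $\mathcal D^\omega$-sobriety of $X$ restricts at once to give $\mathcal C^\omega$-sobriety. All of the content is therefore in (2)~$\Rightarrow$~(1).

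For (2)~$\Rightarrow$~(1) I would start from an arbitrary $D\in\mathcal D^\omega(X)$, that is, a countable directed subset of $X$. By Lemma~\ref{D countable chain} there is a countable chain $C\subseteq D$ with $\da D=\da C$. The idea is that this $C$ already determines the closure of $D$, so that the generic point supplied by $\mathcal C^\omega$-sobriety for $C$ will serve for $D$ as well.

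The one step needing a short argument is the passage from the equality of down-sets to an equality of topological closures. Since $C\subseteq D$ we have $\overline{C}\subseteq\overline{D}$; conversely, because the closure $\overline{C}$ is a lower set in the specialization order, $D\subseteq\da D=\da C\subseteq\overline{C}$, and hence $\overline{D}\subseteq\overline{C}$. Thus $\overline{D}=\overline{C}$.

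Finally, $C\in\mathcal C^\omega(X)$, so hypothesis (2) furnishes a (necessarily unique, since $X$ is $T_0$) point $x\in X$ with $\overline{C}=\overline{\{x\}}$; combined with $\overline{D}=\overline{C}$ this yields $\overline{D}=\overline{\{x\}}$, which is exactly what $\mathcal D^\omega$-sobriety of $X$ requires. The main---and only mild---obstacle is the closure step just described, resting on the fact that closures are downward closed for the specialization order; given Lemma~\ref{D countable chain}, everything else is formal, and this is precisely why the countable case sidesteps the inductive machinery used for Proposition~\ref{D-sober=C-sober}.
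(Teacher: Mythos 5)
Your proposal is correct and follows essentially the same route as the paper: both directions rest on Lemma~\ref{D countable chain} to replace the countable directed set $D$ by a cofinal countable chain $C$ with $\da D=\da C$, and then transfer the generic point of $\overline{C}$ to $\overline{D}$. Your explicit justification that $\da D=\da C$ forces $\overline{D}=\overline{C}$ (via closures being lower sets) is exactly the step the paper compresses into the chain of equalities $\overline{D}=\overline{\da D}=\overline{\da C}=\overline{C}$.
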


\begin{proof} (1) $\Rightarrow$ (2): Trivial.

(2) $\Rightarrow$ (1): For any $D\in \mathcal D^\omega (X)$, by Lemma \ref{D countable chain}, there exists a countable chain $C\subseteq D$ such that $\da D=\da C$. By (2), there exists an $x\in X$ such that $\overline{C}=\overline{\{x\}}$, and hence $\overline{D}=\overline{\da D}=\overline{\da C}=\overline{C}=\overline{\{x\}}$. Thus $X$ is $\mathcal D^\omega$-sober.
\end{proof}

\begin{definition}\label{Scott H-open system} Let $X$ and $Y$ be two $T_0$ spaces.
\begin{enumerate}[\rm (1)]
\item A subset $U$ of $X$ is called \emph{Scott H}-\emph{open} if
(i) $U=\mathord{\uparrow}U$, and (ii) for any $A\in {\rm H}(X)$ for
which $\bigvee A$ exists, $\bigvee A\in U$ implies $A\cap
U\neq\emptyset$. The collection of all Scott H-open subsets of $X$ is called the \emph{Scott H-open system} on $X$ and is denoted by $\sigma_H(X)$. The sets in $\{X\setminus U : U\in \sigma_H(X)\}$ are called \emph{Scott H-closed}.
\item A mapping $f : X \longrightarrow Y$ is called \emph{Scott H-continuous} if $f :  (X, \sigma_H(X)) \longrightarrow (Y, \sigma_H(X))$ is continuous.
\end{enumerate}
\end{definition}

Clearly, $\upsilon(X)\subseteq \sigma_H(X)$ and $\sigma_{\mathcal D}(X)$ is the usual Scott topology $\sigma (X)$. For the R-subset system $\mathcal R$, $\sigma_{\mathcal R}(X)$ is only an open system but not a topology in general.

\begin{remark}\label{Scott H-closed set}  $C\subseteq X$ is Scott H-closed if{}f (i) $C=\da C$, and (ii) for any $A\in {\rm H}(X)$ for
which $\bigvee A$ exists, $A\subseteq C$ implies $\bigvee A\in C$.
\end{remark}

\begin{proposition}\label{H-sober Scott H-open system} For a $T_0$ space $X$, the following two conditions are equivalent:
\begin{enumerate}[\rm (1)]
\item $X$ is H-sober.
\item $X$ is H-complete, and $\mathcal O(X)\subseteq \sigma_H(X)$.
\end{enumerate}
\end{proposition}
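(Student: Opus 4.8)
The plan is to prove the two implications separately, exploiting that one half of the nontrivial content of (1)$\Rightarrow$(2) is already free. Indeed, Remark \ref{H-S H-C H-B} tells us that H-sobriety implies H-completeness, so for (1)$\Rightarrow$(2) the only thing left to verify is $\mathcal O(X)\subseteq \sigma_H(X)$. Conversely, for (2)$\Rightarrow$(1) the idea is that H-completeness hands us a canonical candidate point, namely $\bigvee A$ for each $A\in {\rm H}(X)$, and the hypothesis $\mathcal O(X)\subseteq \sigma_H(X)$ is exactly what is needed to force $\overline{A}=\overline{\{\bigvee A\}}$.

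For (1)$\Rightarrow$(2), I would fix $U\in\mathcal O(X)$ and check the two clauses of Definition \ref{Scott H-open system}. Clause (i), that $U=\ua U$ in the specialization order, is the routine fact that open sets are upper sets (if $u\in U$ and $u\leq_X z$ then $u\in\overline{\{z\}}$, so $z\notin U$ would put $u$ in the closed set $X\setminus U$, a contradiction). For clause (ii), take $A\in {\rm H}(X)$ with $\bigvee A$ existing and $\bigvee A\in U$. Since $X$ is H-sober there is $x$ with $\overline{A}=\overline{\{x\}}$, and as computed in Remark \ref{H-S H-C H-B} this gives $\bigvee A=x$; thus $x\in\overline{A}\cap U$, and because $U$ is an open neighbourhood of the closure point $x$ it must already meet $A$, i.e. $A\cap U\neq\emptyset$. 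Hence every open set is Scott H-open.

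For (2)$\Rightarrow$(1), I would fix $A\in {\rm H}(X)$; H-completeness yields $x:=\bigvee A\in X$, and I claim $\overline{A}=\overline{\{x\}}$. One inclusion is immediate: $x$ is an upper bound of $A$, so $A\subseteq\da x=\overline{\{x\}}$, whence $\overline{A}\subseteq\overline{\{x\}}$. For the reverse it suffices to show $x\in\overline{A}$. Suppose not; then $U:=X\setminus\overline{A}$ is open, hence Scott H-open by hypothesis, and $\bigvee A=x\in U$, so clause (ii) forces $A\cap U\neq\emptyset$, contradicting $A\subseteq\overline{A}=X\setminus U$. Therefore $x\in\overline{A}$, giving $\overline{\{x\}}\subseteq\overline{A}$ and so equality. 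Uniqueness of $x$ is automatic from the $T_0$ axiom, so $X$ is H-sober.

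The argument is short, and there is no deep obstacle; the one point demanding care is the logical dependence in (2)$\Rightarrow$(1). The Scott H-open condition only quantifies over those H-sets whose join exists, so it is useless without first securing the existence of $\bigvee A$ from H-completeness — this is precisely why H-completeness appears as a separate hypothesis in (2) rather than being subsumed. I would also double-check that the identification $\bigvee A=x$ in (1)$\Rightarrow$(2) is invoked correctly via Remark \ref{H-S H-C H-B}, and that $\overline{\{x\}}=\da x$ is used consistently throughout.
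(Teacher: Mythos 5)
Your proposal is correct and follows essentially the same route as the paper: for (1)$\Rightarrow$(2) it uses Remark \ref{H-S H-C H-B} for H-completeness and the identification $\bigvee A=x$ with the generic point of $\overline{A}$, and for (2)$\Rightarrow$(1) it uses that $\overline{A}$ is Scott H-closed to place $\bigvee A$ inside $\overline{A}$. The only cosmetic difference is that you verify the open-set form of clause (ii) directly where the paper verifies the dual closed-set form of Remark \ref{Scott H-closed set}; the content is identical.
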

\begin{proof} (1) $\Rightarrow$ (2): By Remark \ref{H-S H-C H-B}, $X$ is {\rm H}-complete. Suppose that $C\in \Gamma (X)$ and $A\in {\rm H}(X)$ for which $\bigvee A$ exists. Then there is $x\in X$ with $\cl_X A=\cl_X\{x\}$, and hence $\bigvee A=x$. If $A \subseteq C$, then $\bigvee A=x\in \cl_X\{x\}=\cl_X A\subseteq C$. Thus $C$ is H-closed in $(X, \sigma_H(X))$.

(2) $\Rightarrow$ (1): Let $A \in {\rm H}(X)$. Then by condition (2), $\bigvee A$ exists and $\cl_X A$ is Scott H-closed, and hence by $A\subseteq \cl_X A$, we have $\bigvee A\in \cl_X A\subseteq \da \bigvee A$. It follows that $\cl_X A=\da \bigvee A=\cl_X \{\bigvee A\}$. Thus $X$ is {\rm H}-sober
\end{proof}

\begin{lemma}\label{Scott H-cont} Let $X$ and $Y$ be two $T_0$ spaces. For a continuous mapping $f: X \longrightarrow Y$, the following two conditions are equivalent:
\begin{enumerate}[\rm (1)]
\item $f$ is Scott H-continuous.
\item For any $A\in H(X)$ for
which $\bigvee A$ exists in $X$, $f(\bigvee A)=\bigvee f(A)$.
\end{enumerate}
\end{lemma}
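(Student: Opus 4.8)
The plan is to exploit the close analogy with the classical characterization of Scott-continuity (a map between dcpos is Scott-continuous iff it is monotone and preserves directed suprema), replacing the Scott topology by the Scott H-open system and directed suprema by suprema of H-sets. Two preliminary facts will be used throughout. First, since $f$ is continuous between $T_0$ spaces it is monotone for the specialization orders: if $x\leq_X x'$, i.e. $x\in\overline{\{x'\}}$, then $f(x)\in f(\overline{\{x'\}})\subseteq\overline{\{f(x')\}}$, so $f(x)\leq_Y f(x')$. Second, I will phrase everything in terms of Scott H-closed sets, so that Scott H-continuity of $f$ is exactly the statement that $f^{-1}(C)$ is Scott H-closed in $X$ whenever $C$ is Scott H-closed in $Y$, and I will test Scott H-closedness through the characterization in Remark \ref{Scott H-closed set}.

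For (1) $\Rightarrow$ (2), fix $A\in H(X)$ with $s=\bigvee A$ existing. Monotonicity makes $f(s)$ an upper bound of $f(A)$, so it remains to show it is the least one. The key observation is that for every $y\in Y$ the principal lower set $\da y$ is Scott H-closed: it is a lower set, and if $B\in H(Y)$ has $\bigvee B$ existing with $B\subseteq\da y$, then $y$ bounds $B$, whence $\bigvee B\leq y$, i.e. $\bigvee B\in\da y$. Now let $y$ be any upper bound of $f(A)$. Then $f^{-1}(\da y)$ is Scott H-closed by Scott H-continuity, and $A\subseteq f^{-1}(\da y)$ since $f(a)\leq y$ for all $a\in A$; applying clause (ii) of Remark \ref{Scott H-closed set} to the H-set $A$ forces $s=\bigvee A\in f^{-1}(\da y)$, i.e. $f(s)\leq y$. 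Hence $f(s)=\bigvee f(A)$, the existence of this supremum being part of the conclusion.

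For (2) $\Rightarrow$ (1), let $C$ be Scott H-closed in $Y$ and check that $f^{-1}(C)$ meets the two clauses of Remark \ref{Scott H-closed set}. Clause (i): $f^{-1}(C)$ is a lower set because $f$ is monotone and $C$ is a lower set. Clause (ii): suppose $A\in H(X)$ with $s=\bigvee A$ existing and $A\subseteq f^{-1}(C)$, so $f(A)\subseteq C$. By the subset-system axiom, Definition \ref{subset system}(2), $f(A)\in H(Y)$, and by hypothesis (2) the supremum $\bigvee f(A)=f(s)$ exists; since $C$ is Scott H-closed and contains the H-set $f(A)$, clause (ii) of Remark \ref{Scott H-closed set} gives $f(s)=\bigvee f(A)\in C$, i.e. $s\in f^{-1}(C)$. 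Thus $f^{-1}(C)$ is Scott H-closed, and $f$ is Scott H-continuous.

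I do not expect a genuine obstacle here, as the argument is essentially bookkeeping; the one point that must be handled with care is the existence of the relevant suprema. In (1) $\Rightarrow$ (2) the supremum $\bigvee f(A)$ is not assumed but produced, $f(s)$ being exhibited as an upper bound dominated by every upper bound; in (2) $\Rightarrow$ (1) its existence is supplied by hypothesis (2), which is precisely what lets clause (ii) of Remark \ref{Scott H-closed set} apply to $f(A)$. The remaining ingredients, namely monotonicity of $f$ and the stability of $H$ under continuous images, come for free from continuity and from the subset-system axioms respectively.
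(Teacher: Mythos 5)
Your proposal is correct and follows essentially the same route as the paper's proof: both directions hinge on testing Scott H-closedness of preimages via Remark \ref{Scott H-closed set}, with the forward implication using that principal lower sets $\da y$ are Scott H-closed and the reverse implication using $f(A)\in{\rm H}(Y)$ together with hypothesis (2). The only cosmetic difference is that you verify explicitly that $\da y$ is Scott H-closed, which the paper takes for granted (it follows from $\upsilon(Y)\subseteq\sigma_H(Y)$).
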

\begin{proof} (1) $\Rightarrow$ (2): Since $f: X \longrightarrow Y$ is continuous, $f$ is order-preserving and $f(A)\in {\rm H}(Y)$, and hence $f(\bigvee A)$ is an upper bound of $f(A)$. If $y\in Y$ is an upper bound of $f(A)$, that is, $f(A)\subseteq \da y$, then $A\subseteq f^{-1}(\da y)$. Since $f$ is Scott H-continuous and $\da y$ is H-closed in $(Y, \sigma_H(Y))$, $f^{-1}(\da y)$ is H-closed in $(X, \sigma_H(X))$, and consequently, $\bigvee A\in f^{-1}(\da y)$. Therefore, $f(\bigvee A)\leq y$. Thus $f(\bigvee A)=\bigvee f(A)$.

(2) $\Rightarrow$ (1): Suppose $B$ is H-closed in $(Y, \sigma_H(Y))$. Then $f^{-1}(B)$ is a lower subset of $X$ since $f$ is order-preserving and $B=\da B$. For any $A\in {\rm H}(X)$ for which $\bigvee A$ exists in $X$, if $A \subseteq f^{-1}(B)$, then $f(A)\in {\rm H}(Y)$ and $f(A)\subseteq B$. By condition (2),  $f(\bigvee A)=\bigvee f(A)\in B$, and whence $\bigvee A\in  f^{-1}(B)$. Thus $f^{-1}(B)$ is H-closed in $(X, \sigma_H(X))$, proving that $f$ is Scott H-continuous.
\end{proof}

\begin{proposition}\label{uppertop H-Sober} Let $H : \mathbf{Top}_0 \longrightarrow \mathbf{Set}$ be an R-subset system and $P$ a poset. Then the space $(P,\upsilon (P))$ is H-sober if and only if it is H-complete.
\end{proposition}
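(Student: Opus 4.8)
The plan is to prove the two implications separately, with essentially all of the content lying in the converse. For the ``only if'' direction there is nothing special about the upper topology: by Remark~\ref{H-S H-C H-B} every H-sober space is H-complete, so in particular $(P,\upsilon(P))$ is H-complete whenever it is H-sober.

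For the ``if'' direction I would invoke Proposition~\ref{H-sober Scott H-open system}, which reduces H-sobriety to the conjunction of H-completeness (which we are assuming) and the inclusion $\mathcal O(P,\upsilon(P))=\upsilon(P)\subseteq\sigma_H(P)$. First I would record that the specialization order of $(P,\upsilon(P))$ is the original order on $P$, so that $\sigma_H$ is computed with respect to $\leq$ and $\overline{\{x\}}=\da x$ for every $x\in P$; thus the relevant Scott H-open system is exactly $\sigma_H(P)$. It then remains to verify $\upsilon(P)\subseteq\sigma_H(P)$, the inclusion already announced after Definition~\ref{Scott H-open system}, and everything collapses. Equivalently, one can argue directly that for $A\in {\rm H}(P)$ with $s=\bigvee A$ one has $\overline{A}=\da s=\overline{\{s\}}$, which is H-sobriety, with uniqueness of the point coming from $T_0$ separation.

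The heart of the argument, and the step I expect to be the main obstacle, is verifying condition (ii) of Scott H-openness for an \emph{arbitrary} $\upsilon$-open set $U$. Since $\upsilon$-closed sets are intersections of finite unions of principal ideals, I would write $P\setminus U=\bigcap_{i\in I}F_i$ with each $F_i=\da a_{i,1}\cup\cdots\cup\da a_{i,n_i}$. Condition (i) $U=\ua U$ is immediate because $\upsilon$-closed sets are lower sets. For condition (ii), take $A\in {\rm H}(P)$ with $\bigvee A$ existing and $\bigvee A\in U$, and suppose for contradiction that $A\cap U=\emptyset$, i.e.\ $A\subseteq F_i$ for every $i$. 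Here the defining feature $\,{\rm H}(P)\subseteq\ir(P)\,$ does the essential work: the irreducible set $A$ sits inside the finite union of closed sets $F_i$, hence inside a single $\da a_{i,k}$, so $a_{i,k}$ is an upper bound of $A$, giving $\bigvee A\leq a_{i,k}$ and $\bigvee A\in F_i$. As $i$ was arbitrary, $\bigvee A\in\bigcap_{i\in I}F_i=P\setminus U$, contradicting $\bigvee A\in U$. The subtle point is that one cannot just check the subbasic opens $P\setminus\da a$ and pass to finite intersections, since $\sigma_H(P)$ is in general only an open system and need not be closed under finite intersections (as noted for $\mathcal R$ in the text); it is precisely irreducibility of the H-sets that collapses each finite union $F_i$ to a single principal ideal and replaces the missing closure under intersections.
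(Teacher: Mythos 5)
Your proof is correct, and its combinatorial core coincides with the paper's: both arguments rest on the observation that an irreducible set contained in a finite union of principal ideals must lie in a single one of them, applied to the presentation of $\upsilon$-closed sets as intersections of down-sets of finite sets. The difference is in the packaging. The paper argues directly on the closure: it writes $\cl_{\upsilon(P)}A=\bigcap_{i\in I}\da F_i$, uses irreducibility to replace each $\da F_i$ by a principal ideal $\da x_i$, identifies $\cl_{\upsilon(P)}A$ with the cut $A^{\delta}=\bigcap\{\da x: A\subseteq\da x\}$, and then invokes H-completeness to conclude $A^{\delta}=\da\bigvee A$; it must treat $\cl_{\upsilon(P)}A=P$ as a separate case, where H-completeness produces a top element. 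You instead route the argument through Proposition \ref{H-sober Scott H-open system}, reducing everything to the inclusion $\upsilon(P)\subseteq\sigma_H(P)$ --- an inclusion the paper asserts without proof after Definition \ref{Scott H-open system} but does not actually invoke in its own proof --- and your verification of that inclusion is the same finite-union-of-principal-ideals computation. What your version buys is the elimination of the case split (the degenerate closed sets $\emptyset$ and $P$ are handled trivially) and an explicit justification of the asserted inclusion $\upsilon(X)\subseteq\sigma_H(X)$; your closing remark that one cannot merely check the subbasic opens $P\setminus\da a$, because $\sigma_H(P)$ is only an open system and need not be closed under finite intersections, correctly identifies where irreducibility of the H-sets is genuinely needed.
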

\begin{proof} If the upper topology $\upsilon (P)$ is {\rm H}-sober, then $(P, \upsilon (P))$ is {\rm H}-complete by Remark \ref{H-S H-C H-B}. Conversely, if $(P, \upsilon (P))$ is {\rm H}-complete, we show that $\upsilon (P)$ is {\rm H}-sober. For $A\in$ H$((P, \upsilon (P)))$, we have $A\in\ir ((P, \upsilon (P)))$. If $\cl_{\upsilon (P)} A=P$, then by {\rm H}-completeness of $(P, \upsilon (P))$ and Remark \ref{C up=cl C up}, $P$ has a largest element $\top$. So $P=\downarrow \top=\overline{\{\top\}}$. If $\cl_{\upsilon (P)}A\neq P$, then there is a nonempty family $\{\da F_i : i\in I\}\subseteq \mathbf{Fin} (P)$ such that $\cl_{\upsilon (P)} A=\bigcap_{i\in I} \da F_i$. For each $i\in I$, $\cl_{\upsilon (P)} A\subseteq \da F_i$, and hence by the irreducibility of $A$, $\cl_{\upsilon (P)}A\subseteq \da x_i$ for some $x_i\in F_i$. Therefore, $\cl_{\upsilon (P)} A=\bigcap_{i\in I} \da x_i\supseteq\bigcap \{\da x : A\subseteq \da x\}=A^{\delta}\supseteq \cl_{\upsilon (P)} {A}$. Since $(P, \upsilon (P))$ is {\rm H}-complete, $\bigvee A$ exists in $P$, and consequently, $\cl_{\upsilon (P)}A=A^{\delta}=\da \bigvee A=\overline{\{\bigvee A\}}$. Thus $\upsilon (P)$ is {\rm H}-sober.
\end{proof}

\begin{corollary}\label{uppertop Sober} For a poset $P$, $(P,\upsilon (P))$ is sober iff it is irreducible complete.
\end{corollary}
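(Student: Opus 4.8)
The plan is to obtain this as an immediate specialization of the preceding Proposition \ref{uppertop H-Sober}, which already establishes the equivalence ``$(P,\upsilon(P))$ is H-sober iff it is H-complete'' for an arbitrary R-subset system H. The only work is to identify the correct choice of H and translate the H-terminology back into the classical vocabulary of sobriety and irreducible completeness.

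First I would take H to be the R-subset system $\mathcal R$, where $\mathcal R(X)=\ir(X)$ is the set of all irreducible subsets of $X$; this is item (6) in the list of examples of R-subset systems, so $\mathcal R\in\mathcal H$ and Proposition \ref{uppertop H-Sober} applies to it. Next I would invoke the two identifications already recorded in the paper: by the remark following Definition \ref{def H-sober}, the $\mathcal R$-sober spaces are precisely the sober spaces (so $\mathcal R$-$\mathbf{Sob}=\mathbf{Sob}$), and by Definition \ref{H-complete} the $\mathcal R$-complete spaces are exactly the irreducible complete spaces. Substituting these into the statement of Proposition \ref{uppertop H-Sober} with $\mathrm H=\mathcal R$ then reads: $(P,\upsilon(P))$ is sober if and only if it is irreducible complete, which is exactly the assertion.

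There is essentially no obstacle here, since all the substantive content—the transfinite or irreducibility argument showing that H-completeness forces H-sobriety for the upper topology—has already been carried out in Proposition \ref{uppertop H-Sober} at the level of a general R-subset system. The corollary is purely a matter of instantiating H $=\mathcal R$ and applying the dictionary between the H-labelled notions and their classical names, so the proof is a single line once those identifications are cited.
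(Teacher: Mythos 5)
Your proposal is correct and is exactly the paper's intended argument: the corollary is stated immediately after Proposition \ref{uppertop H-Sober} with no separate proof, and is obtained precisely by instantiating ${\rm H}=\mathcal R$ and using the identifications that $\mathcal R$-sober means sober and $\mathcal R$-complete means irreducible complete.
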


\begin{proposition}\label{H-sober charact-H-sets} Let $H : \mathbf{Top}_0 \longrightarrow \mathbf{Set}$ be an R-subset system and $X$ a $T_0$ space. Then the following conditions are equivalent:
\begin{enumerate}[\rm (1)]
	        \item $X$ is H-sober.

            \item  For any $A\in  H(X)$, $\overline{A}\cap\bigcap\limits_{a\in A}\ua a\neq\emptyset$.
            \item  For any $A\in H_c(X)$, $A\cap\bigcap\limits_{a\in A}\ua a\neq\emptyset$.

            \item  For any $A\in H(X)$ and $U\in \mathcal O(X)$, $\bigcap\limits_{a\in A}\ua a\subseteq U$ implies $\ua a \subseteq U$ \emph{(}i.e., $a\in U$\emph{)} for some $a\in A$.
            \item  For any $A\in H_c(X)$ and $U\in \mathcal O(X)$, $\bigcap\limits_{a\in A}\ua a\subseteq U$ implies $\ua a \subseteq U$ \emph{(}i.e., $a\in U$\emph{)} for some $a\in A$.

\end{enumerate}
\end{proposition}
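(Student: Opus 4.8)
The plan is to read $\bigcap_{a\in A}\ua a$ as the set $A^{\uparrow}$ of upper bounds of $A$ and to lean on Remark \ref{C up=cl C up} throughout. First I would record the two facts that drive everything: for any $A\subseteq X$ one has $\bigcap_{a\in A}\ua a=A^{\uparrow}=\overline{A}^{\uparrow}$, and $x\in A^{\uparrow}$ if and only if $A\subseteq\da x=\overline{\{x\}}$ (equivalently $\overline{A}\subseteq\overline{\{x\}}$). With these, the equivalence (1) $\Leftrightarrow$ (2) is immediate: if $X$ is H-sober and $A\in H(X)$, the point $x$ with $\overline{A}=\overline{\{x\}}$ is simultaneously the top of $\overline{A}$ (so $x\in\overline{A}$) and an upper bound of $A$ (so $x\in A^{\uparrow}$), whence $x\in\overline{A}\cap\bigcap_{a\in A}\ua a$; conversely, any $x$ in this intersection satisfies both $\overline{\{x\}}\subseteq\overline{A}$ (from $x\in\overline{A}$) and $\overline{A}\subseteq\overline{\{x\}}$ (from $x\in A^{\uparrow}$), forcing $\overline{A}=\overline{\{x\}}$, with uniqueness automatic since $X$ is $T_0$.

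Next I would pass between the ``H-set'' and ``closed H-set'' versions. Since $H_c(X)=\{\overline{A}:A\in H(X)\}$ and $\bigcap_{b\in\overline{A}}\ua b=\overline{A}^{\uparrow}=A^{\uparrow}=\bigcap_{a\in A}\ua a$ by Remark \ref{C up=cl C up}, the condition in (3) for the closed set $\overline{A}$ is literally the condition in (2) for $A$; as $A$ ranges over $H(X)$ its closure ranges over all of $H_c(X)$, giving (2) $\Leftrightarrow$ (3). For (2) $\Leftrightarrow$ (4) and (3) $\Leftrightarrow$ (5) I would use the open/closed complement duality. For (2) $\Rightarrow$ (4): given $A^{\uparrow}\subseteq U$ with $U$ open, pick $x\in\overline{A}\cap A^{\uparrow}$ from (2); then $x\in U$, and since $x\in\overline{A}$ and $U$ is an open neighbourhood of $x$, $U$ meets $A$, producing $a\in A$ with $a\in U$. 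For (4) $\Rightarrow$ (2): apply (4) to the open set $U=X\setminus\overline{A}$; were $\overline{A}\cap A^{\uparrow}=\emptyset$ we would have $A^{\uparrow}\subseteq U$, so (4) would yield some $a\in A\cap U$, contradicting $A\subseteq\overline{A}$. The equivalence (3) $\Leftrightarrow$ (5) is the same argument with $A$ already closed (so $A=\overline{A}$ and the point found lies in $A$ outright).

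The arguments are elementary, so the only real care needed is bookkeeping. The one step worth isolating is the ``meeting'' inference in (2) $\Rightarrow$ (4): it is precisely the definition of topological closure that a point of $\overline{A}$ lying in an open set $U$ forces $A\cap U\neq\emptyset$, and this is what converts the upper-bound condition into the open-set condition. I expect no genuine obstacle; the only subtlety is to keep straight that (4) quantifies over $H(X)$ while (5) quantifies over $H_c(X)$, so one must route any comparison of (4) and (5) through the pairs (2) $\Leftrightarrow$ (4) and (3) $\Leftrightarrow$ (5) together with (2) $\Leftrightarrow$ (3), rather than relating (4) and (5) directly.
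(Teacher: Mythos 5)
Your proposal is correct and follows essentially the same route as the paper: both rest on Remark \ref{C up=cl C up} (the identification $\bigcap_{a\in A}\ua a=A^{\uparrow}=\overline{A}^{\uparrow}$) together with elementary closure arguments, differing only in bookkeeping (you prove (2)~$\Rightarrow$~(1) directly and argue (2)~$\Rightarrow$~(4) by picking the witness point, where the paper routes the return to (1) through (5) and argues (2)~$\Rightarrow$~(4) by contraposition). No gaps.
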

\begin{proof} (1) $\Rightarrow$ (2): If $X$ is {\rm H}-sober and $A\in \mathcal {\rm H}(X)$, then there is an $x\in X$ such that $\overline{A}=\overline{\{x\}}=\da x$, and whence $x\in \overline{A}\cap \bigcap\limits_{a\in A}\ua a$.

(2) $\Leftrightarrow$ (3): Clearly, we have (2) $\Rightarrow$ (3). Conversely, if condition (3) is satisfied, then for $A\in {\rm H}(X)$, $\overline{A}\in {\rm H}_c(X)$, and $\emptyset\neq\overline{A}\cap \bigcap\limits_{b\in \overline{A}}\ua b\subseteq \overline{A}\cap \bigcap\limits_{a\in A}\ua a$.

(2) $\Rightarrow$ (4): If $\ua a\not\subseteq U$ for all $a\in A$, then $A\subseteq X\setminus U$, and hence $\overline{A}\subseteq X\setminus U$. By condition (2), $\emptyset\neq \overline{A}\cap \bigcap\limits_{a\in A}\ua a\subseteq (X\setminus U)\cap U=\emptyset$, a contradiction.

(4) $\Leftrightarrow$ (5): Obviously, (4) $\Rightarrow$ (5). Conversely, if condition (5) holds, then for $A\in {\rm H}(X)$ and $U\in \mathcal O(X)$ with $\bigcap\limits_{a\in A}\ua a\subseteq U$, we have $\overline{A}\in {\rm H}_c (X)$ and $\bigcap\limits_{b\in \overline{A}}\ua b=\bigcap\limits_{a\in A}\ua a\subseteq U$ by Remark \ref{C up=cl C up}. By condition (5), $b\in U$ for some $b\in \overline{A}$, and whence $A\cap U\neq\emptyset$. Condition (4) is thus satisfied.

(5) $\Rightarrow$ (1): Suppose $A\in {\rm H}_c(X)$. Then $A\cap \bigcap\limits_{a\in A}\ua a\neq\emptyset$ (otherwise, by condition (5), $A\cap \bigcap\limits_{a\in A}\ua a=\emptyset \Rightarrow \bigcap\limits_{a\in A}\ua a\subseteq X\setminus A \Rightarrow \ua a\subseteq X\setminus A$ for some $a\in A$, a contradiction). Select an $x\in A\cap \bigcap\limits_{a\in A}\ua a$. Then $A\subseteq \da x=\overline{\{x\}}\subseteq A$, and hence $A=\overline{\{x\}}$. Thus $X$ is {\rm H}-sober.

\end{proof}

Now we give some equational characterizations of H-sober spaces.

\begin{proposition}\label{H-sober charact-bounded} Let $H : \mathbf{Top}_0 \longrightarrow \mathbf{Set}$ be an R-subset system and $X$ a $T_0$ space. Then the following conditions are equivalent:
\begin{enumerate}[\rm (1)]
	        \item $X$ is H-sober.
            \item  $X$ is H-bounded \emph{(}especially, $X$ is H-complete\emph{)}, and $\ua \left(C\cap\bigcap\limits_{a\in A} \ua a\right)=\bigcap\limits_{a\in A}\ua (C\cap \ua a)$ for any $A\in H(X)$ and $C\in \Gamma(X)$.
             \item  $X$ is H-bounded \emph{(}especially, $X$ is H-complete\emph{)}, and $\ua \left(C\cap\bigcap\limits_{a\in A} \ua a\right)=\bigcap\limits_{a\in A}\ua (C\cap \ua a)$ for any $A\in H_c(X)$ and $C\in \Gamma(X)$.
            \item  $X$ is H-bounded \emph{(}especially, $X$ is H-complete\emph{)}, and $\ua \left(C\cap\bigcap\limits_{a\in A} \ua a\right)=\bigcap\limits_{a\in A}\ua (C\cap \ua a)$ for any $A\in H(X)$ and $C\in H_c (X)$.
            \item  $X$ is H-bounded \emph{(}especially, $X$ is H-complete\emph{)}, and $\ua \left(C\cap\bigcap\limits_{a\in A} \ua a\right)=\bigcap\limits_{a\in A}\ua (C\cap \ua a)$ for any $A\in H_c(X)$ and $C\in H_c (X)$.
\end{enumerate}
\end{proposition}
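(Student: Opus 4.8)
The plan is to prove the cycle $(1)\Rightarrow(2)\Rightarrow(3)\Rightarrow(5)\Rightarrow(1)$ together with the two side implications $(2)\Rightarrow(4)\Rightarrow(5)$, after first isolating one inclusion that costs nothing. Writing $A^{\ua}=\bigcap_{a\in A}\ua a$ by Remark \ref{C up=cl C up}, I would begin by noting that for \emph{every} subset $A$ and every $C\subseteq X$ the inclusion $\ua(C\cap A^{\ua})\subseteq\bigcap_{a\in A}\ua(C\cap\ua a)$ holds unconditionally: if $y\in C\cap A^{\ua}$ and $x\geq y$, then for each $a\in A$ we have $y\geq a$, so $y\in C\cap\ua a$ and $x\in\ua(C\cap\ua a)$. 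Consequently, in each of the clauses $(2)$–$(5)$ only the reverse inclusion of the equation carries any content, and this observation also handles the index-set bookkeeping later.

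For $(1)\Rightarrow(2)$, H-sobriety gives H-completeness, hence H-boundedness, by Remark \ref{H-S H-C H-B}. For the equation, I fix $A\in H(X)$, $C\in\Gamma(X)$ and take $x\in\bigcap_{a\in A}\ua(C\cap\ua a)$. For each $a$ there is $c_a\in C$ with $a\leq c_a\leq x$, so $A\subseteq\da(C\cap\da x)=C\cap\da x$, the latter set being closed and therefore a lower set; hence $\overline A\subseteq C\cap\da x$. H-sobriety supplies a point $p$ with $\overline A=\overline{\{p\}}=\da p$, so $p\in C$ and $p\leq x$, while $A\subseteq\da p$ forces $p\in A^{\ua}$. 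Thus $p\in C\cap A^{\ua}$ with $x\geq p$, i.e.\ $x\in\ua(C\cap A^{\ua})$; combined with the automatic inclusion this is the desired equation.

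The implications among $(2)$–$(5)$ reduce to bookkeeping, the only genuine point being the passage from an $H$-set $A$ to its closure $\overline A\in H_c(X)$. Since $A^{\ua}=\overline A^{\ua}$ by Remark \ref{C up=cl C up}, the left-hand side is unchanged upon replacing $A$ by $\overline A$, and because $A\subseteq\overline A$ the automatic inclusion yields the sandwich
$$\ua(C\cap A^{\ua})\subseteq\bigcap_{b\in\overline A}\ua(C\cap\ua b)\subseteq\bigcap_{a\in A}\ua(C\cap\ua a)=\ua(C\cap A^{\ua}),$$
so the equation for $A$ forces the equation for $\overline A$. As every member of $H_c(X)$ is such a closure, this gives $(2)\Rightarrow(3)$ and $(4)\Rightarrow(5)$, while restricting the range of $C$ from $\Gamma(X)$ to $H_c(X)\subseteq\Gamma(X)$ gives $(2)\Rightarrow(4)$ and $(3)\Rightarrow(5)$ at once; H-boundedness is common to all four and carries over verbatim.

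Finally, for $(5)\Rightarrow(1)$ I would feed the equation back through Proposition \ref{H-sober charact-H-sets}. Given any $A\in H_c(X)$, I apply the equation with $C=A\in H_c(X)$. Using H-boundedness I pick $u\in A^{\ua}$; then $u\in\bigcap_{a\in A}\ua(A\cap\ua a)$, since each $a$ lies in $A\cap\ua a$ and $a\leq u$. The equation gives $u\in\ua(A\cap A^{\ua})$, whence $A\cap A^{\ua}=A\cap\bigcap_{a\in A}\ua a\neq\emptyset$, which is exactly condition $(3)$ of Proposition \ref{H-sober charact-H-sets}; therefore $X$ is H-sober. Since all five conditions are now equivalent and $(1)$ implies H-completeness (Remark \ref{H-S H-C H-B}), the H-boundedness assumed in $(2)$–$(5)$ in fact upgrades to H-completeness, justifying the parenthetical remark. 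I expect the main obstacle to be precisely the mismatch between $H(X)$ and $H_c(X)$ in the indexing of the right-hand intersection; this is dissolved by the sandwich above, after which everything rests on the unconditional inclusion together with the closed-set-plus-sobriety argument of $(1)\Rightarrow(2)$.
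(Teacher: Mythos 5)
Your proposal is correct and follows essentially the same route as the paper: the same decomposition into implications, the same sandwich argument (via $A^{\ua}=\overline{A}^{\ua}$) to pass from $H(X)$ to $H_c(X)$, and the same use of Proposition \ref{H-sober charact-H-sets} for $(5)\Rightarrow(1)$ with $C=A$. The only cosmetic difference is in $(1)\Rightarrow(2)$, where you argue directly by producing the point $p$ with $\overline{A}=\overline{\{p\}}$ inside the closed lower set $C\cap\da x$, while the paper runs the contrapositive through condition (4) of Proposition \ref{H-sober charact-H-sets}; both are sound.
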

\begin{proof}  (1) $\Rightarrow$ (2): Since $X$ is {\rm H}-sober, $X$ is ${\rm H}$-complete by Remark \ref{H-S H-C H-B}. For $A\in {\rm H} (X)$ and $C\in \Gamma(X)$, clearly, $\ua \left(C\cap\bigcap\limits_{a\in A} \ua a\right)\subseteq\bigcap\limits_{a\in A}\ua (C\cap \ua a)$. Conversely, if $x\not\in \ua \left(C\cap\bigcap\limits_{a\in A}\ua a\right)$, that is, $\da x\cap C\cap\bigcap\limits_{a\in A}\ua a=\emptyset$, then $\bigcap\limits_{a\in A}\ua a\subseteq X\setminus \da x\cap C$, and whence by Proposition \ref{H-sober charact-H-sets}, $a\in X\setminus \da x\cap C$ for some $a\in A$, i.e., $\ua a\bigcap\da x\bigcap C=\emptyset$, and hence $x\not\in \ua (C\cap\ua a)$. Therefore, $x\not\in \bigcap\limits_{a\in A}\ua (C\cap \ua a)$. Thus $\ua \left(C\cap\bigcap\limits_{a\in A} \ua d\right)=\bigcap\limits_{a\in A}\ua (C\cap \ua a)$.

(2) $\Rightarrow$ (3): Since $A\in {\rm H}_c(X)$, there is $B\in {\rm H}(X)$ with $A=\overline{B}$. By condition (2) and Remark \ref{C up=cl C up}, $\bigcap\limits_{a\in A}\ua (C\cap \ua a)\supseteq\ua \left(C\cap\bigcap\limits_{a\in A} \ua a\right)=\ua \left(C\cap\bigcap\limits_{b\in B} \ua b\right)=\bigcap\limits_{b\in B}\ua (C\cap \ua b)\supseteq \bigcap\limits_{a\in A}\ua (C\cap \ua a)$, and hence $\ua \left(C\cap\bigcap\limits_{a\in A} \ua a\right)=\bigcap\limits_{a\in A}\ua (C\cap \ua a)$.

(2) $\Rightarrow$ (4) and (3) $\Rightarrow$ (5) : Trivial.

(4) $\Rightarrow$ (5): The proof is similar to that of (2) $\Rightarrow$ (3).

(5) $\Rightarrow$ (1): For each $A\in {\rm H}_c(X)$, by condition (5), $\emptyset \neq A^{\ua}=\bigcap\limits_{a\in A}\ua a=\bigcap\limits_{a\in A}\ua (A\cap \ua a)=\ua \left(A\cap\bigcap\limits_{a\in A} \ua d\right)$. By Proposition \ref{H-sober charact-H-sets}, $X$ is {\rm H}-sober.
\end{proof}

\begin{theorem}\label{H-sober charact-mapping} Let $H : \mathbf{Top}_0 \longrightarrow \mathbf{Set}$ be an R-subset system and $X$ a $T_0$ space. Then the following conditions are equivalent:
\begin{enumerate}[\rm (1)]
		\item $X$ is H-space.
        \item For every continuous mapping $f:X\longrightarrow Y$ to a $T_0$ space $Y$ and any $A\in H(X)$, $\ua f\left(\bigcap\limits_{a\in A}\ua a\right)=\bigcap\limits_{a\in A}\ua f(\ua a)=\bigcap\limits_{a\in A} \ua f(a)$.
        \item For every continuous mapping $f:X\longrightarrow Y$ to a H-sober space $Y$ and any $A\in H(X)$, $\ua f\left(\bigcap\limits_{a\in A}\ua a\right)=\bigcap\limits_{a\in A}\ua f(\ua a)=\bigcap\limits_{a\in A} \ua f(a)$.
        \item For every continuous mapping $f:X\longrightarrow Y$ to a sober space $Y$ and any $A\in H(X)$, $\ua f\left(\bigcap\limits_{a\in A}\ua a\right)=\bigcap\limits_{a\in A}\ua f(\ua a)=\bigcap\limits_{a\in A} \ua f(a)$.
\end{enumerate}
\end{theorem}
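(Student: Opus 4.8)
The plan is to run the cycle $(1)\Rightarrow(2)\Rightarrow(3)\Rightarrow(4)\Rightarrow(1)$. Since every sober space is H-sober (if $Y$ is sober and $A\in {\rm H}(Y)\subseteq\ir(Y)$, then $\overline{A}\in\ir_c(Y)$ has a unique generic point), the class of admissible target spaces shrinks from all $T_0$ spaces in $(2)$, to H-sober spaces in $(3)$, to sober spaces in $(4)$; hence $(2)\Rightarrow(3)\Rightarrow(4)$ hold trivially, merely by restricting the universally quantified target. I also record once and for all that for a continuous, hence order-preserving, $f$ and any $a$, monotonicity gives $f(\ua a)\subseteq\ua f(a)$ while $f(a)\in f(\ua a)$, so $\ua f(\ua a)=\ua f(a)$; thus the second equality in each of $(2)$--$(4)$ is automatic, and only the identity $\ua f\left(\bigcap_{a\in A}\ua a\right)=\bigcap_{a\in A}\ua f(a)$ requires proof.

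For $(1)\Rightarrow(2)$, let $X$ be H-sober, $f:X\longrightarrow Y$ continuous and $A\in {\rm H}(X)$. H-sobriety furnishes $x_0\in X$ with $\overline{A}=\overline{\{x_0\}}=\da x_0$, so by Remark \ref{C up=cl C up} one has $\bigcap_{a\in A}\ua a=A^{\ua}=\overline{A}^{\ua}=\ua x_0$, whence the left-hand side equals $\ua f(\ua x_0)=\ua f(x_0)$. The inclusion $\ua f(x_0)\subseteq\bigcap_{a\in A}\ua f(a)$ is clear since $x_0\geq a$ for every $a\in A$. For the reverse inclusion take $y\geq f(a)$ for all $a\in A$; then $A\subseteq f^{-1}(\da y)$, and since $f$ is continuous $f^{-1}(\da y)$ is closed, so $\overline{A}\subseteq f^{-1}(\da y)$ and in particular $x_0\in f^{-1}(\da y)$, i.e.\ $f(x_0)\leq y$. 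Hence $\bigcap_{a\in A}\ua f(a)\subseteq\ua f(x_0)$ and equality holds.

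For $(4)\Rightarrow(1)$, the key move is to test condition $(4)$ against the sobrification $\eta_X:X\longrightarrow X^s=P_H(\ir_c(X))$ of Remark \ref{eta continuous}, which is continuous into a sober space and whose specialization order is set inclusion. Fix $A\in {\rm H}(X)$; then $\overline{A}\in\ir_c(X)$ is a point of $X^s$. Since $\ua_{X^s}\eta_X(a)=\{F\in\ir_c(X):a\in F\}$, we get $\bigcap_{a\in A}\ua_{X^s}\eta_X(a)=\{F\in\ir_c(X):A\subseteq F\}=\ua_{X^s}\overline{A}$, whose least element is $\overline{A}$. Condition $(4)$ equates this with $\ua_{X^s}\eta_X\!\left(\bigcap_{a\in A}\ua a\right)=\ua_{X^s}\{\overline{\{x\}}:x\in A^{\ua}\}$, so $\overline{A}$ must lie in the latter upper set; that is, $\overline{\{x\}}\subseteq\overline{A}$ for some $x\in A^{\ua}$. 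But $x\in A^{\ua}$ means $A\subseteq\da x$, i.e.\ $\overline{A}\subseteq\overline{\{x\}}$, whence $\overline{A}=\overline{\{x\}}$; uniqueness of $x$ follows from $T_0$-ness. Thus $X$ is H-sober.

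The routine directions are the nesting implications; the real content is concentrated in $(4)\Rightarrow(1)$. The main obstacle there is choosing the correct test space and reading off the orders correctly: everything hinges on recognizing that, under the inclusion order on $\ir_c(X)$, the right-hand intersection is exactly the principal upper set $\ua_{X^s}\overline{A}$ with minimum $\overline{A}$, so that the equation in $(4)$ forces $\overline{A}$ to be generated by the image of a genuine upper bound $x\in A^{\ua}\subseteq X$ — precisely the generic point required for H-sobriety. A secondary point worth noting is that the same equation rules out $A^{\ua}=\emptyset$ (otherwise the left-hand side would be empty while $\ua_{X^s}\overline{A}\ni\overline{A}$), so H-boundedness is recovered automatically, consistent with Remark \ref{H-S H-C H-B}.
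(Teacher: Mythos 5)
Your proof is correct and follows essentially the same route as the paper: the same direct computation for $(1)\Rightarrow(2)$ (the paper phrases it via $\bigvee A$, you via $\bigcap_{a\in A}\ua a=\ua x_0$, which is the same fact), the same trivial nesting for $(2)\Rightarrow(3)\Rightarrow(4)$, and the same key idea of testing $(4)$ against the sobrification $\eta_X:X\longrightarrow X^s=P_H(\ir_c(X))$ to extract an upper bound $x\in A^{\ua}$ with $\overline{\{x\}}\subseteq\overline{A}$. Your explicit identification of $\bigcap_{a\in A}\ua_{X^s}\eta_X(a)$ as the principal upper set $\ua_{X^s}\overline{A}$ is a slightly more transparent rendering of the step the paper states as $\overline{A}\in\bigcap_{a\in A}\ua_{\ir_c(X)}\eta_X(a)$, but it is the same argument.
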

\begin{proof} (1) $\Rightarrow$ (2): First, $f$ is order-preserving. Since $X$ is ${\rm H}$-sober, there exists $x\in X$ with $\overline{A}=\overline{\{x\}}$, and hence $\bigvee A=\bigvee \overline{A}=x$. Therefore, $\ua f\left(\bigcap\limits_{a\in A}\ua a\right)=\ua f(\ua \bigvee A)=\ua f(\bigvee A)=\ua f(x)$. Obviously, $\ua f(\bigvee A)=\ua f(x)\subseteq \bigcap\limits_{a\in A} \ua f(a)$. On the other hand, if $y\in \bigcap\limits_{a\in A} \ua f(a)$, then $a\in f^{-1}(\da y)=f^{-1}(\overline{\{y\}})\in \Gamma (X)$ for all $a\in A$, and hence $\overline{\{x\}}=\overline{A}\subseteq f^{-1}(\da y)$, that is, $y\in \ua f(x)=\ua f(\bigvee A)$. Thus $f(\bigvee A)=\bigvee f(A)$, and whence $\ua f\left(\bigcap\limits_{a\in A}\ua a\right)=\ua f(\bigvee A)=\ua \bigvee f(A)=\bigcap\limits_{a\in A}\ua f(\ua a)=\bigcap\limits_{a\in A} \ua f(a)$.

(2) $\Rightarrow$ (3) $\Rightarrow$ (4): Trivial.

(4) $\Rightarrow$ (1):  Let $\eta_X : X \rightarrow X^s$ ($=P_H(\ir_c(X))$) be the canonical topological embedding from $X$ into its soberification (see Remark \ref{eta continuous}). For $A\in {\rm H}(X)$, by condition (4) we have $\overline{A}\in \bigcap\limits_{a\in A}\ua_{\ir_c(X)} \eta_X (a)=\bigcap\limits_{a\in A}\ua_{\ir_c(X)} \eta_X (\ua a)=\ua_{\ir_c(X)} \eta_X\left(\bigcap\limits_{a\in A}\ua a\right)=\ua_{\ir_c(X)} \eta_X (A^{\ua})$, and whence there is $x\in A^{\ua}$ such that $\overline{\{x\}}\subseteq \overline{A}$. Therefore, $\overline{A}=\overline{\{x\}}$. Thus $X$ is ${\rm H}$-sober.
\end{proof}

\begin{corollary}\label{H-sober charact-mapping-1} Let $H : \mathbf{Top}_0 \longrightarrow \mathbf{Set}$ be an R-subset system and $X$ a $T_0$ space. Then the following conditions are equivalent:
\begin{enumerate}[\rm (1)]
		\item $X$ is H-sober.
        \item $X$ is H-complete, and for every continuous mapping $f:X\longrightarrow Y$ to a $T_0$ space $Y$ and any $A\in H(X)$, $ f(\bigvee A)=\bigvee f(A)$.
        \item $X$ is H-complete, and for every continuous mapping $f:X\longrightarrow Y$ to a H-sober space $Y$ and any $A\in H(X)$, $ f(\bigvee A)=\bigvee f(A)$.
        \item $X$ is H-complete, and for every continuous mapping $f:X\longrightarrow Y$ to a sober space $Y$ and any $A\in H(X)$, $ f(\bigvee A)=\bigvee f(A)$.
\end{enumerate}
\end{corollary}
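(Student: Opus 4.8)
The plan is to read this corollary off Theorem~\ref{H-sober charact-mapping} by rewriting that theorem's equational condition as the join-preservation condition stated here, the extra hypothesis of H-completeness being exactly what makes the translation possible. The bridge is an elementary observation: if $X$ is H-complete and $A\in H(X)$, then $\bigvee A$ exists, so by Remark~\ref{C up=cl C up} the upper bounds satisfy $\bigcap_{a\in A}\ua a=A^{\ua}=\ua\bigvee A$, a principal filter. Consequently, for any order-preserving $f:X\longrightarrow Y$ one has $\ua f\left(\bigcap_{a\in A}\ua a\right)=\ua f(\ua\bigvee A)=\ua f(\bigvee A)$, the last equality holding because $f(\ua\bigvee A)\subseteq\ua f(\bigvee A)$ while $\bigvee A\in\ua\bigvee A$; and $\bigcap_{a\in A}\ua f(\ua a)=\bigcap_{a\in A}\ua f(a)=f(A)^{\ua}$. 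Hence the identity $\ua f\left(\bigcap_{a\in A}\ua a\right)=\bigcap_{a\in A}\ua f(a)$ of Theorem~\ref{H-sober charact-mapping} collapses to $\ua f(\bigvee A)=f(A)^{\ua}$. Since $f(\bigvee A)$ is always an upper bound of $f(A)$, this last equation holds if and only if $f(\bigvee A)$ is the \emph{least} upper bound of $f(A)$, i.e. $f(\bigvee A)=\bigvee f(A)$.

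For (1)$\Rightarrow$(2) I would note that an H-sober space is H-complete by Remark~\ref{H-S H-C H-B}, and that the displayed equation of Theorem~\ref{H-sober charact-mapping}(2) then translates, via the bridge above, into $f(\bigvee A)=\bigvee f(A)$ for every continuous $f$ into a $T_0$ space $Y$ and every $A\in H(X)$; indeed this join-preservation is already extracted inside the proof of that theorem. The chain (2)$\Rightarrow$(3)$\Rightarrow$(4) is immediate once one records the inclusions $\mathbf{Sob}\subseteq\mathbf{H}\text{-}\mathbf{Sob}\subseteq\mathbf{Top}_0$: every sober space is H-sober, because an irreducible closed set is the closure of a point and $H(Y)\subseteq\ir(Y)$. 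Thus restricting the universal quantifier over targets from all $T_0$ spaces to all H-sober spaces and then to all sober spaces only weakens the condition.

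Finally, for (4)$\Rightarrow$(1) I would run the bridge in reverse. Assuming $X$ is H-complete, the join $\bigvee A$ exists for every $A\in H(X)$, and the hypothesis supplies $\bigvee f(A)=f(\bigvee A)$ for every continuous $f$ into a sober $Y$; the same computation then gives $\ua f\left(\bigcap_{a\in A}\ua a\right)=\bigcap_{a\in A}\ua f(\ua a)=\bigcap_{a\in A}\ua f(a)$, which is precisely condition (4) of Theorem~\ref{H-sober charact-mapping}, whence $X$ is H-sober. I do not expect a genuine obstacle here: the entire content is the two-line identity $\bigcap_{a\in A}\ua a=\ua\bigvee A$ together with the reformulation of ``least upper bound'' as an equality of principal filters, and the only care required is bookkeeping the existence of the relevant joins, which H-completeness guarantees once and for all.
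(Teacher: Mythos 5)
Your proposal is correct and follows essentially the same route as the paper: both derive the corollary from Theorem~\ref{H-sober charact-mapping} by using H-completeness to rewrite $\bigcap_{a\in A}\ua a$ as the principal filter $\ua\bigvee A$, so that the theorem's equational condition collapses to $f(\bigvee A)=\bigvee f(A)$. The paper's proof of (4)~$\Rightarrow$~(1) is precisely your ``bridge run in reverse,'' and its (1)~$\Rightarrow$~(2) likewise extracts the join-preservation from inside the proof of the theorem, as you observe.
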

\begin{proof} (1) $\Rightarrow$ (2): Since $X$ is ${\rm H}$-sober, $X$ is {\rm H}-complete by Remark \ref{H-S H-C H-B}. By the proof of (1) $\Rightarrow$ (2) in Theorem \ref{H-sober charact-mapping}, we have $f(\bigvee A)=\bigvee f(A)$.

(2) $\Rightarrow$ (3) $\Rightarrow$ (4): Trivial.

(4) $\Rightarrow$ (1): For every continuous mapping $f:X\longrightarrow Y$ to a sober space $Y$ and any $A\in {\rm H}(X)$, by condition (4) we have $\ua f\left(\bigcap\limits_{a\in A}\ua a\right)=\ua f(\bigvee A)=\ua \bigvee f(A)=\bigcap\limits_{a\in A} \ua f(a)=\bigcap\limits_{a\in A}\ua f(\ua a)$, and whence by Theorem \ref{H-sober charact-mapping}, $X$ is {\rm H}-sober.
\end{proof}

By Lemma \ref{Scott H-cont} and Corollary \ref{H-sober charact-mapping-1}, we get the following corollary.

\begin{corollary}\label{H-sober charact-mapping-2} Let $H : \mathbf{Top}_0 \longrightarrow \mathbf{Set}$ be an R-subset system and $X$ a $T_0$ space. Then the following conditions are equivalent:
\begin{enumerate}[\rm (1)]
		\item $X$ is H-sober.
        \item $X$ is H-complete, and any continuous mapping $f:X\longrightarrow Y$ to a $T_0$ space $Y$ is Scott H-continuous.
        \item $X$ is H-complete, and any continuous mapping $f:X\longrightarrow Y$ to a H-sober space $Y$  is Scott H-continuous.
        \item $X$ is H-complete, and any continuous mapping $f:X\longrightarrow Y$ to a sober space $Y$  is Scott H-continuous.
\end{enumerate}
\end{corollary}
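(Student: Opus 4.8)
The plan is to deduce this corollary directly from Corollary \ref{H-sober charact-mapping-1} by re-expressing, in each of conditions (2), (3), (4), the clause ``for any $A\in H(X)$, $f(\bigvee A)=\bigvee f(A)$'' as the single clause ``$f$ is Scott H-continuous'', the translation being furnished by Lemma \ref{Scott H-cont}. Since condition (1) (``$X$ is H-sober'') is verbatim the same in both corollaries, establishing this clause-by-clause correspondence is all that is required.

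First I would record the effect of H-completeness. By Definition \ref{H-complete}, asserting that $X$ is H-complete means precisely that $\bigvee A$ exists in $X$ for every $A\in H(X)$. Hence, under this standing hypothesis, the qualifying phrase ``$A\in H(X)$ for which $\bigvee A$ exists'' occurring in Lemma \ref{Scott H-cont}(2) is met by \emph{every} H-set, so that condition reads simply: for every $A\in H(X)$, $f(\bigvee A)=\bigvee f(A)$. Thus Lemma \ref{Scott H-cont}, applied to an arbitrary continuous $f:X\longrightarrow Y$ with $X$ H-complete, yields the equivalence ``$f$ is Scott H-continuous'' $\iff$ ``for every $A\in H(X)$, $f(\bigvee A)=\bigvee f(A)$''. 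I would invoke this once for each of the three relevant classes of codomain, namely $Y$ a $T_0$ space, $Y$ an H-sober space, and $Y$ a sober space.

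With this equivalence in hand, condition (2) (respectively (3), (4)) of the present statement becomes identical, clause for clause, to condition (2) (respectively (3), (4)) of Corollary \ref{H-sober charact-mapping-1}: both retain the H-completeness hypothesis, and the mapping clause is interchanged by Lemma \ref{Scott H-cont}. The full chain $(1)\Leftrightarrow(2)\Leftrightarrow(3)\Leftrightarrow(4)$ then transfers verbatim from Corollary \ref{H-sober charact-mapping-1}, completing the proof. The argument is essentially bookkeeping, so there is no substantial obstacle; the only point demanding care is that the existence hypothesis on $\bigvee A$ in Lemma \ref{Scott H-cont} be absorbed by H-completeness, which is exactly why H-completeness must be kept as an explicit hypothesis in (2)--(4) and cannot be dropped in favour of Scott H-continuity alone.
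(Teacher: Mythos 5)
Your proposal is correct and follows exactly the route the paper intends: the paper derives this corollary by citing Lemma \ref{Scott H-cont} together with Corollary \ref{H-sober charact-mapping-1}, and your clause-by-clause translation (with H-completeness absorbing the existence hypothesis on $\bigvee A$) is precisely the bookkeeping that citation leaves implicit.
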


\begin{definition}\label{Hd set}  Let ${\rm H} : \mathbf{Top}_0 \longrightarrow \mathbf{Set}$ be an R-subset system and $X$ a $T_0$ space.
A subset $A$ of $X$ is called \emph{H}-\emph{sober determined}, if for any continuous mapping $ f:X\longrightarrow Y$
to an {\rm H}-sober space $Y$, there exists a unique $y_A\in Y$ such that $\overline{f(A)}=\overline{\{y_A\}}$. The $\mathcal R$-sober determined sets are shortly called \emph{sober determined} sets. Denote by ${\rm H}^d(X)$ the set of all {\rm H}-sober determined subsets of $X$. The set of all closed {\rm H}-sober determined subsets of $X$ is denoted by ${\rm H}^d_c(X)$.
\end{definition}

Clearly, a subset $A$ of a $T_0$ space $X$ is {\rm H}-sober determined if{}f $\overline{A}$ is {\rm H}-sober determined.

\begin{lemma}\label{sobd=irr} For a $T_0$ space $X$ and $A\subseteq X$, the following two conditions are equivalent:
\begin{enumerate}[\rm (1)]
\item $A$ is sober determined.
\item $A$ is irreducible.
\end{enumerate}
Therefore, $\mathcal R^d(X)=\ir(X)$.
\end{lemma}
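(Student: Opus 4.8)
The plan is to prove the two implications separately; the real content sits in $(1)\Rightarrow(2)$, which I would reduce to a short computation inside the soberification.

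For $(2)\Rightarrow(1)$ the argument is immediate from the cited lemmas. Assume $A$ is irreducible and let $f:X\longrightarrow Y$ be any continuous map into a sober space $Y$. By Lemma~\ref{irrimage}, $f(A)\in\ir(Y)$, so by Lemma~\ref{irrsubspace} its closure $\overline{f(A)}$ belongs to $\ir_c(Y)$. Sobriety of $Y$ then supplies a unique $y_A\in Y$ with $\overline{f(A)}=\overline{\{y_A\}}$; since $f$ and $Y$ were arbitrary, $A$ is sober determined.

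For $(1)\Rightarrow(2)$ I would test sober-determinedness against a single, economical sober target: the soberification $X^s=P_H(\ir_c(X))$ together with its canonical embedding $\eta_X:X\longrightarrow X^s$, $\eta_X(x)=\overline{\{x\}}$ (Remark~\ref{eta continuous}). Since $X^s$ is sober, the hypothesis applied to $f=\eta_X$ yields a point $w\in\ir_c(X)$ with $\cl_{X^s}(\eta_X(A))=\cl_{X^s}(\{w\})$ (and, in passing, $A\neq\emptyset$, as the right-hand side is never empty). The goal is to identify $w$ with $\overline{A}$. First, by Remark~\ref{eta continuous}(1) the specialization order of $X^s$ is set inclusion, so $\cl_{X^s}(\{w\})=\{F\in\ir_c(X):F\subseteq w\}$. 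Second, I would establish the closure identity $\cl_{X^s}(\eta_X(A))=\{F\in\ir_c(X):F\subseteq\overline{A}\}$: for $F\in\ir_c(X)$ and $U\in\mathcal O(X)$ one has $\Diamond U\cap\eta_X(A)\neq\emptyset$ exactly when $A\cap U\neq\emptyset$, so $F$ lies in the closure precisely when every open set meeting $F$ also meets $A$, which is the condition $F\subseteq\overline{A}$.

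Equating the two descriptions and testing $F=w$ gives $w\subseteq\overline{A}$, while testing $F=\overline{\{x\}}$ for each $x\in\overline{A}$ gives $x\in w$, hence $\overline{A}\subseteq w$; thus $w=\overline{A}$. Since $w\in\ir_c(X)$, the closure $\overline{A}$ is irreducible, and therefore so is $A$ by Lemma~\ref{irrsubspace}. The final assertion $\mathcal R^d(X)=\ir(X)$ is then just a restatement, as $\mathcal R^d(X)$ is by definition the set of sober determined subsets of $X$. The one non-formal step is the closure identity for $\cl_{X^s}(\eta_X(A))$; everything else is bookkeeping with the specialization order of $X^s$, so I expect no serious obstacle beyond that computation.
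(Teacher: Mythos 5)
Your proposal is correct and follows the same route as the paper: $(2)\Rightarrow(1)$ via Lemma~\ref{irrimage} and sobriety of the target, and $(1)\Rightarrow(2)$ by applying sober-determinedness to the canonical embedding $\eta_X:X\longrightarrow X^s$ and using the identity $\cl_{X^s}(\eta_X(A))=\Box_{\ir_c(X)}\overline{A}$ to conclude $\overline{A}=w\in\ir_c(X)$. The paper states this closure identity without proof; your verification of it is sound (the reduction from basic to subbasic opens is legitimate because the members of $\ir_c(X)$ are irreducible, so the sets $\Diamond U$ already form a topology there).
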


\begin{proof} (1) $\Rightarrow$ (2): Suppose $A$ is sober determined. Now we show that $A$ is irreducible. Consider the sobrification $X^s$ ($=P_H(\ir_c(X)$) of $X$ and the canonical topological embedding $\eta_{X}: X\longrightarrow X^s$. Then there is $B\in \ir_c(X)$ such that $\Box_{\ir_c(X)}\overline{A}=\overline{\eta_X (A)}=\overline {\{B\}}=\Box_{\ir_c(X)}B$, and whence $\overline{A}=B$. Thus $A\in \ir (X)$.

(2) $\Rightarrow$ (1): By Lemma \ref{irrimage}.
\end{proof}

\begin{lemma}\label{H Hd ir} Let $H : \mathbf{Top}_0 \longrightarrow \mathbf{Set}$ be an R-subset system and $X$ a $T_0$ space. Then $H(X)\subseteq H^d(X)\subseteq \ir (X)$.
\end{lemma}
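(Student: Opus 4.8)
The plan is to prove the two inclusions $H(X)\subseteq H^d(X)$ and $H^d(X)\subseteq \ir(X)$ independently, reducing each to the definitions of this section together with Lemma \ref{sobd=irr}.

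For $H(X)\subseteq H^d(X)$ I would fix an arbitrary $A\in H(X)$ and verify Definition \ref{Hd set} directly. Let $f:X\longrightarrow Y$ be any continuous map into an $H$-sober space $Y$. Since $H$ is a subset system, functoriality (clause (2) of Definition \ref{subset system}) gives $f(A)=H(f)(A)\in H(Y)$; as $Y$ is $H$-sober, Definition \ref{def H-sober} then supplies a unique $y_A\in Y$ with $\overline{f(A)}=\overline{\{y_A\}}$. Since $f$ is arbitrary, $A$ is $H$-sober determined, i.e.\ $A\in H^d(X)$. This half is essentially immediate from the functorial nature of $H$ and needs no further ingredients.

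For $H^d(X)\subseteq \ir(X)$ the first thing I would record is the observation that \emph{every sober space is $H$-sober}: if $Y$ is sober and $A\in H(Y)$, then $A\in\ir(Y)$ because $H(Y)\subseteq\ir(Y)$ for any R-subset system, so $\overline{A}\in\ir_c(Y)$ and sobriety yields the required unique point. Therefore every continuous map from $X$ into a sober space is, in particular, a map into an $H$-sober space. Hence if $A\in H^d(X)$, the defining property of $H$-sober determinedness holds in particular for all continuous maps into sober spaces, so $A$ is sober determined, that is $A\in\mathcal R^d(X)$. Invoking Lemma \ref{sobd=irr}, which identifies $\mathcal R^d(X)$ with $\ir(X)$, then gives $A\in\ir(X)$.

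I expect the only non-formal step to be the observation that sobriety implies $H$-sobriety; once that is noted, the second inclusion collapses to a single application of Lemma \ref{sobd=irr}, while the first follows purely from the functoriality built into the notion of a subset system. No transfinite induction or appeal to the Axiom of Choice should be required.
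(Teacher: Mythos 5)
Your proof is correct and follows essentially the same route as the paper: the first inclusion via functoriality of $H$ plus the $H$-sobriety of the target, and the second by reducing to $\mathcal R^d(X)=\ir(X)$ (Lemma \ref{sobd=irr}) through the observation that every sober space is $H$-sober. The paper compresses that last observation into the phrase ``since $H\leq\mathcal R$,'' but the underlying argument is identical to the one you spell out.
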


\begin{proof} First, if $ f:X\longrightarrow Y$ is a continuous mapping to an {\rm H}-sober space $Y$ and $A\in {\rm H}(X)$, then ${\rm H}(f)(A)=f(A)\in {\rm H}(Y)$, and hence by the {\rm H}-sobriety of $Y$, there exists a unique $y_A\in Y$ such that $\overline{f(A)}=\overline{\{y_A\}}$. Thus $A\in  {\rm H}^d(X)$. Now suppose $B\in {\rm H}^d(X)$. Since ${\rm H}\leq \mathcal R$, we have $B\in R^d(X)=\ir(X)$ by Lemma \ref{sobd=irr}.
\end{proof}

\begin{lemma}\label{Hd image} Let $H : \mathbf{Top}_0 \longrightarrow \mathbf{Set}$ be an R-subset system and $ f:X\longrightarrow Y$ a continuous mapping in $\mathbf{Top}_0$. Then $f(A)\in H^d(Y)$ for all $A\in H^d(X)$.
\end{lemma}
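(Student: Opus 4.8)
The plan is to exploit the defining property of $H$-sober determined sets directly, using nothing more than the composition of continuous maps. Recall that $A\in H^d(X)$ means: for \emph{every} continuous map into an $H$-sober space, the closure of the image of $A$ is the closure of a (unique) singleton. To prove $f(A)\in H^d(Y)$, I must verify exactly this property for $f(A)$, that is, for an arbitrary continuous map $g:Y\longrightarrow Z$ with $Z$ an $H$-sober space, produce a unique $z\in Z$ with $\overline{g(f(A))}=\overline{\{z\}}$.

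First I would fix such a $g:Y\longrightarrow Z$ to an $H$-sober space $Z$ and form the composite $g\circ f:X\longrightarrow Z$, which is continuous since $f$ and $g$ are. The key observation is that, because $A\in H^d(X)$ and $Z$ is $H$-sober, the definition of $H$-sober determined sets applied to the map $g\circ f$ yields a unique point $z_A\in Z$ with $\overline{(g\circ f)(A)}=\overline{\{z_A\}}$.

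Next I would simply rewrite $(g\circ f)(A)=g(f(A))$, so that $\overline{g(f(A))}=\overline{\{z_A\}}$. The uniqueness of $z_A$ is inherited from the $T_0$ separation of $Z$ (every $H$-sober space is $T_0$ by definition), since in a $T_0$ space $\overline{\{z_1\}}=\overline{\{z_2\}}$ forces $z_1=z_2$. As $g$ was an arbitrary continuous map from $Y$ into an arbitrary $H$-sober space, this establishes that $f(A)$ is $H$-sober determined, i.e.\ $f(A)\in H^d(Y)$.

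I do not expect any genuine obstacle here: the statement is essentially the functoriality of the ``test against $H$-sober spaces'' condition, and the whole argument reduces to the identity $(g\circ f)(A)=g(f(A))$ together with associativity of composition. The only points requiring a word of care are that $H$-sober spaces are indeed $T_0$ (so that the witnessing point is unique) and that $g\circ f$ really does land in an $H$-sober space, both of which are immediate from the definitions. No appeal to the $H$-set structure of $A$ or $f(A)$ is needed, since the notion of $H$-sober determined set quantifies over all maps into $H$-sober spaces rather than referring to membership in $H(X)$ or $H(Y)$.
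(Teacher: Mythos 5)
Your argument is correct and is essentially identical to the paper's proof: both form the composite $g\circ f$ for an arbitrary continuous $g:Y\longrightarrow Z$ into an H-sober space $Z$, apply the defining property of $A\in H^d(X)$ to $g\circ f$, and conclude via $(g\circ f)(A)=g(f(A))$. Nothing further is needed.
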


\begin{proof} Let $Z$ is an {\rm H}-sober space and $g:Y\longrightarrow Z$ is a continuous mapping.
Since $g\circ f:X\longrightarrow Z$ is continuous and $A\in {\rm H}^d (X)$, there is $z\in Z$ such that $\overline{g(f(A))}=\overline{g\circ f(A)}=\overline{\{z\}}$. Thus $\overline{f(A)}\in {\rm H}^d (Y)$.
\end{proof}

The following corollary follows directly from Lemma \ref{sobd=irr}, Lemma \ref{H Hd ir} and Lemma \ref{Hd image}.

\begin{corollary}\label{Hd R-subset system} Let $H : \mathbf{Top}_0 \longrightarrow \mathbf{Set}$ be an R-subset system. Then
\begin{enumerate}[\rm (1)]
\item $H^d : \mathbf{Top}_0 \longrightarrow \mathbf{Set}$ is an R-subset system, where for any continuous mapping $ f:X\longrightarrow Y$ in $\mathbf{Top}_0$, $H^d(f) : H^d(X) \longrightarrow H^d(Y)$ is defined by $H^d(f)(A)=f(A)$ for all $A\in H^d (X)$.
    \item $H\leq H^d\leq \mathcal R$.
    \item $\mathcal R^d=\mathcal R$
    \end{enumerate}
    \end{corollary}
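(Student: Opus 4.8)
The plan is to verify the three assertions in turn, each being an essentially mechanical repackaging of the three preceding lemmas; the only substantive clauses to check sit inside part (1), where I must confirm that $H^d$ satisfies every condition of Definitions \ref{subset system} and \ref{R-subset system}. First I would pin down the inclusion chain $\mathcal S(X)\subseteq H^d(X)\subseteq \ir(X)\subseteq 2^X$ for every $T_0$ space $X$. Since $H$ is itself an R-subset system we have $\mathcal S(X)\subseteq H(X)$, and Lemma \ref{H Hd ir} supplies $H(X)\subseteq H^d(X)\subseteq \ir(X)$; concatenating these yields both the lower bound $\mathcal S(X)\subseteq H^d(X)$ required by clause (1) of Definition \ref{subset system} and the irreducibility condition $H^d(X)\subseteq \ir(X)$ of Definition \ref{R-subset system}.

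Next I would verify that $H^d$ is a covariant functor with the stated morphism rule. For a continuous $f:X\longrightarrow Y$, Lemma \ref{Hd image} guarantees $f(A)\in H^d(Y)$ whenever $A\in H^d(X)$, so $H^d(f)(A)=f(A)$ is a well-defined map $H^d(X)\longrightarrow H^d(Y)$, which is exactly clause (2) of Definition \ref{subset system}. The functor axioms $H^d(id_X)=id_{H^d(X)}$ and $H^d(g\circ f)=H^d(g)\circ H^d(f)$ then hold automatically, since direct image is functorial ($id_X(A)=A$ and $(g\circ f)(A)=g(f(A))$); this settles part (1). Part (2) is merely the reading of the inclusions $H(X)\subseteq H^d(X)\subseteq \ir(X)=\mathcal R(X)$ of Lemma \ref{H Hd ir} against the definition of $\leq$ on $\mathcal H$, and part (3) is precisely the concluding line $\mathcal R^d(X)=\ir(X)=\mathcal R(X)$ of Lemma \ref{sobd=irr}.

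There is no genuine obstacle here, only one bookkeeping subtlety worth flagging. The morphism clause of Definition \ref{subset system} demands $f(A)\in H^d(Y)$, whereas the argument behind Lemma \ref{Hd image} actually produces $\overline{f(A)}$; the two coincide as elements of $H^d(Y)$ because a set is H-sober determined exactly when its closure is (the remark preceding Lemma \ref{sobd=irr}), so this gap closes with no extra work. The sole conceptual input — that direct images of H-sober determined sets stay H-sober determined — is already isolated as Lemma \ref{Hd image}, whose proof uses only the continuity of $g\circ f$ and the defining universal property of H-sober determined sets. Everything else is the verification of functor axioms and inclusion chains.
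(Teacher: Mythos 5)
Your proposal is correct and follows exactly the route the paper intends: the corollary is stated there as an immediate consequence of Lemma \ref{sobd=irr}, Lemma \ref{H Hd ir} and Lemma \ref{Hd image}, and your write-up simply makes the routine verifications (inclusion chain, functoriality of direct image, and the observation that a set is H-sober determined iff its closure is) explicit. The bookkeeping subtlety you flag about $f(A)$ versus $\overline{f(A)}$ in Lemma \ref{Hd image} is a fair and correctly resolved point.
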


\begin{example}\label{Johnstone dcpo} (Johnstone's dcpo)  Let $\mathbb{J}=\mathbb{N}\times (\mathbb{N}\cup \{\infty\})$ with ordering defined by $(j, k)\leq (m, n)$ if{}f $j = m$ and $k \leq n$, or $n =\infty$ and $k\leq m$ (see Figure 1). $\mathbb{J}$ is a well-known dcpo constructed by Johnstone in \cite{johnstone-81}. Then $\mathcal C(\mathbb{J}) \subset \mathcal D(\mathbb{J})$. By Proposition \ref{D-sober=C-sober} and Lemma \ref{H Hd ir},  $D(\mathbb{J})\subseteq \mathcal D^d(\mathbb{J})=\mathcal C^d(\mathbb{J})$. Therefore, $\mathcal C(\mathbb{J}) \subset \mathcal C^d(\mathbb{J})$.
\end{example}

\begin{figure}[ht]
	\centering
	\includegraphics[height=4.5cm,width=4.5cm]{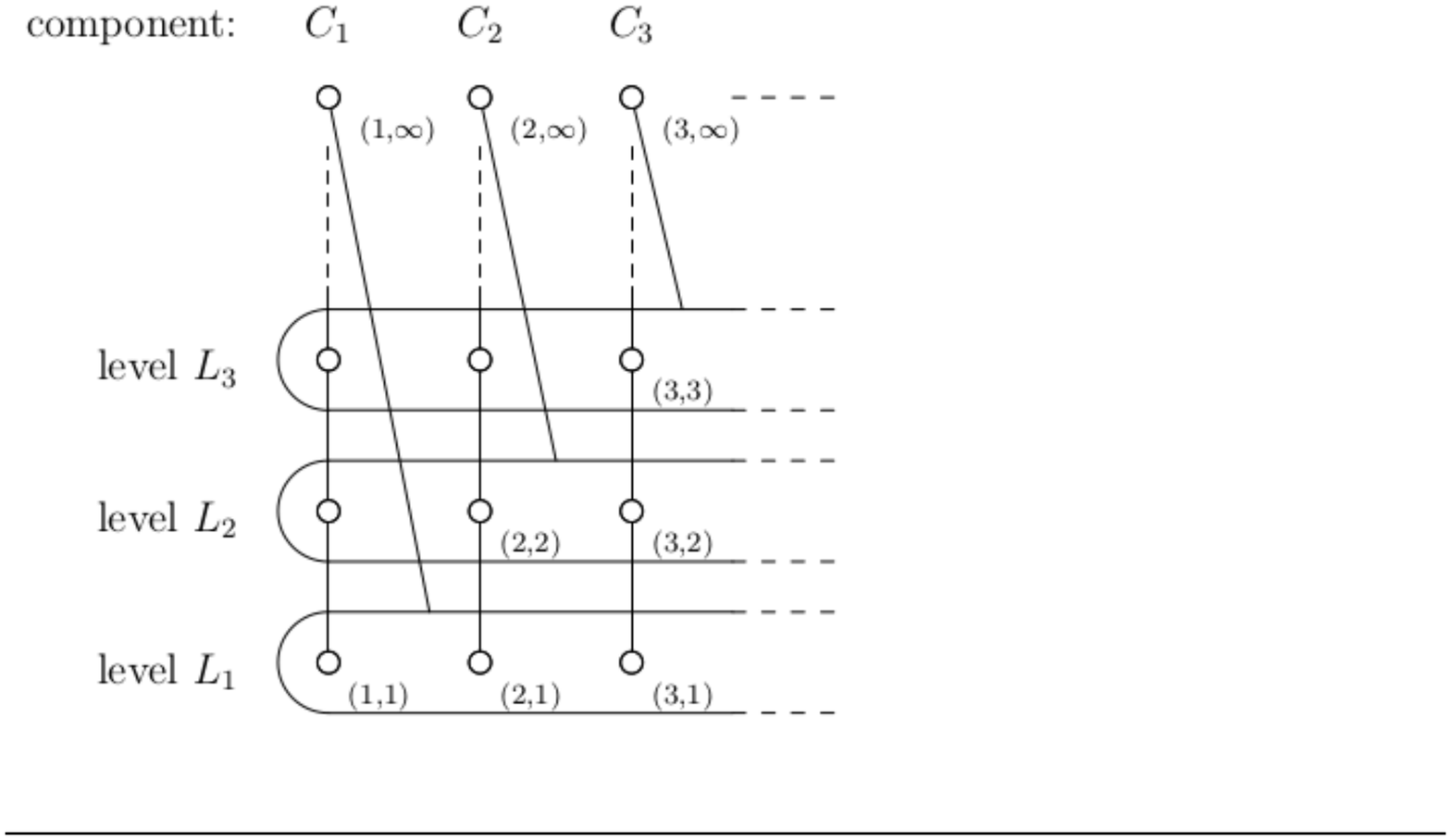}
	\caption{Johnstone's dcpo $\mathbb{J}$}
\end{figure}

\begin{proposition}\label{H-sober=Hd-sober} Let $H : \mathbf{Top}_0 \longrightarrow \mathbf{Set}$ be an R-subset system and $X$ a $T_0$ space. Then the following two conditions are equivalent:
\begin{enumerate}[\rm (1)]
\item $X$ is  H-sober.
    \item X is $H^d$-sober.
    \end{enumerate}
    \end{proposition}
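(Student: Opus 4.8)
The plan is to prove the two implications separately, and I expect the whole argument to be short, hinging on the containment $H \leq H^d$ established in Corollary \ref{Hd R-subset system} together with the definition of H-sober determined set applied to a well-chosen test map.

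For the implication (2) $\Rightarrow$ (1), I would simply invoke that $H \leq H^d$, so that $H(X) \subseteq H^d(X)$. Given any $A \in H(X)$, we then have $A \in H^d(X)$, and since $X$ is assumed $H^d$-sober there is a unique $x \in X$ with $\overline{A} = \overline{\{x\}}$. As this holds for every $A \in H(X)$, the space $X$ is H-sober. This direction requires no creativity beyond citing the ordering $H \leq H^d \leq \mathcal{R}$ from Corollary \ref{Hd R-subset system}.

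For the converse (1) $\Rightarrow$ (2), suppose $X$ is H-sober, and let $A \in H^d(X)$ be arbitrary. The key move is to apply the defining property of H-sober determined sets (Definition \ref{Hd set}) to the \emph{identity} continuous mapping $id_X : X \longrightarrow X$: this is legitimate precisely because the codomain $X$ is, by hypothesis, an H-sober space. The definition then yields a unique point $x \in X$ with $\overline{id_X(A)} = \overline{\{x\}}$, that is, $\overline{A} = \overline{\{x\}}$. Since $A \in H^d(X)$ was arbitrary, every H-sober determined subset of $X$ has this form, so $X$ is $H^d$-sober.

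I do not anticipate a genuine obstacle here; the only point to be careful about is recognizing that the identity map is the natural test map, exploiting the self-referential fact that $X$ is H-sober in order to feed $X$ itself into the universal quantification over H-sober target spaces in Definition \ref{Hd set}. Uniqueness of the representing point in both directions is automatic from the $T_0$ separation, since $\overline{\{x\}} = \overline{\{x'\}}$ forces $x = x'$.
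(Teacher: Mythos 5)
Your proposal is correct and follows essentially the same route as the paper: the forward direction applies the defining property of $H^d(X)$ to the identity mapping $id_X : X \longrightarrow X$ (legitimate since $X$ is assumed H-sober), and the reverse direction uses the containment $H(X)\subseteq H^d(X)$ from Lemma \ref{H Hd ir}. No gaps.
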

\begin{proof} (1) $\Rightarrow$ (2): Let $A\in {\rm H}^d(X)$. Since $X$ is {\rm H}-sober and  the identity $id_X : X \longrightarrow X$ is continuous, there exists an $x\in X$ such that $\overline{A}=\overline{id_X(A)}=\overline{\{x\}}$. Therefore, $X$ is ${\rm H}^d$-sober.

(2) $\Rightarrow$ (1): By Lemma \ref{H Hd ir}.
\end{proof}

By Corollary \ref{Hd R-subset system} and Proposition \ref{H-sober=Hd-sober}, we get the following result.

\begin{corollary}\label{Hdd=Hd} Let $H : \mathbf{Top}_0 \longrightarrow \mathbf{Set}$ be an R-subset system. Then $d : \mathcal H\longrightarrow \mathcal H$, $H \mapsto H^d$, is a closure operator.
\end{corollary}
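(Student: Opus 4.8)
The plan is to verify, on the poset $(\mathcal H,\leq)$, the three defining properties of a closure operator — extensivity, monotonicity and idempotency — after first checking that $d$ is a well-defined self-map of $\mathcal H$. Well-definedness and extensivity I would simply read off from Corollary \ref{Hd R-subset system}: part (1) says $H^d$ is again an R-subset system, so $d$ does carry $\mathcal H$ into $\mathcal H$, and part (2) gives $H\leq H^d$, which is precisely extensivity.

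For monotonicity, suppose $H_1\leq H_2$, i.e.\ $H_1(X)\subseteq H_2(X)$ for every $T_0$ space $X$. The key observation I would record first is that every $H_2$-sober space is $H_1$-sober: if $Y$ is $H_2$-sober and $A\in H_1(Y)\subseteq H_2(Y)$, then $H_2$-sobriety already supplies the unique point $y$ with $\overline A=\overline{\{y\}}$. Hence the class of $H_2$-sober spaces is contained in the class of $H_1$-sober spaces. Consequently the universal quantifier defining $H_1$-sober determinedness ranges over a \emph{larger} family of target spaces than the one defining $H_2$-sober determinedness, so being $H_1$-sober determined is the stronger condition; thus $H_1^d(X)\subseteq H_2^d(X)$ for all $X$, that is, $H_1^d\leq H_2^d$. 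Put differently, $H\mapsto\{H\text{-sober spaces}\}$ is order-reversing and $\{\text{class}\}\mapsto\{\text{sets determined by maps into that class}\}$ is order-reversing, so their composite $H\mapsto H^d$ is order-preserving.

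Idempotency is where Proposition \ref{H-sober=Hd-sober} does the real work. That proposition tells me a $T_0$ space is H-sober if and only if it is $H^d$-sober, so the class of H-sober spaces and the class of $H^d$-sober spaces literally coincide. Since $H^d(X)$ is defined by quantifying over continuous maps from $X$ into H-sober spaces, while $(H^d)^d(X)$ is defined by quantifying over continuous maps from $X$ into $H^d$-sober spaces, the two definitions use the \emph{same} family of target spaces and hence single out the same subsets of $X$. Therefore $(H^d)^d=H^d$, which together with the previous steps establishes that $d$ is a closure operator.

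I do not expect a genuine obstacle here: well-definedness and extensivity are immediate citations of Corollary \ref{Hd R-subset system}, and idempotency collapses to the set-equality of target classes furnished by Proposition \ref{H-sober=Hd-sober}. The only step carrying an argument of its own is monotonicity, whose crux is the inclusion of the $H_2$-sober spaces into the $H_1$-sober spaces coming from $H_1\leq H_2$; once this double order-reversal is noted, the passage back to $H_1^d\leq H_2^d$ is automatic.
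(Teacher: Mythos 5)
Your proposal is correct and follows the same route as the paper, which simply cites Corollary \ref{Hd R-subset system} (well-definedness and extensivity) together with Proposition \ref{H-sober=Hd-sober} (idempotency, via the coincidence of the classes of H-sober and $H^d$-sober target spaces). The only material you add is an explicit verification of monotonicity via the double order-reversal $H\mapsto\{\mbox{H-sober spaces}\}\mapsto H^d$, which the paper leaves implicit but which is exactly the intended argument.
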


\begin{proposition}\label{H-sober closed} Let $X$ be an H-sober space.
\begin{enumerate}[\rm (1)]
\item If $A$ is a nonempty closed subspace of $X$, then $A$ is H-sober.
\item If $U$ is a nonempty saturated subspace of $X$, then $U$ is H-sober.
\end{enumerate}
\end{proposition}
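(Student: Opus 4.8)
The plan is to reduce H-sobriety of each subspace to that of $X$ itself, using Proposition \ref{subspace H-set}(1): every H-set of a nonempty subspace $Z$ is already an H-set of $X$. Concretely, given $B\in {\rm H}(Z)$ (with $Z$ being either $A$ or $U$), I would invoke Proposition \ref{subspace H-set}(1) to view $B\in {\rm H}(X)$, and then use H-sobriety of $X$ to produce a unique $x\in X$ with $\cl_X B=\cl_X\{x\}=\da x$. The entire argument then turns on showing that this witness $x$ actually lies in $Z$; once that is secured, the elementary subspace-closure identity $\cl_Z S=\cl_X S\cap Z$ closes everything out, and uniqueness is inherited from $Z$ being $T_0$.

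For part (1), the placement of $x$ is automatic. Since $B\subseteq A$ and $A$ is closed, I have $\cl_X B\subseteq A$; and because $x\in\cl_X\{x\}=\cl_X B$, it follows that $x\in A$. Then $\cl_A\{x\}=\cl_X\{x\}\cap A=\cl_X B$ (using $\cl_X\{x\}=\cl_X B\subseteq A$) and likewise $\cl_A B=\cl_X B\cap A=\cl_X B$, so $\cl_A B=\cl_A\{x\}$. Thus $A$ is H-sober.

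For part (2), the corresponding step is the one genuinely nontrivial observation. Here $x$ need not lie in $U$ a priori, so I would argue as follows: pick any $b\in B$ (an H-set is irreducible, hence nonempty), so that $b\in U$ and $b\le x$ since $b\in\cl_X B=\da x$. Because $U$ is saturated, i.e. an upper set, the combination $b\in U$ and $b\le x$ forces $x\in U$. With $x\in U$ established, I get $\cl_U B=\cl_X B\cap U=\da x\cap U=\cl_X\{x\}\cap U=\cl_U\{x\}$, so $U$ is H-sober.

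The main obstacle is precisely this placement of the witness point. For closed subspaces it costs nothing, because closedness traps the whole closure $\cl_X B$, and hence $x$, inside $A$. For saturated subspaces one cannot trap the closure; instead the upper-set property must be exploited together with the fact that $x$ dominates every element of the nonempty set $B\subseteq U$. Everything beyond this reduces to the routine identity $\cl_Z S=\cl_X S\cap Z$ and the $T_0$ separation inherited by subspaces.
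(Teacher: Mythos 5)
Your proof is correct and follows essentially the same route as the paper's: reduce to an H-set of $X$ via Proposition \ref{subspace H-set}(1), obtain the witness $x$ from H-sobriety of $X$, and place $x$ inside the subspace using closedness in case (1) and the upper-set property together with $B\subseteq \da x$ in case (2). Your spelled-out justification that $x\in U$ (picking $b\in B\subseteq U$ with $b\le x$) is exactly the step the paper compresses into ``by $C\subseteq U=\ua U$ and $C\subseteq\da x$, we have $x\in U$.''
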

\begin{proof} (1): Let $B\in {\rm H}(A)$. Then $B\in {\rm H}(X)$ (see Remark \ref{subspace H-set}), and hence there is $x\in X$ such that $\cl_X B=\cl_X\{x\}$. As $A\in \Gamma (X)$ and $B\subseteq A$, we have $x\in A$ and $\cl_X B\subseteq A$. It follows that $\cl_A B=\cl_X B=\cl _X\{x\}=\cl_A\{x\}$. Therefore, $A$ is {\rm H}-sober.

(2): Let $C\in {\rm H}(U)$. Then $C\in {\rm H}(X)$, and whence there is $x\in X$ such that $\cl_X C=\cl_X\{x\}$. By $C\subseteq U=\ua U$ and $C\subseteq \da x$, we have $x\in U$ and $\cl_U C=(\cl_X C)\bigcap U=(\cl_X\{x\})\bigcap U=\cl_U\{x\}$. Thus $U$ is {\rm H}-sober.
\end{proof}

\begin{proposition}\label{H-sober retract}
	A retract of an H-sober space is H-sober.
\end{proposition}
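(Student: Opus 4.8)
The plan is to exploit the functoriality of H together with the H-sobriety of the ambient space. Recall that $Y$ being a retract of $X$ means that there are continuous maps $s : Y \longrightarrow X$ and $r : X \longrightarrow Y$ with $r\circ s=\mathrm{id}_Y$, where $X$ is assumed H-sober. First I would take an arbitrary $B\in {\rm H}(Y)$ and transport it into $X$: since $s$ is continuous, the subset-system axiom (Definition \ref{subset system}(2)) gives $s(B)={\rm H}(s)(B)\in {\rm H}(X)$. Because $X$ is H-sober, there is a unique $x\in X$ with $\cl_X s(B)=\cl_X\{x\}$. I would then set $y=r(x)$ and claim that $\cl_Y B=\cl_Y\{y\}$, which (together with $T_0$-ness of $Y$) is exactly what H-sobriety of $Y$ requires.

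The verification rests on the elementary fact that for a continuous map $r$ and any subset $A$ one has $\cl_Y r(\cl_X A)=\cl_Y r(A)$: indeed $A\subseteq \cl_X A$ gives $\cl_Y r(A)\subseteq\cl_Y r(\cl_X A)$, while continuity of $r$ gives $r(\cl_X A)\subseteq\cl_Y r(A)$ and hence the reverse inclusion after applying $\cl_Y$. Applying this with $A=s(B)$ and using $r\circ s=\mathrm{id}_Y$, so that $r(s(B))=B$, I obtain
$$\cl_Y B=\cl_Y r(\cl_X s(B))=\cl_Y r(\cl_X\{x\})\subseteq\cl_Y\{r(x)\}=\cl_Y\{y\}.$$
For the reverse inclusion, continuity of $r$ yields $r(\cl_X s(B))\subseteq\cl_Y r(s(B))=\cl_Y B$; since $x\in\cl_X s(B)=\cl_X\{x\}$, this gives $y=r(x)\in\cl_Y B$, whence $\cl_Y\{y\}\subseteq\cl_Y B$. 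Thus $\cl_Y B=\cl_Y\{y\}$, and the uniqueness of $y$ follows from $Y$ being $T_0$. Therefore $Y$ is H-sober.

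There is no serious obstacle in this argument; the only point requiring care is the bookkeeping with closures and the specialization order, namely keeping track of which closure is taken in which space and invoking continuity of $r$ in both directions to pin down $\cl_Y B$. An alternative would be to route the argument through Corollary \ref{Ps retract} and the Smyth power space, but the direct computation above seems cleanest and keeps the proof entirely within $X$ and $Y$.
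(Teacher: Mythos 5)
Your proof is correct and follows essentially the same route as the paper's: transport $B$ into $X$ along the section, apply H-sobriety of $X$ to get a generic point $x$, and push it back with the retraction using the identity $\cl_Y r(\cl_X A)=\cl_Y r(A)$. The paper merely writes the two inclusions you verify as a single chain of closure equalities, so there is nothing to add.
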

\begin{proof}
	It is well-known that a retract of a $T_0$ space is $T_0$ (cf. \cite{Engelking}). Assume $X$ is an ${\rm H}$-sober space and $Y$ a retract of $X$. Then there are continuous mappings $f:X\longrightarrow Y$ and $g:Y\longrightarrow X$ with $f\circ g={\rm id}_Y$. Let $B\in {\rm H}(Y)$. Then $g(B)\in{\rm H}(X)$. Since $X$ is {\rm H}-sober, there is $x\in X$ such that $\overline{g(B)}=\overline{\{x\}}$, and whence $\overline{B}=\overline{fg(B)}=\overline{f\left(\overline{g(B)}\right)}=\overline{f\left(\overline{\{x\}}\right)}=\overline{f\left(\{x\}\right)}=\overline{\{f(x)\}}$. Thus $Y$ is {\rm H}-sober.
\end{proof}

\begin{theorem}\label{H-sober prod}
	Let $\{X_i:i\in I\}$ be a family of $T_0$ spaces. Then the following two conditions are equivalent:
	\begin{enumerate}[\rm(1)]
		\item The product space $\prod_{i\in I}X_i$ is H-sober.
		\item For each $i \in I$, $X_i$ is H-sober.
	\end{enumerate}
\end{theorem}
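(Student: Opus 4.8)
The plan is to prove both implications directly, writing $X=\prod_{i\in I}X_i$ with projections $p_i:X\longrightarrow X_i$, each of which is continuous. Since irreducible sets are by definition nonempty, I would work in the nondegenerate setting where every factor is nonempty (if some $X_i$ is empty the product is empty and the statement degenerates). The forward direction relies on the structure of irreducible sets in products, while the backward direction is handled by a retract argument.

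For (2) $\Rightarrow$ (1), I would start from an arbitrary $A\in H(X)$. Since $H(X)\subseteq\ir(X)$, the set $A$ is irreducible, so Lemma \ref{irrprod} applies and gives $\cl_X(A)=\prod_{i\in I}\cl_{X_i}(p_i(A))$. Because each $p_i$ is continuous, condition (2) of Definition \ref{subset system} yields $p_i(A)=H(p_i)(A)\in H(X_i)$, which is exactly what lets the per-factor hypothesis be invoked. By the H-sobriety of each $X_i$ there is a unique $x_i\in X_i$ with $\cl_{X_i}(p_i(A))=\cl_{X_i}\{x_i\}$. Setting $x=(x_i)_{i\in I}$ and using the product identity $\overline{\{x\}}=\prod_{i\in I}\overline{\{x_i\}}$ (which holds since $y\leq_X x$ iff $p_i(y)\leq_{X_i}x_i$ for every $i$), I would conclude $\cl_X(A)=\prod_{i\in I}\cl_{X_i}\{x_i\}=\overline{\{x\}}$. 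Uniqueness of $x$ follows because a product of $T_0$ spaces is $T_0$. Hence $X$ is H-sober.

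For (1) $\Rightarrow$ (2), I would exhibit each factor as a retract of the product and then apply Proposition \ref{H-sober retract}. Fix $j\in I$; since every $X_i$ is nonempty, choose $a_i\in X_i$ for each $i\neq j$ and define a section $e_j:X_j\longrightarrow X$ by $p_j\circ e_j=\mathrm{id}_{X_j}$ and $p_i\circ e_j\equiv a_i$ for $i\neq j$. Each coordinate map $p_i\circ e_j$ is either the identity or a constant, hence continuous, so $e_j$ is continuous; together with the continuous $p_j$ and $p_j\circ e_j=\mathrm{id}_{X_j}$, this realizes $X_j$ as a retract of the H-sober space $X$, and Proposition \ref{H-sober retract} gives that $X_j$ is H-sober.

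The substantive content lies in the (2) $\Rightarrow$ (1) direction: everything hinges on reducing the closure of an irreducible H-set in the product to the product of the closures of its projections via Lemma \ref{irrprod}, and then reassembling the coordinatewise witnesses into a single point whose closure is exactly $\cl_X(A)$. The functoriality step $p_i(A)\in H(X_i)$ is what makes the per-factor H-sobriety applicable, and the product identity for $\overline{\{x\}}$ is what closes the argument; both are routine once the decomposition is in place. The (1) $\Rightarrow$ (2) direction is comparatively immediate given the retract proposition, the only care needed being the choice of base points to build the section, which is where the nonemptiness of the factors enters.
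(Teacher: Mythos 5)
Your proposal is correct and follows essentially the same route as the paper: the retract argument via Proposition \ref{H-sober retract} for (1) $\Rightarrow$ (2), and for (2) $\Rightarrow$ (1) the combination of functoriality of H (giving $p_i(A)\in H(X_i)$), Lemma \ref{irrprod} to decompose $\cl_X(A)$ as the product of the closures of the projections, and the identity $\overline{\{x\}}=\prod_{i\in I}\overline{\{x_i\}}$. The only difference is that you spell out the section construction and the nonemptiness caveat, which the paper leaves implicit.
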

\begin{proof}	
	(1) $\Rightarrow$ (2):  For each $i \in I$, $X_i$ is a retract of $\prod_{i\in I}X_i$. By Proposition \ref{H-sober retract}, $X_i$ is {\rm H}-sober.
	
	(2) $\Rightarrow$ (1): Let $X=\prod_{i\in I}X_i$. Then $X$ is $T_0$ (see \cite[Proposition 2.3.11]{Engelking}). Suppose $A\in {\rm H}(X)$. Then for each $i \in I$, $p_i(A)\in {\rm H}(X_i)$, and hence there is $x_i\in X_i$ such that $\cl_{X_i} p_i(A)=\cl_{X_i}\{x_i\}$. Let $x=(x_i)_{i\in I}$. Then by Lemma \ref{irrprod} and \cite[Proposition 2.3.3]{Engelking}), we have $\cl_X(A)=\prod_{i\in I}\cl_{X_i} p_i(A)=\prod_{i\in I}\cl_{X_i}\{x_i\}=\cl_X \{x\}$. So $X$ is {\rm H}-sober.
\end{proof}

\begin{theorem}\label{H-sober function space}
	Let $H : \mathbf{Top}_0 \longrightarrow \mathbf{Set}$ be an R-subset system. If $X$ is a $T_0$ space and $Y$ an H-sober space, then the function space  $\mathbf{Top}_0(X, Y )$ of all continuous functions
$f : X\longrightarrow Y$ equipped with the topology of pointwise convergence \emph{(}i.e.,
the relative product topology\emph{)} is H-sober.
\end{theorem}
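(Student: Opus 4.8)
The plan is to realize the function space as a subspace of a product of copies of $Y$ and then transport H-sobriety from the product down to the subspace. Write $Y^X=\prod_{x\in X}Y$ for the product of copies of $Y$ indexed by $X$. By hypothesis $Y$ is H-sober, so Theorem \ref{H-sober prod} gives that $Y^X$ is H-sober. The function space $\mathbf{Top}_0(X,Y)$, carrying the topology of pointwise convergence, is exactly the subspace of $Y^X$ whose points are the continuous maps $X\longrightarrow Y$. Accordingly, I would fix an arbitrary $A\in{\rm H}(\mathbf{Top}_0(X,Y))$ and, via Proposition \ref{subspace H-set}(1), regard it as a member of ${\rm H}(Y^X)$; H-sobriety of $Y^X$ then produces a point $h\in Y^X$ with $\overline{A}=\overline{\{h\}}$ (closure in $Y^X$), unique since $Y^X$ is $T_0$. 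For each $x\in X$ the evaluation map $p_x\colon\mathbf{Top}_0(X,Y)\longrightarrow Y$ is continuous, so $p_x(A)\in{\rm H}(Y)$ by Definition \ref{subset system}(2); comparing with the coordinatewise description of the specialization order on $Y^X$ (Lemma \ref{irrprod}), $h(x)$ is the unique point of $Y$ satisfying $\overline{p_x(A)}=\overline{\{h(x)\}}$, that is, $h(x)=\bigvee p_x(A)$ by Remark \ref{H-S H-C H-B}.

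The main obstacle, and the only nontrivial step, is to check that the pointwise limit $h$ is continuous, so that $h\in\mathbf{Top}_0(X,Y)$. I would argue as follows. Fix $V\in\mathcal O(Y)$ and $x_0\in h^{-1}(V)$, so $h(x_0)=\bigvee p_{x_0}(A)\in V$. Since $Y$ is H-sober, Proposition \ref{H-sober Scott H-open system} gives $\mathcal O(Y)\subseteq\sigma_H(Y)$, so $V$ is Scott H-open; applying condition (ii) of Scott H-openness to the H-set $p_{x_0}(A)$ together with $\bigvee p_{x_0}(A)\in V$ yields some $f\in A$ with $f(x_0)\in V$. As $f$ is continuous, $f^{-1}(V)$ is an open neighborhood of $x_0$. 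Now for every $x\in f^{-1}(V)$ we have $f(x)\in V$ and $f(x)\in p_x(A)$, hence $f(x)\leq\bigvee p_x(A)=h(x)$; because $V$ is open and therefore an upper set in the specialization order, $h(x)\in V$. Thus $f^{-1}(V)\subseteq h^{-1}(V)$, showing that $h^{-1}(V)$ is open and hence that $h$ is continuous.

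It then remains to pass from the product closure to the subspace closure. Since $\mathbf{Top}_0(X,Y)$ is a subspace of $Y^X$ and both $A$ and $h$ lie in it, $\cl_{\mathbf{Top}_0(X,Y)}A=\cl_{Y^X}A\cap\mathbf{Top}_0(X,Y)=\cl_{Y^X}\{h\}\cap\mathbf{Top}_0(X,Y)=\cl_{\mathbf{Top}_0(X,Y)}\{h\}$, with uniqueness of $h$ in the subspace following from its $T_0$ property. Hence every $A\in{\rm H}(\mathbf{Top}_0(X,Y))$ has the required form, and $\mathbf{Top}_0(X,Y)$ is H-sober. I expect the continuity of $h$ to be precisely where the H-sobriety of $Y$ (through the Scott H-open characterization) is genuinely used; everything else is a formal consequence of Theorem \ref{H-sober prod} together with the behaviour of closures in subspaces of products.
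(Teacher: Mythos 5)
Your proposal is correct and follows essentially the same route as the paper: realize $\mathbf{Top}_0(X,Y)$ as a subspace of the product $Y^X$, obtain the candidate limit coordinatewise from the H-sobriety of $Y$, and prove its continuity by picking an element of $A$ passing through the given open set and using that open sets are upper in the specialization order. The only cosmetic difference is that you invoke Theorem \ref{H-sober prod} and Lemma \ref{irrprod} to get $\overline{A}=\overline{\{h\}}$ in $Y^X$ up front and then intersect with the subspace, whereas the paper defines the function coordinatewise and verifies the closure identity directly from the irreducibility of $A$; both are sound.
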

\begin{proof}	 Let $\mathbf{Top}_0(X, Y)$ be the subspace of product space $Y^X$, that is, $\mathbf{Top}_0(X, Y)$ is endowed with the topology induced by the product topology on $Y^X$. For $A\in {\rm H}(\mathbf{Top}_0(X, Y))$, we have $A\in {\rm H}(Y^X)$, and whence for
each $x\in X$, $p_x(A)\in {\rm H}(Y)$, where $p_x$ is the $x$th projection. As $Y$ is {\rm H}-sober, there is a
unique element $a_x\in Y$ such that $\overline{p_x(A)}=\overline{\{a_x\}}$ for
each $x\in X$. We now show that the function
$f : X \longrightarrow Y$ defined by $f(x)=a_x$ is continuous. Indeed let $x\in X$ and let $V\in \mathcal O(Y)$ with $f(x)=a_x\in V$.
Then $V\bigcap p_x(A)\neq \emptyset$, that is,
there is an element $a\in A$ such that $a(x)\in V$. As $a : X \longrightarrow Y$ is continuous, there is a
$U\in \mathcal O(X)$ with $x\in U$ such that $a(z)\in V$ for every $z\in U$. Since $a\in A$, we have $a(z)\in p_z(A)\subseteq \overline{p_z(A)}=\overline{\{a_z\}}=\overline{\{f(z)\}}$, and hence $f(z)\in V$ for all $z\in U$. Thus $f$
is continuous. Finally, we shows that $\overline{A}=\overline{\{f\}}$ in $\mathbf{Top}_0(X, Y )$ (with the topology
induced by the product topology on $Y^X$). For any subbasic open set $p_x^{-1}(U_x)$ ($x\in X$ and $U_x\in \mathcal O(Y)$) with $f\in p_x^{-1}(U_x)$, since $f(x)=a_x\in \overline{p_x(A)}$, we have $f\in p_x^{-1}(\overline{p_x(A)})\bigcap p_x^{-1}(U_x))=p_x^{-1}(\overline{p_x(A)}\bigcap U_x)$. Therefore,
$\overline{p_x(A)}\bigcap U_x\neq\emptyset$, and hence $A\bigcap p_x^{-1}(U_x)\neq\emptyset$. Thus all basic open sets of $f$
must meet $A$ since $A\in {\rm H}(Y^X)\subseteq \ir (Y^X)$. It follows that $\overline{A}=\overline{\{f\}}$. Thus $\mathbf{Top}_0(X, Y )$, as a subspace of the product space $Y^X$, is {\rm H}-sober.
\end{proof}

 \begin{proposition}\label{H-sober equalizer} Let $X$ be an H-sober space and $Y$ a $T_0$ space. If $f, g: X \longrightarrow Y$ are continuous, then the equalizer
$E(f, g)=\{x\in X : f(x)=g(x)\}$ \emph{(}as a subspace of $X$\emph{)} is H-sober.
\end{proposition}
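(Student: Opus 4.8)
The plan is to exploit that $E := E(f,g)$ is a (non\-empty) subspace of the H-sober space $X$, and to identify, for each H-set of $E$, a suitable supremum computed in $X$ which is then forced to lie in $E$. First I would fix $A \in H(E)$; if $E = \emptyset$ there is nothing to prove, so assume $E$ is a non\-empty subspace of $X$. By Proposition \ref{subspace H-set}(1) this yields $A \in H(X)$. Since $X$ is H-sober, there is a unique $x \in X$ with $\overline{A} = \overline{\{x\}}$, where closures are taken in $X$; by Remark \ref{H-S H-C H-B} this point is precisely $x = \bigvee A$.

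The crucial step is to show $x \in E$, that is, $f(x) = g(x)$. Here I would invoke Corollary \ref{H-sober charact-mapping-1}: since $X$ is H-sober and $Y$ is $T_0$, every continuous map out of $X$ preserves suprema of H-sets, so $f(x) = f(\bigvee A) = \bigvee f(A)$ and $g(x) = g(\bigvee A) = \bigvee g(A)$. Because $A \subseteq E$, each $a \in A$ satisfies $f(a) = g(a)$, whence $f(A) = g(A)$ as subsets of $Y$, and therefore $\bigvee f(A) = \bigvee g(A)$. Combining these equalities gives $f(x) = g(x)$, so indeed $x \in E$.

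It then remains to compare closures in the subspace. Using the standard identity $\overline{B}^{\,E} = \overline{B}^{\,X} \cap E$ for $B \subseteq E$, I obtain $\overline{A}^{\,E} = \overline{A}^{\,X} \cap E = \overline{\{x\}}^{\,X} \cap E = \overline{\{x\}}^{\,E}$, and uniqueness of such a point is immediate since a subspace of a $T_0$ space is $T_0$; hence $E$ is H-sober. I expect the middle step to be the main obstacle: the equalizer need not be closed in $X$, so one cannot simply argue $\overline{A}^{\,X} \subseteq E$ and read off the witness. Instead, the whole argument hinges on the sup-preservation property of continuous maps out of H-sober spaces (Corollary \ref{H-sober charact-mapping-1}), which is exactly what certifies that the point $x = \bigvee A$ delivered by the H-sobriety of $X$ is fixed by the condition $f = g$.
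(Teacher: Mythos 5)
Your proof is correct and follows essentially the same strategy as the paper's: pass $A\in{\rm H}(E)$ up to ${\rm H}(X)$, use the {\rm H}-sobriety of $X$ to obtain $x$ with $\overline{A}=\overline{\{x\}}$, show $x\in E$, and then compare closures via $\overline{B}^{\,E}=\overline{B}^{\,X}\cap E$. The only (harmless) difference is in certifying $x\in E$: you route through the sup-preservation statement of Corollary \ref{H-sober charact-mapping-1}, whereas the paper argues directly that $\cl_Y\{f(x)\}=\cl_Y f(\cl_X A)=\cl_Y f(A)=\cl_Y g(A)=\cl_Y\{g(x)\}$ and invokes the $T_0$ separation of $Y$ — a slightly more elementary computation that avoids any appeal to suprema.
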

\begin{proof} Suppose $E(f, g)\neq\emptyset$. Then as a subspace of $X$, $E(f, g)$ is $T_0$. Let $A\in {\rm H}(E(f, g))$. Then $A\in {\rm H}(X)$. By the {\rm H}-sobriety of $X$, there is $x\in X$ such that $\cl_X(A)=\cl_X\{x\}$, and consequently, $\cl_Y f(\{x\})=\cl_Y f(\cl_X\{x\})=\cl_Y f(\cl_X A)=\cl_Y f(A)=\cl_Y g(A)=\cl_Y g(\cl_X A)=\cl_Y g(\cl_X\{x\})=\cl_Y g(\{x\})$. Since $Y$ is $T_0$, we have $f(x)=g(x)$, and hence $x\in E(f, g)$. It follows that $cl_{E(f,~\! g)}A=\cl_X(A)\bigcap E(f, g)=\cl_X\{x\}\bigcap E(f, g)=\cl_{E(f,~ \! g)}\{x\}$, proving that $E(f, g)$ is ${\rm H}$-sober.
\end{proof}

By Theorem \ref{H-sober prod} and  Proposition \ref{H-sober equalizer}, we get the following corollary.

\begin{corollary}\label{H-sober complete} For any R-subset system $H : \mathbf{Top}_0 \longrightarrow \mathbf{Set}$, $\mathbf{H} $-$\mathbf{Sob}$ is complete.
\end{corollary}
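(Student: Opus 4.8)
The plan is to invoke the standard categorical criterion that a category is complete precisely when it admits all small products and all equalizers of parallel pairs, and then to read off these two limit constructions from the two results cited immediately before the statement. Since $\mathbf{H}$-$\mathbf{Sob}$ is by definition a \emph{full} subcategory of $\mathbf{Top}_0$ (its morphisms are all continuous maps between H-sober spaces), it will suffice to show that the products and equalizers formed in $\mathbf{Top}_0$ already land inside $\mathbf{H}$-$\mathbf{Sob}$; fullness then transports their universal properties verbatim, because for objects of $\mathbf{H}$-$\mathbf{Sob}$ the morphism-sets in the two categories coincide.

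First I would treat products. Given any family $\{X_i : i\in I\}$ of H-sober spaces, form the topological product $\prod_{i\in I}X_i$ with its projections $p_i$. This space is $T_0$, and by Theorem \ref{H-sober prod} it is H-sober, hence an object of $\mathbf{H}$-$\mathbf{Sob}$. For any H-sober $Z$ and continuous maps $f_i : Z\longrightarrow X_i$, the unique continuous map $\langle f_i\rangle : Z\longrightarrow \prod_{i\in I}X_i$ with $p_i\circ\langle f_i\rangle=f_i$ supplied by the product in $\mathbf{Top}_0$ is automatically a morphism of $\mathbf{H}$-$\mathbf{Sob}$, so $\prod_{i\in I}X_i$ is the product there as well. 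The degenerate case $I=\emptyset$ yields the one-point space, which is H-sober (its only irreducible set is the whole space, equal to the closure of its single point), giving a terminal object.

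Next the equalizers. For parallel morphisms $f,g : X\longrightarrow Y$ in $\mathbf{H}$-$\mathbf{Sob}$, the equalizer in $\mathbf{Top}_0$ is the subspace $E(f,g)=\{x\in X : f(x)=g(x)\}$ with the inclusion $e : E(f,g)\hookrightarrow X$. When $E(f,g)\neq\emptyset$, Proposition \ref{H-sober equalizer} shows it is H-sober; when $E(f,g)=\emptyset$, the empty space is vacuously H-sober, since irreducible sets are nonempty and hence ${\rm H}(\emptyset)\subseteq\ir(\emptyset)=\emptyset$ makes the defining condition hold trivially. In either case $E(f,g)$ is an object of $\mathbf{H}$-$\mathbf{Sob}$, and, as with products, fullness upgrades the universal property of $e$ from $\mathbf{Top}_0$ to $\mathbf{H}$-$\mathbf{Sob}$.

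Having exhibited all small products (including the terminal object) and all equalizers, I would conclude by the completeness criterion that $\mathbf{H}$-$\mathbf{Sob}$ is complete. I do not expect a genuine obstacle: the substantive content is already established in Theorem \ref{H-sober prod} and Proposition \ref{H-sober equalizer}, so the only points demanding care are the two degenerate cases — the empty product (terminal object) and the empty equalizer — and the routine observation that limits computed in the ambient category $\mathbf{Top}_0$ serve as limits in the full subcategory once they are known to be H-sober.
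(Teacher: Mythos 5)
Your proposal is correct and follows exactly the route the paper takes: the corollary is deduced from Theorem \ref{H-sober prod} (products) and Proposition \ref{H-sober equalizer} (equalizers) via the standard criterion that a category with all small products and equalizers is complete. You are somewhat more careful than the paper in spelling out the degenerate cases (the empty product as terminal object and the empty equalizer, which Proposition \ref{H-sober equalizer} excludes by assuming $E(f,g)\neq\emptyset$), but this only fills in details the paper leaves implicit.
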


As a direct corollary, we  have the following known result (cf. \cite{redbook}).

\begin{corollary}\label{sober d-space complete} $\mathbf{Top}_d$ and $\mathbf{Sob}$ are complete.
\end{corollary}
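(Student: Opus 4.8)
The plan is to obtain this statement purely by instantiation of the general completeness result already established for the abstract categories $\mathbf{H}$-$\mathbf{Sob}$, so that almost no new work is required. The key observation is that both $\mathbf{Top}_d$ and $\mathbf{Sob}$ are, up to identification, particular instances of $\mathbf{H}$-$\mathbf{Sob}$: by the list of examples of R-subset systems given after Definition \ref{R-subset system}, the assignments $\mathcal D$ (all directed subsets) and $\mathcal R$ (all irreducible subsets) are both legitimate R-subset systems, and by the remark immediately following Definition \ref{def H-sober} one has $\mathcal D$-$\mathbf{Sob}=\mathbf{Top}_d$ and $\mathcal R$-$\mathbf{Sob}=\mathbf{Sob}$.

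First I would recall that Corollary \ref{H-sober complete} asserts the completeness of $\mathbf{H}$-$\mathbf{Sob}$ for an \emph{arbitrary} R-subset system $H$; concretely this completeness is assembled from the existence of arbitrary products of H-sober spaces (Theorem \ref{H-sober prod}, whose implication $(2)\Rightarrow(1)$ applied to the empty family also supplies the terminal one-point space) together with the existence of equalizers of parallel pairs (Proposition \ref{H-sober equalizer}), invoking the standard categorical fact that a category possessing all small products and all equalizers is complete. Since $\mathcal D$ and $\mathcal R$ are R-subset systems, Corollary \ref{H-sober complete} applies verbatim to each of them.

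Finally I would simply specialize: taking $H=\mathcal D$ in Corollary \ref{H-sober complete} yields that $\mathcal D$-$\mathbf{Sob}=\mathbf{Top}_d$ is complete, and taking $H=\mathcal R$ yields that $\mathcal R$-$\mathbf{Sob}=\mathbf{Sob}$ is complete. There is no genuine obstacle here: the entire difficulty was already discharged in establishing Theorem \ref{H-sober prod} and Proposition \ref{H-sober equalizer}, and the only point to verify is that the two classically named categories coincide with the corresponding H-sober categories for $\mathcal D$ and $\mathcal R$, which is exactly the content of the identifications recorded after Definition \ref{def H-sober}.
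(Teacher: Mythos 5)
Your proposal is correct and follows exactly the paper's route: the paper derives this corollary by specializing Corollary \ref{H-sober complete} (itself obtained from Theorem \ref{H-sober prod} and Proposition \ref{H-sober equalizer}) to the R-subset systems $\mathcal D$ and $\mathcal R$, using the identifications $\mathcal D$-$\mathbf{Sob}=\mathbf{Top}_d$ and $\mathcal R$-$\mathbf{Sob}=\mathbf{Sob}$ recorded after Definition \ref{def H-sober}. Nothing is missing.
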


\section{Super H-sober spaces}

In this section, we shall introduce and investigate a strong type of H-sober spaces --- super H-sober spaces. Several important connections between super H-sober spaces and H-sober spaces will be given.

\begin{definition}\label{super H-sober space} Let ${\rm H} : \mathbf{Top}_0 \longrightarrow \mathbf{Set}$ be an R-subset system and $X$ a $T_0$ space.
$X$ is called \emph{super H}-\emph{sober} provided its Smyth power space $P_S(X)$ is {\rm H}-sober, that is, for any $\mathcal A\in{\rm H}(P_S(X))$, there is a (unique) $K\in \mk (X)$ such that $\cl_{P_S(X)}{\mathcal A}=\cl_{P_S(X)}\{K\}=\da_{\mk (X)}K$. The super $\mathcal R$-sober spaces are shortly called \emph{super sober spaces}. The category of all super {\rm H}-sober spaces with continuous mappings is denoted by $\mathbf{SH}$-$\mathbf{Sob}$.
\end{definition}

\begin{definition}\label{HIP property} Let $X$ be a $T_0$ space.
 \begin{enumerate}[\rm (1)]
 \item $X$ is called \emph{Smyth H}-\emph{complete}, if $\bigcap \mathcal A\in \mk (X)$ for each $\mathcal A\in {\rm H}(P_S(X))$. $\mk (X)$ is called \emph{irreducible complete} if $X$ is Smyth $\mathcal R$-complete, that is, for each $\mathcal A\in \ir (P_S(X))$, $\bigcap \mathcal A\in \mk (X)$ (cf. Lemma \ref{sups in Smyth}).
 \item $X$ is said to have \emph{H}-\emph{intersection property}, $\mathbf{HIP}$ for short, if $\bigcap \mathcal A\neq\emptyset$ for each $\mathcal A\in {\rm H}(P_S(X))$.

\end{enumerate}
\end{definition}

For ${\rm H}=\mathcal D$ (resp., $\mathcal R$), the {\rm H}-intersection property is also called \emph{filtered intersection property} (resp., \emph{irreducible intersection property}), $\mathbf{FIP}$ (resp., $\mathbf{RIP}$) for short (cf. \cite{xu-shen-xi-zhao1}).

\begin{lemma}\label{Ps bounded=HIP}  Let $X$ be a $T_0$ space. Then
\begin{enumerate}[(1)]
\item $X$ is Smyth H-complete if{}f $P_S(X)$ is H-complete.
\item $X$ has $\mathbf{HIP}$ if{}f $P_S(X)$ is H-bounded.
\end{enumerate}
\end{lemma}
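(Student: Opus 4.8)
The plan is to translate both statements into conditions about $\bigcap \mathcal A$ via the Smyth order, and then invoke the structural results on $\mk(X)$ already established. The first observation I would record is that, by Remark \ref{xi continuous}, the specialization order of $P_S(X)$ is the Smyth order $\sqsubseteq$, so that $K_1 \sqsubseteq K_2$ means $K_2 \subseteq K_1$. Consequently, for any $\mathcal A \in {\rm H}(P_S(X))$, a compact saturated set $K$ is an upper bound of $\mathcal A$ in $P_S(X)$ precisely when $K \subseteq K'$ for every $K' \in \mathcal A$, that is, when $K \subseteq \bigcap \mathcal A$; and the supremum $\bigvee \mathcal A$, if it exists, is the largest such $K$. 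Note also that $\mathcal A$, being irreducible (since ${\rm H}(P_S(X)) \subseteq \ir(P_S(X))$), is nonempty, so the nonempty-family hypotheses of the cited results are met.

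For part (1), I would argue that $\bigvee \mathcal A$ exists in $P_S(X)$ if and only if $\bigcap \mathcal A \in \mk(X)$. This is exactly Lemma \ref{sups in Smyth} applied to the family $\mathcal A \subseteq \mk(X)$, which moreover yields $\bigvee \mathcal A = \bigcap \mathcal A$ whenever the supremum exists. Thus the assertion ``for every $\mathcal A \in {\rm H}(P_S(X))$, $\bigvee \mathcal A$ exists'' (that $P_S(X)$ is H-complete) is equivalent to ``for every $\mathcal A \in {\rm H}(P_S(X))$, $\bigcap \mathcal A \in \mk(X)$'' (that $X$ is Smyth H-complete). This is the whole content of (1).

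For part (2), I would identify the set of upper bounds of $\mathcal A$ as $\mathcal A^{\ua} = \bigcap_{K' \in \mathcal A} \ua_{\mk(X)} K'$, and then apply Remark \ref{two meets in Smyth}, which states precisely that this intersection of principal filters in $\mk(X)$ is nonempty if and only if $\bigcap \mathcal A \neq \emptyset$. Hence $\mathcal A^{\ua} \neq \emptyset$ (H-boundedness of $P_S(X)$) is equivalent to $\bigcap \mathcal A \neq \emptyset$ (HIP of $X$), for each such $\mathcal A$.

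Since both halves reduce to a direct application of results already in hand, I do not anticipate a genuine obstacle; the only point requiring care is the orientation of the Smyth order, so that ``upper bound in $P_S(X)$'' is correctly read as ``subset of $\bigcap \mathcal A$'' rather than the reverse. Getting that direction right is exactly what makes Lemma \ref{sups in Smyth} and Remark \ref{two meets in Smyth} applicable verbatim.
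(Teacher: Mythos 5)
Your proposal is correct and follows essentially the same route as the paper: part (1) is a direct application of Lemma \ref{sups in Smyth} in both cases, and your part (2) simply cites Remark \ref{two meets in Smyth} where the paper inlines the same one-line argument (pick $x\in\bigcap\mathcal A$ to get the upper bound $\ua x$, and conversely an upper bound $K$ gives $\emptyset\neq K\subseteq\bigcap\mathcal A$). Your explicit care about the orientation of the Smyth order and the nonemptiness of $\mathcal A$ is a sensible addition but does not change the substance.
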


\begin{proof} (1): If $X$ is Smyth {\rm H}-complete, then for each $\mathcal A\subseteq {\rm H}(P_S(X))$, $\bigcap \mathcal A\in \mk (X)$, and hence by Lemma \ref{sups in Smyth}, $\bigvee_{\mk (X)} \mathcal A$ exists and $\bigvee_{\mk (X)}=\bigcap \mathcal A$. Conversely, if $P_S(X)$ is {\rm H}-complete, then for each $\mathcal A\subseteq {\rm H}(P_S(X))$, $\bigvee_{\mk (X)} \mathcal A$ exists. By Lemma \ref{sups in Smyth}, $\bigcap \mathcal A\in \mk (X)$ and $\bigvee_{\mk (X)}=\bigcap \mathcal A$.

(2): If $X$ has $\mathbf{HIP}$, then for each $\mathcal A\in {\rm H}(P_S(X))$, $\bigcap \mathcal A\neq\emptyset$. Select an $x\in \bigcap \mathcal A$. Then $\ua x\in \mk (X)$ is an upper bound of $\mathcal A$ in $\mk (X)$. Conversely, if $P_S(X)$ is H-bounded, then for any $\mathcal A\in {\rm H}(P_S(X))$, $\mathcal A$ has an upper bound $K\in \mk (X)$. It follows that $\emptyset \neq K\subseteq \bigcap \mathcal A$, proving that $X$ has $\mathbf{HIP}$.
\end{proof}

By Remark \ref{two meets in Smyth}, Remark \ref{H-S H-C H-B} and Lemma \ref{Ps bounded=HIP}, we have the following implications:
\begin{center}

super {\rm H} sobriety $\Rightarrow$ Smyth {\rm H}-completeness $\Rightarrow$ $\mathbf{HIP}$ $\Rightarrow$ {\rm H}-boundedness.\\

\end{center}

The following example shows that Smyth {\rm H}-completeness does not implies the {\rm H}-completeness in general.

\begin{example}\label{example 1} Let $P$=$\{a_1, a_2\}\cup \mathbb{N}$ with the partial order defined by (1) for each $n\in \mathbb{N}$, $ n<n+1$, $n<a_1$ and $n<a_2$; (2) $a_1$ and $a_2$ are incomparable (see Figure 2). Then the Scott space $\Sigma~\!\! P$ is not $\mathcal D$-complete, that is, $P$ is not a dcpo. But $\mk (\Sigma~\!\! P)=\{\ua n : n\in \mathbb{N}\}\bigcup\{\ua a_1=\{a_1\}, \ua a_2=\{a_2\}, \ua \{a_1, a_2\}=\{a_1, a_2\}\}$ (with the Smyth order) is a dcpo, and hence $\Sigma~\!\! P$ is Smyth $\mathcal D$-complete.
\end{example}

\begin{figure}[ht]
	\centering
	\includegraphics[height=3.5cm,width=2.5cm]{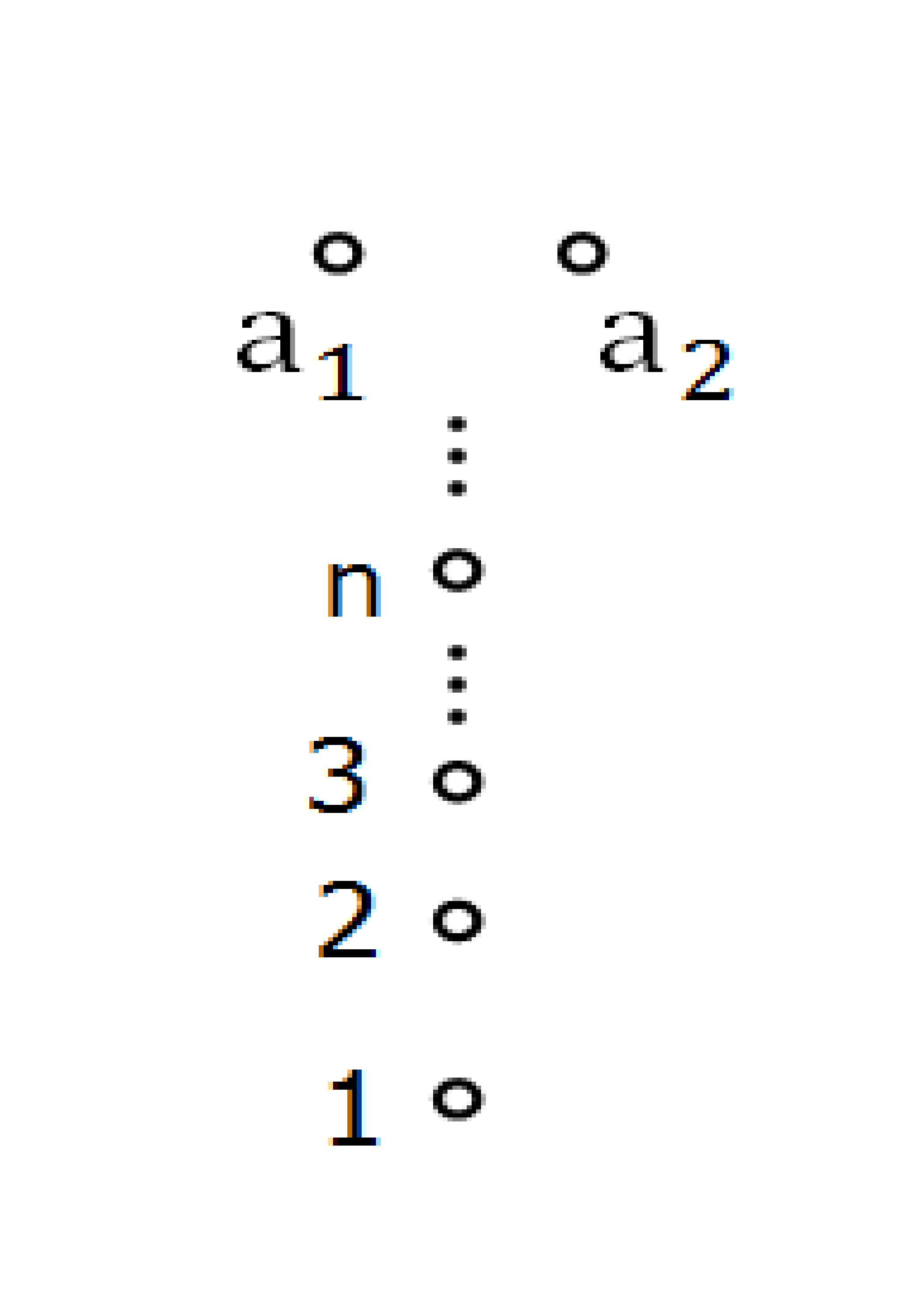}
	\caption{A super $\mathcal D$-complete non-dcpo $T_0$ space}
\end{figure}

By Lemma \ref{many  meets in Smyth} and Proposition \ref{H-sober charact-H-sets}, we have the following result.

\begin{theorem}\label{super H-sober charact H-sets} Let $H : \mathbf{Top}_0 \longrightarrow \mathbf{Set}$ be an R-subset system and $X$ a $T_0$ space. Then the following conditions are equivalent:
\begin{enumerate}[\rm (1)]
	        \item $X$ is super H-sober.

            \item  For any $\mathcal K\in H(P_S(X))$, $\cl_{P_S(X)}\mathcal K\cap\bigcap\limits_{K\in \mathcal K}\ua_{\mk (X)} K\neq\emptyset$.

            \item  For any $\mathcal K\in  H(P_S(X))$ and $\mathcal U\in \mathcal O(P_S(X))$, $\bigcap\limits_{K\in \mathcal K}\ua_{\mk (X)} K\subseteq \mathcal U$ implies $\ua_{\mk (X)} K \subseteq \mathcal U$ \emph{(}i.e., $K\in \mathcal U$\emph{)} for some $K\in \mathcal K$.
            \item  For any $\mathcal K\in  H(P_S(X))$ and $U\in \mathcal O(X)$, $\bigcap\limits_{K\in \mathcal K}\ua_{\mk (X)} K\subseteq \Box U$ implies $\ua_{\mk (X)} K \subseteq \Box U$ \emph{(}i.e., $K\subseteq U$\emph{)} for some $K\in \mathcal K$.

                \item  For any $\mathcal K\in  H(P_S(X))$ and $U\in \mathcal O(X)$, $\bigcap\mathcal K\subseteq U$ implies $K \subseteq U$ for some $K\in \mathcal K$.
                    \item  For any $\mathcal K\in  H(P_S(X))$ and $U\in \mathcal O(X)$, $\bigcap\mathcal K\in \mk (X)$, and $\bigcap\mathcal K\subseteq U$ implies $K \subseteq U$ for some $K\in \mathcal K$.

\end{enumerate}
\end{theorem}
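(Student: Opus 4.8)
The plan is to derive everything from two results already in hand: Proposition~\ref{H-sober charact-H-sets}, applied not to $X$ but to its Smyth power space $P_S(X)$, together with Lemma~\ref{many  meets in Smyth}. The guiding observation is that, by Definition~\ref{super H-sober space}, $X$ is super ${\rm H}$-sober precisely when $P_S(X)$ is ${\rm H}$-sober, so the whole theorem is a translation of the characterizations of ${\rm H}$-sobriety into the language of $P_S(X)$. Under this translation the points of $P_S(X)$ are the sets $K\in\mk(X)$; the specialization order on $P_S(X)$ is the Smyth order (Remark~\ref{xi continuous}), so the up-set $\ua a$ of a point $a=K$ becomes $\ua_{\mk(X)}K$; and the basic open sets are the $\Box U$ with $U\in\mathcal O(X)$.

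First I would read off $(1)\Leftrightarrow(2)\Leftrightarrow(3)$ directly from Proposition~\ref{H-sober charact-H-sets} with $P_S(X)$ in place of $X$. Indeed, condition~$(1)$ here is condition~(1) of that proposition for $P_S(X)$; condition~$(2)$ here, namely $\cl_{P_S(X)}\mathcal K\cap\bigcap_{K\in\mathcal K}\ua_{\mk(X)}K\neq\emptyset$, is its condition~(2) for $P_S(X)$; and condition~$(3)$ here, phrased with an arbitrary $\mathcal U\in\mathcal O(P_S(X))$, is its condition~(4) for $P_S(X)$. So these three require no work beyond the dictionary above.

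It then remains to link the $P_S(X)$-level statement~$(3)$ to the ``downstairs'' statements $(4)$--$(6)$, and this is exactly what Lemma~\ref{many  meets in Smyth} supplies. For this I would fix an arbitrary $\mathcal K\in{\rm H}(P_S(X))$; since ${\rm H}$ is an R-subset system, $\mathcal K\in\ir(P_S(X))$, so $\mathcal K$ is a nonempty family in $\mk(X)$ and the lemma applies. Its four equivalent conditions are, in order, the $\mathcal K$-instances of our conditions $(3)$, $(4)$, $(6)$ and $(5)$: condition~(1) of the lemma is $(3)$, condition~(2) is $(4)$, condition~(3) (which additionally records $\bigcap\mathcal K\in\mk(X)$) is $(6)$, and condition~(4) is $(5)$. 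Quantifying over all $\mathcal K\in{\rm H}(P_S(X))$ yields $(3)\Leftrightarrow(4)\Leftrightarrow(5)\Leftrightarrow(6)$, and together with the previous paragraph this closes the cycle through all six conditions.

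I do not expect a genuine obstacle: the content is carried by the two cited results, and the only care needed is bookkeeping --- matching each condition with the correct clause of Proposition~\ref{H-sober charact-H-sets} or Lemma~\ref{many  meets in Smyth}, and noting the harmless nonemptiness of $\mathcal K$ (from irreducibility) and of $\bigcap\mathcal K$, so that $(5)$ and $(6)$ indeed coincide, just as in the $(3)\Leftrightarrow(4)$ step of that lemma, where compactness of $\bigcap\mathcal K$ is recovered from the covering condition.
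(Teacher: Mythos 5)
Your proposal is correct and is exactly the paper's own argument: the paper introduces the theorem with the single line ``By Lemma \ref{many  meets in Smyth} and Proposition \ref{H-sober charact-H-sets}, we have the following result,'' and your dictionary (conditions (1)--(3) of the theorem as conditions (1), (2), (4) of Proposition \ref{H-sober charact-H-sets} applied to $P_S(X)$; conditions (3)--(6) matched to conditions (1), (2), (4), (3) of Lemma \ref{many  meets in Smyth} for each fixed nonempty $\mathcal K\in{\rm H}(P_S(X))$) is precisely the bookkeeping the paper leaves to the reader. No gaps.
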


\begin{remark}\label{super H-sober charact remark} In Theorem \ref{super H-sober charact H-sets} we can also chose $\mathcal K\in  H_c(P_S(X))$ (see Remark \ref{intersection=closure intersection in Smyth}).
\end{remark}

\begin{corollary}\label{WF=Smyth d-space} For a $T_0$ space $X$, the following conditions are equivalent:
\begin{enumerate}[\rm (1)]
	        \item $X$ is well-filtered.
            \item $P_S(X)$ is a $d$-space.
            \item  For any $\mathcal K\in  \mathcal D(\mk (X))$, $\cl_{P_S(X)}\mathcal K\cap\bigcap\limits_{K\in \mathcal K}\ua_{\mk (X)} K\neq\emptyset$.
            \item  For any $\mathcal K\in  \mathcal D(\mk (X))$ and $\mathcal U\in \mathcal O(P_S(X))$, $\bigcap\limits_{K\in \mathcal K}\ua_{\mk (X)} K\subseteq \mathcal U$ implies $\ua_{\mk (X)} K \subseteq \mathcal U$ \emph{(}i.e., $K\in \mathcal U$\emph{)} for some $K\in \mathcal K$.
            \item  For any $\mathcal K\in  \mathcal D(\mk (X))$ and $U\in \mathcal O(X)$, $\bigcap\limits_{K\in \mathcal K}\ua_{\mk (X)} K\subseteq \Box U$ implies $\ua_{\mk (X)} K \subseteq \Box U$ \emph{(}i.e., $K\subseteq U$\emph{)} for some $K\in \mathcal K$.
            \item  For any $\mathcal K\in \mathcal D(\mk (X))$ and $U\in \mathcal O(X)$, $\bigcap\mathcal K\in \mk (X)$, and $\bigcap\mathcal K\subseteq U$ implies $K \subseteq U$ for some $K\in \mathcal K$.
\end{enumerate}
\end{corollary}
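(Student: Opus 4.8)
The plan is to obtain this corollary as the specialization of Theorem \ref{super H-sober charact H-sets} to the R-subset system $H=\mathcal D$. Since $\mathcal D$ is an R-subset system and that theorem imposes no further hypothesis on $H$ (in particular, no property M), it applies directly. The only work is to translate the six conditions of that theorem, read with $H=\mathcal D$, into the six conditions stated here; no fresh topological argument is needed, so the proof is a matter of careful bookkeeping with the relevant orders.

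First I would record the identification that drives everything. By Remark \ref{xi continuous}(1) the specialization order of $P_S(X)$ is the Smyth order $\sqsubseteq$ on $\mk(X)$, so a $\mathcal D$-set of $P_S(X)$ is exactly a directed subset of the poset $(\mk(X),\sqsubseteq)$; that is, $\mathcal D(P_S(X))=\mathcal D(\mk(X))$. Consequently every occurrence of ``$\mathcal K\in H(P_S(X))$'' with $H=\mathcal D$ becomes ``$\mathcal K\in\mathcal D(\mk(X))$'', and with this reading conditions (3)--(6) here match conditions (2), (3), (4), (6) of Theorem \ref{super H-sober charact H-sets} verbatim.

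Next I would dispatch condition (2). By Definition \ref{super H-sober space}, $X$ is super $\mathcal D$-sober iff $P_S(X)$ is $\mathcal D$-sober; and $\mathcal D$-sober spaces are precisely $d$-spaces, as noted right after Definition \ref{def H-sober}. Hence condition (1) of Theorem \ref{super H-sober charact H-sets}, taken for $H=\mathcal D$, is exactly ``$P_S(X)$ is a $d$-space'', which is condition (2) here.

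The one step that deserves explicit care---and is the nearest thing to an obstacle---is linking condition (1) (well-filteredness) to condition (5) of Theorem \ref{super H-sober charact H-sets}. A nonempty family $\mathcal K\subseteq\mk(X)$ is filtered iff for any $K_1,K_2\in\mathcal K$ there is $K_3\in\mathcal K$ with $K_3\subseteq K_1\cap K_2$, and since $K_1\sqsubseteq K_3$ together with $K_2\sqsubseteq K_3$ means precisely $K_3\subseteq K_1$ and $K_3\subseteq K_2$, this is exactly directedness of $\mathcal K$ in $(\mk(X),\sqsubseteq)$, i.e. $\mathcal K\in\mathcal D(\mk(X))$. Under this reading the defining clause of well-filteredness---that $\bigcap\mathcal K\subseteq U$ implies $K\subseteq U$ for some $K\in\mathcal K$, for every filtered $\mathcal K\subseteq\mk(X)$ and every $U\in\mathcal O(X)$---is word for word condition (5) of Theorem \ref{super H-sober charact H-sets} with $H=\mathcal D$. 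Assembling these identifications, the six conditions here are simply a relabelling of the six conditions of Theorem \ref{super H-sober charact H-sets} for $H=\mathcal D$, and so their equivalence follows at once.
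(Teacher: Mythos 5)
Your proposal is correct and matches the paper's own route exactly: the paper states this result as an immediate specialization of Theorem \ref{super H-sober charact H-sets} to ${\rm H}=\mathcal D$, with precisely the identifications you spell out ($\mathcal D(P_S(X))=\mathcal D(\mk(X))$ via the Smyth specialization order, super $\mathcal D$-sobriety $=$ ``$P_S(X)$ is a $d$-space'', and well-filteredness $=$ condition (5) of the theorem). The careful matching of ``filtered under inclusion'' with ``directed under $\sqsubseteq$'' is the only point needing attention, and you handle it correctly.
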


The equivalence of conditions (1) and (2) was first shown in \cite{xi-zhao-MSCS-well-filtered}. By Proposition \ref{D-sober=C-sober} and Theorem \ref{super H-sober charact H-sets}, we get the following conclusion, which was first obtained by Shen\footnote{C. Shen, An equivalent description of well-filtered spaces, preprint.}.

\begin{corollary}\label{WF=CWF}
A $T_0$ space is well-filtered if and only if $P_S(X)$ is $\mathcal C$-sober, that is, for each chain $\mathcal C$ of compact saturated sets and each open set $U$ with $\bigcap \mathcal C\subseteq U$, there exists $K\in \mathcal C$ such that $K\subseteq U$.
\end{corollary}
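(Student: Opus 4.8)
The plan is to chain together three facts that are already in place: the identification of well-filteredness with the $d$-space property of the Smyth power space, the coincidence of $\mathcal D$-sobriety with $\mathcal C$-sobriety, and the concrete reformulation of super $\mathcal C$-sobriety furnished by Theorem~\ref{super H-sober charact H-sets}. No new construction is needed; the work is entirely in routing the statement through the correct earlier results.

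First I would recall from Corollary~\ref{WF=Smyth d-space} (the equivalence of its conditions (1) and (2)) that $X$ is well-filtered if and only if $P_S(X)$ is a $d$-space. Since the $\mathcal D$-sober spaces are exactly the $d$-spaces, this says precisely that $P_S(X)$ is $\mathcal D$-sober. Because $P_S(X)$ is itself a $T_0$ space, Proposition~\ref{D-sober=C-sober} may be applied to it and yields that $P_S(X)$ is $\mathcal D$-sober if and only if $P_S(X)$ is $\mathcal C$-sober. By the definition of super H-sobriety, the latter is exactly the assertion that $X$ is super $\mathcal C$-sober. Hence well-filteredness of $X$ is equivalent to super $\mathcal C$-sobriety of $X$.

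It then remains only to unfold what super $\mathcal C$-sobriety means in elementary terms, and here Theorem~\ref{super H-sober charact H-sets} does the work: applying it with ${\rm H}=\mathcal C$, the equivalence of conditions (1) and (5) says that $X$ is super $\mathcal C$-sober exactly when, for every $\mathcal K\in\mathcal C(P_S(X))$ and every $U\in\mathcal O(X)$, the inclusion $\bigcap\mathcal K\subseteq U$ forces $K\subseteq U$ for some $K\in\mathcal K$. At this point I would note that, since the specialization order of $P_S(X)$ is the Smyth order (Remark~\ref{xi continuous}), a member of $\mathcal C(P_S(X))$ is simply a family of compact saturated subsets of $X$ totally ordered by reverse inclusion, that is, a chain $\mathcal C\subseteq\mk(X)$. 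This identification turns the abstract condition into the displayed chain statement of the corollary.

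There is no genuine obstacle, as the substance has already been absorbed into the cited results; the proof is a short assembly of equivalences. The only points demanding a little care are bookkeeping ones: confirming that Proposition~\ref{D-sober=C-sober} is legitimately invoked for $P_S(X)$ (which is indeed $T_0$), and checking that ``chain in $P_S(X)$'' and ``chain of compact saturated sets'' denote the same objects under the Smyth order, so that condition (5) of Theorem~\ref{super H-sober charact H-sets} reads verbatim as the condition in the statement.
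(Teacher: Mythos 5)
Your proposal is correct and follows essentially the same route as the paper, which derives the corollary precisely by combining Corollary \ref{WF=Smyth d-space}, Proposition \ref{D-sober=C-sober} applied to $P_S(X)$, and Theorem \ref{super H-sober charact H-sets}; your additional bookkeeping (that $P_S(X)$ is $T_0$ and that chains in $P_S(X)$ under the Smyth order are exactly chains of compact saturated sets) is accurate and fills in details the paper leaves implicit.
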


\begin{proposition}\label{omega WF=omega CWF} For a $T_0$ space $X$, the following conditions are equivalent:
\begin{enumerate}[\rm (1)]
 \item $X$ is $\omega$-well-filtered.
  \item $X$ is $\mathcal C^\omega$-sober.
  \item For any countable descending chain $K_1\supseteq K_2\supseteq\ldots\supseteq K_n\supseteq\ldots$ of compact saturated subsets of $X$ and $U\in\mathcal O(X)$, $\bigcap_{n\in \mathbb{N}}K_n\subseteq U$ implies $K_{n_0}\subseteq U$ for some $n_0\in \mathbb{N}$.
  \end{enumerate}
\end{proposition}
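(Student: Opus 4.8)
The plan is to move the entire problem into the Smyth power space $P_S(X)$, where $\omega$-well-filteredness and both super-sobriety notions become instances of the characterizations already established, and then to trade countable directed families for countable chains. The key bookkeeping device is Remark \ref{xi continuous}(1): the specialization order of $P_S(X)$ \emph{is} the Smyth order $\sqsubseteq$, so that a countable filtered family $\mathcal K\subseteq\mk(X)$ is exactly a countable directed subset of $P_S(X)$, while a countable descending chain $K_1\supseteq K_2\supseteq\cdots$ in $\mk(X)$ is exactly a countable chain of $P_S(X)$. Under this translation the defining clause of $\omega$-well-filteredness is verbatim clause (5) of Theorem \ref{super H-sober charact H-sets} for the R-subset system $\mathcal D^\omega$; hence (1) is equivalent to $X$ being super $\mathcal D^\omega$-sober, i.e. to $P_S(X)$ being $\mathcal D^\omega$-sober.

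First I would apply Proposition \ref{D omega-sober=C omega-sober} to the $T_0$ space $P_S(X)$ (it is $T_0$ by the proof of Lemma \ref{Ps functor}): $P_S(X)$ is $\mathcal D^\omega$-sober if and only if it is $\mathcal C^\omega$-sober. By Definition \ref{super H-sober space} the latter says precisely that $X$ is super $\mathcal C^\omega$-sober, equivalently that $P_S(X)$ is $\mathcal C^\omega$-sober, which is condition (2). This yields (1)$\Leftrightarrow$(2). For (2)$\Leftrightarrow$(3) I would unfold condition (2) once more through Theorem \ref{super H-sober charact H-sets}, now for $H=\mathcal C^\omega$: $P_S(X)$ is $\mathcal C^\omega$-sober iff for every $\mathcal K\in\mathcal C^\omega(P_S(X))$ and every $U\in\mathcal O(X)$ one has $\bigcap\mathcal K\subseteq U\Rightarrow K\subseteq U$ for some $K\in\mathcal K$. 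Reading $\mathcal C^\omega(P_S(X))$ back as the countable inclusion-chains of compact saturated sets turns this into condition (3), the only point to check being that it suffices to test $\mathbb N$-indexed descending chains; this is immediate, since replacing a countable chain by its running $\subseteq$-minima produces a descending sequence lying inside the chain with the same intersection.

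The genuinely substantive input is entirely packaged in Proposition \ref{D omega-sober=C omega-sober} (proved via Lemma \ref{D countable chain}): it is what lets us collapse arbitrary countable filtered families---the hypothesis hidden in $\omega$-well-filteredness---down to countable chains, so that condition (3), which constrains only descending chains, is in fact no weaker than (1). I expect the main place to be careful is the order translation: one must consistently compute $\mathcal D^\omega(P_S(X))$ and $\mathcal C^\omega(P_S(X))$ in the \emph{specialization} order of $P_S(X)$, remembering that directedness and chains under $\sqsubseteq$ correspond to filteredness and descending chains under ordinary set inclusion, and must apply Proposition \ref{D omega-sober=C omega-sober} to $P_S(X)$ rather than to $X$ itself. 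Everything else is a direct instantiation of Theorem \ref{super H-sober charact H-sets} and Definition \ref{super H-sober space}, mirroring the derivation of Corollary \ref{WF=CWF}.
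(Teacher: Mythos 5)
Your proposal is correct and takes essentially the same route as the paper: the equivalence (1)$\Leftrightarrow$(2) is obtained by reading $\omega$-well-filteredness as clause (5) of Theorem \ref{super H-sober charact H-sets} for $\mathcal D^\omega$ and then applying Proposition \ref{D omega-sober=C omega-sober} to $P_S(X)$, exactly as the paper does. Your reduction of an arbitrary countable chain to an $\mathbb{N}$-indexed descending one via running $\subseteq$-minima is precisely the paper's argument for (3)$\Rightarrow$(2), and your reading of condition (2) as ``$P_S(X)$ is $\mathcal C^\omega$-sober'' matches the intended meaning (cf.\ Corollary \ref{WF=CWF}).
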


\begin{proof} (1) $\Leftrightarrow$ (2): By Proposition \ref{D omega-sober=C omega-sober} and Theorem \ref{super H-sober charact H-sets}.

(2) $\Rightarrow$ (3): Trivial.

(3) $\Rightarrow$ (2): For any countable chain $\{G_n : n\in \mathbb{N}\}$ of compact saturated subsets of $X$ and $U\in\mathcal O(X)$ with $\bigcap_{n\in \mathbb{N}}G_n\subseteq U$, let $K_n=\mathrm{min}\{G_1, G_2, ..., G_n\}$ for each $n\in \mathbb{N}$. Then $\{K_n : n\in \mathbb{N}\}$ is a countable descending chain and $\bigcap_{n\in \mathbb{N}}K_n=\bigcap_{n\in \mathbb{N}}G_n\subseteq U$. By condition (3), $K_{n_0}\subseteq U$ for some $n_0\in \mathbb{N}$. Since $\{G_n : n\in \mathbb{N}\}$ is a chain with the set inclusion order, $K_{n_0}=\mathrm{min}\{G_1, G_2, ..., G_{n_0}\}=G_{m_0}$ for some $1\leq m_0\leq n_0$, and hence $G_{m_0}\subseteq U$.

\end{proof}

\begin{definition}\label{R-SS property Q} A R-subset system ${\rm H} : \mathbf{Top}_0 \longrightarrow \mathbf{Set}$ is said to satisfy  \emph{property Q} if for any $\mathcal K\in {\rm H}(P_S(X))$ and any $A\in M(\mathcal{K})$, $A$ contains a closed {\rm H}-set $C$ such that $C\in M(\mathcal{K})$.
\end{definition}

By Lemma \ref{t Rudin}, we have the following corollary.

\begin{corollary}\label{R propert Q} The R-subset system $\mathcal R : \mathbf{Top}_0 \longrightarrow \mathbf{Set}$ has property Q.
\end{corollary}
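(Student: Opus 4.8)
The plan is to observe that for the system $\mathcal R$, property Q is nothing more than a direct reformulation of the Topological Rudin's Lemma. First I would record that, by definition, $\mathcal R(X)=\ir(X)$ for every $T_0$ space $X$, so the closed $\mathcal R$-sets are precisely the irreducible closed subsets of $X$; in other words, $\mathcal R_c(X)=\ir_c(X)$.

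Next, I would fix an arbitrary $T_0$ space $X$, an $\mathcal R$-set $\mathcal K\in \mathcal R(P_S(X))=\ir(P_S(X))$, and a closed set $A\in M(\mathcal K)$. Unwinding the definition of $M(\mathcal K)$, the membership $A\in M(\mathcal K)$ says exactly that $A$ is a closed subset of $X$ that meets every member of $\mathcal K$. This is precisely the hypothesis required to apply Lemma \ref{t Rudin} to the irreducible subset $\mathcal K$ of the Smyth power space $P_S(X)$ together with the closed set $A$.

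I would then invoke the Topological Rudin's Lemma (Lemma \ref{t Rudin}), which produces a minimal irreducible closed subset $C\subseteq A$ that still meets all members of $\mathcal K$. Since $C$ is irreducible and closed, it is a closed $\mathcal R$-set; and since $C$ meets every $K\in\mathcal K$, we have $C\in M(\mathcal K)$. Hence $A$ contains a closed $\mathcal R$-set $C$ with $C\in M(\mathcal K)$, which is exactly property Q for $\mathcal R$.

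Because the statement is an immediate translation of Lemma \ref{t Rudin}, there is no genuine obstacle here. The only points to verify carefully are the two bookkeeping identifications: that a \emph{closed $\mathcal R$-set} means an irreducible closed set (so that the minimal irreducible closed subset supplied by the lemma qualifies), and that membership in $M(\mathcal K)$ coincides with meeting every element of $\mathcal K$ (so that the hypotheses of the lemma and the conclusion of property Q line up).
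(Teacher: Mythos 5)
Your proof is correct and is exactly the paper's argument: the corollary is stated there as an immediate consequence of the Topological Rudin Lemma (Lemma \ref{t Rudin}), with the same two identifications you make (closed $\mathcal R$-sets are the irreducible closed sets, and $A\in M(\mathcal K)$ means $A$ is a closed set meeting every member of $\mathcal K$). Nothing further is needed.
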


\begin{theorem}\label{super H-sober is H-sober} Let $H : \mathbf{Top}_0 \longrightarrow \mathbf{Set}$ be an R-subset system and $X$ a $T_0$ space. Consider the following two conditions:
\begin{enumerate}[\rm (1)]
\item $X$ is super H-sober.
\item $X$ is H-sober.
\end{enumerate}
Then \emph{(1)} $\Rightarrow$ \emph{(2)}, and two conditions are equivalent if $H$ has property Q.
\end{theorem}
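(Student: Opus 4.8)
The plan is to reduce both implications to the combinatorial characterisations already established, namely Theorem~\ref{super H-sober charact H-sets} for super {\rm H}-sobriety and Proposition~\ref{H-sober charact-H-sets} for {\rm H}-sobriety, and to let the canonical embedding $\xi_X : X \longrightarrow P_S(X)$, $x \mapsto \ua x$, transport {\rm H}-sets between $X$ and $P_S(X)$. The forward implication is essentially formal, while the converse is where property Q does the real work.

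For (1) $\Rightarrow$ (2) I would argue as follows. Fix $A \in {\rm H}(X)$ and $U \in \mathcal O(X)$ with $\bigcap_{a \in A}\ua a \subseteq U$; by the equivalence of conditions (1) and (4) in Proposition~\ref{H-sober charact-H-sets} it suffices to produce some $a \in A$ with $a \in U$. Since $\xi_X$ is continuous (Remark~\ref{xi continuous}), Definition~\ref{subset system}(2) gives $\mathcal K := \xi_X(A) = \{\ua a : a \in A\} \in {\rm H}(P_S(X))$, and clearly $\bigcap \mathcal K = \bigcap_{a\in A}\ua a \subseteq U$. Because $X$ is super {\rm H}-sober, condition (5) of Theorem~\ref{super H-sober charact H-sets} yields some $\ua a \in \mathcal K$ with $\ua a \subseteq U$, i.e.\ $a \in U$. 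Hence $X$ is {\rm H}-sober, and this direction requires no hypothesis on H.

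For (2) $\Rightarrow$ (1) under property Q, I would verify condition (5) of Theorem~\ref{super H-sober charact H-sets} directly. Let $\mathcal K \in {\rm H}(P_S(X))$ and $U \in \mathcal O(X)$ with $\bigcap \mathcal K \subseteq U$, and suppose toward a contradiction that $K \not\subseteq U$ for every $K \in \mathcal K$. Then each $K$ meets the closed set $X \setminus U$, so $X \setminus U \in M(\mathcal K)$. Property Q (Definition~\ref{R-SS property Q}) provides a closed {\rm H}-set $C \subseteq X \setminus U$ with $C \in M(\mathcal K)$; writing $C = \overline B$ with $B \in {\rm H}(X)$ and invoking {\rm H}-sobriety of $X$, we obtain a point $x$ with $C = \overline{\{x\}} = \da x$, so $x \notin U$. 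The decisive step is that $C \in M(\mathcal K)$ means $\da x \cap K \neq \emptyset$ for every $K \in \mathcal K$; since each $K$ is saturated (an upper set), any $z \in \da x \cap K$ forces $x \in \ua z \subseteq K$, whence $x \in \bigcap \mathcal K \subseteq U$, contradicting $x \notin U$. Therefore some $K \in \mathcal K$ satisfies $K \subseteq U$, and Theorem~\ref{super H-sober charact H-sets} gives that $X$ is super {\rm H}-sober.

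I expect the main obstacle to be the $(2)\Rightarrow(1)$ direction, and within it the interplay of property Q with {\rm H}-sobriety: property Q is exactly what lets one shrink the witness $X \setminus U \in M(\mathcal K)$ down to a closed {\rm H}-set, and {\rm H}-sobriety is exactly what collapses that {\rm H}-set to a single point $x$. The passage from $\da x \cap K \neq \emptyset$ to $x \in K$ relies crucially on the saturation of the compact sets and is the technical crux; the asymmetry in the statement reflects that the forward implication needs no assumption on H while the converse genuinely consumes property Q.
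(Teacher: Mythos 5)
Your proposal is correct and follows essentially the same route as the paper: the forward direction transports $A$ to $\xi_X(A)\in{\rm H}(P_S(X))$ and applies the characterizations in Theorem~\ref{super H-sober charact H-sets} and Proposition~\ref{H-sober charact-H-sets}, while the converse uses property Q to shrink $X\setminus U\in M(\mathcal K)$ to a closed H-set, collapses it to $\overline{\{x\}}$ by H-sobriety, and derives the contradiction $x\in\bigcap\mathcal K\subseteq U$. Your explicit justification that $\da x\cap K\neq\emptyset$ forces $x\in K$ via saturation is a step the paper leaves implicit, but the argument is the same.
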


\begin{proof} (1) $\Rightarrow$ (2): Let $A\in {\rm H}(X)$ and $U\in \mathcal U$ with $\bigcap\limits_{a\in A}\ua a\subseteq U$. Then by Remak \ref{xi continuous}, $\xi_X(A)\in {\rm H}(P_S(X))$, and hence by Theorem \ref{super H-sober charact H-sets}, $\ua a\subseteq U$ for some $a\in A$. Therefore, by Proposition \ref{H-sober charact-H-sets}, $X$ is H-sober.

(2) $\Rightarrow$ (1): Suppose that ${\rm H}$ has property {\rm Q}. Let $\mathcal A\in {\rm H}((P_S(X))$ and $U\in \mathcal O(X)$ with $\bigcap\mathcal A \subseteq U$. If $K\nsubseteq U$ for all $K\in \mathcal A$, then by the property {\rm Q} of {\rm H}, $X\setminus U$ contains an {\rm H}-set $C$ such that $\overline{C}\in M(\mathcal{K})$. Since $X$ is {\rm H}-sober, there exists $x\in X$ such that $\overline{C}=\overline{\{x\}}$, and hence $\overline{\{x\}}\in M(\mathcal{K})$. Therefore, $x\in \bigcap \mathcal A\subseteq U$, which is a contradiction with $\overline{\{x\}}=\overline{C}\subseteq X\setminus U$. It follows from Theorem \ref{super H-sober charact H-sets} that $X$ is super {\rm H}-sober.
\end{proof}

\begin{remark}\label{WF is d-space} By Corollary \ref{WF=Smyth d-space} and Theorem \ref{super H-sober is H-sober}, we get the known result that every well-filtered space is a $d$-space (see, e.g., \cite[Proposition 2.1]{Xi-Lawson-2017}).
\end{remark}

\begin{corollary}\label{SH-sober is subcat of H-sober} For any R-subset system $H : \mathbf{Top}_0 \longrightarrow \mathbf{Set}$,  $\mathbf{SH}$-$\mathbf{Sob}$ is a full subcategory of $\mathbf{H}$-$\mathbf{Sob}$.
\end{corollary}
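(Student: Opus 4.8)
The plan is to unwind the two clauses in the definition of a \emph{full subcategory} and check each one, observing that almost all of the work has already been done in Theorem \ref{super H-sober is H-sober}. Recall that a subcategory $\mathbf{C}$ of $\mathbf{D}$ is full provided (i) every object of $\mathbf{C}$ is an object of $\mathbf{D}$, and (ii) for any two objects $X,Y$ of $\mathbf{C}$, every $\mathbf{D}$-morphism $X\to Y$ is already a $\mathbf{C}$-morphism. So I would split the argument into an object-level part and a morphism-level part.

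For the object inclusion, I would invoke the implication \emph{(1)} $\Rightarrow$ \emph{(2)} of Theorem \ref{super H-sober is H-sober}: every super H-sober space is H-sober. Concretely, given a super H-sober space $X$ and any $A\in{\rm H}(X)$, one passes to $\xi_X(A)\in{\rm H}(P_S(X))$ via the canonical embedding of Remark \ref{xi continuous}, applies the super H-sobriety characterisation of Theorem \ref{super H-sober charact H-sets}, and transfers the resulting conclusion back to $X$ through Proposition \ref{H-sober charact-H-sets}. Since this implication holds for \emph{every} R-subset system ${\rm H}$ (with no hypothesis such as property Q), the class of objects of $\mathbf{SH}$-$\mathbf{Sob}$ sits inside the class of objects of $\mathbf{H}$-$\mathbf{Sob}$ unconditionally. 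This is the only place where any genuine topological content is used, and it is already established.

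For fullness, I would note that both $\mathbf{SH}$-$\mathbf{Sob}$ and $\mathbf{H}$-$\mathbf{Sob}$ are, by their very definitions, subcategories of $\mathbf{Top}_0$ whose morphisms are precisely the continuous maps between the underlying $T_0$ spaces. Every super H-sober space is $T_0$ by Definition \ref{super H-sober space}, and every H-sober space is $T_0$ by Definition \ref{def H-sober}. Hence for objects $X,Y$ of $\mathbf{SH}$-$\mathbf{Sob}$, the hom-set $\mathbf{SH}\text{-}\mathbf{Sob}(X,Y)$ and the hom-set $\mathbf{H}\text{-}\mathbf{Sob}(X,Y)$ both coincide with $\mathbf{Top}_0(X,Y)$; no morphisms are discarded when enlarging the ambient category. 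This gives clause (ii) immediately.

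There is essentially no obstacle here: the statement is a formal packaging of the already-proved inclusion of object classes together with the trivial coincidence of hom-sets. If I wanted to be maximally economical, I would simply write one sentence citing Theorem \ref{super H-sober is H-sober} for the objects and one sentence observing that both categories inherit their morphisms from $\mathbf{Top}_0$, so that fullness is automatic.
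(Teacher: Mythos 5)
Your argument is correct and matches the paper's intent exactly: the corollary is stated as an immediate consequence of Theorem \ref{super H-sober is H-sober}\,(1)\,$\Rightarrow$\,(2), which gives the object inclusion unconditionally, and fullness is automatic since both $\mathbf{SH}$-$\mathbf{Sob}$ and $\mathbf{H}$-$\mathbf{Sob}$ take all continuous maps as morphisms. Nothing is missing.
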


By Theorem \ref{super H-sober charact H-sets}, Corollary \ref{R propert Q} and Theorem \ref{super H-sober is H-sober}, we have the following result (see \cite[Theorem 3.11]{Klause-Heckmann} and \cite[Lemma 7.20]{Schalk}).

\begin{theorem}\label{Heckman-Keimel theorem}\emph{(Heckmann-Keimel-Schalk Theorem)} For a $T_0$ space $X$, the following conditions are equivalent:
\begin{enumerate}[\rm (1)]
            \item $X$ is sober.
             \item  For any $\mathcal A\in  \ir(P_S(X))$, $\cl_{P_S(X)}\mathcal A\cap\bigcap\limits_{K\in \mathcal A}\ua_{\mk (X)} K\neq\emptyset$.

            \item  For any $\mathcal A\in  \ir(P_S(X))$ and $\mathcal U\in \mathcal O(P_S(X))$, $\bigcap\limits_{K\in \mathcal A}\ua_{\mk (X)} K\subseteq \mathcal U$ implies $\ua_{\mk (X)} K \subseteq \mathcal U$ \emph{(}i.e., $K\in \mathcal U$\emph{)} for some $K\in \mathcal A$.
            \item  For any $\mathcal A\in  \ir(P_S(X))$ and $U\in \mathcal O(X)$, $\bigcap\limits_{K\in \mathcal A}\ua_{\mk (X)} K\subseteq \Box U$ implies $\ua_{\mk (X)} K \subseteq \Box U$ \emph{(}i.e., $K\subseteq U$\emph{)} for some $K\in \mathcal A$.
 \item  For any $\mathcal A\in \ir(P_S(X))$ and $U\in \mathcal O(X)$, $\bigcap\mathcal K\subseteq U$ implies $K \subseteq U$ for some $K\in \mathcal A$.
                \item  For any $\mathcal A\in  \ir(P_S(X))$ and $U\in \mathcal O(X)$, $\bigcap\mathcal A\in \mk (X)$, and $\bigcap\mathcal K\subseteq U$ implies $K \subseteq U$ for some $K\in \mathcal A$.
           \item $X$ is super sober \emph{(}that is, $P_S(X)$ is sober\emph{)}.
\end{enumerate}
\end{theorem}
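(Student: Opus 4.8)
The plan is to derive this theorem as the specialization to $H=\mathcal{R}$ of the general machinery already established. Since $\mathcal{R}(X)=\ir(X)$, $\mathcal{R}$-sobriety is exactly sobriety and super $\mathcal{R}$-sobriety is exactly the assertion that $P_S(X)$ is sober, i.e.\ condition (7). The whole content of the theorem should then fall out by assembling three earlier results rather than by any fresh argument.

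First I would handle the equivalence of conditions (2) through (7) by applying Theorem \ref{super H-sober charact H-sets} with $H=\mathcal{R}$. Because $\mathcal{R}(P_S(X))=\ir(P_S(X))$, each occurrence of $\mathcal{K}\in H(P_S(X))$ in that theorem becomes $\mathcal{A}\in\ir(P_S(X))$, and its six equivalent conditions translate verbatim into conditions (7), (2), (3), (4), (5), (6) of the present statement. Thus conditions (2)--(7) are immediately equivalent, with (7) being precisely super $\mathcal{R}$-sobriety.

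Next I would link sobriety (condition (1)) to this block. By Corollary \ref{R propert Q}, the R-subset system $\mathcal{R}$ satisfies property Q, so Theorem \ref{super H-sober is H-sober} applies in its strong form: when $H$ has property Q, super H-sobriety and H-sobriety coincide. Specializing to $H=\mathcal{R}$ yields that $X$ is super sober if and only if $X$ is $\mathcal{R}$-sober, that is, sober; hence (7) $\Leftrightarrow$ (1). Combining this with the previous step gives the equivalence of all seven conditions.

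I do not anticipate a genuine obstacle: the substantive work --- the Rudin-lemma input behind property Q and the Smyth-power-space characterizations --- has already been carried out in the cited lemmas. The only point demanding care is bookkeeping, namely checking that under the identification $\mathcal{R}(P_S(X))=\ir(P_S(X))$ the general conditions of Theorem \ref{super H-sober charact H-sets} line up one-for-one with conditions (2)--(6) here (including the harmless relabelling of the irreducible family as $\mathcal{A}$). Once that correspondence is confirmed, the proof is simply the concatenation of the three citations.
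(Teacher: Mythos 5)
Your proposal is correct and matches the paper's own derivation exactly: the paper states this theorem as an immediate consequence of Theorem \ref{super H-sober charact H-sets} (giving the equivalence of (2)--(7) for $H=\mathcal R$), Corollary \ref{R propert Q}, and Theorem \ref{super H-sober is H-sober} (giving (1) $\Leftrightarrow$ (7)). No further comment is needed.
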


The following example shows that for ${\rm H}=\mathcal D, \mathcal D^\omega, \mathcal C$ and $\mathcal C^\omega$, {\rm H} does not satisfy property {\rm Q}, and ${\rm H}$-sobriety indeed does not imply super ${\rm H}$-sobriety in general.

\begin{example}\label{d-space not WF}  Let $\mathbb{J}$ be the Johnstone's dcpo (see Example \ref{Johnstone dcpo}). Then $\mathbb{J}$ is a countable dcpo, $\mathcal D=\mathcal D^\omega$ and $\mathcal C=\mathcal C^\omega$. The Johnstone space $\Sigma~\!\mathbb{J}$ is a $\mathcal D$-sober space (i.e., a $d$-space). However, $\Sigma~\!\mathbb{J}$ is not well-filtered (see \cite[Exercise 8.3.9]{Jean-2013}), that is, $\Sigma~\!\mathbb{J}$ is not super $\mathcal D$-sober by Corollary \ref{WF=Smyth d-space}. By Theorem \ref{super H-sober is H-sober}, $\mathcal D$ does not have property {\rm Q}, and hence by Proposition \ref{D-sober=C-sober}, $\mathcal C$ do not satisfy property {\rm Q}.
\end{example}

For the super {\rm H}-sobriety, we have the following result.

\begin{theorem}\label{super H-sober=PS super H-sober} Let $H : \mathbf{Top}_0 \longrightarrow \mathbf{Set}$ be an R-subset system having property M and $X$ a $T_0$ space. Then the following conditions are equivalent:
\begin{enumerate}[\rm (1)]
\item $X$ is super H-sober.
\item $P_S(X)$ is super H-sober.
\end{enumerate}
\end{theorem}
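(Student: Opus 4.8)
The plan is to treat the two implications quite differently. The implication $(2)\Rightarrow(1)$ is essentially free: by Definition \ref{super H-sober space}, condition $(1)$ says exactly that $P_S(X)$ is H-sober, while condition $(2)$ says that $P_S(X)$ is super H-sober. Hence applying the unconditional implication $(1)\Rightarrow(2)$ of Theorem \ref{super H-sober is H-sober} (super H-sobriety $\Rightarrow$ H-sobriety) to the space $P_S(X)$ immediately yields $(2)\Rightarrow(1)$, and property M plays no role here. All the substance lies in $(1)\Rightarrow(2)$.

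For $(1)\Rightarrow(2)$ I would first unwind the definitions so that the goal becomes: $P_S(P_S(X))$ is H-sober, i.e.\ (by the characterisation of super H-sobriety in Theorem \ref{super H-sober charact H-sets} applied to the base space $P_S(X)$) for every $\mathcal{W}\in H(P_S(P_S(X)))$ and every $\mathcal{U}\in\mathcal O(P_S(X))$ with $\bigcap\mathcal{W}\subseteq\mathcal{U}$, some member $\mathbb{K}\in\mathcal{W}$ satisfies $\mathbb{K}\subseteq\mathcal{U}$. I would argue by contradiction: if no member of $\mathcal{W}$ is contained in $\mathcal{U}$, then $\mathcal{C}:=P_S(X)\setminus\mathcal{U}$ is a closed subset of $P_S(X)$ meeting every member of $\mathcal{W}$, that is, $\mathcal{C}\in M(\mathcal{W})$. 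By the topological Rudin lemma for H-sets (Corollary \ref{H-rudin}) there is then a minimal irreducible closed set $\mathcal{D}\subseteq\mathcal{C}$ that still meets every member of $\mathcal{W}$.

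The clean endgame is a sobriety-style argument carried out entirely inside $P_S(X)$. Suppose for the moment that $\mathcal{D}$ is a \emph{closed H-set} of $P_S(X)$. Since $X$ is super H-sober, $P_S(X)$ is H-sober, so $\mathcal{D}=\overline{\{\mathbb{K}_*\}}$ for a unique point $\mathbb{K}_*\in\mk(X)$ of $P_S(X)$. As $\mathcal{D}=\{\,\mathbb{L}\in\mk(X):\mathbb{L}\sqsubseteq\mathbb{K}_*\,\}$ meets each member $\mathbb{K}$ of $\mathcal{W}$, and each such $\mathbb{K}$ is saturated (an upper set in the Smyth order), it follows that $\mathbb{K}_*\in\mathbb{K}$ for every $\mathbb{K}\in\mathcal{W}$, whence $\mathbb{K}_*\in\bigcap\mathcal{W}\subseteq\mathcal{U}$; but $\mathbb{K}_*\in\mathcal{D}\subseteq\mathcal{C}=P_S(X)\setminus\mathcal{U}$, a contradiction. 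Thus the whole matter reduces to showing that the minimal set $\mathcal{D}$ produced by Rudin's lemma is a closed H-set — in effect, that on the particular space $P_S(X)$ the system $H$ exhibits the property Q behaviour (Definition \ref{R-SS property Q}) used in Theorem \ref{super H-sober is H-sober}.

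The heart of the proof — and the step I expect to be the main obstacle — is precisely this upgrade from ``irreducible closed'' to ``closed H-set'', and it is where property M is indispensable. The idea is to feed $\mathcal{D}$ back through property M (Definition \ref{R-SS property M}, via Lemma \ref{R property M charact}), obtaining $\{\uparrow(\mathbb{K}\cap\mathcal{D}):\mathbb{K}\in\mathcal{W}\}\in H(P_S(P_S(X)))$, and then to push this family down to $P_S(X)$ along the continuous union map $\bigcup\colon P_S(P_S(X))\to P_S(X)$ of Corollary \ref{Smythunioncont}; by Lemma \ref{K union} each $\bigcup(\mathbb{K}\cap\mathcal{D})$ again lies in $\mk(X)$, and one checks that the resulting H-set $\{\bigcup(\mathbb{K}\cap\mathcal{D}):\mathbb{K}\in\mathcal{W}\}\in H(P_S(X))$ is contained in $\mathcal{D}$. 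Super H-sobriety of $X$ then supplies a supremum $\mathbb{K}_*\in\mathcal{D}$ of this H-set, and minimality of $\mathcal{D}$ is intended to force $\mathcal{D}=\overline{\{\mathbb{K}_*\}}$, exhibiting $\mathcal{D}$ as a point closure and hence a closed H-set. The delicate point to be nailed down is that this supremum-closure still meets every member of $\mathcal{W}$, so that minimality may be invoked; controlling the interaction between the upward Smyth-closures delivered by property M and the downward closed set $\mathcal{C}$ is the crux, and is exactly what the compatibility of $\bigcup$ with the Smyth order, together with property M, must resolve. Specialising at the end to $H=\mathcal D$ recovers the known equivalence that $X$ is well-filtered if and only if $P_S(X)$ is well-filtered.
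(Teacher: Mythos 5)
Your overall architecture for $(1)\Rightarrow(2)$ coincides with the paper's: reduce to the criterion of Theorem \ref{super H-sober charact H-sets} applied to the base space $P_S(X)$, use Lemma \ref{t Rudin} to extract a minimal irreducible closed set $\mathcal D\subseteq P_S(X)\setminus\mathcal U$ meeting all members of $\mathcal W$, apply property M to form $\{\mathord{\uparrow}(\mathbb K\cap\mathcal D):\mathbb K\in\mathcal W\}\in {\rm H}(P_S(P_S(X)))$, push it along the continuous union map of Corollary \ref{Smythunioncont} to get $\{\bigcup(\mathbb K\cap\mathcal D):\mathbb K\in\mathcal W\}\in {\rm H}(P_S(X))$ inside the lower set $\mathcal D$, and use the H-sobriety of $P_S(X)$ together with Proposition \ref{H-sober Scott H-open system} to produce $\mathbb K_*=\bigcap_{\mathbb K\in\mathcal W}\bigcup(\mathbb K\cap\mathcal D)\in\mathord{\mathsf{K}}(X)\cap\mathcal D$. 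The direction $(2)\Rightarrow(1)$ is handled exactly as in the paper.

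However, the step you explicitly defer --- that $\overline{\{\mathbb K_*\}}$ still meets every member of $\mathcal W$, equivalently that $\mathbb K_*\in\bigcap\mathcal W$ --- is the real content of the theorem, and it is not delivered by ``compatibility of $\bigcup$ with the Smyth order together with property M'': a priori $\mathbb K_*$ is just an intersection of the sets $\bigcup(\mathbb K\cap\mathcal D)$, and nothing formal forces it to lie in each saturated collection $\mathbb K\in\mathcal W$. The paper closes this with a \emph{second} minimality argument, carried out at the level of points of $X$ rather than of $P_S(X)$: if $\mathbb K_*\notin\mathbb K_0$ for some $\mathbb K_0\in\mathcal W$, choose $G\in\mathcal D\cap\mathbb K_0$; since $\mathbb K_0$ is an upper set for $\sqsubseteq$, one cannot have $\mathbb K_*\subseteq G$, so there is $g\in\mathbb K_*\setminus G$. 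Because $g\in\mathbb K_*\subseteq\bigcup(\mathbb K\cap\mathcal D)$ for every $\mathbb K\in\mathcal W$, the closed set $\Diamond_{\mathord{\mathsf{K}}(X)}\overline{\{g\}}\cap\mathcal D$ still meets every member of $\mathcal W$, so the minimality of $\mathcal D$ forces $\mathcal D\subseteq\Diamond_{\mathord{\mathsf{K}}(X)}\overline{\{g\}}$; in particular $g\in G$, a contradiction. Hence $\mathbb K_*\in\bigcap\mathcal W\subseteq\mathcal U$, contradicting $\mathbb K_*\in\mathcal D$. Note that once this is established the proof ends immediately, so your intermediate goal $\mathcal D=\overline{\{\mathbb K_*\}}$ is unnecessary; but without the $\Diamond\overline{\{g\}}$ argument (or an equivalent), your proof is incomplete precisely at its decisive point.
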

\begin{proof} (1) $\Rightarrow$ (2): Suppose that $\{\mathcal K_i : i\in I\}\in{\rm H}(P_S(P_S(X)))$ and $\mathcal U\in \mathcal O(P_S(X))$ with $\bigcap\limits_{i\in I}\mathcal K_i\subseteq \mathcal U$. If $\mathcal K_i\not\subseteq \mathcal U$ for each $i\in I$, then by Lemma \ref{t Rudin}, $\mk (X)\setminus \mathcal U$ contains a minimal irreducible closed subset $\mathcal C$ that still meets all
$\mathcal K_i$. For each $i\in I$, let $K_i=\bigcup \mathcal \ua_{\mk (X)} (\mathcal C\bigcap \mathcal K_i)$ ($=\bigcup (\mathcal C\bigcap \mathcal K_d$)). Then by Corollary \ref{Smythunioncont} and the property M of {\rm H}, $\{K_i : i\in I\}\in {\rm H}(P_S(X))$, and $K_i\in \mathcal C$ for each $i\in I$ since $\mathcal C=\da_{\mk (X)}\mathcal C$. Let $K=\bigcap\limits_{i\in I} K_i$. Then by Lemma \ref{sups in Smyth}, Proposition \ref{H-sober Scott H-open system} and condition (1), we have $K\in \mk (X)$ and $K=\bigvee_{\mk (X)} \{K_i : i\in I\}\in \mathcal C$. We claim that $K\in \bigcap\limits_{i\in I}\mathcal K_i$. Suppose, on the contrary, that $K\not\in \bigcap\limits_{i\in I}\mathcal K_i$. Then there is $i_0\in I$ such that $K\not\in \mathcal K_{i_0}$. Select a $G\in \mathcal C\bigcap \mathcal K_{i_0}$. Then $K\not\subseteq G$ and we can choose $g\in K\setminus G$. It follows that $g\in K_i=\bigcup (\mathcal A\bigcap \mathcal K_i)$ for all $i\in I$ and $G\not\in \Diamond_{\mk (K)}\overline{\{g\}}$. Thus $\Diamond_{\mk (K)}\overline{\{g\}}\bigcap\mathcal A\bigcap \mathcal K_i\neq\emptyset$ for all $i\in I$. By the minimality of $\mathcal C$, we have $\mathcal C=\Diamond_{\mk (K)}\overline{\{g\}}\bigcap\mathcal C$, and consequently, $G\in \mathcal C\bigcap \mathcal K_{i_o}=\Diamond_{\mk (K)}\overline{\{g\}}\bigcap\mathcal A\bigcap \mathcal K_{i_0}$, which is a contradiction with $G\not\in \Diamond_{\mk (K)}\overline{\{g\}}$. Thus $K\in \bigcap\limits_{i\in I}\mathcal K_i\subseteq \mathcal U\subseteq \mk (X)\setminus \mathcal A$, being a contradiction with $K\in \mathcal C$. Therefore, $P_S(X)$  is super {\rm H}-sober by Theorem \ref{super H-sober charact H-sets}.

(2) $\Rightarrow$ (1): By Theorem \ref{super H-sober is H-sober}

\end{proof}

By Remark \ref{C D property M}, Corollary \ref{WF=Smyth d-space} and Theorem \ref{super H-sober=PS super H-sober}, we get the following corollary.

\begin{corollary}\label{WF=Ps WF}\emph{(\cite{xuxizhao})}
For a $T_0$ space $X$, the following conditions are equivalent:
\begin{enumerate}[\rm (1)]
\item $X$ is well-filtered.
\item $P_S(X)$ is a $d$-space.
\item $P_S(P_S(X))$ is a $d$-space.
\item $P_S(X)$ is well-filtered.
\end{enumerate}
\end{corollary}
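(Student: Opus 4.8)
The plan is to specialize the general machinery to the R-subset system $H=\mathcal D$ and to track what ``super $\mathcal D$-sober'' unfolds to at each level of the iterated Smyth power spaces. Recall from the remark following Definition \ref{def H-sober} that $\mathcal D$-sobriety is exactly the property of being a $d$-space; hence a space $Z$ is super $\mathcal D$-sober precisely when $P_S(Z)$ is a $d$-space. With this dictionary in hand, the four conditions all become statements about $d$-spaces at successive levels, and the corollary reduces to two applications of Corollary \ref{WF=Smyth d-space} and one application of Theorem \ref{super H-sober=PS super H-sober}.

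First I would dispose of the equivalence (1) $\Leftrightarrow$ (2), which is nothing but Corollary \ref{WF=Smyth d-space}: $X$ is well-filtered iff $P_S(X)$ is a $d$-space. Read through the dictionary, condition (2) says exactly that $X$ is super $\mathcal D$-sober. Next, for (2) $\Leftrightarrow$ (3) I would invoke property M. By Remark \ref{C D property M} the system $\mathcal D$ has property M, so Theorem \ref{super H-sober=PS super H-sober} applies with $H=\mathcal D$ and yields that $X$ is super $\mathcal D$-sober iff $P_S(X)$ is super $\mathcal D$-sober. Unfolding the latter via the dictionary, $P_S(X)$ being super $\mathcal D$-sober means that $P_S(P_S(X))$ is a $d$-space, which is precisely condition (3). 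Combined with the previous step, this gives (2) $\Leftrightarrow$ (3).

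Finally, for (3) $\Leftrightarrow$ (4) I would apply Corollary \ref{WF=Smyth d-space} once more, but this time to the space $P_S(X)$ in place of $X$: it asserts that $P_S(X)$ is well-filtered iff $P_S(P_S(X))$ is a $d$-space, that is, (4) $\Leftrightarrow$ (3). Assembling the three equivalences closes the cycle and completes the proof.

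There is no genuine obstacle here; the whole argument is a careful bookkeeping exercise resting on two previously established results. The one point that requires attention is that condition (4) is obtained not from the original space $X$ but by applying Corollary \ref{WF=Smyth d-space} to $P_S(X)$, and that the hypothesis of property M---needed to invoke Theorem \ref{super H-sober=PS super H-sober}---is supplied precisely by Remark \ref{C D property M}.
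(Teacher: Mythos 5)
Your proposal is correct and follows exactly the route the paper intends: it cites precisely Remark \ref{C D property M}, Corollary \ref{WF=Smyth d-space} and Theorem \ref{super H-sober=PS super H-sober} as the ingredients, and your three-step assembly (including applying Corollary \ref{WF=Smyth d-space} a second time to $P_S(X)$ for the equivalence with (4)) is the intended bookkeeping. Nothing to add.
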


A topological space $X$ is called \emph{consonant} if for every $\mathcal F\in \sigma (\mathcal O(X))$, there is a family $\{K_i : i\in I\}\subseteq \mk (X)$ with $\mathcal F=\bigcup_{K\in \mathcal A} \Phi (K)$ (see, e.g., \cite{Jean-2013}). Inspired by this concept, we give the following definition.

\begin{definition}\label{H-consonant}  Let ${\rm H} : \mathbf{Top}_0 \longrightarrow \mathbf{Set}$ be an R-subset system. A $T_0$ space $X$ is called \emph{H}-\emph{consonant} if for any $\mathcal F\in \mathrm{OFilt(\mathcal O(X))}$, there is an $\mathcal A\in {\rm H}(P_S(X))$ with $\mathcal F=\mathcal F_{\mathcal A}=\bigcup_{K\in \mathcal A} \Phi (K)$ (cf. Lemma \ref{irr-induced-opne filter}).
\end{definition}

By Lemma \ref{irr-induced-opne filter}, the Hofmann-Mislove Theorem (Theorem \ref{Hofmann-Mislove theorem}), Theorem \ref{super H-sober charact H-sets}, Corollary \ref{WF=Smyth d-space} and Theorem \ref{Heckman-Keimel theorem}, we get the following result.

\begin{proposition}\label{sober H-consonant} Let $H : \mathbf{Top}_0 \longrightarrow \mathbf{Set}$ be an R-subset system and $X$ a $T_0$ space. Then the following conditions are equivalent:
\begin{enumerate}[\rm (1)]
	        \item $X$ is sober.
            \item  $X$ is H-consonant and super H-sober.
            \item $X$ is $\mathcal S$-consonant.
            \item  $X$ is $\mathcal D$-consonant and super $\mathcal D$-sober \emph{(}i.e., well-filtered\emph{)}.
            \item  $X$ is $\mathcal R$-consonant and super $\mathcal R$-sober \emph{(}i.e., super sober\emph{)}.
\end{enumerate}
\end{proposition}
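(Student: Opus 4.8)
The plan is to establish the general equivalence (1) $\Leftrightarrow$ (2) for an arbitrary R-subset system H and then to read off (3), (4) and (5) as the instances $H=\mathcal S$, $H=\mathcal D$ and $H=\mathcal R$; this is legitimate because the argument for (1) $\Leftrightarrow$ (2) will use nothing about H beyond $\mathcal S\leq H\leq\mathcal R$. For (4) and (5) the parentheticals are just the identifications super $\mathcal D$-sober $=$ well-filtered (Corollary \ref{WF=Smyth d-space}) and super $\mathcal R$-sober $=$ super sober, together with $X$ sober $\Leftrightarrow$ $X$ super sober (Theorem \ref{Heckman-Keimel theorem}). For (3), I would first observe that every $T_0$ space is $\mathcal S$-sober, so $P_S(X)$ is always $\mathcal S$-sober and $X$ is automatically super $\mathcal S$-sober; hence the case $H=\mathcal S$ of (2) reduces to ``$X$ is $\mathcal S$-consonant''. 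Since $\mathcal F_{\{K\}}=\Phi(K)$ for $\{K\}\in\mathcal S(P_S(X))$, $\mathcal S$-consonance says exactly that every open filter has the form $\Phi(K)$, which is soberness by the Hofmann-Mislove Theorem (Theorem \ref{Hofmann-Mislove theorem}); this settles (1) $\Leftrightarrow$ (3) on its own.

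For (1) $\Rightarrow$ (2): if $X$ is sober then $P_S(X)$ is sober by Theorem \ref{Heckman-Keimel theorem}, and since $H(P_S(X))\subseteq\ir(P_S(X))$ (from $H\leq\mathcal R$) the soberness of $P_S(X)$ immediately yields its H-sobriety, i.e.\ $X$ is super H-sober. For H-consonance, take $\mathcal F\in\mathrm{OFilt}(\mathcal O(X))$; by Hofmann-Mislove $\mathcal F=\Phi(K)$ for some $K\in\mk(X)$, and then $\mathcal A=\{K\}\in\mathcal S(P_S(X))\subseteq H(P_S(X))$ gives $\mathcal F_{\mathcal A}=\Phi(K)=\mathcal F$.

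The content of the proposition is (2) $\Rightarrow$ (1). Given $\mathcal F\in\mathrm{OFilt}(\mathcal O(X))$, H-consonance furnishes $\mathcal A\in H(P_S(X))$ with $\mathcal F=\mathcal F_{\mathcal A}$. As $H\leq\mathcal R$ we have $\mathcal A\in\ir(P_S(X))$, so super H-sobriety (H-sobriety of $P_S(X)$) provides a unique $K_0\in\mk(X)$ with $\cl_{P_S(X)}\mathcal A=\da_{\mk(X)}K_0=\{K\in\mk(X):K_0\subseteq K\}$, where the last equality uses that $\leq_{P_S(X)}$ is the Smyth order (Remark \ref{xi continuous}). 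The decisive step is then the chain $\mathcal F=\mathcal F_{\mathcal A}=\mathcal F_{\cl_{P_S(X)}\mathcal A}=\bigcup_{K_0\subseteq K}\Phi(K)=\Phi(K_0)$: the middle equality is Remark \ref{K-union-closure}, and the collapse of the union holds because $K_0\subseteq K$ forces $\Phi(K)\subseteq\Phi(K_0)$ while $K_0$ itself belongs to $\cl_{P_S(X)}\mathcal A$. Hence every open filter is of the form $\Phi(K_0)$, and Hofmann-Mislove delivers that $X$ is sober.

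The part to get right --- more a pivot than a true obstacle --- is identifying $\cl_{P_S(X)}\mathcal A$ with the principal down-set $\da_{\mk(X)}K_0$ and then seeing that $\bigcup_{K_0\subseteq K}\Phi(K)$ collapses to $\Phi(K_0)$; once this is in hand the proposition is an assembly of the Hofmann-Mislove Theorem, Theorem \ref{Heckman-Keimel theorem}, Lemma \ref{irr-induced-opne filter} and Remark \ref{K-union-closure}, precisely as signposted before the statement.
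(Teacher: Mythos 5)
Your proof is correct and assembles exactly the ingredients the paper signposts (Hofmann--Mislove, the Heckmann--Keimel--Schalk theorem, Lemma \ref{irr-induced-opne filter} / Remark \ref{K-union-closure}); the paper gives no written-out argument, and your key step $\mathcal F=\mathcal F_{\mathcal A}=\mathcal F_{\cl\mathcal A}=\Phi(K_0)$ with $\cl_{P_S(X)}\mathcal A=\da_{\mk(X)}K_0$ is just the closure-form of what the paper would obtain from Theorem \ref{super H-sober charact H-sets} via $\bigcap\mathcal A=\bigcap\cl\mathcal A=K_0$. The reductions of (3), (4), (5) to instances of (2) (with super $\mathcal S$-sobriety being automatic) are also as intended.
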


In the following, we shall give some equational characterizations of super ${\rm H}$-spaces.

\begin{theorem}\label{super H-sober cha equa H-set} Let $H : \mathbf{Top}_0 \longrightarrow \mathbf{Set}$ be an R-subset system having property M and $X$ a $T_0$ space. Then the following conditions are equivalent:
\begin{enumerate}[\rm (1)]
	        \item $X$ is super H-sober.
            \item  $X$ has $\mathbf{HIP}$ \emph{(}especially, $X$ is Smyth $H$-complete\emph{)}, and $\ua_{\mk (X)} \left(\mathcal C\cap\bigcap\limits_{K\in \mathcal A} \ua_{\mk (X)} K\right)=\bigcap\limits_{K\in \mathcal A}\ua_{\mk (X)} (\mathcal C\cap \ua_{\mk (X)} K)$ for any $\mathcal A\in H(P_S(X))$ and $\mathcal C\in \Gamma(P_S(X))$.
            \item  $X$ has $\mathbf{HIP}$ \emph{(}especially, $X$ is Smyth $H$-complete\emph{)}, and $\ua_{\mk (X)} \left(\mathcal C\cap\bigcap\limits_{K\in \mathcal A} \ua_{\mk (X)} K\right)=\bigcap\limits_{K\in \mathcal A}\ua_{\mk (X)} (\mathcal C\cap \ua_{\mk (X)} K)$ for any $\mathcal A\in H(P_S(X))$ and $\mathcal C\in \ir_c(P_S(X))$.
            \item  $X$ has $\mathbf{HIP}$ \emph{(}especially, $X$ is Smyth $H$-complete\emph{)}, and $\ua \left(C\cap\bigcap\mathcal A\right)=\bigcap\limits_{K\in \mathcal A}\ua (C\cap K)$ for any $\mathcal A\in H (P_S(X))$ and $C\in \Gamma (X)$.
            \item  $X$ has $\mathbf{HIP}$ \emph{(}especially, $X$ is Smyth $H$-complete\emph{)}, and $\ua \left(C\cap\bigcap\mathcal A\right)=\bigcap\limits_{K\in \mathcal A}\ua (C\cap K)$ for any $\mathcal A\in H (P_S(X))$ and $C\in \ir_c (X)$.
\end{enumerate}
\end{theorem}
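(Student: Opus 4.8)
The organizing idea is that conditions (2),(3) live ``upstairs'' in the Smyth power space $P_S(X)$, while (4),(5) are their ``downstairs'' counterparts phrased inside $X$ itself. The plan is to obtain (1)$\Leftrightarrow$(2)$\Leftrightarrow$(3) essentially for free by applying the H-sober equational characterization to $P_S(X)$, and then to attach (4),(5) through the cycle (1)$\Rightarrow$(4)$\Rightarrow$(5)$\Rightarrow$(1), saving the Topological Rudin Lemma and property M for the single hard implication (5)$\Rightarrow$(1).

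For (1)$\Leftrightarrow$(2)$\Leftrightarrow$(3): since $X$ is super H-sober exactly when $P_S(X)$ is H-sober, I would invoke the equivalence (1)$\Leftrightarrow$(2) of Proposition \ref{H-sober charact-bounded} for the space $P_S(X)$, noting that ``$P_S(X)$ is H-bounded'' translates into $\mathbf{HIP}$ by Lemma \ref{Ps bounded=HIP}(2); this gives (1)$\Leftrightarrow$(2) verbatim, with no use of property M. The step (2)$\Rightarrow$(3) is trivial because $\ir_c(P_S(X))\subseteq\Gamma(P_S(X))$. For (3)$\Rightarrow$(1) I would test the identity at $\mathcal C=\cl_{P_S(X)}\mathcal A\in\ir_c(P_S(X))$: choosing $x\in\bigcap\mathcal A$ (nonempty by $\mathbf{HIP}$), the point $\ua x$ lies in every factor $\ua_{\mk(X)}(\mathcal C\cap\ua_{\mk(X)}K)$ with witness $L=K$ (as $K\in\mathcal C$ and $\ua x\subseteq K$), so the right-hand side is nonempty; the identity then forces $\mathcal C\cap\bigcap_{K\in\mathcal A}\ua_{\mk(X)}K\neq\emptyset$, which is exactly condition (2) of Theorem \ref{super H-sober charact H-sets}, so $X$ is super H-sober.

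For (1)$\Rightarrow$(4): the inclusion $\ua(C\cap\bigcap\mathcal A)\subseteq\bigcap_{K\in\mathcal A}\ua(C\cap K)$ always holds, so I would only argue the reverse, by contraposition. If $x\notin\ua(C\cap\bigcap\mathcal A)$ then $\bigcap\mathcal A\subseteq X\setminus(C\cap\da x)$; since super H-sobriety makes $\bigcap\mathcal A\in\mk(X)$, Theorem \ref{super H-sober charact H-sets} produces some $K\in\mathcal A$ with $K\subseteq X\setminus(C\cap\da x)$, i.e.\ $x\notin\ua(C\cap K)$, and $\mathbf{HIP}$ is inherited along the implication chain following Lemma \ref{Ps bounded=HIP}. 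The step (4)$\Rightarrow$(5) is again trivial.

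The hard part is (5)$\Rightarrow$(1), and this is where both the Rudin reduction and property M are unavoidable. Given $\mathcal A\in H(P_S(X))$ and $U\in\mathcal O(X)$ with $\bigcap\mathcal A\subseteq U$, I would suppose toward a contradiction that no $K\in\mathcal A$ satisfies $K\subseteq U$; then $X\setminus U\in M(\mathcal A)$, so by Corollary \ref{H-rudin} it contains a minimal irreducible closed set $A_0$ still meeting every $K\in\mathcal A$. Because $A_0\cap K\neq\emptyset$ for all $K$, each $\ua(K\cap A_0)$ is a genuine member of $\mk(X)$ and property M yields $\{\ua(K\cap A_0):K\in\mathcal A\}\in H(P_S(X))$; $\mathbf{HIP}$ then gives $\bigcap_{K\in\mathcal A}\ua(A_0\cap K)\neq\emptyset$. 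Applying the identity of (5) with the irreducible closed set $C=A_0$ forces $\ua(A_0\cap\bigcap\mathcal A)\neq\emptyset$, hence $A_0\cap\bigcap\mathcal A\neq\emptyset$, contradicting $A_0\subseteq X\setminus U$ and $\bigcap\mathcal A\subseteq U$. Thus some $K\subseteq U$, and Lemma \ref{many meets in Smyth} simultaneously delivers $\bigcap\mathcal A\in\mk(X)$, so Theorem \ref{super H-sober charact H-sets} gives super H-sobriety. The two delicate points I expect to watch are ensuring $A_0$ meets every $K$ so that property M is applicable to nonempty compact saturated sets, and recognizing that it is precisely the \emph{irreducible} case $C=A_0$ of the identity that is needed, which is exactly why the Rudin reduction from the merely closed set $X\setminus U$ cannot be avoided.
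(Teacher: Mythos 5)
Your proposal is correct, and four of the five implications match the paper's proof essentially verbatim: (1)$\Leftrightarrow$(2) via Proposition \ref{H-sober charact-bounded} applied to $P_S(X)$ together with Lemma \ref{Ps bounded=HIP}, the trivial steps (2)$\Rightarrow$(3) and (4)$\Rightarrow$(5), the contrapositive argument for (1)$\Rightarrow$(4), and the Rudin-plus-property-M argument for (5)$\Rightarrow$(1). The one place you genuinely diverge is (3)$\Rightarrow$(1). The paper runs the Topological Rudin Lemma a second time there: it assumes $\bigcap_{K\in\mathcal A}\ua_{\mk(X)}K\subseteq\Box U$ with no $K\subseteq U$, extracts a minimal irreducible closed $A\subseteq X\setminus U$ meeting all members of $\mathcal A$, passes to $\Diamond A\in\ir_c(P_S(X))$, and uses property M together with $\mathbf{HIP}$ to pick $x\in\bigcap_{K\in\mathcal A}\ua(A\cap K)$ before deriving a contradiction from the identity. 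Your argument instead instantiates the identity directly at $\mathcal C=\cl_{P_S(X)}\mathcal A\in\ir_c(P_S(X))$, notes that any $x\in\bigcap\mathcal A$ (nonempty by $\mathbf{HIP}$) puts $\ua x$ in every factor of the right-hand side with witness $K$ itself, and concludes $\mathcal C\cap\bigcap_{K\in\mathcal A}\ua_{\mk(X)}K\neq\emptyset$, which is precisely condition (2) of Theorem \ref{super H-sober charact H-sets}. This is shorter, avoids the second Rudin reduction, and does not use property M in that implication at all (property M is of course still needed for (5)$\Rightarrow$(1), and is a standing hypothesis anyway); the paper's route buys nothing extra here, so your simplification is a genuine, if modest, improvement.
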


\begin{proof} (1) $\Leftrightarrow$ (2): By Proposition \ref{H-sober charact-bounded} and Lemma \ref{Ps bounded=HIP}.

(2) $\Rightarrow$ (3): Trivial.

(3) $\Rightarrow$ (1): Let $\mathcal A\in  {\rm H}(P_S(X))$ and $U\in \mathcal O(X)$ with $\bigcap\limits_{K\in \mathcal A}\ua_{\mk (X)} K\subseteq \Box U$. If $K\nsubseteq U$ for all $K\in \mathcal A$, then by Lemma \ref{t Rudin}, $X\setminus U$ contains a minimal irreducible closed subset $A$ that still meets all members of $\mathcal{A}$. By Lemma \ref{X-Smyth-irr}, $\Diamond A\in \ir_c(P_S(X))$. Since {\rm H} has property M and $X$ has $\mathbf{HIP}$, we have $\bigcap\limits_{K\in \mathcal A}\ua (A\cap K)\neq\emptyset$. Select $x\in \bigcap\limits_{K\in \mathcal A}\ua (A\cap K)$. Then by condition (3), we have $\ua x\in \bigcap\limits_{K\in\mathcal K}\ua_{\mk (X)}(\Diamond A\cap \ua_{\mk (X)}K)=\ua_{\mk (X)} (\Diamond A\cap \bigcap\limits_{K\in \mathcal A}\ua_{\mk (X)} K)$, and hence there is $G\in \mk (X)$ such that $G\in \Diamond A\cap \bigcap\limits_{K\in \mathcal A}\ua_{\mk (X)} K$ and $G\sqsubseteq \ua x$, that is, $\ua x\subseteq G$, $A\bigcap G\neq \emptyset$ and $G\subseteq \bigcap \mathcal A$. It follows that $\emptyset \neq A\bigcap G\subseteq A\cap \bigcap \mathcal A \subseteq A \cap U$, which is in contradiction with $A\subseteq X\setminus U$. By Theorem \ref{super H-sober charact H-sets}, $X$ is super {\rm H}-sober.

(1) $\Rightarrow$ (4): By Proposition \ref{H-sober charact-bounded} and Lemma \ref{Ps bounded=HIP}, $X$ is Smyth ${\rm H}$-complete. Let $\mathcal A\in {\rm H} (P_S(X))$ and $C\in \Gamma (X)$. Obviously, $\ua \left(C\cap\bigcap\mathcal A\right)\subseteq\bigcap\limits_{K\in \mathcal A}\ua (C\cap K)$. On the other hand, if $x\not\in \ua \left(C\cap\bigcap\mathcal A\right)$, then $\da x\cap C\cap \bigcap\mathcal A=\emptyset$, and hence $\bigcap\mathcal A\subseteq X\setminus \da x\cap C$. By Theorem \ref{super H-sober charact H-sets}, there is $G\in \mathcal A$ with $G\subseteq X\setminus \da x\cap C$ or, equivalently, $\da x\cap C\cap G=\emptyset$. Thus $x\not\in \ua (C\cap G)$, and whence $x\not\in \bigcap\limits_{K\in \mathcal A}\ua (C\cap K)$. Therefore, $\bigcap\limits_{K\in \mathcal A}\ua (C\cap K)\subseteq\ua \left(C\cap\bigcap\mathcal A\right)$. The equation $\ua \left(C\cap\bigcap\mathcal A\right)=\bigcap\limits_{K\in \mathcal A}\ua (C\cap K)$ thus holds.

(4) $\Rightarrow$ (5): Trivial.

(5) $\Rightarrow$ (1): Suppose that $\mathcal A\in {\rm H}(P_S(X))$ and $U\in \mathcal O(X)$ with $\bigcap \mathcal A \subseteq U$. If $K\not\subseteq U$ for each $K\in \mathcal A$, then by Lemma \ref{t Rudin}, $X\setminus U$ contains a minimal irreducible closed subset $C$ that still meets all members of $\mathcal{K}$. By condition (5), we have $\{\ua (C\cap K) : K\in \mathcal A\}\in {\rm H}(P_S(X))$, and hence by condition (5), $\emptyset \neq \bigcap\limits_{K\in\mathcal A}\ua (C\cap K)=\ua \left(C\cap\bigcap \mathcal A\right)=\emptyset$ since $\bigcap \mathcal A \subseteq U\subseteq X\setminus C$, a contradiction. Thus $X$ is super {\rm H}-sober by Theorem \ref{super H-sober charact H-sets}.

\end{proof}

By Remark \ref{C D property M}, Lemma \ref{R has property M}, Corollary \ref{WF=Smyth d-space}, Theorem \ref{Heckman-Keimel theorem} and Theorem \ref{super H-sober cha equa H-set}, we get the following two corollaries.

\begin{corollary}\label{WF charact-bounded} \emph{(\cite{xu-shen-xi-zhao1})} For a $T_0$ space $X$, the following conditions are equivalent:
\begin{enumerate}[\rm (1)]
	        \item $X$ is well-filtered.
            \item  $X$ has $\mathbf{FIP}$ \emph{(}especially, $\mk (X)$ is a dcpo\emph{)}, and $\ua_{\mk (X)} \left(\mathcal C\cap\bigcap\limits_{K\in \mathcal K} \ua_{\mk (X)} K\right)=\bigcap\limits_{K\in \mathcal K}\ua_{\mk (X)} (\mathcal C\cap \ua_{\mk (X)} K)$ for any $\mathcal K\in \mathcal D(\mk (X))$ and $\mathcal C\in \Gamma(P_S(X))$.
            \item  $X$ has $\mathbf{FIP}$ \emph{(}especially, $\mk (X)$ is a dcpo\emph{)}, and $\ua_{\mk (X)} \left(\mathcal C\cap\bigcap\limits_{K\in \mathcal K} \ua_{\mk (X)} K\right)=\bigcap\limits_{K\in \mathcal K}\ua_{\mk (X)} (\mathcal C\cap \ua_{\mk (X)} K)$ for any $\mathcal K\in \mathcal D(\mk (X))$ and $\mathcal C\in \ir_c(P_S(X))$.
            \item  $X$ has $\mathbf{FIP}$ \emph{(}especially, $\mk (X)$ is a dcpo\emph{)}, and $\ua \left(C\cap\bigcap\mathcal K\right)=\bigcap\limits_{K\in \mathcal K}\ua (C\cap K)$ for any $\mathcal K\in \mathcal D(\mk (X))$ and $C\in \Gamma (X)$.
            \item  $X$ has $\mathbf{FIP}$ \emph{(}especially, $\mk (X)$ is a dcpo\emph{)}, and $\ua \left(C\cap\bigcap\mathcal K\right)=\bigcap\limits_{K\in \mathcal K}\ua (C\cap K)$ for any $\mathcal K\in \mathcal D(\mk (X))$ and $C\in \ir_c (X)$.
\end{enumerate}
\end{corollary}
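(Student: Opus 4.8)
The plan is to obtain the corollary simply as the instance $H=\mathcal D$ of Theorem \ref{super H-sober cha equa H-set}, so the real content is carried entirely by that theorem and the only work is translating its data into the classical language of compact saturated sets. First I would verify the hypothesis of the theorem: by Remark \ref{C D property M} the R-subset system $\mathcal D$ has property M, so Theorem \ref{super H-sober cha equa H-set} is indeed available for $H=\mathcal D$.

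Next I would match condition (1) in the two statements. By Corollary \ref{WF=Smyth d-space}, $X$ is well-filtered if and only if $P_S(X)$ is a $d$-space, i.e. $\mathcal D$-sober; by Definition \ref{super H-sober space} this is exactly the assertion that $X$ is super $\mathcal D$-sober. Hence condition (1) of the corollary coincides with condition (1) of Theorem \ref{super H-sober cha equa H-set} read at $H=\mathcal D$.

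It then remains to translate the auxiliary data appearing in conditions (2)--(5). Since the specialization order on $P_S(X)$ is the Smyth order (Remark \ref{xi continuous}), a directed subset of $P_S(X)$ is precisely a family $\mathcal K\in\mathcal D(\mk(X))$, so $\mathcal D(P_S(X))=\mathcal D(\mk(X))$. The $\mathcal D$-intersection property $\mathbf{HIP}$ is by definition the filtered intersection property $\mathbf{FIP}$, and by Lemma \ref{sups in Smyth} the space $X$ is Smyth $\mathcal D$-complete if and only if $\bigcap\mathcal K=\bigvee_{\mk(X)}\mathcal K\in\mk(X)$ for every $\mathcal K\in\mathcal D(\mk(X))$, that is, if and only if $\mk(X)$ is a dcpo. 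Substituting these identifications into conditions (2)--(5) of Theorem \ref{super H-sober cha equa H-set} reproduces conditions (2)--(5) of the corollary verbatim (with $\mathcal A$ renamed $\mathcal K$), the closed-set and irreducible-closed-set quantifiers $\mathcal C\in\Gamma(P_S(X))$, $\mathcal C\in\ir_c(P_S(X))$, $C\in\Gamma(X)$, $C\in\ir_c(X)$ carrying over unchanged.

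I do not expect any genuine obstacle: the equivalences are purely a specialization of the general theorem, and the argument is bookkeeping that records what super $\mathcal D$-sobriety, $\mathbf{HIP}$ and Smyth $\mathcal D$-completeness become for compact saturated sets. The single point deserving a line of care is the identification $\mathcal D(P_S(X))=\mathcal D(\mk(X))$, which rests on the fact that the specialization order of $P_S(X)$ is the Smyth order; once that is noted, the corollary follows at once.
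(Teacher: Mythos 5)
Your proposal is correct and is essentially the paper's own derivation: the paper obtains this corollary precisely by specializing Theorem \ref{super H-sober cha equa H-set} to $\mathrm{H}=\mathcal D$, using Remark \ref{C D property M} for property M and Corollary \ref{WF=Smyth d-space} to identify well-filteredness with super $\mathcal D$-sobriety. Your bookkeeping (the identification $\mathcal D(P_S(X))=\mathcal D(\mk(X))$ via the Smyth order, $\mathbf{HIP}=\mathbf{FIP}$, and Smyth $\mathcal D$-completeness $=$ $\mk(X)$ being a dcpo via Lemma \ref{sups in Smyth}) is exactly the translation the paper intends.
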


\begin{corollary}\label{super sober charact-bounded} \emph{(\cite{xu-shen-xi-zhao1})} For a $T_0$ space $X$, the following conditions are equivalent:
\begin{enumerate}[\rm (1)]
	        \item $X$ is sober.
            \item  $X$ has $\mathbf{RIP}$ \emph{(}especially, $\mk (X)$ is irreducible complete\emph{)}, and $\ua_{\mk (X)} \left(\mathcal C\cap\bigcap\limits_{K\in \mathcal A} \ua_{\mk (X)} K\right)=\bigcap\limits_{K\in \mathcal A}\ua_{\mk (X)} (\mathcal C\cap \ua_{\mk (X)} K)$ for any $\mathcal A\in \ir(P_S(X))$ and $\mathcal C\in \Gamma(P_S(X))$.
            \item  $X$ has $\mathbf{RIP}$ \emph{(}especially, $\mk (X)$ is irreducible complete\emph{)}, and $\ua_{\mk (X)} \left(\mathcal C\cap\bigcap\limits_{K\in \mathcal A} \ua_{\mk (X)} K\right)=\bigcap\limits_{K\in \mathcal A}\ua_{\mk (X)} (\mathcal C\cap \ua_{\mk (X)} K)$ for any $\mathcal A\in \ir_c(P_S(X))$ and $\mathcal C\in \ir_c(P_S(X))$.
            \item  $X$ has $\mathbf{RIP}$ \emph{(}especially, $\mk (X)$ is irreducible complete\emph{)}, and $\ua \left(C\cap\bigcap\mathcal A\right)=\bigcap\limits_{K\in \mathcal A}\ua (C\cap K)$ for any $\mathcal A\in \ir(P_S(X))$ and $C\in \Gamma (X)$.
            \item  $X$ has $\mathbf{RIP}$ \emph{(}especially, $\mk (X)$ is irreducible complete\emph{)}, and $\ua \left(C\cap\bigcap\mathcal A\right)=\bigcap\limits_{K\in \mathcal K}\ua (C\cap K)$ for any $\mathcal A\in \ir(P_S(X))$  and $C\in \ir_c (X)$.
\end{enumerate}
\end{corollary}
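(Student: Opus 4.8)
The plan is to obtain all five equivalences at once by specialising the general equational characterisation of super ${\rm H}$-sobriety, Theorem \ref{super H-sober cha equa H-set}, to the R-subset system ${\rm H}=\mathcal R$. First I would check that $\mathcal R$ is an admissible input: by Lemma \ref{R has property M} the system $\mathcal R$ has property M, so Theorem \ref{super H-sober cha equa H-set} applies with ${\rm H}=\mathcal R$. Under this choice the bookkeeping is immediate, since $\mathcal R(P_S(X))=\ir(P_S(X))$, so every family $\mathcal A$ ranging over ${\rm H}(P_S(X))$ is just an irreducible subset of $P_S(X)$; the $\mathbf{HIP}$ of Definition \ref{HIP property} is by definition the irreducible intersection property $\mathbf{RIP}$; and Smyth $\mathcal R$-completeness is exactly the assertion that $\mk(X)$ is irreducible complete (cf. Lemma \ref{sups in Smyth}).

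Next I would pin down the topological condition. Condition (1) of Theorem \ref{super H-sober cha equa H-set} reads ``$X$ is super $\mathcal R$-sober'', i.e. $P_S(X)$ is $\mathcal R$-sober, which is precisely super sobriety; by the Heckmann--Keimel--Schalk Theorem (Theorem \ref{Heckman-Keimel theorem}) super sobriety coincides with sobriety. Hence condition (1) above is condition (1) of Theorem \ref{super H-sober cha equa H-set}, and conditions (2), (4) and (5) here are verbatim the corresponding conditions of Theorem \ref{super H-sober cha equa H-set} (the ranges $\mathcal A\in\ir(P_S(X))$, $\mathcal C\in\Gamma(P_S(X))$, $C\in\Gamma(X)$ and $C\in\ir_c(X)$ all match). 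Thus (1), (2), (4) and (5) are equivalent straight away.

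The only point needing attention --- and the sole place where the present statement is not a literal instance of Theorem \ref{super H-sober cha equa H-set} --- is condition (3), where $\mathcal A$ is restricted to $\ir_c(P_S(X))$ rather than to all of $\ir(P_S(X))$. I expect this to be the (mild) obstacle, and it is dispatched by passing to closures in the Smyth power space. Indeed, by Remark \ref{C up=cl C up} applied inside $P_S(X)$ one has $\bigcap_{K\in\mathcal A}\ua_{\mk (X)}K=\mathcal A^{\ua}=\overline{\mathcal A}^{\ua}=\bigcap_{K\in\overline{\mathcal A}}\ua_{\mk (X)}K$, while Remark \ref{intersection=closure intersection in Smyth} gives $\bigcap\mathcal A=\bigcap\overline{\mathcal A}$ with $\overline{\mathcal A}$ again irreducible; moreover a member of the closed family $\overline{\mathcal A}$ lying in an open set $\Box U$ forces a member of $\mathcal A$ to lie in $\Box U$. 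As recorded in Remark \ref{super H-sober charact remark}, one may therefore test the defining inequalities, and hence the displayed equation, on closed irreducible families $\mathcal A\in\ir_c(P_S(X))$ without loss. Consequently condition (3) here is equivalent to condition (3) of Theorem \ref{super H-sober cha equa H-set}, closing the loop and finishing the identification of all five conditions with sobriety.
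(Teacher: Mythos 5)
Your proposal is correct and matches the paper's (implicit) proof exactly: the paper obtains this corollary by instantiating Theorem \ref{super H-sober cha equa H-set} at ${\rm H}=\mathcal R$, using Lemma \ref{R has property M} to supply property M and Theorem \ref{Heckman-Keimel theorem} to identify super sobriety with sobriety, just as you do. Your extra care over the restriction of $\mathcal A$ to $\ir_c(P_S(X))$ in condition (3) addresses a point the paper silently glosses over, and your resolution --- pass to $\overline{\mathcal A}$ via Remarks \ref{C up=cl C up}, \ref{intersection=closure intersection in Smyth} and \ref{super H-sober charact remark} so that the closed-family form of the equation already suffices to run the proof of (3) $\Rightarrow$ (1) --- is the right one.
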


The following three corollaries can be directly deduce from Lemma \ref{COMPminimalset}, Theorem \ref{super H-sober cha equa H-set}, Corollary \ref{WF charact-bounded} and Corollary \ref{super sober charact-bounded}.

\begin{corollary}\label{super H-sober min equa} Let $H : \mathbf{Top}_0 \longrightarrow \mathbf{Set}$ be an R-subset system having property M, $X$ a $T_0$ space and $\mathcal A\in H(P_S(X))$. Then $C=\bigcap \mathcal A\in \mk (X)$, and for each $c\in \mathrm{min}(C)$, $\bigcap\limits_{K\in \mathcal A} \ua (\da c\cap K)=\ua \left(\da c\cap\bigcap \mathcal A\right)=\ua c$.
\end{corollary}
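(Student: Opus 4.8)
The plan is to read both assertions off the results already in place: the equational description of super H-sobriety in Theorem \ref{super H-sober cha equa H-set} together with the structure of compact saturated sets in Lemma \ref{COMPminimalset}. Property M enters as the standing hypothesis that makes Theorem \ref{super H-sober cha equa H-set} available, and inside the proof of that theorem it is the topological Rudin lemma (Lemma \ref{t Rudin}) that does the real work. First I would record the compactness statement: for $\mathcal A\in H(P_S(X))$ the intersection $C=\bigcap\mathcal A$ is a nonempty compact saturated set, i.e. $C\in\mk(X)$, this being exactly the Smyth H-completeness carried by Theorem \ref{super H-sober cha equa H-set} (equivalently condition (6) of Theorem \ref{super H-sober charact H-sets}, or, via Lemma \ref{Ps bounded=HIP}, the H-completeness of $P_S(X)$). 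With $C\in\mk(X)$ secured, Lemma \ref{COMPminimalset} gives $C=\ua\,\mathrm{min}(C)$ with $\mathrm{min}(C)$ compact and nonempty, so it is legitimate to fix $c\in\mathrm{min}(C)$.

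Next I would evaluate the middle term. Since $C=\ua\,\mathrm{min}(C)$ is an upper set and $c$ is minimal in $C$, any $x\in C$ with $x\leq c$ must equal $c$; hence $\da c\cap\bigcap\mathcal A=\da c\cap C=\{c\}$ and therefore $\ua(\da c\cap\bigcap\mathcal A)=\ua c$. This half uses only the minimality of $c$ together with $c\in C$.

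Finally, the left-hand identity $\bigcap_{K\in\mathcal A}\ua(\da c\cap K)=\ua(\da c\cap\bigcap\mathcal A)$ is precisely the equation of Theorem \ref{super H-sober cha equa H-set}(4) specialised to the closed set $\da c=\overline{\{c\}}\in\Gamma(X)$; as $\da c\in\ir_c(X)$, condition (5) there applies equally. Splicing this to the evaluation above gives $\bigcap_{K\in\mathcal A}\ua(\da c\cap K)=\ua(\da c\cap\bigcap\mathcal A)=\ua c$, which is the assertion. The only genuinely nontrivial inclusion anywhere is $\bigcap_{K\in\mathcal A}\ua(\da c\cap K)\subseteq\ua c$: unwinding it, if $y$ lies in every $\ua(\da c\cap K)$ then each $K\in\mathcal A$ meets the closed set $\da c\cap\da y$, and one must force $c\leq y$. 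This is exactly what Theorem \ref{super H-sober cha equa H-set} delivers, by passing through Lemma \ref{t Rudin} to a minimal irreducible closed subset of $\da c\cap\da y$ that still meets every member of $\mathcal A$ and then applying property M. Thus the main obstacle does not lie in the corollary itself; here it only remains to specialise the cited identity and to compute $\da c\cap C$ via minimality.
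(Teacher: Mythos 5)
Your proof is correct and follows exactly the paper's intended derivation: the paper offers no separate argument, saying only that the corollary follows from Lemma \ref{COMPminimalset} and Theorem \ref{super H-sober cha equa H-set}, which is precisely your route (Smyth H-completeness giving $C=\bigcap\mathcal A\in\mk(X)$, minimality of $c$ giving $\da c\cap C=\{c\}$ and hence $\ua\left(\da c\cap\bigcap\mathcal A\right)=\ua c$, and the equation of Theorem \ref{super H-sober cha equa H-set}(4) specialised to the closed set $\da c=\overline{\{c\}}\in\Gamma(X)$). One remark: as printed the corollary omits the hypothesis that $X$ is super H-sober, without which the claim fails (e.g.\ Example \ref{examp1} gives $\mathcal K\in\mathcal D(P_S(X_{cof}))$ with $\bigcap\mathcal K=\emptyset$), and your argument correctly supplies this hypothesis, in line with the sober and well-filtered analogues in Corollaries \ref{sober min equa} and \ref{WF min equa}.
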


\begin{corollary}\label{sober min equa} Let $X$ be a sober space and $\mathcal A\subseteq \ir (P_S(X))$. Then $C=\bigcap \mathcal A\in \mk (X)$, and for each $c\in \mathrm{min}(C)$, $\bigcap\limits_{K\in \mathcal A} \ua (\da c\cap K)=\ua \left(\da c\cap\bigcap \mathcal A\right)=\ua c$.
\end{corollary}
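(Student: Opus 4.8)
The plan is to obtain the statement as a direct specialization of the preceding Corollary \ref{super H-sober min equa} to the R-subset system $\mathcal R$. Recall that $\mathcal R = \ir$, and by Lemma \ref{R has property M} the system $\mathcal R$ satisfies property M; moreover, by the Heckmann--Keimel--Schalk Theorem (Theorem \ref{Heckman-Keimel theorem}), a $T_0$ space is sober if and only if it is super $\mathcal R$-sober. Hence, for $X$ sober and $\mathcal A\in \ir(P_S(X))=\mathcal R(P_S(X))$, every hypothesis of Corollary \ref{super H-sober min equa} is met, and the two displayed equalities follow at once.

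For a self-contained argument I would proceed directly. Since $X$ is sober, it is super sober by Theorem \ref{Heckman-Keimel theorem}, so condition (6) there yields $C:=\bigcap \mathcal A\in \mk(X)$. Applying the equation of Corollary \ref{super sober charact-bounded}(4) with the closed set $\da c$ (in the role of the dummy closed set, for a fixed $c\in \mathrm{min}(C)$), I obtain
$$\bigcap_{K\in \mathcal A}\ua(\da c\cap K)=\ua\left(\da c\cap\bigcap \mathcal A\right)=\ua(\da c\cap C).$$
It then remains only to identify $\da c\cap C$ with $\{c\}$, after which all three expressions in the claimed chain of equalities coincide.

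The single nontrivial point is this last identification, and it is the step I would write out carefully. By Lemma \ref{COMPminimalset}, $C=\ua\mathrm{min}(C)$. If $y\in \da c\cap C$, then $y\leq c$ and $m\leq y$ for some $m\in \mathrm{min}(C)$, so $m\leq c$; the minimality of $c$ in $C$ forces $m=c$, whence $c\leq y\leq c$ and thus $y=c$ by antisymmetry of the specialization order. Therefore $\da c\cap C=\{c\}$ and $\ua(\da c\cap C)=\ua c$, which combined with the display gives $\bigcap_{K\in \mathcal A}\ua(\da c\cap K)=\ua\left(\da c\cap\bigcap \mathcal A\right)=\ua c$, as required. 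No genuine obstacle arises here: the compactness of $\bigcap\mathcal A$ and the key equation are already supplied by the established theory, and the only care needed is the elementary computation of $\da c\cap C$ just given.
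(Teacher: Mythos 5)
Your proposal is correct and follows essentially the same route as the paper, which derives this corollary by citing Lemma \ref{COMPminimalset} together with the equational characterization in Corollary \ref{super sober charact-bounded} (equivalently, the specialization of Corollary \ref{super H-sober min equa} to $\mathcal R$, which has property M and for which super $\mathcal R$-sobriety coincides with sobriety by Theorem \ref{Heckman-Keimel theorem}). Your explicit verification that $\da c\cap C=\{c\}$ via $C=\ua\mathrm{min}(C)$ and the minimality of $c$ is exactly the detail the paper leaves implicit, and it is carried out correctly.
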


\begin{corollary}\label{WF min equa}\emph{(\cite{wu-xi-xu-zhao-19})} Let $X$ be a well-filtered space and $\mathcal K\in \mathcal D(\mk (X))$. Then $C=\bigcap \mathcal K\in \mk (X)$, and for each $c\in \mathrm{min}(C)$, $\bigcap\limits_{K\in \mathcal K} \ua (\da c\cap K)=\ua \left(\da c\cap\bigcap \mathcal K\right)=\ua c$.
\end{corollary}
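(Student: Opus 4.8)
The plan is to obtain this corollary as the instance $\mathrm{H}=\mathcal D$ of Corollary \ref{super H-sober min equa}, so that almost all of the genuine work is already packaged there. First I would translate the two hypotheses needed to invoke Corollary \ref{super H-sober min equa} into the present setting. Recall from Corollary \ref{WF=Smyth d-space} that a $T_0$ space is well-filtered precisely when $P_S(X)$ is a $d$-space, that is, precisely when $X$ is super $\mathcal D$-sober; thus the well-filteredness of $X$ supplies exactly the super-sobriety required. Recall also from Remark \ref{C D property M} that the R-subset system $\mathcal D$ satisfies property M, so that Corollary \ref{super H-sober min equa} is applicable with $\mathrm{H}=\mathcal D$.

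Next I would match up the families. By Remark \ref{xi continuous} the specialization order of $P_S(X)$ is the Smyth order $\sqsubseteq$, so a directed family of compact saturated sets $\mathcal K\in\mathcal D(\mk(X))$ is the same thing as a directed subset of the space $P_S(X)$ under its specialization order, i.e. $\mathcal K\in\mathcal D(P_S(X))=\mathrm{H}(P_S(X))$ with $\mathrm{H}=\mathcal D$. Hence $\mathcal K$ is an admissible choice of $\mathcal A$ in Corollary \ref{super H-sober min equa}.

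Applying Corollary \ref{super H-sober min equa} with $\mathrm{H}=\mathcal D$ and $\mathcal A=\mathcal K$ then yields at once that $C=\bigcap\mathcal K\in\mk(X)$ and that, for every $c\in\mathrm{min}(C)$, $\bigcap_{K\in\mathcal K}\ua(\da c\cap K)=\ua(\da c\cap\bigcap\mathcal K)=\ua c$, which is exactly the assertion. I do not expect any real obstacle here: the computation with minimal points (through Lemma \ref{COMPminimalset}) and the equational identity have already been carried out in Theorem \ref{super H-sober cha equa H-set} and its Corollary \ref{super H-sober min equa}. The only point demanding a moment's attention is the bookkeeping just described---identifying $\mathcal D(\mk(X))$ with $\mathcal D(P_S(X))$ via the coincidence of the Smyth order and the specialization order of $P_S(X)$, and recognizing well-filteredness as super $\mathcal D$-sobriety via Corollary \ref{WF=Smyth d-space}.
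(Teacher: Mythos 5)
Your proposal is correct and follows essentially the same route as the paper, which obtains this corollary by specializing the general equational result (Theorem \ref{super H-sober cha equa H-set} / Corollary \ref{super H-sober min equa}) to $\mathrm{H}=\mathcal D$ using property M of $\mathcal D$, the identification of well-filteredness with super $\mathcal D$-sobriety from Corollary \ref{WF=Smyth d-space}, and Lemma \ref{COMPminimalset}. Your reading of Corollary \ref{super H-sober min equa} as requiring $X$ to be super H-sober (despite the statement literally saying ``$T_0$ space'') is the intended one, and the identification $\mathcal D(\mk(X))=\mathcal D(P_S(X))$ via the Smyth order is exactly the bookkeeping needed.
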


\begin{theorem}\label{super H-sober mapping} Let $H : \mathbf{Top}_0 \longrightarrow \mathbf{Set}$ be an R-subset system and $X$ a $T_0$ space. Then the following conditions are equivalent:
\begin{enumerate}[\rm (1)]
		\item $X$ is super H-sober.
        \item For every continuous mapping $f:X\longrightarrow Y$ to a $T_0$ space $Y$ and any $\mathcal A\in H(P_S(X))$, $\ua f\left(\bigcap\mathcal A\right)=\bigcap_{K\in\mathcal A}\ua f(K)$,
        \item For every continuous mapping $f:X\longrightarrow Y$ to an H-sober space $Y$ and any $\mathcal A\in H(P_S(X))$, $\ua f\left(\bigcap\mathcal A\right)=\bigcap_{K\in\mathcal A}\ua f(K)$.
        \item For every continuous mapping $f:X\longrightarrow Y$ to a super H-sober space $Y$ and any $\mathcal A\in H(P_S(X))$, $\ua f\left(\bigcap\mathcal A\right)=\bigcap_{K\in\mathcal A}\ua f(K)$.
        \item For every continuous mapping $f:X\longrightarrow Y$ to a sober space $Y$ and any $\mathcal A\in H(P_S(X))$, $\ua f\left(\bigcap\mathcal A\right)=\bigcap_{K\in\mathcal A}\ua f(K)$.

\end{enumerate}
\end{theorem}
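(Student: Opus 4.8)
The plan is to follow the pattern of Theorem~\ref{H-sober charact-mapping} (the non-super analogue), reducing the cycle to the two substantial implications $(1)\Rightarrow(2)$ and $(5)\Rightarrow(1)$, with the chain $(2)\Rightarrow(3)\Rightarrow(4)\Rightarrow(5)$ being automatic. Indeed, by Proposition~\ref{sober H-consonant} every sober space is super H-sober, by Theorem~\ref{super H-sober is H-sober} every super H-sober space is H-sober, and every H-sober space is $T_0$; hence
\[
\{\text{sober}\}\subseteq\{\text{super H-sober}\}\subseteq\{\text{H-sober}\}\subseteq\{T_0\},
\]
so each of $(2)\Rightarrow(3)$, $(3)\Rightarrow(4)$, $(4)\Rightarrow(5)$ merely restricts the class of admissible target spaces and is trivial. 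Throughout I will use that $\bigcap\mathcal A$ is the relevant join in $P_S(X)$ (Lemma~\ref{sups in Smyth}) and that $H(P_S(X))\subseteq\ir(P_S(X))$ since $H$ is an R-subset system.

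For $(1)\Rightarrow(2)$, fix a continuous $f:X\longrightarrow Y$ and $\mathcal A\in H(P_S(X))$. The inclusion $\ua f\left(\bigcap\mathcal A\right)\subseteq\bigcap_{K\in\mathcal A}\ua f(K)$ is immediate from $\bigcap\mathcal A\subseteq K$ for every $K\in\mathcal A$. For the reverse inclusion, take $y\in\bigcap_{K\in\mathcal A}\ua f(K)$; then for each $K$ there is $k\in K$ with $f(k)\le y$, so the closed set $C=f^{-1}(\overline{\{y\}})$ meets every member of $\mathcal A$. If $\bigcap\mathcal A\cap C=\emptyset$, then $\bigcap\mathcal A\subseteq X\setminus C\in\mathcal O(X)$, and the super H-sobriety of $X$, via the equivalent condition ``$\bigcap\mathcal A\subseteq U$ implies $K\subseteq U$ for some $K$'' in Theorem~\ref{super H-sober charact H-sets}, yields some $K\in\mathcal A$ with $K\subseteq X\setminus C$, i.e.\ $K\cap C=\emptyset$, a contradiction. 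Hence some $k_0\in\bigcap\mathcal A$ satisfies $f(k_0)\le y$, giving $y\in\ua f\left(\bigcap\mathcal A\right)$.

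The heart of the argument is $(5)\Rightarrow(1)$. Here I would apply condition $(5)$ to the canonical embedding $\eta_X:X\longrightarrow X^s=P_H(\ir_c(X))$ into the sobrification of $X$, which is sober (Remark~\ref{eta continuous}). The key computation is that, since the specialization order of $X^s$ is inclusion of irreducible closed sets, for any $K\subseteq X$ one has $\ua_{X^s}\eta_X(K)=\{F\in\ir_c(X):F\cap K\neq\emptyset\}=\Diamond_{\ir_c(X)}K$. Feeding $\eta_X$ into $(5)$ therefore gives, for every $\mathcal A\in H(P_S(X))$,
\[
\{F\in\ir_c(X):F\cap\bigcap\mathcal A\neq\emptyset\}=\bigcap_{K\in\mathcal A}\{F\in\ir_c(X):F\cap K\neq\emptyset\},
\]
i.e.\ an irreducible closed set meets $\bigcap\mathcal A$ precisely when it meets every member of $\mathcal A$. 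To verify super H-sobriety via Theorem~\ref{super H-sober charact H-sets}, suppose $\bigcap\mathcal A\subseteq U$ with $U\in\mathcal O(X)$ and, for contradiction, that $K\not\subseteq U$ for all $K\in\mathcal A$. Then $X\setminus U$ is a closed set meeting every member of the irreducible set $\mathcal A\in\ir(P_S(X))$, so by the Topological Rudin Lemma (Lemma~\ref{t Rudin}) it contains an irreducible closed $F$ still meeting every member of $\mathcal A$. The displayed equivalence then forces $F\cap\bigcap\mathcal A\neq\emptyset$, whence $\bigcap\mathcal A\not\subseteq U$, a contradiction. Thus $X$ is super H-sober.

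The main obstacle is the step $(5)\Rightarrow(1)$: condition $(5)$ only provides information about maps out of $X$ (not out of $P_S(X)$), so one cannot simply invoke Theorem~\ref{H-sober charact-mapping} for the space $P_S(X)$. The device that bridges this gap is the identification $\ua_{X^s}\eta_X(K)=\Diamond_{\ir_c(X)}K$ together with the Topological Rudin Lemma, which manufactures the irreducible closed witness $F$ needed to transfer ``meeting every $K$'' to ``meeting $\bigcap\mathcal A$''. I expect verifying that this extraction is legitimate --- and confirming that no property M hypothesis is secretly required --- to be the delicate point, the remainder being order-theoretic bookkeeping.
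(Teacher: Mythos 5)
Your proposal is correct and follows essentially the same route as the paper: the cycle is reduced to $(1)\Rightarrow(2)$ and $(5)\Rightarrow(1)$ via the inclusions sober $\Rightarrow$ super H-sober $\Rightarrow$ H-sober $\Rightarrow$ $T_0$; the inclusion $\bigcap_{K\in\mathcal A}\ua f(K)\subseteq\ua f(\bigcap\mathcal A)$ is obtained exactly as in the paper by testing the closed set $f^{-1}(\overline{\{y\}})$ against Theorem \ref{super H-sober charact H-sets}; and $(5)\Rightarrow(1)$ uses the same device of feeding the sobrification embedding $\eta_X$ into condition (5) via the identity $\ua_{\ir_c(X)}\eta_X(K)=\Diamond_{\ir_c(X)}K$ and invoking the Topological Rudin Lemma to produce the irreducible closed witness. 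No property M is needed, just as you anticipated.
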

\begin{proof} By Theorem \ref{super H-sober is H-sober} and Theorem \ref{Heckman-Keimel theorem}, we only need to prove the equivalences of conditions (1), (2) and (5).

(1) $\Rightarrow$ (2): It needs only to check $\bigcap_{K\in\mathcal A}\ua f(K)\subseteq \ua f\left(\bigcap\mathcal A\right)$. Let $y\in\bigcap_{K\in\mathcal A}\ua f(K)$. Then for each $K\in\mathcal A$, $\overline{\{y\}}\cap f(K)\neq\emptyset$, that is, $K\cap f^{-1}\left(\overline{\{y\}}\right)\neq\emptyset$. Since $X$ is super {\rm H}-sober, $f^{-1}\left(\overline{\{y\}}\right)\cap\bigcap\mathcal A\neq\emptyset$ (for otherwise we have $\bigcap\mathcal A\subseteq X\setminus f^{-1}\left(\overline{\{y\}}\right)$, and hence by Theorem \ref{super H-sober charact H-sets}, $K\subseteq X\setminus f^{-1}\left(\overline{\{y\}}\right)$ for some $K\in\mathcal A$, a contradiction). It follows that $\overline{\{y\}}\cap f\left(\bigcap\mathcal A\right)\neq\emptyset$. This implies that $y\in\ua f\left(\bigcap\mathcal A\right)$. So $\bigcap_{K\in\mathcal A}\ua f(K)\subseteq \ua f\left(\bigcap\mathcal A\right)$.

(2) $\Rightarrow$ (5): Trivial.

(5) $\Rightarrow$ (1):   Let $\eta_X : X \rightarrow X^s$ ($=P_H(\ir_c(X))$) be the canonical topological embedding from $X$ into its soberification.  Suppose that $\mathcal A\in {\rm H}(P_S(X))$ and $U\in \mathcal O(X)$ with $\bigcap \mathcal A \subseteq U$. If $K\not\subseteq U$ for each $K\in \mathcal A$, then by Lemma \ref{t Rudin}, $X\setminus U$ contains a minimal irreducible closed subset $A$ that still meets all members of $\mathcal{A}$. By condition (5) we have $\bigcap\limits_{K\in\mathcal A}\ua_{\ir_c(X)} \eta_X(K)=\ua_{\ir_c(X)} \eta_X\left(\bigcap\mathcal A\right)\subseteq \ua_{\ir_c(X)}\eta_X(U)=\Diamond_{\ir_c(X)} U$. Clearly, $A\in \bigcap\limits_{K\in\mathcal A}\ua_{\ir_c(X)} \eta_X(K)$, and whence $A\in \Diamond_{\ir_c(X)} U$, that is, $A\cap U\neq\emptyset$, being in contradiction with $A\subseteq X\setminus U$. Thus $X$ is super {\rm H}-sober by Theorem \ref{super H-sober charact H-sets}.
\end{proof}

By Lemma \ref{Ps functor}, Lemma \ref{sups in Smyth}, Remark \ref{H-S H-C H-B}, Lemma \ref{Scott H-cont} and Lemma \ref{Ps bounded=HIP}, Theorem \ref{super H-sober mapping} can be restated as the following one.

\begin{theorem}\label{super H-sober charact-mapping-1} Let $H : \mathbf{Top}_0 \longrightarrow \mathbf{Set}$ be an R-subset system and $X$ a $T_0$ space. Then the following conditions are equivalent:
\begin{enumerate}[\rm (1)]
		\item $X$ is super $H$-sober.
        \item $X$ is Smyth $H$-complete, and for any continuous mapping $f:X\longrightarrow Y$ to a $T_0$ space $Y$, $ P_S(f): P_S(X) \longrightarrow P_S(Y)$ is Scott H-continuous.
        \item $X$ is Smyth $H$-complete, and for any continuous mapping $f:X\longrightarrow Y$ to an H-sober space $Y$, $ P_S(f): P_S(X) \longrightarrow P_S(Y)$ is Scott H-continuous.
        \item $X$ is Smyth $H$-complete, and for any continuous mapping $f:X\longrightarrow Y$ to a super H-sober space $Y$, $ P_S(f): P_S(X) \longrightarrow P_S(Y)$ is Scott H-continuous.
            \item $X$ is Smyth $H$-complete, and for any continuous mapping $f:X\longrightarrow Y$ to a sober space $Y$, $ P_S(f): P_S(X) \longrightarrow P_S(Y)$ is Scott H-continuous.
\end{enumerate}
\end{theorem}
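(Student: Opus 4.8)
The plan is to deduce this theorem directly from Theorem \ref{super H-sober mapping} (the ``mapping'' characterization of super H-sobriety) by translating, for each class of target spaces, the set-theoretic equation $\ua f\left(\bigcap\mathcal A\right)=\bigcap_{K\in\mathcal A}\ua f(K)$ into the Scott H-continuity of the lifted map $P_S(f)$. I would establish the cycle (1) $\Rightarrow$ (2) $\Rightarrow$ (3) $\Rightarrow$ (4) $\Rightarrow$ (5) $\Rightarrow$ (1). The three middle implications (2) $\Rightarrow$ (3) $\Rightarrow$ (4) $\Rightarrow$ (5) are immediate: the Smyth H-completeness clause is identical in each condition, while the family of admissible target spaces shrinks along the chain sober $\subseteq$ super H-sober $\subseteq$ H-sober $\subseteq \mathbf{Top}_0$ (using Corollary \ref{SH-sober is subcat of H-sober}, Theorem \ref{super H-sober is H-sober}, and the fact that every sober space is super H-sober, which follows from Theorem \ref{Heckman-Keimel theorem} since a sober $Y$ has $P_S(Y)$ sober, hence H-sober). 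So the genuine content sits in (1) $\Rightarrow$ (2) and (5) $\Rightarrow$ (1).

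The common tool for both nontrivial implications is a translation that I would isolate first. Fix a continuous $f : X \longrightarrow Y$. By Lemma \ref{Ps functor}, $P_S(f)(K)=\ua f(K)$, and by Lemma \ref{sups in Smyth}, for any nonempty subfamily of $\mk(X)$ the Smyth supremum exists iff the intersection lies in $\mk(X)$, in which case the two agree; the same holds in $\mk(Y)$ for $P_S(f)(\mathcal A)$. Combining these with Lemma \ref{Scott H-cont} applied to $P_S(f) : P_S(X)\longrightarrow P_S(Y)$, I would argue: if $X$ is Smyth H-complete, then $\bigvee\mathcal A=\bigcap\mathcal A$ exists in $P_S(X)$ for \emph{every} $\mathcal A\in H(P_S(X))$ by Lemma \ref{Ps bounded=HIP}(1), so Lemma \ref{Scott H-cont} makes $P_S(f)$ Scott H-continuous exactly when $P_S(f)\left(\bigcap\mathcal A\right)=\bigvee P_S(f)(\mathcal A)$ for all such $\mathcal A$; rewriting the left side as $\ua f\left(\bigcap\mathcal A\right)$ and the right side (via Lemma \ref{sups in Smyth}) as $\bigcap_{K\in\mathcal A}\ua f(K)$ yields precisely the equation of Theorem \ref{super H-sober mapping}.

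With this translation the endpoints are short. For (1) $\Rightarrow$ (2): super H-sobriety gives $P_S(X)$ H-sober, hence H-complete by Remark \ref{H-S H-C H-B}, hence $X$ Smyth H-complete by Lemma \ref{Ps bounded=HIP}(1); moreover Theorem \ref{super H-sober mapping} supplies the equation for every continuous $f$ to a $T_0$ space, and the translation upgrades it to Scott H-continuity of $P_S(f)$. For (5) $\Rightarrow$ (1): assuming $X$ Smyth H-complete and $P_S(f)$ Scott H-continuous for every $f$ into a sober space, the translation recovers the equation of Theorem \ref{super H-sober mapping}(5), whence $X$ is super H-sober.

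I expect the one delicate point to be the bookkeeping inside the translation. Lemma \ref{Scott H-cont} only constrains those $\mathcal A$ whose supremum already exists in $P_S(X)$, so I must invoke Smyth H-completeness to ensure this is all of $H(P_S(X))$, and I must verify at each step that the relevant set lands in $\mk(\cdot)$ (for instance $\ua f\left(\bigcap\mathcal A\right)\in\mk(Y)$, because $\bigcap\mathcal A\in\mk(X)$ and $f$ is continuous) so that Lemma \ref{sups in Smyth} legitimately replaces the Smyth supremum $\bigvee P_S(f)(\mathcal A)$ by the intersection $\bigcap_{K\in\mathcal A}\ua f(K)$. Everything else is a formal matching of quantifiers against the already-proved Theorem \ref{super H-sober mapping}.
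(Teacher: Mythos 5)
Your proposal is correct and follows exactly the route the paper takes: the paper presents this theorem as a restatement of Theorem \ref{super H-sober mapping} obtained by combining Lemma \ref{Ps functor}, Lemma \ref{sups in Smyth}, Remark \ref{H-S H-C H-B}, Lemma \ref{Scott H-cont} and Lemma \ref{Ps bounded=HIP}, which is precisely the translation you isolate (identifying $P_S(f)(\bigvee\mathcal A)=\bigvee P_S(f)(\mathcal A)$ with $\ua f\left(\bigcap\mathcal A\right)=\bigcap_{K\in\mathcal A}\ua f(K)$ under Smyth H-completeness). Your careful handling of the bookkeeping --- that Smyth H-completeness guarantees all suprema in $P_S(X)$ exist and that $\ua f\left(\bigcap\mathcal A\right)\in\mk(Y)$ so Lemma \ref{sups in Smyth} applies --- supplies the details the paper leaves implicit.
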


By Corollary \ref{WF=Smyth d-space}, Theorem \ref{Heckman-Keimel theorem}, Theorem \ref{super H-sober mapping} and Theorem \ref{super H-sober charact-mapping-1}, we have the following four corollaries.

\begin{corollary}\label{WF mapping} \emph{(\cite{xu-shen-xi-zhao1})} For a $T_0$ space $X$, the following conditions are equivalent:
\begin{enumerate}[\rm (1)]
		\item $X$ is well-filtered.
        \item For every continuous mapping $f:X\longrightarrow Y$ from $X$ to a $T_0$ space $Y$ and any $\mathcal K\in \mathcal D(\mk (X))$, $\ua f\left(\bigcap\mathcal K\right)=\bigcap_{K\in\mathcal K}\ua f(K)$.
        \item For every continuous mapping $f:X\longrightarrow Y$ from $X$ to a well-filtered space $Y$ and any $\mathcal K\in \mathcal D(\mk (X))$, $\ua f\left(\bigcap\mathcal K\right)=\bigcap_{K\in\mathcal K}\ua f(K)$.
        \item For every continuous mapping $f:X\longrightarrow Y$ from $X$ to a sober space $Y$ and any $\mathcal K\in \mathcal D(\mk (X))$, $\ua f\left(\bigcap\mathcal K\right)=\bigcap_{K\in\mathcal K}\ua f(K)$.
\end{enumerate}
\end{corollary}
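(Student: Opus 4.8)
The plan is to read this corollary off directly from Theorem \ref{super H-sober mapping} by taking the R-subset system to be $\mathcal D$, so essentially nothing new has to be proved; the entire content is already contained in that theorem together with the dictionary supplied by Corollary \ref{WF=Smyth d-space}. First I would record that for ${\rm H}=\mathcal D$ the super ${\rm H}$-sobriety of $X$ is exactly well-filteredness: by Corollary \ref{WF=Smyth d-space}, $X$ is well-filtered iff $P_S(X)$ is a $d$-space, i.e.\ iff $P_S(X)$ is $\mathcal D$-sober, which is precisely the definition of $X$ being super $\mathcal D$-sober. Hence condition (1) of the present corollary is condition (1) of Theorem \ref{super H-sober mapping} for ${\rm H}=\mathcal D$.

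Next I would pin down the quantifiers. Since the specialization order on $P_S(X)$ is the Smyth order (Remark \ref{xi continuous}), a subset of $P_S(X)$ is directed exactly when it is a directed family in $\mk(X)$; thus $\mathcal D(P_S(X))=\mathcal D(\mk(X))$, and the clause ``any $\mathcal A\in\mathcal D(P_S(X))$'' in the theorem becomes ``any $\mathcal K\in\mathcal D(\mk(X))$'' here. With this substitution the identity $\ua f\left(\bigcap\mathcal A\right)=\bigcap_{K\in\mathcal A}\ua f(K)$ is literally the identity appearing in the corollary, so condition (2) of the theorem matches condition (2) here and condition (5) of the theorem matches condition (4) here (target $T_0$, resp.\ sober, in both cases).

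Finally, for the intermediate target condition I would translate ``super ${\rm H}$-sober space $Y$'' back through Corollary \ref{WF=Smyth d-space}: a space $Y$ is super $\mathcal D$-sober iff $Y$ is well-filtered, so condition (4) of Theorem \ref{super H-sober mapping} becomes exactly condition (3) here. Collecting the equivalences (1)$\Leftrightarrow$(2)$\Leftrightarrow$(4)$\Leftrightarrow$(5) of Theorem \ref{super H-sober mapping} then gives the asserted four-way equivalence. There is no genuine obstacle; the only point that needs care is applying the dictionary ($\mathcal D$-sober $=$ $d$-space, super $\mathcal D$-sober $=$ well-filtered) consistently at both the source space $X$ and every target space $Y$. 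I would also note that the theorem's condition (3) (mappings into a $\mathcal D$-sober, i.e.\ $d$-space, target) is simply dropped in the corollary, which is harmless, since selecting a subfamily of mutually equivalent statements again yields mutually equivalent statements.
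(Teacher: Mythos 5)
Your proposal is correct and matches the paper's own route: the paper derives this corollary by specializing Theorem \ref{super H-sober mapping} to ${\rm H}=\mathcal D$ and translating via Corollary \ref{WF=Smyth d-space} (super $\mathcal D$-sober $=$ well-filtered), exactly as you do. Your added care about identifying $\mathcal D(P_S(X))$ with $\mathcal D(\mk(X))$ and about dropping the theorem's condition (3) is sound and consistent with the paper.
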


\begin{corollary}\label{WF charact-mapping-1} For a $T_0$ space $X$, the following conditions are equivalent:
\begin{enumerate}[\rm (1)]
		\item $X$ is well-filtered.
        \item $\mk (X)$ is a dcpo, and for any continuous mapping $f:X\longrightarrow Y$ to a $T_0$ space $Y$, $ P_S(f): P_S(X) \longrightarrow P_S(Y)$ is Scott continuous.
        \item $\mk (X)$ is a dcpo, and for any continuous mapping $f:X\longrightarrow Y$ to a well-filtered space $Y$, $ P_S(f): P_S(X) \longrightarrow P_S(Y)$ is Scott continuous.
            \item $\mk (X)$ is a dcpo, and for any continuous mapping $f:X\longrightarrow Y$ to a sober space $Y$, $ P_S(f): P_S(X) \longrightarrow P_S(Y)$ is Scott continuous.
\end{enumerate}
\end{corollary}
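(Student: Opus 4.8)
The plan is to derive this corollary directly from Theorem \ref{super H-sober charact-mapping-1} by specializing the R-subset system to $H = \mathcal{D}$ and then translating the abstract vocabulary into its concrete well-filtered form.

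First I would assemble the dictionary for $H = \mathcal{D}$. Since $\mathcal{D}$-$\mathbf{Sob} = \mathbf{Top}_d$, super $\mathcal{D}$-sobriety means exactly that $P_S(X)$ is a $d$-space, and Corollary \ref{WF=Smyth d-space} identifies this with well-filteredness of $X$; the same equivalence applied to the codomain shows that a space is super $\mathcal{D}$-sober if{}f it is well-filtered. Next, as noted after Definition \ref{Scott H-open system}, $\sigma_{\mathcal{D}}(X) = \sigma(X)$ is the usual Scott topology, so Scott $\mathcal{D}$-continuity is ordinary Scott continuity. Finally, by Lemma \ref{Ps bounded=HIP}(1), $X$ is Smyth $\mathcal{D}$-complete if{}f $P_S(X)$ is $\mathcal{D}$-complete; and $\mathcal{D}$-completeness of a space is precisely the statement that its specialization poset is a dcpo, so Smyth $\mathcal{D}$-completeness of $X$ amounts to saying that $\mk(X)$ (under the Smyth order) is a dcpo.

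With these substitutions in place, conditions (1)--(4) of the present corollary read off, respectively, as conditions (1), (2), (4) and (5) of Theorem \ref{super H-sober charact-mapping-1} taken with $H = \mathcal{D}$. In particular, condition (3) here, asking that $Y$ be well-filtered, corresponds to the ``$Y$ super H-sober'' clause (4) of the theorem rather than to its $d$-space clause (3), which the corollary simply omits. Hence all four conditions are equivalent.

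Since the whole argument is a specialization, there is no genuine obstacle; the one point that must be checked with care is exactly the identification ``super $\mathcal{D}$-sober $=$ well-filtered'' for the codomain $Y$, so that clause (3) of the corollary is matched to the correct clause of the theorem. This is immediate from Corollary \ref{WF=Smyth d-space} applied to $Y$.
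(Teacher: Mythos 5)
Your proposal is correct and follows essentially the same route as the paper, which obtains this corollary precisely by specializing Theorem \ref{super H-sober charact-mapping-1} to ${\rm H}=\mathcal D$ and invoking Corollary \ref{WF=Smyth d-space} to identify super $\mathcal D$-sobriety with well-filteredness (together with the observations that $\sigma_{\mathcal D}=\sigma$ and that Smyth $\mathcal D$-completeness of $X$ means $\mk(X)$ is a dcpo). Your careful matching of the corollary's clause (3) to the theorem's clause (4) is exactly the right bookkeeping.
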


\begin{corollary}\label{super sober mapping} \emph{(\cite{xu-shen-xi-zhao1})} For a $T_0$ space $X$, the following conditions are equivalent:
\begin{enumerate}[\rm (1)]
		\item $X$ is sober.
        \item For every continuous mapping $f:X\longrightarrow Y$ to a $T_0$ space $Y$ and any $\mathcal A\in \ir(P_S(X))$, $\ua f\left(\bigcap\mathcal K\right)=\bigcap_{K\in\mathcal A}\ua f(K)$.
        \item For every continuous mapping $f:X\longrightarrow Y$ to a sober space $Y$ and any $\mathcal A\in \ir(P_S(X))$, $\ua f\left(\bigcap\mathcal K\right)=\bigcap_{K\in\mathcal A}\ua f(K)$.
\end{enumerate}
\end{corollary}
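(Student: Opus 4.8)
The plan is to obtain this corollary as the special case ${\rm H}=\mathcal R$ of Theorem \ref{super H-sober mapping}. First I would recall that for the R-subset system $\mathcal R$ one has $\mathcal R(P_S(X))=\ir(P_S(X))$ by the very definition of $\mathcal R$, so the hypothesis $\mathcal A\in {\rm H}(P_S(X))$ occurring in Theorem \ref{super H-sober mapping} becomes literally the condition $\mathcal A\in \ir(P_S(X))$ used here, and the displayed equation $\ua f\left(\bigcap\mathcal A\right)=\bigcap_{K\in\mathcal A}\ua f(K)$ is unchanged.

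Next I would identify the sobriety notions appearing in Theorem \ref{super H-sober mapping} when ${\rm H}=\mathcal R$. An $\mathcal R$-sober space is exactly a sober space, so a super $\mathcal R$-sober space is by definition a space whose Smyth power space is sober, i.e.\ a super sober space. By the Heckmann--Keimel--Schalk Theorem (Theorem \ref{Heckman-Keimel theorem}, the equivalence of conditions (1) and (7)), a $T_0$ space is sober if and only if it is super sober; hence condition (1) of Theorem \ref{super H-sober mapping} for ${\rm H}=\mathcal R$, namely ``$X$ is super $\mathcal R$-sober'', is equivalent to ``$X$ is sober'', which is condition (1) here. The same identification collapses the several target-space hypotheses of the theorem: for ${\rm H}=\mathcal R$ an ``${\rm H}$-sober space'', a ``super ${\rm H}$-sober space'' and a ``sober space'' all coincide, so conditions (3), (4) and (5) of Theorem \ref{super H-sober mapping} merge into the single statement of condition (3) of the corollary, while condition (2) of the theorem (arbitrary $T_0$ target) becomes condition (2) here.

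Putting these observations together, Theorem \ref{super H-sober mapping} with ${\rm H}=\mathcal R$ yields the equivalence of: $X$ super $\mathcal R$-sober; the equation $\ua f\left(\bigcap\mathcal A\right)=\bigcap_{K\in\mathcal A}\ua f(K)$ for all continuous $f$ into an arbitrary $T_0$ space; and the same equation for all continuous $f$ into a sober space. Rewriting ``super $\mathcal R$-sober'' as ``sober'' via Theorem \ref{Heckman-Keimel theorem} gives precisely conditions (1), (2) and (3) of the corollary. I do not expect any genuine obstacle here, since the mathematical content is carried entirely by Theorem \ref{super H-sober mapping} and Theorem \ref{Heckman-Keimel theorem}; the only points to verify are the purely notational facts that $\mathcal R(P_S(X))=\ir(P_S(X))$ and that the three flavours of sobriety in the theorem all reduce to ordinary sobriety when ${\rm H}=\mathcal R$. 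The closest thing to a subtlety is confirming that the quantifier ``for every continuous mapping to a sober space'' in the corollary genuinely matches conditions (3)--(5) of the theorem, which it does once ${\rm H}$-sobriety is interpreted as ordinary sobriety.
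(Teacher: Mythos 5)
Your proposal is correct and follows exactly the route the paper takes: the paper derives this corollary by specializing Theorem \ref{super H-sober mapping} to ${\rm H}=\mathcal R$ and invoking Theorem \ref{Heckman-Keimel theorem} to identify super $\mathcal R$-sobriety with ordinary sobriety, which is precisely your argument. The notational checks you flag (that $\mathcal R(P_S(X))=\ir(P_S(X))$ and that the three target-space hypotheses collapse) are indeed all that is needed.
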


\begin{corollary}\label{sober charact-mapping-1} For a $T_0$ space $X$, the following conditions are equivalent:
\begin{enumerate}[\rm (1)]
		\item $X$ is sober.
        \item $\mk (X)$ is irreducible complete, and for any continuous mapping $f:X\longrightarrow Y$ to a $T_0$ space $Y$ and any $\mathcal A\in \ir (P_S(X))$, $ P_S(f)(\bigvee_{\mk (X)}\mathcal A)=\bigvee_{\mk (Y)}P_S(f)(\mathcal A)$.
          \item $\mk (X)$ is irreducible complete, and for any continuous mapping $f:X\longrightarrow Y$ to a sober space $Y$ and any $\mathcal A\in \ir (P_S(X))$, $ P_S(f)(\bigvee_{\mk (X)}\mathcal A)=\bigvee_{\mk (Y)}P_S(f)(\mathcal A)$.
\end{enumerate}
\end{corollary}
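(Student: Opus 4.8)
The plan is to read off the result from Theorem~\ref{super H-sober charact-mapping-1} specialized to the R-subset system $H=\mathcal R$, translating each of its ingredients into the vocabulary of the present statement. The pivot is the Heckmann--Keimel--Schalk Theorem (Theorem~\ref{Heckman-Keimel theorem}): a $T_0$ space $X$ is sober if and only if it is super sober, that is, super $\mathcal R$-sober. Hence condition (1) here coincides with condition (1) of Theorem~\ref{super H-sober charact-mapping-1} taken at $H=\mathcal R$, and it remains only to rewrite conditions (2) and (5) of that theorem in the equational form stated here.

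First I would dispose of the two auxiliary notions. Since $\mathcal R(P_S(X))=\ir(P_S(X))$, being Smyth $\mathcal R$-complete means $\bigcap\mathcal A\in\mk(X)$ for every $\mathcal A\in\ir(P_S(X))$, which by Definition~\ref{HIP property} is exactly the requirement that $\mk(X)$ be irreducible complete. For the mapping clause I would apply Lemma~\ref{Scott H-cont} to the continuous map $P_S(f):P_S(X)\longrightarrow P_S(Y)$ (continuous by Lemma~\ref{Ps functor}): it is Scott $\mathcal R$-continuous precisely when, for every $\mathcal A\in\ir(P_S(X))$ for which $\bigvee_{\mk(X)}\mathcal A$ exists, $P_S(f)(\bigvee_{\mk(X)}\mathcal A)=\bigvee_{\mk(Y)}P_S(f)(\mathcal A)$, where $P_S(f)(\mathcal A)=\{\ua f(K):K\in\mathcal A\}$. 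Under irreducible completeness, Lemma~\ref{sups in Smyth} guarantees that $\bigvee_{\mk(X)}\mathcal A=\bigcap\mathcal A$ exists for every such $\mathcal A$, so the proviso ``for which the join exists'' becomes vacuous and Scott $\mathcal R$-continuity of $P_S(f)$ is verbatim the equation displayed in conditions (2) and (3).

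Putting these translations together, the three conditions here match, respectively, conditions (1), (2) and (5) of Theorem~\ref{super H-sober charact-mapping-1} at $H=\mathcal R$ --- note that for $H=\mathcal R$ the codomains ``$H$-sober'', ``super $H$-sober'' and ``sober'' all coincide, so conditions (3)--(5) of that theorem collapse to the single demand that $Y$ be sober, which is condition (3) here. The equivalences (1)$\Leftrightarrow$(2)$\Leftrightarrow$(3) then follow at once. I expect no genuine obstacle: the corollary is a dictionary translation of an already-proved theorem. The only point needing a moment's care is the bookkeeping of Smyth joins --- one must check that $P_S(f)(\mathcal A)$ is again irreducible in $P_S(Y)$ (which follows from continuity of $P_S(f)$ via Lemma~\ref{irrimage}), so that $\bigvee_{\mk(Y)}P_S(f)(\mathcal A)$ is exactly the join asserted by Scott $\mathcal R$-continuity and not a strictly stronger existence claim.
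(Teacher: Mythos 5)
Your proposal is correct and matches the paper's own derivation: the paper obtains this corollary precisely by specializing Theorem \ref{super H-sober charact-mapping-1} to $H=\mathcal R$, using Theorem \ref{Heckman-Keimel theorem} to identify sobriety with super $\mathcal R$-sobriety and the same dictionary (Smyth $\mathcal R$-completeness $=$ irreducible completeness of $\mk(X)$, Scott $\mathcal R$-continuity of $P_S(f)$ via Lemma \ref{Scott H-cont} and Lemma \ref{sups in Smyth} $=$ the displayed join equation). Your added check that $P_S(f)(\mathcal A)\in\ir(P_S(Y))$ is a harmless extra precaution; nothing is missing.
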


\section{H-Rudin sets and H-sober determined sets}

\begin{definition}\label{H-Rudin set HD-set}
		Let ${\rm H} : \mathbf{Top}_0 \longrightarrow \mathbf{Set}$ be an R-subset system, $X$ a $T_0$ space and $A$ a nonempty subset of $X$.

\begin{enumerate}[\rm (1)]
\item $A$ is said to have \emph{H}-\emph{Rudin property}, if there exists $\mathcal K\in{\rm H}(P_S(X))$ such that $\overline{A}\in m(\mathcal K)$, that is,  $\overline{A}$ is a minimal closed set that intersects all members of $\mathcal K$. The $\mathcal D$-Rudin sets are shortly called \emph{Rudin sets} (see \cite{Shenchon, xu-shen-xi-zhao1}). Let ${\rm H}^R(X)=\{A\subseteq X : A \mbox{{~has H-Rudin property}}\}$. The sets in ${\rm H}^R(X)$ will also be called \emph{H-Rudin sets}.
    \item $A$ is called \emph{super H}-\emph{sober determined}, if for any continuous mapping $ f:X\longrightarrow Y$
to a super {\rm H}-sober space $Y$, there exists a unique $y_A\in Y$ such that $\overline{f(A)}=\overline{\{y_A\}}$. Denote by ${\rm H}^D(X)$ the set of all super {\rm H}-sober determined subsets of $X$. Let ${\rm H}^D_c(X)={\rm H}^D(X)\bigcap\Gamma (X)$.
\end{enumerate}
\end{definition}

Clearly, $A\in {\rm H}^R(X)$ (resp., $A\in {\rm H}^D(X)$) if{}f $\overline{A}\in {\rm H}^R(X)$ (resp., $\overline{A}\in {\rm H}^D(X)$).

\begin{lemma}\label{super H-Rudin functor}
	Let $H : \mathbf{Top}_0 \longrightarrow \mathbf{Set}$ be an R-subset system having property M. Then $H^R : \mathbf{Top}_0 \longrightarrow \mathbf{Set}$ is an R-subset system, where for any continuous mapping $ f:X\longrightarrow Y$ in $\mathbf{Top}_0$, $H^R(f) : H^R (X)\longrightarrow H^R (Y)$ is defined by $H^R(f)(A)=f(A)$ for each $A\in H^R (X)$.
\end{lemma}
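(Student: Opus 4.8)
The plan is to verify the three defining clauses of an R-subset system for $H^R$: (i) $\mathcal S(X)\subseteq H^R(X)\subseteq \ir(X)$ for every $T_0$ space $X$; (ii) the image rule $A\mapsto f(A)$ sends $H^R(X)$ into $H^R(Y)$ for every continuous $f:X\longrightarrow Y$; and (iii) $H^R$ is then a covariant functor. Clause (iii) will be purely formal once (ii) is in hand: $H^R(\mathrm{id}_X)=\mathrm{id}_{H^R(X)}$ and $H^R(g\circ f)=H^R(g)\circ H^R(f)$ hold because the image of a set under a composite is the composite of the images. So the genuine content lies in (i) and (ii).

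For $\mathcal S(X)\subseteq H^R(X)$, I would fix $x\in X$ and take the singleton $\{\ua x\}\in \mathcal S(P_S(X))\subseteq H(P_S(X))$, whose unique point is $\ua x\in \mk(X)$. Since closed sets are lower sets, a set $A\in\Gamma(X)$ meets $\ua x$ iff $x\in A$ iff $\overline{\{x\}}\subseteq A$; hence $M(\{\ua x\})=\{A\in\Gamma(X):\overline{\{x\}}\subseteq A\}$ has least member $\overline{\{x\}}$, so $\overline{\{x\}}\in m(\{\ua x\})$ and $\{x\}\in H^R(X)$. For $H^R(X)\subseteq\ir(X)$, I would invoke the H-Rudin Lemma (Corollary \ref{H-rudin}): if $A\in H^R(X)$ is witnessed by $\mathcal K\in H(P_S(X))$ with $\overline{A}\in m(\mathcal K)$, then $\overline{A}$ contains a minimal irreducible closed subset $B$ still meeting every member of $\mathcal K$; since $B\in M(\mathcal K)$ and $B\subseteq\overline{A}$, minimality of $\overline{A}$ forces $B=\overline{A}$, so $\overline{A}$, and hence $A$, is irreducible.

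The crux is clause (ii), and this is exactly where property M enters; I expect it to be the main obstacle, since it requires manufacturing a witnessing H-set in $P_S(Y)$ out of one in $P_S(X)$. Given $A\in H^R(X)$ with witness $\mathcal K$, $\overline{A}\in m(\mathcal K)$, and continuous $f:X\longrightarrow Y$, I would set $\mathcal L=\{\ua f(K\cap\overline{A}):K\in\mathcal K\}$. As $\overline{A}\in m(\mathcal K)\subseteq M(\mathcal K)$, Lemma \ref{R property M charact}(2) yields $\mathcal L\in H(P_S(Y))$. It then remains to show $\overline{f(A)}\in m(\mathcal L)$. That $\overline{f(A)}$ meets each member of $\mathcal L$ is immediate: choosing $z\in K\cap\overline{A}$ (nonempty since $\overline{A}\in M(\mathcal K)$) gives $f(z)\in f(\overline{A})\subseteq\overline{f(A)}$ and $f(z)\in\ua f(K\cap\overline{A})$. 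For minimality, suppose $C\in\Gamma(Y)$ with $C\subseteq\overline{f(A)}$ meets every member of $\mathcal L$. For each $K\in\mathcal K$, a point of $C\cap\ua f(K\cap\overline{A})$ lies above some $f(w)$ with $w\in K\cap\overline{A}$, and since $C$ is a lower set, $f(w)\in C$, i.e. $w\in f^{-1}(C)\cap\overline{A}\cap K$. Thus the closed set $f^{-1}(C)\cap\overline{A}\subseteq\overline{A}$ lies in $M(\mathcal K)$, so minimality of $\overline{A}$ gives $\overline{A}\subseteq f^{-1}(C)$, whence $f(\overline{A})\subseteq C$ and $\overline{f(A)}\subseteq C$. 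Therefore $C=\overline{f(A)}$, proving $\overline{f(A)}\in m(\mathcal L)$ and $f(A)\in H^R(Y)$, which settles (ii) and completes the argument.
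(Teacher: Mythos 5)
Your proposal is correct, and its core --- the image rule, which is the only thing the paper actually proves under this lemma --- is essentially identical to the paper's argument: form $\mathcal K_f=\{\ua f(K\cap\overline{A}):K\in\mathcal K\}$, use property M (via Lemma \ref{R property M charact}) to see it lies in ${\rm H}(P_S(Y))$, check $\overline{f(A)}\in M(\mathcal K_f)$, and pull any smaller closed witness $C$ back to $f^{-1}(C)\cap\overline{A}\in M(\mathcal K)$ to exploit minimality of $\overline{A}$. Where you differ is in scope and in one sub-argument: the paper defers the containments $\mathcal S(X)\subseteq{\rm H}(X)\subseteq{\rm H}^R(X)\subseteq\ir(X)$ to Proposition \ref{H Hd HR HD ir}(2), whereas you verify them inside the proof; and for ${\rm H}^R(X)\subseteq\ir(X)$ you apply the topological Rudin lemma (Corollary \ref{H-rudin}) to extract a minimal irreducible closed subset of $\overline{A}$ that must equal $\overline{A}$ by minimality, while the paper argues directly with two closed sets covering $\overline{A}$ and the irreducibility of $\mathcal K$ in $P_S(X)$. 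Both routes are valid; yours is arguably slicker for that containment, and your explicit treatment of the singleton clause and functoriality makes the proof self-contained, at the cost of duplicating material the paper organizes elsewhere.
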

\begin{proof} Suppose that $f : X\longrightarrow Y$ is a continuous mapping in $\mathbf{Top}_0$. We need to show $f({\rm H}^R(X))\subseteq {\rm H}^R(Y)$. Let $A\in H^R(X)$. Then there exists $\mathcal K\in{\rm H}(P_S(X))$ such that $\overline{A}\in m(\mathcal K)$. Let $\mathcal{K}_f=\{\ua f(K\cap \overline{A}) : K\in \mathcal K\}$. Then by the property M of {\rm H}, $\mathcal{K}_f\in {\rm H}(P_S(Y))$. For each $K\in \mathcal K$, since $K\cap \overline{A}\neq\emptyset$, we have $\emptyset\neq f(K\cap \overline{A})\subseteq \ua f(K\cap \overline{A})\cap \overline{f(A)}$. So $\overline{f(A)}\in M(\mathcal{K}_f)$. If $B$ is a closed subset of $\overline{f(A)}$ with $B\in M(\mathcal{K}_f)$, then $B\cap\ua f(K\cap \overline{A})\neq\emptyset$ for each $K\in \mathcal K$. So $K\cap \overline{A}\cap f^{-1}(B)\neq\emptyset$ for all $K\in \mathcal K$. It follows from the minimality of $\overline{A}$ that $\overline{A}=\overline{A}\cap f^{-1}(B)$, and consequently, $\overline{f(A)}\subseteq B$. Therefore, $\overline{f(A)}=B$, and hence $\overline{f(A)}\in m(\mathcal K_f)$. Thus $f(A)\in {\rm H}^R(Y)$.
\end{proof}

\begin{lemma}\label{HD functor} Let $H : \mathbf{Top}_0 \longrightarrow \mathbf{Set}$ be an R-subset system. Then $H^D : \mathbf{Top}_0 \longrightarrow \mathbf{Set}$ is an R-subset system, where for any continuous mapping $ f:X\longrightarrow Y$ in $\mathbf{Top}_0$, $H^D(f) : H^D (X)\longrightarrow H^D (Y)$ is defined by $H^D(f)(A)=f(A)$ for each $A\in H^D (X)$.
\end{lemma}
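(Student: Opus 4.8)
The plan is to mirror the development already carried out for $H^d$ in Lemma \ref{H Hd ir}, Lemma \ref{Hd image} and Corollary \ref{Hd R-subset system}, since $H^D$ is defined exactly like $H^d$ but with \emph{super} H-sober target spaces in place of H-sober ones. Concretely, I must verify the defining requirements of an R-subset system: that $\mathcal S(X)\subseteq H^D(X)\subseteq 2^X$, that $f(A)\in H^D(Y)$ whenever $f:X\longrightarrow Y$ is continuous and $A\in H^D(X)$ (which simultaneously makes $H^D(f)$ well defined and supplies condition (2) of Definition \ref{subset system}), and that $H^D(X)\subseteq\ir(X)$. The functoriality identities $H^D(\mathrm{id}_X)=\mathrm{id}_{H^D(X)}$ and $H^D(g\circ f)=H^D(g)\circ H^D(f)$ are then automatic, because the action of $H^D$ on arrows is taking direct images and $(g\circ f)(A)=g(f(A))$.

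First I would establish the two-sided containment $H(X)\subseteq H^D(X)\subseteq\ir(X)$, the analogue of Lemma \ref{H Hd ir}. For the left inclusion, take $A\in H(X)$ and a continuous map $f:X\longrightarrow Y$ into a super H-sober space $Y$; by Theorem \ref{super H-sober is H-sober}, $Y$ is H-sober, and by functoriality of $H$ we have $f(A)\in H(Y)$, so H-sobriety of $Y$ yields a unique $y_A$ with $\overline{f(A)}=\overline{\{y_A\}}$. Hence $A\in H^D(X)$, and in particular $\mathcal S(X)\subseteq H(X)\subseteq H^D(X)$. For the right inclusion, the key observation is that \emph{every sober space is super H-sober}: if $Y$ is sober then $P_S(Y)$ is sober by Theorem \ref{Heckman-Keimel theorem}, and since $H\leq\mathcal R$ every sober space is H-sober, so $P_S(Y)$ is H-sober, i.e.\ $Y$ is super H-sober. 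Consequently, if $A\in H^D(X)$, then the defining condition applies in particular to every continuous map into a sober space, so $A$ is $\mathcal R$-sober determined, i.e.\ sober determined; by Lemma \ref{sobd=irr} this forces $A\in\mathcal R^d(X)=\ir(X)$.

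Next I would prove the image condition, the exact analogue of Lemma \ref{Hd image}: for continuous $f:X\longrightarrow Y$ and $A\in H^D(X)$, show $f(A)\in H^D(Y)$. Given a super H-sober space $Z$ and a continuous $g:Y\longrightarrow Z$, the composite $g\circ f:X\longrightarrow Z$ is continuous, so $A\in H^D(X)$ gives a unique $z\in Z$ with $\overline{g(f(A))}=\overline{(g\circ f)(A)}=\overline{\{z\}}$; this says precisely that $f(A)$ is super H-sober determined, whence $f(A)\in H^D(Y)$. Thus $H^D(f)(A)=f(A)$ is a well-defined map $H^D(X)\longrightarrow H^D(Y)$.

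Finally I would assemble the pieces: the containments from the first step give condition (1) of Definition \ref{subset system} together with the R-subset condition $H^D(X)\subseteq\ir(X)$; the image condition gives condition (2); and functoriality follows at once from image–composition. I expect the only non-formal step to be the inclusion $H^D(X)\subseteq\ir(X)$; its crux is recognizing that the class of sober spaces is already contained in the class of super H-sober spaces, so that the stronger quantification defining $H^D$ still sees enough test spaces to force irreducibility, after which Lemma \ref{sobd=irr} closes the argument. Everything else is a routine transcription of the corresponding facts for $H^d$. Note, incidentally, that unlike the companion Lemma \ref{super H-Rudin functor} this argument needs no appeal to property M.
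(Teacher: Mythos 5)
Your argument is correct and its core step---that for a super H-sober space $Z$ and continuous $g:Y\longrightarrow Z$ the composite $g\circ f$ witnesses $f(A)\in H^D(Y)$---is exactly the paper's proof of this lemma, which records only that image-stability step. The additional containments $\mathcal S(X)\subseteq H(X)\subseteq H^D(X)\subseteq\ir(X)$ that you verify (via Theorem \ref{super H-sober is H-sober} for the left inclusion and via ``sober $\Rightarrow$ super H-sober'' plus Lemma \ref{sobd=irr} for the right) are precisely how the paper later establishes $H\leq H^d\leq H^D\leq\mathcal R$ in Proposition \ref{H Hd HR HD ir}, so you have simply front-loaded that bookkeeping rather than taken a different route.
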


\begin{proof} Suppose that $f : X\longrightarrow Y$ is a continuous mapping in $\mathbf{Top}_0$ and $A\in H^D(X)$. Let $Z$ be a super {\rm H}-sober space and $g:Y\longrightarrow Z$ a continuous mapping.
Since $g\circ f:X\longrightarrow Z$ is continuous and $A\in {\rm H}^D (X)$, there is $z\in Z$ such that $\overline{g(f(A))}=\overline{g\circ f(A)}=\overline{\{z\}}$. Thus $f(A)\in {\rm H}^D (Y)$.
\end{proof}

\begin{proposition}\label{H Hd HR HD ir} Let $H : \mathbf{Top}_0 \longrightarrow \mathbf{Set}$ be an R-subset system and $X$ a $T_0$ space. Then
\begin{enumerate}[\rm (1)]
\item  $H\leq H^d\leq H^D\leq \mathcal R$.
\item $H(X)\subseteq H^R(X)\subseteq \ir (X)$.
\item If H has property M, then $H\leq H^R\leq H^D\leq \mathcal R$.
\end{enumerate}
\end{proposition}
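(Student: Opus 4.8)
The plan is to assemble the three displayed chains from results already in hand, isolating the single genuinely new inclusion. Throughout I would use two elementary observations: first, for R-subset systems $H_1 \leq H_2$ every $H_2$-sober space is automatically $H_1$-sober, since an $H_1$-set is an $H_2$-set; second, every sober space is super $H$-sober, because if $Y$ is sober then $P_S(Y)$ is sober by Theorem \ref{Heckman-Keimel theorem}, and as $H(P_S(Y)) \subseteq \ir(P_S(Y))$ this $\mathcal R$-sober $P_S(Y)$ is in particular $H$-sober.

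For (1): The inclusion $H \leq H^d$ is the first half of Lemma \ref{H Hd ir}. For $H^d \leq H^D$, I would observe that by Theorem \ref{super H-sober is H-sober} every super $H$-sober space is $H$-sober; hence if $A \in H^d(X)$ and $f : X \longrightarrow Y$ is continuous with $Y$ super $H$-sober, then $Y$ is $H$-sober, so the generic point of $\overline{f(A)}$ exists and $A \in H^D(X)$. For $H^D \leq \mathcal R$, since every sober space is super $H$-sober, any $A \in H^D(X)$ is in particular sober determined; thus $A \in \mathcal R^d(X) = \ir(X)$ by Lemma \ref{sobd=irr}.

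For (2): The inclusion $H^R(X) \subseteq \ir(X)$ follows from the $H$-Rudin Lemma (Corollary \ref{H-rudin}): if $\overline{A} \in m(\mathcal K)$ for some $\mathcal K \in H(P_S(X))$, then $\overline{A}$ contains a minimal irreducible closed subset meeting every member of $\mathcal K$, and by the minimality of $\overline{A}$ in $M(\mathcal K)$ this subset must equal $\overline{A}$, so $\overline{A}$ is irreducible and $A \in \ir(X)$. For $H(X) \subseteq H^R(X)$, given $A \in H(X)$ I would take $\mathcal K = \xi_X(A) = \{\ua a : a \in A\}$, which lies in $H(P_S(X))$ by functoriality of $H$ applied to the embedding $\xi_X$ (Remark \ref{xi continuous} and Definition \ref{subset system}). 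Then $\overline{A} \in M(\mathcal K)$ because $a \in \overline{A} \cap \ua a$, and minimality is immediate: any closed $B \subseteq \overline{A}$ with $B \cap \ua a \neq \emptyset$ for all $a$ forces each $a$ into the lower set $B$, whence $A \subseteq B$ and $B = \overline{A}$.

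For (3): Here $H \leq H^R$ is exactly part (2) and $H^D \leq \mathcal R$ is part (1), so the only new content — and the main obstacle, the only place property M enters — is $H^R \leq H^D$. Given $A \in H^R(X)$ and a continuous $f : X \longrightarrow Y$ into a super $H$-sober space $Y$, I would first invoke Lemma \ref{super H-Rudin functor}, which uses property M, to get $f(A) \in H^R(Y)$, say $\overline{f(A)} \in m(\mathcal L)$ for some $\mathcal L \in H(P_S(Y))$. Then I would extract the generic point: by Theorem \ref{super H-sober charact H-sets}, $\bigcap \mathcal L \in \mk(Y)$ and $\bigcap \mathcal L \subseteq U$ forces some $L \in \mathcal L$ into $U$. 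If $\overline{f(A)} \cap \bigcap \mathcal L = \emptyset$, then $\bigcap \mathcal L \subseteq Y \setminus \overline{f(A)}$ would push some $L$ into $Y \setminus \overline{f(A)}$, contradicting $\overline{f(A)} \in M(\mathcal L)$; so choose $y_A \in \overline{f(A)} \cap \bigcap \mathcal L$. Since $y_A \in L$ for every $L \in \mathcal L$, the closed set $\overline{\{y_A\}} \subseteq \overline{f(A)}$ still meets every member of $\mathcal L$, and minimality of $\overline{f(A)}$ in $M(\mathcal L)$ yields $\overline{f(A)} = \overline{\{y_A\}}$, with uniqueness from the $T_0$ axiom. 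Hence $A \in H^D(X)$. The delicate step is precisely securing $f(A) \in H^R(Y)$, which is why property M is indispensable: without it there is no $H$-family $\mathcal L$ in the target witnessing the Rudin property of $\overline{f(A)}$, and hence no handle on $\overline{f(A)}$ inside the super $H$-sober space $Y$.
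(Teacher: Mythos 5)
Your proof is correct and follows essentially the same route as the paper's: the same decomposition into sub-inclusions and the same key ingredients (Lemma \ref{H Hd ir}, Theorem \ref{super H-sober is H-sober}, Lemma \ref{sobd=irr}, the witnessing family $\xi_X(A)$, property M and Theorem \ref{super H-sober charact H-sets}). The only local deviations are harmless: for ${\rm H}^R(X)\subseteq\ir(X)$ you invoke Corollary \ref{H-rudin} where the paper argues directly with a decomposition into two closed sets, and for ${\rm H}^R\leq {\rm H}^D$ you exploit minimality of $\overline{f(A)}$ in $Y$ (via Lemma \ref{super H-Rudin functor}) where the paper pulls the minimality back to $\overline{A}$ in $X$.
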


\begin{proof} (1): By Lemma \ref{H Hd ir}, Corollary \ref{Hd R-subset system}, Theorem \ref{super H-sober is H-sober} and Lemma \ref{HD functor}, we have ${\rm H}\leq{\rm H}^d\leq {\rm H}^D$. By Corollary \ref{Hd R-subset system}, Theorem \ref{Heckman-Keimel theorem} and $H\leq \mathcal R$, we have $H^D\leq \mathcal R^D=\mathcal R^d=\mathcal R$.

(2): For $A\in {\rm H}(X)$, let $\mathcal K_A=\xi_X(A)=\{\ua a : a\in A\}$. Then by Remark \ref{xi continuous}, $\mathcal K_A\in {\rm H}(P_S(X))$ and $\overline{A}\in M(\mathcal K_A)$. If $B\in M(\mathcal K_A)$, then $a\in B$ for all $a\in A$, and hence $\overline{A}\subseteq B$. So $\overline{A}\in m(\mathcal K_A)$. Thus $A\in {\rm H}^R(X)$. Now we show ${\rm H}^R(X)\subseteq \ir (X)$. Let $C\in {\rm H}^R(X)$. Then there exists $\mathcal K\in{\rm H}(P_S(X))$ such that $\overline{C}\in m(\mathcal K)$. For $B_1, B_2\in \Gamma (X)$ with $\overline{C}=B_1\cup B_2$, if $B_1\neq \overline{C}$ and $B_2\neq \overline{C}$, then by the minimality of $\overline{A}$, there exist $K_1, K_2\in\mathcal K$ with $K_1\cap B_1=\emptyset$ and $K_2\cap B_2=\emptyset$, that is, $K_1\in \Box (X\setminus B_1)$ and $K_2\in \Box (X\setminus B_2$, and hence by $\mathcal K\in {\rm H}(P_S(X))\subseteq \ir (P_S(X))$, $\mathcal K\cap \Box (X\setminus C)=\mathcal K\cap \Box (X\setminus B_1)\cap \Box (X\setminus B_2)\neq\emptyset$, which is in contradiction with $\overline{C}\in m(\mathcal K)$. Therefore, $C\in \ir (X)$.

(3): Since {\rm H} has property M, by Lemma \ref{super H-Rudin functor}, ${\rm H}^R$ is an R-subset system. Now we show ${\rm H}^R\leq {\rm H}^D$. Let $A\in {\rm H}^R(X)$ and $ f:X\longrightarrow Y$ a continuous mapping to a super {\rm H}-sober space $Y$. Then there exists $\mathcal K\in{\rm H}(P_S(X))$ such that $\overline{A}\in m(\mathcal K)$, and hence by the property M of {\rm H},we have $\mathcal{K}_f=\{\ua f(K\cap \overline{A}) : K\in \mathcal K\}\in {\rm H}(P_S(Y))$ and $\overline{f(A)}\in M(\mathcal{K}_f)$. By Theorem \ref{super H-sober charact H-sets}, we have $\bigcap\mathcal{K}_f \cap \overline{f(A)}\neq \emptyset$. Select a $y_A\in \bigcap\mathcal{K}_f \cap \overline{f(A)}$. Then $\overline{\{y_A\}}\subseteq\overline{f(A)}$, and $K\cap \overline{A}\cap f^{-1}(\overline{\{y_A\}})\neq\emptyset$ for all $K\in \mathcal K$. It follows that $\overline{A}=\overline{A}\cap f^{-1}(\overline{\{y_A\}})$ by the minimality of $\overline{A}$, and consequently, $\overline{f(A)}\subseteq \overline{\{y_A\}}$. Therefore, $\overline{f(A)}=\overline{\{y_A\}}$. The uniqueness of $y_A$ follows from the $T_0$ separation of $Y$. Thus $A\in {\rm H}^D(X)$.

\end{proof}

\begin{lemma}\label{LHC directed} \emph{(\cite{E_20182})}
	Let $X$ be a locally hypercompact $T_0$ space and $A\in\ir(X)$. Then there exists a directed subset $D\subseteq\da A$ such that $\overline{A}=\overline{D}$.
\end{lemma}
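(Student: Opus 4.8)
The plan is to apply Rudin's Lemma (Corollary~\ref{rudin}) with $P=X$ (under the specialization order) and the lower set $C=\da A$, feeding into it the finitary approximations supplied by local hypercompactness. Since $A\in\ir(X)$ is nonempty, $\da A$ is a nonempty lower subset of $X$; moreover any directed $D\subseteq\da A$ automatically satisfies $\overline{D}\subseteq\overline{A}$, because closed sets are lower sets in the specialization order, so $\da A\subseteq\overline{A}$. Hence the whole content is to produce a directed $D\subseteq\da A$ with $A\subseteq\overline{D}$, equivalently with $D$ meeting every open set that meets $A$. I would record first that, by irreducibility, the family $\mathcal U_A=\{U\in\mathcal O(X):U\cap A\neq\emptyset\}$ is a filter of open sets: it is upward closed and, as $A$ is irreducible and nonempty, closed under finite intersections.

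The key step is to organize the finite approximations into a filtered family to which Rudin's Lemma applies, namely
\[
\mathcal F=\{\ua F\in\mathbf{Fin}\,X : A\cap\ii\,\ua F\neq\emptyset\}.
\]
First I would check that $\mathcal F\neq\emptyset$ and $\mathcal F\subseteq\Diamond\,\da A$: for any $a\in A$, local hypercompactness applied to the neighbourhood $X$ yields $\ua F$ with $a\in\ii\,\ua F$, so $\ua F\in\mathcal F$, and any such $\ua F$ meets $\da A$ since $a\in\ua F\cap\da A$. The essential point is that $\mathcal F$ is filtered. Given $\ua F_1,\ua F_2\in\mathcal F$, the open sets $\ii\,\ua F_1$ and $\ii\,\ua F_2$ both meet $A$, so by irreducibility there is $a\in A\cap\ii\,\ua F_1\cap\ii\,\ua F_2$; applying local hypercompactness to the open neighbourhood $W=\ii\,\ua F_1\cap\ii\,\ua F_2$ of $a$ produces $\ua F_3$ with $a\in\ii\,\ua F_3\subseteq\ua F_3\subseteq W$, whence $\ua F_3\in\mathcal F$ and $\ua F_3\subseteq\ua F_1\cap\ua F_2$. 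Thus $\mathcal F$ is a filtered family of finitely generated upper sets contained in $\Diamond\,\da A$.

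Rudin's Lemma then furnishes a directed set $D\subseteq\da A$ with $\mathcal F\subseteq\Diamond\,\da D$, and it remains to verify $A\subseteq\overline{D}$. For $U\in\mathcal U_A$, pick $a\in A\cap U$; local hypercompactness gives $\ua F$ with $a\in\ii\,\ua F\subseteq\ua F\subseteq U$, so $\ua F\in\mathcal F$ and therefore $\ua F\cap\da D\neq\emptyset$, i.e. there are $z\in\ua F$ and $d\in D$ with $z\le d$. Since $z\in\ua F\subseteq U$ and $U$ is an upper set, $d\in U$, giving $D\cap U\neq\emptyset$. Hence $D$ meets every member of $\mathcal U_A$, so $A\subseteq\overline{D}$, and combined with the reverse inclusion we get $\overline{A}=\overline{D}$. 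I expect the main obstacle to be exactly the passage from finite approximations to a directed set of genuine points: local hypercompactness only supplies finitely generated upper sets $\ua F$ near points of $A$, and an individual element of such an $F$ need not ``see'' $A$, since $\ii\,\ua F$ can be strictly larger than $\bigcup_{f\in F}\ii\,\ua f$. Recognizing that this is precisely the situation Rudin's Lemma resolves — converting a filtered family of finite sets into one directed set inside a prescribed lower set — is the crux, and once $\mathcal F$ is shown filtered (the only place genuinely combining irreducibility with local hypercompactness) the remainder is routine manipulation of upper sets.
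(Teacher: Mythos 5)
The paper does not prove this lemma; it is quoted from \cite{E_20182} without an argument. Your proof is correct and is essentially the standard one for this result: the filtered family $\mathcal F$ of finitely generated upper sets whose interiors meet $A$, made filtered by combining irreducibility (in its open-set form) with local hypercompactness, is exactly the right input for Rudin's Lemma (Corollary~\ref{rudin}) applied to the lower set $\da A$, and the verification that the resulting directed set $D$ is dense in $\overline{A}$ is carried out correctly.
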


 By Lemma \ref{H Hd ir} and Lemma \ref{LHC directed}, we get the following result.

 \begin{corollary}\label{LHC is d-D}\emph{(\cite{xu-shen-xi-zhao1})} For a locally hypercompact $T_0$ space $X$, $\ir_c (X)=\mathcal D^d_c(X)=\mathcal D_c(X)$.
 \end{corollary}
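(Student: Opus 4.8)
The plan is to prove the two equalities by establishing a cycle of inclusions
$$\ir_c(X)\subseteq \mathcal D_c(X)\subseteq \mathcal D^d_c(X)\subseteq \ir_c(X),$$
where the last two inclusions hold for every $T_0$ space (no hypercompactness needed) and only the first one uses the locally hypercompact hypothesis. The overall strategy is thus to obtain the ``easy'' inclusions from the general machinery already developed for R-subset systems, and to close the loop with the structural result on locally hypercompact spaces, Lemma \ref{LHC directed}.

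First I would record the general inclusions. Applying Lemma \ref{H Hd ir} to the R-subset system $H=\mathcal D$ gives $\mathcal D(X)\subseteq \mathcal D^d(X)\subseteq \ir (X)$. Passing to closed sets, if $E\in \mathcal D_c(X)$ then $E=\overline{D}$ for some $D\in \mathcal D(X)$; since $D\in \mathcal D^d(X)$ and a set is $\mathcal D$-sober determined iff its closure is (noted just before Lemma \ref{sobd=irr}), the closed set $E=\overline D$ is $\mathcal D$-sober determined, so $E\in \mathcal D^d_c(X)$. This yields $\mathcal D_c(X)\subseteq \mathcal D^d_c(X)$. Likewise, any $C\in \mathcal D^d_c(X)$ is closed and $\mathcal D$-sober determined, hence lies in $\mathcal D^d(X)\subseteq \ir (X)$ by Lemma \ref{H Hd ir}; being closed and irreducible, $C\in \ir_c(X)$, giving $\mathcal D^d_c(X)\subseteq \ir_c(X)$.

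It then remains to prove the one inclusion that actually invokes the hypothesis, namely $\ir_c(X)\subseteq \mathcal D_c(X)$. Here I would take an arbitrary $C\in \ir_c(X)$. Since $C$ is irreducible and $X$ is locally hypercompact, Lemma \ref{LHC directed} supplies a directed subset $D\subseteq \da C$ with $\overline{C}=\overline{D}$. As $C$ is already closed, $C=\overline{C}=\overline{D}\in \mathcal D_c(X)$, which is exactly the desired inclusion. Combining this with the inclusions of the previous paragraph completes the cycle and forces $\ir_c(X)=\mathcal D^d_c(X)=\mathcal D_c(X)$.

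I do not expect a genuine obstacle in this argument: all the real work is packaged into the two cited lemmas, with Lemma \ref{LHC directed} carrying the essential content (that in a locally hypercompact space every irreducible set is, up to closure, the closure of a directed set). The only point requiring a little care is the bookkeeping between a set and its closure when translating $H(X)\subseteq H^d(X)\subseteq \ir(X)$ into the corresponding statement for the closed versions $\mathcal D_c$, $\mathcal D^d_c$, $\ir_c$, but this is handled cleanly by the remark that $A$ is $\mathcal D$-sober determined precisely when $\overline{A}$ is.
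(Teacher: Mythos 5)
Your proof is correct and follows exactly the route the paper intends: the paper derives this corollary directly from Lemma \ref{H Hd ir} (giving $\mathcal D_c(X)\subseteq\mathcal D^d_c(X)\subseteq\ir_c(X)$) together with Lemma \ref{LHC directed} (giving $\ir_c(X)\subseteq\mathcal D_c(X)$ under local hypercompactness), which is precisely your cycle of inclusions. The closure bookkeeping you flag is handled the same way, via the observation that a set is H-sober determined iff its closure is.
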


By Corollary \ref{LHC is d-D}, every locally hypercompact $d$-space is sober. For locally compact $T_0$ spaces and core compact $T_0$ spaces, we have the following similar result.

\begin{proposition}\label{LC rudin}\emph{(\cite{xu-shen-xi-zhao1})} Let $X$ be a $T_0$ space. Then
	\begin{enumerate}[\rm (1)]
\item If $X$ is locally compact, then $\ir_c (X)=\mathcal D^D_c(X)=\mathcal D^R_c(X)$.
\item If $X$ is core compact, then $\ir_c (X)=\mathcal D^D_c(X)$.
\end{enumerate}
\end{proposition}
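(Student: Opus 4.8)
The plan is to lean on the comparison of the R-subset systems $\mathcal D$, $\mathcal D^R$ and $\mathcal D^D$ that is already in place, and to supply only the two missing reverse inclusions by hand. Since $\mathcal D$ has property M (Remark \ref{C D property M}), Proposition \ref{H Hd HR HD ir} gives $\mathcal D\leq\mathcal D^R\leq\mathcal D^D\leq\mathcal R$, and passing to closed members yields $\mathcal D^R_c(X)\subseteq\mathcal D^D_c(X)\subseteq\ir_c(X)$ for every $T_0$ space $X$. So for (1) it suffices to prove $\ir_c(X)\subseteq\mathcal D^R_c(X)$ when $X$ is locally compact, and for (2) it suffices to prove $\ir_c(X)\subseteq\mathcal D^D_c(X)$ when $X$ is core compact.

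For (1), fix $A\in\ir_c(X)$ and set $\mathcal K=\{K\in\mk(X):\ii K\cap A\neq\emptyset\}$. First I would check $\mathcal K\in\mathcal D(P_S(X))$: it is nonempty, since local compactness applied to any $a\in A$ (with ambient open $X$) yields $K\in\mk(X)$ with $a\in\ii K$; and it is directed in the Smyth order, because given $K_1,K_2\in\mathcal K$ the irreducibility of $A$ forces $\ii K_1\cap\ii K_2\cap A\neq\emptyset$, whence local compactness produces $K_3\in\mk(X)$ with $x\in\ii K_3\subseteq K_3\subseteq\ii K_1\cap\ii K_2$ for some $x$ in that intersection, so $K_3\in\mathcal K$ and $K_1,K_2\sqsubseteq K_3$. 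Next I would verify $A\in m(\mathcal K)$: each $K\in\mathcal K$ meets $A$ (as $\ii K\cap A\subseteq K\cap A$), so $A\in M(\mathcal K)$; and if $B\subsetneq A$ is closed, then choosing $a\in A\setminus B$ and, by local compactness, $K\in\mk(X)$ with $a\in\ii K\subseteq K\subseteq X\setminus B$ gives $K\in\mathcal K$ with $K\cap B=\emptyset$, so no proper closed subset of $A$ lies in $M(\mathcal K)$. Hence $\overline A=A\in m(\mathcal K)$ with $\mathcal K\in\mathcal D(P_S(X))$, i.e. $A\in\mathcal D^R_c(X)$; combined with the chain above this yields $\ir_c(X)=\mathcal D^R_c(X)=\mathcal D^D_c(X)$.

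For (2) the difficulty is that a core compact space may carry almost no compact saturated sets, so the family used in (1) is unavailable and the compact saturated sets must be found in the targets. The plan is to use that core compactness means exactly that $\mathcal O(X)$ is a continuous lattice, and that the canonical embedding $\eta_X:X\to X^s$ into the sobrification leaves the open-set lattice unchanged, $\mathcal O(X^s)\cong\mathcal O(X)$; thus $X^s$ is core compact and sober, hence locally compact and sober by Theorem \ref{SoberLC=CoreC}. Applying part (1) to $X^s$, the point $\hat A\in X^s$ corresponding to $A$ has its closure realized as a $\mathcal D$-Rudin set of $X^s$ through the filtered family $\{Q\in\mk(X^s):\hat A\in\ii_{X^s}Q\}$. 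Given any continuous $f:X\to Y$ with $Y$ super $\mathcal D$-sober, I would compose with $\eta_Y:Y\to Y^s$ and use the universal property of the sobrification to extend $\eta_Y\circ f$ to $g:X^s\to Y^s$; pushing the above filtered family forward by $P_S(g)$ (continuous, Lemma \ref{Ps functor}) produces a filtered family in $\mk(Y^s)$ of which $\overline{f(A)}$, viewed as a point of the sober space $Y^s$, is the intersection-determined generic point.

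The main obstacle, and the only place where the hypothesis that $Y$ is \emph{well-filtered} rather than merely sober must intervene, is the descent from $Y^s$ back to $Y$: one must show that this generic point actually lies in $Y$, equivalently that $\overline{f(A)}\in\mathcal S_c(Y)$. Here I would invoke that $Y$ is super $\mathcal D$-sober, i.e. that $P_S(Y)$ is a $d$-space (Corollary \ref{WF=Smyth d-space}), together with the characterizations of Theorem \ref{super H-sober charact H-sets}: for a filtered family of compact saturated sets of $Y$ whose intersection is contained in an open set, some member already lies in that open set, which is exactly what forces a minimal closed set meeting all members of such a family to be a point-closure of $Y$. The delicate point is to transport the family constructed in $Y^s$ to an honest filtered family in $\mk(Y)$ (equivalently, to recognize the relevant Scott-open filters of $\mathcal O(Y)\cong\mathcal O(Y^s)$, produced via the Hofmann-Mislove Theorem \ref{Hofmann-Mislove theorem} in $Y^s$, as being represented inside $Y$); once this is achieved, $\overline{f(A)}\in\mathcal S_c(Y)$ for every super $\mathcal D$-sober $Y$, so $A\in\mathcal D^D(X)$ and $\ir_c(X)\subseteq\mathcal D^D_c(X)$, completing (2).
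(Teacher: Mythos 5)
The paper gives no proof of this proposition at all---it is imported from \cite{xu-shen-xi-zhao1}---so your argument can only be judged on its own merits. Part (1) is correct and complete: the family $\mathcal K=\{K\in\mk(X):\ii K\cap A\neq\emptyset\}$ is nonempty and Smyth-directed (irreducibility of $A$ gives $\ii K_1\cap\ii K_2\cap A\neq\emptyset$, and local compactness interpolates a compact saturated $K_3$ inside $\ii K_1\cap\ii K_2$ whose interior still meets $A$), $A$ meets every member, and no proper closed subset of $A$ can meet every member; combined with $\mathcal D\leq\mathcal D^R\leq\mathcal D^D\leq\mathcal R$ from Proposition \ref{H Hd HR HD ir} this yields $\ir_c(X)=\mathcal D^R_c(X)=\mathcal D^D_c(X)$. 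This is exactly the expected argument for the locally compact case.

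Part (2), however, is an outline whose central step is missing, and that step is the whole theorem. The detour through sobrifications gains nothing: $\hat A$ is a \emph{point} of $X^s$, so $\cl_{X^s}\{\hat A\}$ is trivially a point closure for any $T_0$ space $X$, and the generic point of $\cl_{Y^s}\eta_Y(f(A))$ in the sober space $Y^s$ always exists without any hypothesis on $X$; the assertion that this generic point lies in $\eta_Y(Y)$ is simply a restatement of the conclusion. The descent you yourself flag as ``the delicate point'' cannot be completed by soft means: by Theorem \ref{Hofmann-Mislove theorem} applied to $Y^s$, $\mk(Y^s)$ is order-isomorphic to $\mathrm{OFilt}(\mathcal O(Y^s))\cong\mathrm{OFilt}(\mathcal O(Y))$, whereas $\Phi:\mk(Y)\longrightarrow\mathrm{OFilt}(\mathcal O(Y))$ is onto if and only if $Y$ is sober, so for a well-filtered non-sober $Y$ (Isbell's $\Sigma L$, or $X_{coc}$ of Example \ref{examp3}) the family you push forward into $\mk(Y^s)$ need not correspond to any family in $\mk(Y)$, and Theorem \ref{super H-sober charact H-sets} cannot be invoked. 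What the core compactness of $X$ must actually deliver is a filtered family of compact saturated subsets \emph{of $Y$ itself}---constructed from the way-below relation on $\mathcal O(X)$, e.g.\ from pairs $V\ll U$ with $V\cap A\neq\emptyset$ and the images $f(U\cap A)$---of which $\overline{f(A)}$ is a minimal closed set meeting all members; even granting your descent, that minimality verification is also unaddressed. As written, part (2) records the difficulty rather than resolving it, so the proposal does not establish statement (2).
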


The following result shows that for an R-subset system H having property M, the super H-sobriety is indeed a special type of $\mathrm{H}$-sobriety.

\begin{theorem}\label{super H-sober HD-sober} Let $H : \mathbf{Top}_0 \longrightarrow \mathbf{Set}$ be an R-subset system having property M and $X$ a $T_0$ space. Then the following conditions are equivalent:
\begin{enumerate}[\rm (1)]
\item  $X$ is super H-sober \emph{(}i.e., $P_S(X)$ is H-sober\emph{)}.
\item $X$ is $H^D$-sober.
\item $X$ is $H^R$-sober.
\end{enumerate}
\end{theorem}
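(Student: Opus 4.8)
The plan is to establish the cycle of implications $(1) \Rightarrow (2) \Rightarrow (3) \Rightarrow (1)$, since in this order each arrow is either a one-line consequence of earlier results or rests on a single application of the Topological Rudin Lemma together with the characterization of super H-sobriety.

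First I would dispatch $(1) \Rightarrow (2)$. If $X$ is super H-sober, then $X$ is itself a super H-sober space, so for any $A \in {\rm H}^D(X)$ I can apply the defining property of super H-sober determined sets to the identity map ${\rm id}_X : X \longrightarrow X$; this produces a unique $x \in X$ with $\overline{A} = \overline{{\rm id}_X(A)} = \overline{\{x\}}$, which is exactly $H^D$-sobriety. The step $(2) \Rightarrow (3)$ is then immediate from the comparison of R-subset systems: since ${\rm H}$ has property M, Proposition \ref{H Hd HR HD ir}(3) gives ${\rm H}^R \leq {\rm H}^D$, that is ${\rm H}^R(X) \subseteq {\rm H}^D(X)$, and a space that resolves (with a generic point) every set in the larger system certainly resolves every set in the smaller one. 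Hence $H^D$-sobriety forces $H^R$-sobriety.

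The substance of the theorem lies in $(3) \Rightarrow (1)$, which I would verify through the characterization in Theorem \ref{super H-sober charact H-sets}: it suffices to show that for every $\mathcal K \in {\rm H}(P_S(X))$ and every $U \in \mathcal O(X)$ with $\bigcap \mathcal K \subseteq U$ there is some $K \in \mathcal K$ with $K \subseteq U$. Suppose not, so that $K \cap (X\setminus U) \neq \emptyset$ for every $K \in \mathcal K$; then $C = X \setminus U$ is a closed set in $M(\mathcal K)$, and since $\mathcal K \subseteq \mk (X)$, Lemma \ref{t ruding} furnishes a closed subset $A \subseteq C$ with $A \in m(\mathcal K)$. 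As $\mathcal K \in {\rm H}(P_S(X))$ and $\overline{A} = A \in m(\mathcal K)$, this exhibits $A$ as an H-Rudin set, i.e. $A \in {\rm H}^R(X)$; invoking $H^R$-sobriety gives a unique $x \in X$ with $A = \overline{\{x\}} = \da x$.

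The final move — and the point where the argument really uses the structure of the Smyth power space — is to convert ``$A$ meets every member of $\mathcal K$'' into ``$x$ lies in every member of $\mathcal K$''. Because $A = \da x$ and each $K \in \mathcal K$ satisfies $K \cap \da x \neq \emptyset$, every $K$ contains a point below $x$; since each $K$ is saturated, i.e. an upper set in the specialization order, this forces $x \in K$. Thus $x \in \bigcap \mathcal K \subseteq U$, contradicting $x \in A \subseteq X \setminus U$. This contradiction yields the required $K \subseteq U$, so $X$ is super H-sober by Theorem \ref{super H-sober charact H-sets}. I expect the main obstacle to be precisely this implication: the delicate points are recognizing that the minimal closed set supplied by Lemma \ref{t ruding} is genuinely an H-Rudin set (so that $H^R$-sobriety can be applied at all) and then exploiting saturation to pass from ``meets'' to ``contains'', the two places where property M and the Smyth-order structure are essential.
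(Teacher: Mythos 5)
Your proposal is correct and follows essentially the same route as the paper: (1)$\Rightarrow$(2) via the identity map and the definition of ${\rm H}^D$, (2)$\Rightarrow$(3) via ${\rm H}^R\leq {\rm H}^D$ (which is where property M enters), and (3)$\Rightarrow$(1) via the Topological Rudin Lemma to produce a minimal closed set in $m(\mathcal K)$ inside $X\setminus U$, recognize it as an H-Rudin set, resolve it to a point by ${\rm H}^R$-sobriety, and derive a contradiction using Theorem \ref{super H-sober charact H-sets}. Your extra step spelling out that saturation of each $K$ converts ``meets $\da x$'' into ``contains $x$'' is exactly what the paper's phrase ``$\overline{\{x\}}\in m(\mathcal A)$, and consequently $x\in\bigcap\mathcal A$'' compresses.
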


\begin{proof} First, by Corollary \ref{Hd R-subset system}, Proposition \ref{H Hd HR HD ir} and the property M of {\rm H}, we have that ${\rm H}^R$ and ${\rm H}^D$ are R-subset systems, ${\rm H}\leq H^d\leq {\rm H}^D \leq \mathcal R$ and ${\rm H}\leq {\rm H}^R\leq{\rm H}^D$.

(1) $\Rightarrow$ (2): Assume that $X$ is super {\rm H}-sober. For any $A\in{\rm H}^D(X)$, since the identity $id_X : X\longrightarrow X$ is continuous, there is a unique $x\in X$ such that $\overline{A}=\overline{\{x\}}$. Thus $X$ is ${\rm H}^D$-sober.

(2) $\Rightarrow$ (3): By ${\rm H}^R\leq{\rm H}^D$.

(3) $\Rightarrow$ (1): Suppose that $\mathcal A\in {\rm H}(P_S(X))$ and $U\in \mathcal O(X)$ with $\bigcap\mathcal A \subseteq U$. If $\mathcal K\not\subseteq U$ for all $K\in \mathcal A$, then by Lemma \ref{t Rudin}, $X\setminus U$ contains a minimal irreducible closed subset $A$ that still meets all members of $\mathcal A$, and hence $A\in {\rm H}^R(X)$. Since $X$ is  ${\rm H}^R$-sober, there is $x\in X$ such that $A=\overline{\{x\}}$. It follows that $\overline{\{x\}}\in m(\mathcal A)$, and consequently, $x\in \bigcap \mathcal A\subseteq U$, being a contradiction with $x\in A\subseteq X\setminus U$. Therefore, $X$ is super {\rm H}-sober by Theorem \ref{super H-sober charact H-sets}.

\end{proof}

By Theorem \ref{super H-sober=PS super H-sober} and Theorem \ref{super H-sober HD-sober}, we get the following corollary.

\begin{corollary}\label{super super H-sober HR-sober} Let $H : \mathbf{Top}_0 \longrightarrow \mathbf{Set}$ be an R-subset system having property M and $X$ a $T_0$ space. Then the following conditions are equivalent:
\begin{enumerate}[\rm (1)]
\item  $X$ is super H-sober.
\item $X$ is $H^R$-sober.
\item $X$ is $H^D$-sober.
\item  $P_S(X)$ is super H-sober.
\item $P_S(X)$ is $H^R$-sober.
\item $P_S(X)$ is $H^D$-sober.
\end{enumerate}
\end{corollary}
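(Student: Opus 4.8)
The plan is to assemble the sixfold equivalence purely from the two theorems immediately preceding it, applying the second of them twice: once to the base space $X$ and once to its Smyth power space $P_S(X)$. No new argument about irreducible sets or the topological Rudin lemma is needed; everything has already been packaged into Theorem \ref{super H-sober=PS super H-sober} and Theorem \ref{super H-sober HD-sober}.

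First I would invoke Theorem \ref{super H-sober HD-sober} directly with the $T_0$ space $X$. Because H has property M by hypothesis, that theorem yields at once the equivalence of conditions (1), (2) and (3): $X$ is super H-sober iff $X$ is $H^D$-sober iff $X$ is $H^R$-sober. Next I would apply Theorem \ref{super H-sober=PS super H-sober}, again legitimate since H has property M, to bridge the two levels, giving that $X$ is super H-sober iff $P_S(X)$ is super H-sober; this is exactly the equivalence of condition (1) with condition (4).

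The only move requiring a moment's care is the last one: I would apply Theorem \ref{super H-sober HD-sober} a second time, now taking the ambient $T_0$ space to be $P_S(X)$ rather than $X$. Here one must observe that $P_S(X)$ is indeed a $T_0$ space (established in the proof of Lemma \ref{Ps functor}) and that the property M hypothesis is a condition on the functor H alone, hence unaffected by which base space we feed in. The theorem then delivers that $P_S(X)$ is super H-sober iff $P_S(X)$ is $H^D$-sober iff $P_S(X)$ is $H^R$-sober, i.e. conditions (4), (5) and (6) are mutually equivalent. Chaining the three blocks $(1)\Leftrightarrow(2)\Leftrightarrow(3)$, $(1)\Leftrightarrow(4)$ and $(4)\Leftrightarrow(5)\Leftrightarrow(6)$ then closes the cycle and shows all six conditions equivalent.

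The hard part, such as it is, is entirely bookkeeping rather than mathematics: one must keep straight that super H-sobriety, $H^R$-sobriety and $H^D$-sobriety are each defined relative to whatever $T_0$ space is substituted, so that when the theorems are read with $P_S(X)$ in the role of $X$ the conclusions land on conditions (4)--(6) rather than (1)--(3). Beyond verifying that the hypotheses ($T_0$-ness of $P_S(X)$ and property M of H) transfer to this second application, there is no genuine obstacle.
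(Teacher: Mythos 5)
Your proposal is correct and follows exactly the paper's route: the paper derives this corollary precisely by combining Theorem \ref{super H-sober=PS super H-sober} (giving $(1)\Leftrightarrow(4)$) with Theorem \ref{super H-sober HD-sober} applied to both $X$ and $P_S(X)$ (giving $(1)\Leftrightarrow(2)\Leftrightarrow(3)$ and $(4)\Leftrightarrow(5)\Leftrightarrow(6)$). Your extra care about $P_S(X)$ being $T_0$ and property M being a condition on the functor alone is sound bookkeeping that the paper leaves implicit.
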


By Remark \ref{C D property M}, Corollary \ref{WF=Smyth d-space} and Theorem \ref{super H-sober HD-sober}, we have the following result.

\begin{corollary}\label{WF=DD-sober} \emph{(\cite{xu-shen-xi-zhao1, xuxizhao})}  For a $T_0$ space $X$, the following conditions are equivalent:
\begin{enumerate}[\rm (1)]
\item  $X$ is well-filtered.
\item $X$ is $\mathcal D^D$-sober.
\item $X$ is $\mathcal D^R$-sober.
\item  $P_S(X)$ is well-filtered.
\item $P_S(X)$ is $\mathcal D^R$-sober.
\item $P_S(X)$ is $\mathcal D^D$-sober.
\end{enumerate}
\end{corollary}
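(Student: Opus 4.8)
The plan is to specialize the abstract results already established for a general R-subset system $H$ with property M to the concrete system $H=\mathcal D$, using throughout the identification of ``super $\mathcal D$-sobriety'' with well-filteredness. First I would record that $\mathcal D$ satisfies property M by Remark \ref{C D property M}, so every statement proved under the hypothesis ``$H$ has property M'' is available for $H=\mathcal D$. Next comes the decisive translation: by Definition \ref{super H-sober space}, $X$ is super $\mathcal D$-sober exactly when $P_S(X)$ is $\mathcal D$-sober, i.e. a $d$-space, which by Corollary \ref{WF=Smyth d-space} happens iff $X$ is well-filtered. Thus condition (1) of the corollary is literally the statement ``$X$ is super $\mathcal D$-sober''.

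Applying Theorem \ref{super H-sober HD-sober} with $H=\mathcal D$ then gives the chain $X$ super $\mathcal D$-sober $\iff$ $X$ is $\mathcal D^D$-sober $\iff$ $X$ is $\mathcal D^R$-sober, which is precisely (1) $\iff$ (2) $\iff$ (3). For the second half I would run the same argument one level up, replacing $X$ by $P_S(X)$. Applying Corollary \ref{WF=Smyth d-space} to the space $P_S(X)$ shows that ``$P_S(X)$ is well-filtered'' is the same as ``$P_S(P_S(X))$ is a $d$-space'', i.e. ``$P_S(X)$ is super $\mathcal D$-sober''; and Theorem \ref{super H-sober HD-sober} applied to $P_S(X)$ yields $P_S(X)$ super $\mathcal D$-sober $\iff$ $P_S(X)$ is $\mathcal D^D$-sober $\iff$ $P_S(X)$ is $\mathcal D^R$-sober, giving (4) $\iff$ (5) $\iff$ (6).

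The only thing left is to bridge the two groups, and here I would invoke Theorem \ref{super H-sober=PS super H-sober} (legitimate because $\mathcal D$ has property M), which states that $X$ is super $\mathcal D$-sober iff $P_S(X)$ is super $\mathcal D$-sober; this identifies (1) with (4) and closes the cycle.

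There is essentially no genuine obstacle here, since all the real work has been carried out in the general theorems and the proof is an assembly. The one point that requires a little care is the bookkeeping in the translation step: one must apply Corollary \ref{WF=Smyth d-space} twice --- once to $X$ (turning (1) into super $\mathcal D$-sobriety of $X$) and once to $P_S(X)$ (turning (4) into super $\mathcal D$-sobriety of $P_S(X)$) --- and keep the two levels distinct, so that the single use of Theorem \ref{super H-sober=PS super H-sober} correctly connects the $X$-level equivalences to the $P_S(X)$-level ones rather than being applied at the wrong level.
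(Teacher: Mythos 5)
Your proposal is correct and follows essentially the same route as the paper, which derives this corollary by specializing Remark \ref{C D property M}, Corollary \ref{WF=Smyth d-space}, Theorem \ref{super H-sober HD-sober} and Theorem \ref{super H-sober=PS super H-sober} (packaged as Corollary \ref{super super H-sober HR-sober}) to ${\rm H}=\mathcal D$. Your explicit two-level bookkeeping of Corollary \ref{WF=Smyth d-space} is exactly the intended assembly.
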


The following result follows directly from Theorem \ref{SoberLC=CoreC}, Proposition \ref{LC rudin} and Corollary \ref{WF=DD-sober}.

\begin{theorem}\label{corecwellf sober}\emph{(\cite{Lawson-Xi, xu-shen-xi-zhao1})} Let $X$ be a a core compact well-filtered space. Then $X$ is sober. Therefore, $X$ is a locally compact sober space.
\end{theorem}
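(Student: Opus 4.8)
The plan is to derive the sobriety of $X$ directly from its two hypotheses by a sandwiching argument on families of closed sets, and then to read off local compactness from Theorem \ref{SoberLC=CoreC}. Recall that by Definition \ref{def H-sober} a space is sober exactly when $\ir_c(X)=\mathcal S_c(X)$, so it suffices to produce this equality; I would obtain it by interposing the family $\mathcal D^D_c(X)$ of closed super $\mathcal D$-sober determined sets, using core compactness to control it from above and well-filteredness to control it from below.

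First I would invoke the well-filteredness of $X$. By Corollary \ref{WF=DD-sober}, well-filteredness is equivalent to $\mathcal D^D$-sobriety, so $X$ is $\mathcal D^D$-sober; by the equivalent formulation of H-sobriety in Definition \ref{def H-sober}, this says precisely that $\mathcal D^D_c(X)=\mathcal S_c(X)$. Next I would invoke the core compactness of $X$: Proposition \ref{LC rudin}(2) gives $\ir_c(X)=\mathcal D^D_c(X)$ for any core compact $T_0$ space. Chaining these two equalities yields $\ir_c(X)=\mathcal D^D_c(X)=\mathcal S_c(X)$, which is exactly the statement that $X$ is sober.

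For the final assertion I would simply note that $X$ is now both core compact and sober, i.e.\ condition (3) of Theorem \ref{SoberLC=CoreC} holds; its equivalence with condition (1) then shows that $X$ is locally compact and sober, completing the proof.

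The point to emphasize is that all of the genuine difficulty has already been absorbed into the cited results. The substantive step is Proposition \ref{LC rudin}(2), whose proof for core compact spaces rests on a topological Rudin-type argument identifying each irreducible closed set as a super $\mathcal D$-sober determined (hence $\mathcal D^R$) set, together with Corollary \ref{WF=DD-sober}, which runs through the characterization (Theorem \ref{super H-sober HD-sober}) of well-filtered spaces as super $\mathcal D$-sober spaces. Consequently the only care required here is the correct bookkeeping of the chain of set-equalities and the right appeal to the equivalence in Theorem \ref{SoberLC=CoreC}; there is no further obstacle to overcome.
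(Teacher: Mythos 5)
Your proposal is correct and coincides with the paper's own argument: the theorem is stated there as following directly from Proposition \ref{LC rudin}(2), Corollary \ref{WF=DD-sober} and Theorem \ref{SoberLC=CoreC}, which is exactly the chain $\ir_c(X)=\mathcal D^D_c(X)=\mathcal S_c(X)$ you describe, followed by the same appeal to the equivalence of core compact sober and locally compact sober. Nothing further is needed.
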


In \cite{Hofmann-Lawson} (or \cite[Exercise V-5.25]{redbook}), Hofmann and Lawson constructed a second-countable core compact $T_0$ space $X$ in which every compact subset has empty interior. By Theorem \ref{corecwellf sober}, $X$ is not well-filtered.

\begin{example}\label{examp1}
	Let $X$ be a countably infinite set and $X_{cof}$ the space equipped with the \emph{co-finite topology} (the empty set and the complements of finite subsets of $X$ are open). Then
\begin{enumerate}[\rm (a)]
    \item $\Gamma(X_{cof})=\{\emptyset, X\}\cup X^{(<\omega)}$, $X_{cof}$ is $T_1$ and hence a $d$-space.
    \item $\mk (X_{cof})=2^X\setminus \{\emptyset\}$.
    \item $X_{cof}$ is locally compact and first countable.
    \item $\mathcal S (X_{cof})=\mathcal D (X_{cof})=\{\{x\} : x\in X\}$,
    \item $\ir_c (X_{cof})=\mathcal D^R(X_{cof})=\mathcal D^D(X_{cof})=\{X\}\cup \{\{x\} : x\in X\}\neq\mathcal D (X_{cof})$. In fact, let $\mathcal{K}_X=\{X\setminus F : F\in X^{(<\omega)}\}$. Then $\mathcal{K}_X\in \mathcal D(P_S(X_{cof}))$, $X\in M(\mathcal{K}_X)$ and $\bigcap \mathcal{K}_X=\emptyset$. For any $A\in \Gamma (X_{cof})$, if $A\neq X$, then $A$ is finite and hence $A\not\in M(\mathcal{K}_X)$ because $A\cap (X\setminus A)=\emptyset$. Thus $X\in \mathcal D^R(X_{cof})$, but $X\not\in \mathcal D(X_{cof})=\mathcal D_c(X_{cof})$.
\item $X_{cof}$ is not well-filtered by Corollary \ref{WF=Smyth d-space} or Theorem \ref{super H-sober HD-sober}.
\end{enumerate}
\end{example}

\begin{example}\label{examp2}
	Let $L$ be the complete lattice constructed by Isbell in \cite{isbell}. Then by \cite[Corollary 3.2]{Xi-Lawson-2017} (or Corollary \ref{complete Soctt is WF} below), $X=\Sigma~\!\!L$ is well-filtered. Note that $\Sigma L$ is not sober. Thus by Theorem \ref{super H-sober HD-sober}, $\mathcal D^R(X)\neq\ir (X)$ and $\mathcal D^D\neq\ir (X)$.
\end{example}

\begin{example}\label{examp3}(\cite{xuzhao}) Let $X$ be an uncountably infinite set and $X_{coc}$ the space equipped with \emph{the co-countable topology} (the empty set and the complements of countable subsets of $X$ are open sets). Then
\begin{enumerate}[\rm (a)]
    \item $\Gamma (X_{coc})=\{\emptyset, X\}\cup X^{(\leqslant\omega)}$, $X_{coc}$ is $T_1$ and hence a $d$-space.
    \item $\mk (X_{coc})=X^{(<\omega)}\setminus \{\emptyset\}$ and $\ii~\!K=\emptyset$ for all $K\in \mk (X_{coc})$.
    \item $X_{coc}$ is not locally compact and not first countable.
    \item $\ir (X_{coc})=\ir_c(X_{coc})=\{X\}\cup \{\{x\} : x\in X\}$, $\mathcal D^D(X_{coc})=\mathcal D^R(X_{coc})=\mathcal D(X_{coc})=\mathcal S(X_{coc})=\{\{x\} : x\in X\}$. Therefore, $\ir (X_{coc})\neq \mathcal D^D(X_{coc})$.
    \item $X_{coc}$ is well-filtered by Corollary \ref{WF=DD-sober}, but it not sober.
\end{enumerate}
\end{example}

By Corollary \ref{Hdd=Hd}, Proposition \ref{H Hd HR HD ir} and Corollary \ref{super super H-sober HR-sober}, we have the following result.

\begin{corollary}\label{HDD=HD}  Let $H : \mathbf{Top}_0 \longrightarrow \mathbf{Set}$ be an R-subset system having property M. Then
\begin{enumerate}[\rm (1)]
\item $D : \mathcal H\longrightarrow \mathcal H$, $H\mapsto H^D$, is a closure operator.
\item $d\leq R\leq D$, where $R : \mathcal H\longrightarrow \mathcal H$ is defined by $R(H)=H^R(H)$ for each $H\in \mathcal H$.
\end{enumerate}
\end{corollary}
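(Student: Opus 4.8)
The plan is to prove the two assertions separately, routing everything through the coincidence of sobriety classes supplied by Corollary~\ref{super super H-sober HR-sober} together with the containments of Proposition~\ref{H Hd HR HD ir}. For (1), $H^{D}$ is again an R-subset system by Lemma~\ref{HD functor}, so $D\colon\mathcal H\to\mathcal H$ is well defined, and it suffices to verify that $D$ is extensive, monotone and idempotent. Extensivity $H\le H^{D}$ is contained in Proposition~\ref{H Hd HR HD ir}(1). For monotonicity, if $H_{1}\le H_{2}$ then every $H_{2}$-sober space is $H_{1}$-sober, hence every super $H_{2}$-sober space is super $H_{1}$-sober; thus the super $H_{1}$-sober targets include all super $H_{2}$-sober targets, so the point-closure condition defining $H_{1}^{D}$ is at least as stringent as that defining $H_{2}^{D}$, giving $H_{1}^{D}\le H_{2}^{D}$.

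The heart of (1) is idempotence, $H^{DD}=H^{D}$, which I would obtain by showing that the super $H^{D}$-sober spaces are exactly the super $H$-sober spaces. By Corollary~\ref{super super H-sober HR-sober} a $T_{0}$ space is $H^{D}$-sober iff it is super $H$-sober; applying this to $P_{S}(Y)$ shows that $Y$ is super $H^{D}$-sober (i.e.\ $P_{S}(Y)$ is $H^{D}$-sober, by Definition~\ref{super H-sober space}) iff $P_{S}(Y)$ is super $H$-sober, which by the same corollary (super $H$-sobriety of $Y$ is equivalent to that of $P_{S}(Y)$) holds iff $Y$ is super $H$-sober. Hence the two classes coincide, and since $(H^{D})^{D}$ is defined by quantifying the point-closure condition over super $H^{D}$-sober targets, $(H^{D})^{D}=H^{D}$. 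Together with extensivity this is idempotence, so $D$ is a closure operator.

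For (2), the inequality $R\le D$ is immediate, since $H^{R}\le H^{D}$ is the last containment of Proposition~\ref{H Hd HR HD ir}(3). The remaining inequality $d\le R$, namely $H^{d}(X)\subseteq H^{R}(X)$ for every $T_{0}$ space $X$, is the main obstacle. As membership in $H^{d}(X)$ and in $H^{R}(X)$ depends only on the closure, I may assume $A=\overline{A}$, and then $A\in\ir(X)$ by Proposition~\ref{H Hd HR HD ir}(1); what must be produced is a family $\mathcal K\in H(P_{S}(X))$ for which $A$ is a minimal closed set meeting every member, i.e.\ $A\in m(\mathcal K)$. When $A\in H(X)$ the canonical family $\xi_{X}(A)=\{\ua a:a\in A\}$ lies in $H(P_{S}(X))$ and works exactly as in the proof of Proposition~\ref{H Hd HR HD ir}(2); the trouble is that for an $A$ that is only $H$-sober determined this family need not be an $H$-set.

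This step cannot be reached by formal monotonicity, for $H^{R}$ is not $d$-closed: Corollary~\ref{super super H-sober HR-sober} yields $(H^{R})^{d}=H^{D}$, so applying the closure operator $d$ (Corollary~\ref{Hdd=Hd}) to $H\le H^{R}$ returns only $H^{d}\le H^{D}$, never $H^{d}\le H^{R}$. The real task is to turn the hypothesis that $A$ is $H$-sober determined into an explicit $H$-indexed approximating family: from the behaviour of $A$ under maps into $H$-sober targets one reads off a family of closed $H$-sets descending onto $A$, transports it into $P_{S}(X)$ while keeping it inside $H(P_{S}(X))$ by means of property~M (Definition~\ref{R-SS property M}), and finally prunes it to exact minimality via the Topological Rudin Lemma (Lemma~\ref{t Rudin}). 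Ensuring the transported family is a genuine $H$-set rather than merely an $H^{d}$-set is the crux, and is precisely where property~M must carry the argument.
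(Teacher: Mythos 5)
Your treatment of part (1) and of the inequality $R\leq D$ is correct and takes the same route the paper intends: well-definedness of $D$ from Lemma~\ref{HD functor}, extensivity and $H^{R}\leq H^{D}$ from Proposition~\ref{H Hd HR HD ir}, monotonicity from the inclusion of the classes of target spaces, and idempotence from the identification of the super $H^{D}$-sober spaces with the super $H$-sober spaces via Corollary~\ref{super super H-sober HR-sober}. That part of the proposal is fine.

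The gap is the inequality $d\leq R$, which you explicitly leave unproved. Your closing paragraph is a programme, not an argument: the claim that from the behaviour of $A$ under maps into $H$-sober targets one can ``read off'' a family of closed $H$-sets converging to $A$ has no justification. Membership of $A$ in $H^{d}(X)$ is a universally quantified condition over continuous maps into $H$-sober spaces and supplies no witness object whatsoever, while property M (Definition~\ref{R-SS property M}) only transforms a family already known to lie in $H(P_S(X))$ into another such family --- it cannot manufacture one. So the required $\mathcal K\in H(P_S(X))$ with $\overline{A}\in m(\mathcal K)$ is never produced, and $H^{d}(X)\subseteq H^{R}(X)$ is not established. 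You are right that it cannot be recovered formally: applying the closure operator $d$ to $H\leq H^{R}$ yields only $H^{d}\leq (H^{R})^{d}=H^{D}$. Be aware, though, that the paper itself offers nothing more at this point: the corollary is stated as a consequence of Corollary~\ref{Hdd=Hd}, Proposition~\ref{H Hd HR HD ir} and Corollary~\ref{super super H-sober HR-sober}, none of which yields $H^{d}\leq H^{R}$, and the same unexplained inclusion $\mathcal D^{d}_{c}(X)\subseteq\mathcal D^{R}_{c}(X)$ already appears in Remark~\ref{Sob WF d-space determined set}(4). So you have correctly isolated the one step that genuinely needs an argument, but your proposal does not supply it; as submitted it proves $H\leq H^{d}\leq H^{D}$ and $H\leq H^{R}\leq H^{D}$, not $H^{d}\leq H^{R}$.
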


In the following, we investigate some basic properties of super {\rm H}-sober spaces.

\begin{proposition}\label{super H-sober closed} Let $X$ be a super H-sober space.
\begin{enumerate}[\rm (1)]
\item If $A$ is a nonempty closed subspace of $X$, then $A$ is super H-sober.
\item If $U$ is a nonempty saturated subspace of $X$, then $U$ is super H-sober.
\end{enumerate}
\end{proposition}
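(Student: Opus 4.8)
The plan is to reduce both parts to the Smyth power space, since by Definition \ref{super H-sober space} the assertion ``$X$ is super H-sober'' means exactly ``$P_S(X)$ is H-sober'', and to exploit the covering characterisation in Theorem \ref{super H-sober charact H-sets} together with the behaviour of $H$ under the inclusion of a subspace (Proposition \ref{subspace H-set}). For each part I would push families of compact saturated sets forward along the inclusion map via $P_S$.

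For part (2) I would argue directly at the level of power spaces. Writing $U$ for a nonempty saturated (upper) subspace, I claim $P_S(U)$ is precisely the subspace $\Box_{\mk(X)}U=\{K\in\mk(X):K\subseteq U\}$ of $P_S(X)$. The key point is that, because $U$ is an upper set, a subset $K\subseteq U$ is saturated in $U$ iff it is saturated in $X$: indeed $\ua_X K\subseteq U$ whenever $K\subseteq U$, so $\ua_U K=\ua_X K\cap U=\ua_X K$. Hence $\mk(U)=\{K\in\mk(X):K\subseteq U\}$, and checking subbasic open sets, $\Box_{\mk(U)}(W\cap U)=\mk(U)\cap\Box_{\mk(X)}W$, shows the subspace topologies agree. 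Moreover $\Box_{\mk(X)}U$ is an upper set in the Smyth order (if $K\subseteq U$ and $K\sqsubseteq K'$, i.e. $K'\subseteq K$, then $K'\subseteq U$), hence a saturated subspace of $P_S(X)$, and it is nonempty since $\ua_X u\in\mk(U)$ for any $u\in U$. As $X$ is super H-sober, $P_S(X)$ is H-sober, so Proposition \ref{H-sober closed}(2) applied to the saturated subspace $P_S(U)$ gives that $P_S(U)$ is H-sober, i.e. $U$ is super H-sober.

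For part (1) this slick reduction is unavailable, since for a closed (down-closed) $A$ the members of $\mk(A)$ are saturated in $A$ but generally not in $X$, so $P_S(A)$ is not a subspace of $P_S(X)$. Instead I would verify condition (5) of Theorem \ref{super H-sober charact H-sets} directly for $A$. Given $\mathcal K\in H(P_S(A))$ and an open set $U=W\cap A$ of $A$ (with $W\in\mathcal O(X)$) such that $\bigcap\mathcal K\subseteq U$, push $\mathcal K$ forward along the inclusion $i:A\hookrightarrow X$: by Proposition \ref{subspace H-set}(2), $\mathcal K'=\{\ua_X K:K\in\mathcal K\}\in H(P_S(X))$. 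Using the identity $\ua_X K\cap A=\ua_A K=K$ (saturation of $K$ in $A$), one checks that $\bigcap\mathcal K'\subseteq W\cup(X\setminus A)$, a set that is open in $X$ precisely because $A$ is closed. Applying condition (5) for the super H-sober space $X$ yields some $K\in\mathcal K$ with $\ua_X K\subseteq W\cup(X\setminus A)$; since $K\subseteq A$ this forces $K\subseteq(W\cup(X\setminus A))\cap A=W\cap A=U$. Thus condition (5) holds for $A$, and by the equivalence of conditions (5) and (6) in Theorem \ref{super H-sober charact H-sets}, which also delivers $\bigcap\mathcal K\in\mk(A)$ and its nonemptiness for free, $A$ is super H-sober.

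The main obstacle is exactly this mismatch in part (1): unlike the saturated case, a closed subspace does not induce a subspace of the Smyth power space, so one cannot simply quote Proposition \ref{H-sober closed}(1). The devices that resolve it are the push-forward $\{\ua_X K:K\in\mathcal K\}$ and the open set $W\cup(X\setminus A)$, whose openness hinges on the closedness of $A$, while the identity $\ua_X K\cap A=K$ is what transfers the covering conclusion back from $X$ to $A$. I would also remark that, when $H$ has property M, part (1) admits a quicker route via Theorem \ref{super H-sober HD-sober}: super H-sobriety then coincides with $H^D$-sobriety, and one may apply Proposition \ref{H-sober closed}(1) to the R-subset system $H^D$. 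The direct verification above, however, needs no extra hypothesis on $H$ and is therefore the approach I would record.
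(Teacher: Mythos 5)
Your proposal is correct, and both parts rest on the same essential devices as the paper's proof, though the packaging differs in ways worth noting. For part (1) the paper also pushes $\mathcal K$ forward along the inclusion $j_A\colon A\hookrightarrow X$ to get $\{\ua_X K : K\in\mathcal K\}\in {\rm H}(P_S(X))$ and also exploits that $U\cup(X\setminus A)$ is open because $A$ is closed; the difference is that the paper works straight from Definition \ref{super H-sober space}, producing a $G\in\mk(X)$ with $\cl_{P_S(X)}\{\ua_X K : K\in\mathcal K\}=\cl_{P_S(X)}\{G\}$ and then verifying that $G\cap A$ is the required generic point of $\cl_{P_S(A)}\mathcal K$, whereas you verify the covering criterion (condition (5) of Theorem \ref{super H-sober charact H-sets}) and let the equivalence do the rest. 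Both are sound; your version avoids the closure computation at the cost of invoking the characterization theorem. For part (2) your route is genuinely slicker: identifying $P_S(U)$ with the nonempty saturated subspace $\Box_{\mk(X)}U$ of $P_S(X)$ (using that $U$ upper forces $\ua_U K=\ua_X K$, so $\mk(U)=\{K\in\mk(X):K\subseteq U\}$ with matching topologies) lets you quote Proposition \ref{H-sober closed}(2) outright. The paper instead reproves this in situ: it observes $K=\ua_X K$ for $K\in\mk(U)$, so $\mathcal A=P_S(j_U)(\mathcal A)\in{\rm H}(P_S(X))$, finds the generic point $H$, shows $H\subseteq U$, and checks the closures agree. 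Your reduction makes the structural reason for part (2) more transparent, and correctly isolates why the same reduction fails for part (1) (members of $\mk(A)$ need not be saturated in $X$ when $A$ is merely closed). All the individual steps you use --- $\ua_A K=\ua_X K\cap A$, the push-forward via Proposition \ref{subspace H-set}(2), and the openness of $W\cup(X\setminus A)$ --- check out.
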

\begin{proof} (1): Let $j_A : A \longrightarrow X$ be the inclusion mapping and $\mathcal B\in {\rm H}(P_S(A))$. Then by Lemma \ref{Ps functor}, $P_S(j_A)(\mathcal B)=\{\ua_X K : K\in \mathcal B\}\in {\rm H}(P_S(X))$, and hence there is $G\in \mk (X)$ such that $\cl_{P_S(X)} P_S(j_A)(\mathcal B)=\cl_{P_S(X)}\{G\}$. Clearly, $G\cap A\in \mk (A)$ (for otherwise, $G\cap A=\emptyset$ implies $G\in \Box_{\mk (X)} (X\setminus A)$, and hence $K\subseteq \ua_X K\subseteq X\setminus A$ for some $K\in \mathcal B$, a contradiction). Now we show that $\cl_{P_S(A)} \mathcal B=\cl_{P_S(A)}\{G\cap A\}$. For any $U\in \mathcal O(X)$, we have
$$\begin{array}{lll}
\emptyset&\neq&\mathcal B\cap \Box_{\mk (A)} (U\cap A)\\
&\Leftrightarrow&\exists K\in \mathcal B\mbox{~with~}K\subseteq U\cap A\\
&\Leftrightarrow&P_S(j_A)(\mathcal B)\cap \Box_{\mk (X)} U\neq \emptyset\\
&\Leftrightarrow&G\subseteq U\\
&\Rightarrow&G\cap A\subseteq U\cap A\\
&\Leftrightarrow&\{G\cap A\}\cap \Box_{\mk (A)} (U\cap A)\neq\emptyset\\.
\end{array}$$
On the other hand, if $G\cap A\subseteq U$, then $G\subseteq U\cup (X\setminus A)$, and hence by $\cl_{P_S(X)} P_S(j_A)(\mathcal B)=\cl_{P_S(X)}\{G\}$, there is $K\in \mathcal B$ such that $\ua_X K\subseteq U\cup (X\setminus A)$. It follows that $K=A\cap \ua_X K\subseteq U$. Since $K\subseteq A$, we have $K\subseteq U\cap A$, and whence $\mathcal B\cap \Box_{\mk (A)} (U\cap A)\neq \emptyset$. Consequently, $\mathcal B\cap \Box_{\mk (A)} (U\cap A)\neq \emptyset \Leftrightarrow \{G\cap A\}\cap \Box_{\mk (A)} (U\cap A)\neq\emptyset$. Thus $\cl_{P_S(A)} \mathcal B=\cl_{P_S(A)}\{G\cap A\}$. Therefore, $A$ is super {\rm H}-sober.

(2): Let $\mathcal A\in {\rm H}(P_S(U))$. For each $K\in \mathcal A$, since $U=\ua_X U$ and $K=\ua_U K$, we have $K=\ua_X K$. Let $j_U : U \longrightarrow X$ be the inclusion mapping. Then $\mathcal A=P_S(j_U)(\mathcal A)\in {\rm H}(P_S(X))$, and whence there is $H\in \mk (X)$ such that $\cl_{P_S(X)}\mathcal A=\cl_{P_S(X)}\{H\}$. Now we show $H\subseteq U$. Assume, on the contrary, that there is $h\in H\setminus U$, then $U\subseteq X\setminus \cl_X \{h\}$ since $U=\ua_X U$. By $\mathcal A\subseteq \Box_{\mk (X)} U\subseteq \Box_{\mk (X)} (X\setminus \cl_X \{h\})$, we have $\{H\}\cap \Box_{\mk (X)}(X\setminus \cl_X \{h\})$ or, equivalently, $H\subseteq X\setminus \cl_X \{h\}$, a contradiction. So $H\in \mk (U)$. For any $W\in \mathcal O(X)$, it is easy to check that $\mathcal A\cap \Box_{\mk (U)}W\cap U\neq\emptyset \Leftrightarrow H\in \Box_{\mk (U)}W\cap U$. Therefore, $\cl_{P_S(U)} \mathcal A=\cl_{P_S(U)}\{H\}$. Thus $U$ is super {\rm H}-sober.
\end{proof}

By Corollary \ref{Ps retract} and Proposition \ref{H-sober retract}, we get the following result.

\begin{proposition}\label{super H-sober retract} A retract of a super  H-sober space is super H-sober.
\end{proposition}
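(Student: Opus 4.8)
The plan is to reduce the statement entirely to the definition of super H-sobriety together with the two immediately preceding results that the author has flagged. Recall that, by Definition \ref{super H-sober space}, a $T_0$ space is super H-sober precisely when its Smyth power space is H-sober. So the whole assertion is really a statement about $P_S$, and the proof should consist of transporting the retraction up to the level of Smyth power spaces and then applying the already-established stability of H-sobriety under retracts.

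Concretely, I would start by assuming $X$ is super H-sober and that $Y$ is a retract of $X$, witnessed by continuous maps $r : X \longrightarrow Y$ and $s : Y \longrightarrow X$ with $r\circ s = \mathrm{id}_Y$. First I would rewrite the hypothesis using the definition: $P_S(X)$ is H-sober. Next I would invoke Corollary \ref{Ps retract}, which says that if $Y$ is a retract of $X$ then $P_S(Y)$ is a retract of $P_S(X)$; here the functoriality of $P_S$ from Lemma \ref{Ps functor} supplies the maps $P_S(r)$ and $P_S(s)$ with $P_S(r)\circ P_S(s)=P_S(r\circ s)=P_S(\mathrm{id}_Y)=\mathrm{id}_{P_S(Y)}$, so $P_S(Y)$ is genuinely a retract of $P_S(X)$.

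The final step is to apply Proposition \ref{H-sober retract}: since $P_S(X)$ is H-sober and $P_S(Y)$ is a retract of it, $P_S(Y)$ is H-sober. Re-reading this through Definition \ref{super H-sober space} gives exactly that $Y$ is super H-sober, completing the argument.

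There is essentially no hard step here: the entire content is the observation that $P_S$ carries retractions to retractions (Corollary \ref{Ps retract}) so that super H-sobriety, being H-sobriety one level up in the Smyth hierarchy, inherits the retract-stability already proved for H-sobriety. If anything required care it would be confirming that the retract structure lifts correctly under $P_S$, but that is handled cleanly by the functoriality in Lemma \ref{Ps functor} and is precisely what Corollary \ref{Ps retract} records, so no genuine obstacle remains.
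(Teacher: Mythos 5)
Your proposal is correct and follows exactly the route the paper takes: the paper derives this proposition directly from Corollary \ref{Ps retract} (retracts lift along $P_S$) together with Proposition \ref{H-sober retract} (H-sobriety is retract-stable), read through Definition \ref{super H-sober space}. Nothing further is needed.
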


By Theorem \ref{H-sober prod}, Theorem \ref{H-sober function space}, Proposition \ref{H-sober equalizer} and Theorem \ref{super H-sober HD-sober}, we have the following three results.

\begin{theorem}\label{super H-sober prod}
	Let $H : \mathbf{Top}_0 \longrightarrow \mathbf{Set}$ be an R-subset system having property M and  $\{X_i:i\in I\}$ a family of $T_0$ spaces. Then the following two conditions are equivalent:
	\begin{enumerate}[\rm(1)]
		\item The product space $\prod_{i\in I}X_i$ is super H-sober.
		\item For each $i \in I$, $X_i$ is super H-sober.
	\end{enumerate}
\end{theorem}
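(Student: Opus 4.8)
The plan is to reduce super ${\rm H}$-sobriety to ordinary ${\rm H}'$-sobriety for a suitably chosen R-subset system ${\rm H}'$, and then invoke the product theorem for H-sober spaces (Theorem \ref{H-sober prod}), which holds for \emph{every} R-subset system with no extra hypothesis. The natural choice is ${\rm H}'={\rm H}^D$. By Lemma \ref{HD functor}, ${\rm H}^D$ is always an R-subset system, so Theorem \ref{H-sober prod} applies to it verbatim: the product $\prod_{i\in I}X_i$ is ${\rm H}^D$-sober if and only if each factor $X_i$ is ${\rm H}^D$-sober. The role of property M is precisely to let us translate freely between super ${\rm H}$-sobriety and ${\rm H}^D$-sobriety.

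First I would record, from Theorem \ref{super H-sober HD-sober} (which requires that ${\rm H}$ have property M), that a $T_0$ space $Z$ is super ${\rm H}$-sober if and only if it is ${\rm H}^D$-sober. Applying this criterion once to the product $X=\prod_{i\in I}X_i$ and once to each factor $X_i$ yields the two equivalences
\[
X\ \text{is super }{\rm H}\text{-sober}\iff X\ \text{is }{\rm H}^D\text{-sober},\qquad X_i\ \text{is super }{\rm H}\text{-sober}\iff X_i\ \text{is }{\rm H}^D\text{-sober}.
\]
Both uses are legitimate because ${\rm H}$ has property M by hypothesis.

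Chaining these facts gives the result at once: $\prod_{i\in I}X_i$ is super ${\rm H}$-sober iff it is ${\rm H}^D$-sober (Theorem \ref{super H-sober HD-sober}), iff every $X_i$ is ${\rm H}^D$-sober (Theorem \ref{H-sober prod} applied to the R-subset system ${\rm H}^D$), iff every $X_i$ is super ${\rm H}$-sober (Theorem \ref{super H-sober HD-sober} again). For the implication (1) $\Rightarrow$ (2) one may alternatively argue directly: each $X_i$ is a retract of the product, and a retract of a super ${\rm H}$-sober space is super ${\rm H}$-sober by Proposition \ref{super H-sober retract}; this direction does not even need property M.

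There is no serious computational obstacle once the reduction is set up, since the substantive work has already been carried out in Theorem \ref{super H-sober HD-sober}. The only point to watch is that property M is genuinely needed for the nontrivial direction: without it, ${\rm H}^R$ and ${\rm H}^D$ need not interact well (Lemma \ref{super H-Rudin functor} and Proposition \ref{H Hd HR HD ir} invoke property M), and the Smyth power space definition of super ${\rm H}$-sobriety would not collapse to a plain sobriety statement to which Theorem \ref{H-sober prod} could be applied. Thus property M is the hinge on which the whole reduction turns.
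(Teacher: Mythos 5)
Your proposal is correct and follows essentially the same route as the paper, which derives this theorem by combining Theorem \ref{super H-sober HD-sober} (super H-sobriety equals ${\rm H}^D$-sobriety under property M) with Theorem \ref{H-sober prod} applied to the R-subset system ${\rm H}^D$. The remark that the direction (1) $\Rightarrow$ (2) also follows from the retract argument without property M is a small correct bonus observation.
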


\begin{theorem}\label{super H-sober function space}
	Let $H : \mathbf{Top}_0 \longrightarrow \mathbf{Set}$ be an R-subset system having property M. If $X$ is a $T_0$ space and $Y$ a super H-sober space, then the function space $\mathbf{Top}_0(X, Y)$ equipped with the topology of pointwise convergence is super H-sober.
\end{theorem}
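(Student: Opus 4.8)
The plan is to reduce this statement to the already-established H-sober version of the function space theorem (Theorem \ref{H-sober function space}) by replacing the R-subset system $H$ with its associated system $H^D$ of super H-sober determined sets. The crucial observation is Theorem \ref{super H-sober HD-sober}: when $H$ has property M, a $T_0$ space is super H-sober if and only if it is $H^D$-sober. Since $H^D$ is itself an R-subset system by Lemma \ref{HD functor}, every result proved for an arbitrary R-subset system applies verbatim with $H^D$ in place of $H$.

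First I would observe that, since $Y$ is super H-sober and $H$ has property M, Theorem \ref{super H-sober HD-sober} gives that $Y$ is $H^D$-sober. Next, because $H^D$ is an R-subset system, Theorem \ref{H-sober function space} applies with $H$ replaced by $H^D$: as $X$ is a $T_0$ space and $Y$ is $H^D$-sober, the function space $\mathbf{Top}_0(X, Y)$ equipped with the topology of pointwise convergence is $H^D$-sober. Finally, applying Theorem \ref{super H-sober HD-sober} once more --- this time to the $T_0$ space $\mathbf{Top}_0(X, Y)$, still invoking property M of $H$ --- the $H^D$-sobriety of $\mathbf{Top}_0(X, Y)$ is equivalent to its super H-sobriety, which is the desired conclusion.

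Since every step is a direct invocation of a previously proved result, there is essentially no obstacle: the entire content has already been packaged into Theorem \ref{super H-sober HD-sober}, which transfers super H-sobriety into ordinary $H^D$-sobriety, and into Theorem \ref{H-sober function space}, the H-sober function space theorem stated for an arbitrary R-subset system. The only point meriting care is to verify that property M of $H$ alone suffices --- one does not need property M of $H^D$ --- because both applications of Theorem \ref{super H-sober HD-sober} use the hypothesis on $H$ itself, while the middle step uses only that $H^D$ is an R-subset system (Lemma \ref{HD functor}) and not any further property of it.
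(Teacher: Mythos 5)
Your proposal is correct and is precisely the argument the paper intends: the paper derives Theorem \ref{super H-sober function space} by citing Theorem \ref{H-sober function space} together with Theorem \ref{super H-sober HD-sober}, i.e., transferring super H-sobriety to $H^D$-sobriety, applying the H-sober function space theorem to the R-subset system $H^D$, and transferring back. Your added remark that only property M of $H$ (not of $H^D$) is needed is a worthwhile clarification the paper leaves implicit.
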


\begin{proposition}\label{super H-sober equalizer} Let $H : \mathbf{Top}_0 \longrightarrow \mathbf{Set}$ be an R-subset system having property, $X$ a super H-sober space and $Y$ a $T_0$ space. If $f, g: X \longrightarrow Y$ are continuous, then the equalizer
$E(f, g)=\{x\in X : f(x)=g(x)\}$ \emph{(}as a subspace of $X$\emph{)} is super H-sober.
\end{proposition}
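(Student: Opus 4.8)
The plan is to reduce super H-sobriety to ordinary H-sobriety for a derived R-subset system, and then invoke Proposition \ref{H-sober equalizer}, which already handles equalizers for an arbitrary R-subset system. First I would recall that, because $H$ has property M, Lemma \ref{HD functor} guarantees that $H^D : \mathbf{Top}_0 \longrightarrow \mathbf{Set}$ is itself an R-subset system, and Theorem \ref{super H-sober HD-sober} tells us that for any $T_0$ space $Z$ the conditions ``$Z$ is super H-sober'' and ``$Z$ is $H^D$-sober'' are equivalent. This equivalence is the engine of the whole argument: it converts a statement about the Smyth power space $P_S(Z)$ into a plain H-sobriety statement about $Z$ with respect to the subset system $H^D$.

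Applying this equivalence to $X$, the hypothesis that $X$ is super H-sober becomes the statement that $X$ is $H^D$-sober. Now Proposition \ref{H-sober equalizer} is formulated for an arbitrary R-subset system; instantiating it at the R-subset system $H^D$, with the $H^D$-sober space $X$, the $T_0$ space $Y$, and the continuous maps $f,g : X \longrightarrow Y$, yields directly that the equalizer $E(f,g)$, viewed as a subspace of $X$, is $H^D$-sober.

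Finally I would translate back: since $H$ has property M, a second application of Theorem \ref{super H-sober HD-sober}, this time to the space $E(f,g)$, shows that its $H^D$-sobriety is equivalent to its super H-sobriety. Hence $E(f,g)$ is super H-sober, which is the desired conclusion.

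The one point that requires care --- and the step I expect to be the crux --- is confirming that Proposition \ref{H-sober equalizer} may be legitimately instantiated at $H^D$ in place of $H$; this is precisely what Lemma \ref{HD functor} licenses, since that lemma establishes that $H^D$ is a bona fide R-subset system and Proposition \ref{H-sober equalizer} was proved uniformly for every such system. It is worth noting that a direct argument imitating the proof of Proposition \ref{super H-sober closed} is \emph{not} available here, because when $Y$ is merely $T_0$ the equalizer $E(f,g)$ need be neither a closed nor a saturated subspace of $X$; the detour through $H^D$-sobriety is exactly what circumvents this difficulty and keeps the proof routine.
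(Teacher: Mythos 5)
Your proposal is correct and is exactly the paper's argument: the paper derives Proposition \ref{super H-sober equalizer} by citing Proposition \ref{H-sober equalizer} together with Theorem \ref{super H-sober HD-sober}, i.e., by instantiating the equalizer result at the R-subset system $H^D$ (legitimate by Lemma \ref{HD functor}) and translating super H-sobriety to $H^D$-sobriety and back. Your proposal just spells out the steps the paper leaves implicit.
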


By Theorem \ref{super H-sober prod} and  Proposition \ref{super H-sober equalizer}, we get the following corollary.

\begin{corollary}\label{super H-sober complete} For any R-subset system $H : \mathbf{Top}_0 \longrightarrow \mathbf{Set}$ having property M, $\mathbf{SH}$-$\mathbf{Sob}$ is complete.
\end{corollary}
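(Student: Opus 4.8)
The plan is to invoke the standard categorical criterion that a category is complete if and only if it possesses all small products and all equalizers of parallel pairs of morphisms. Since $\mathbf{SH}$-$\mathbf{Sob}$ is by definition a full subcategory of $\mathbf{Top}_0$ (it is full in $\mathbf{H}$-$\mathbf{Sob}$ by Corollary \ref{SH-sober is subcat of H-sober}, which in turn is full in $\mathbf{Top}_0$), it suffices to verify that the product formed in $\mathbf{Top}_0$ of any family of super {\rm H}-sober spaces is again super {\rm H}-sober, and that the equalizer formed in $\mathbf{Top}_0$ of any parallel pair of continuous maps between super {\rm H}-sober spaces is again super {\rm H}-sober. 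Fullness then guarantees that these $\mathbf{Top}_0$-limits serve as the corresponding limits inside $\mathbf{SH}$-$\mathbf{Sob}$, because any cone with super {\rm H}-sober apex in $\mathbf{Top}_0$ is automatically a cone in $\mathbf{SH}$-$\mathbf{Sob}$ and the universal property is inherited verbatim.

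First I would handle products. Given a family $\{X_i : i\in I\}$ of super {\rm H}-sober spaces, Theorem \ref{super H-sober prod} (which is exactly where property M of {\rm H} is used) shows directly that $\prod_{i\in I} X_i$ is super {\rm H}-sober. The empty case $I=\emptyset$ deserves a brief separate mention: here the product is the terminal object, namely the one-point space, and the criterion of Theorem \ref{super H-sober prod} applies vacuously, so the one-point space is super {\rm H}-sober and furnishes a terminal object for $\mathbf{SH}$-$\mathbf{Sob}$. Next I would handle equalizers: given continuous maps $f,g : X \longrightarrow Y$ with $X$ and $Y$ super {\rm H}-sober, the equalizer in $\mathbf{Top}_0$ is the subspace $E(f,g)=\{x\in X : f(x)=g(x)\}$, and since $X$ is super {\rm H}-sober while $Y$ is in particular $T_0$, Proposition \ref{super H-sober equalizer} shows that $E(f,g)$ is super {\rm H}-sober. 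Thus the $\mathbf{Top}_0$-equalizer again lies in $\mathbf{SH}$-$\mathbf{Sob}$. Having all small products and all equalizers, $\mathbf{SH}$-$\mathbf{Sob}$ is complete.

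I do not expect any genuine obstacle here: all the analytic content has already been absorbed into Theorem \ref{super H-sober prod} and Proposition \ref{super H-sober equalizer}, both of which depend on property M through the super {\rm H}-sober machinery of the previous section. The only point demanding a little care is the structural observation that a full subcategory of an ambient category which is closed under that category's products and equalizers is itself complete, with the same limits; this is routine once one notes that limits computed in the ambient category restrict to such a subcategory, together with the standard fact that products and equalizers generate all small limits. Consequently the entire argument reduces to the two cited closure results plus this categorical bookkeeping.
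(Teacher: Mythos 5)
Your proposal is correct and follows essentially the same route as the paper, which derives the corollary directly from Theorem \ref{super H-sober prod} (products) and Proposition \ref{super H-sober equalizer} (equalizers) via the standard fact that a category with all small products and equalizers is complete. The extra care you take with the empty product (terminal object) and with the fullness argument for inheriting limits from $\mathbf{Top}_0$ is sound bookkeeping that the paper leaves implicit.
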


The following conclusion follows directly from Remark \ref{C D property M}, Corollary \ref{WF=Smyth d-space} and Theorem \ref{super H-sober prod}.

\begin{corollary}\label{WF prod} \emph{(\cite{Shenchon, xu-shen-xi-zhao1, xu-shen-xi-zhao2})}
	Let $\{X_i:i\in I\}$ be a family of $T_0$ spaces. Then the following two conditions are equivalent:
	\begin{enumerate}[\rm(1)]
		\item The product space $\prod_{i\in I}X_i$ is well-filtered.
		\item For each $i \in I$, $X_i$ is well-filtered.
	\end{enumerate}
\end{corollary}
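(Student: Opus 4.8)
The plan is to recognize well-filteredness as the $H=\mathcal D$ instance of super H-sobriety and then simply invoke the product theorem already proved at that level of generality. The crucial identification is supplied by Corollary \ref{WF=Smyth d-space}: a $T_0$ space $X$ is well-filtered precisely when $P_S(X)$ is a $d$-space. Since a $\mathcal D$-sober space is exactly a $d$-space (as recorded after Definition \ref{def H-sober}), saying that $P_S(X)$ is a $d$-space is the same as saying $P_S(X)$ is $\mathcal D$-sober, which by Definition \ref{super H-sober space} means precisely that $X$ is super $\mathcal D$-sober. Hence the class of well-filtered spaces coincides with the class of super $\mathcal D$-sober spaces, and the corollary becomes the super $\mathcal D$-sober version of the product statement.

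First I would record, via Remark \ref{C D property M}, that the R-subset system $\mathcal D$ satisfies property M; this is exactly the standing hypothesis needed to apply Theorem \ref{super H-sober prod}. Then I would instantiate that theorem with $H=\mathcal D$, obtaining that $\prod_{i\in I}X_i$ is super $\mathcal D$-sober if and only if each factor $X_i$ is super $\mathcal D$-sober. Translating both sides back through the identification of the preceding paragraph turns this into the assertion that $\prod_{i\in I}X_i$ is well-filtered iff each $X_i$ is well-filtered, which is exactly the claim. Concretely, the chain reads: $\prod_{i\in I}X_i$ well-filtered $\Leftrightarrow$ $\prod_{i\in I}X_i$ super $\mathcal D$-sober $\Leftrightarrow$ each $X_i$ super $\mathcal D$-sober $\Leftrightarrow$ each $X_i$ well-filtered.

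I do not expect any genuine obstacle here: once the three cited ingredients are in place the argument is purely a matter of specialization, and the only conceptual content is the correct translation between well-filteredness and super $\mathcal D$-sobriety. The sole routine point to keep in mind is that a product of $T_0$ spaces is again $T_0$ (so that super $\mathcal D$-sobriety is even meaningful for $\prod_{i\in I}X_i$), which is standard and is already used inside the proof of Theorem \ref{super H-sober prod} through Theorem \ref{H-sober prod}. Thus no new computation is required, and the corollary follows immediately from Remark \ref{C D property M}, Corollary \ref{WF=Smyth d-space}, and Theorem \ref{super H-sober prod}.
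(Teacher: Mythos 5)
Your proposal is correct and follows exactly the route the paper takes: the paper derives this corollary directly from Remark \ref{C D property M}, Corollary \ref{WF=Smyth d-space}, and Theorem \ref{super H-sober prod}, which is precisely your chain of identifications. Nothing is missing.
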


By Remark \ref{C D property M}, Theorem \ref{super H-sober function space} and Proposition \ref{super H-sober equalizer}, we have the following two results.

\begin{theorem}\label{WF function space}
	If $X$ is a $T_0$ space and $Y$ a well-filtered space, then the function space $\mathbf{Top}_0(X, Y)$ equipped with the topology of pointwise convergence is well-filtered.
\end{theorem}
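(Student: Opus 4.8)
The plan is to obtain this statement as the special case $H=\mathcal D$ of Theorem \ref{super H-sober function space}, the crucial point being the identification of well-filteredness with super $\mathcal D$-sobriety. Since that general function-space theorem already does all the heavy lifting, the task here reduces to setting up the correct dictionary and specializing.

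First I would recall the translation between the two notions. By Corollary \ref{WF=Smyth d-space}, a $T_0$ space $Z$ is well-filtered if and only if $P_S(Z)$ is a $d$-space. Because $\mathcal D$-$\mathbf{Sob}=\mathbf{Top}_d$, a $d$-space is precisely a $\mathcal D$-sober space; and by Definition \ref{super H-sober space}, $Z$ is super $\mathcal D$-sober exactly when $P_S(Z)$ is $\mathcal D$-sober. Chaining these equivalences, $Z$ is well-filtered if and only if $Z$ is super $\mathcal D$-sober.

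Next I would invoke Remark \ref{C D property M}, which guarantees that the R-subset system $\mathcal D$ satisfies property M. Now apply Theorem \ref{super H-sober function space} with $H=\mathcal D$: since $Y$ is well-filtered, it is super $\mathcal D$-sober by the dictionary above; $X$ is a $T_0$ space; and $\mathcal D$ enjoys property M. Hence the function space $\mathbf{Top}_0(X,Y)$ equipped with the topology of pointwise convergence is super $\mathcal D$-sober. Translating back through the same dictionary, $\mathbf{Top}_0(X,Y)$ is well-filtered, which is the desired conclusion.

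There is essentially no new obstacle to overcome at this stage: all the genuine difficulty is absorbed into Theorem \ref{super H-sober function space}, whose proof in turn rests on the identification of super H-sobriety with $H^D$-sobriety (Theorem \ref{super H-sober HD-sober}) together with the function-space argument for H-sober spaces (Theorem \ref{H-sober function space}). The present result is the clean specialization to $H=\mathcal D$, the only ingredient specific to it being the verification, already recorded in Corollary \ref{WF=Smyth d-space}, that well-filteredness coincides with super $\mathcal D$-sobriety.
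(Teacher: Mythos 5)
Your proposal is correct and follows exactly the paper's own route: the paper derives this theorem by citing Remark \ref{C D property M} (that $\mathcal D$ has property M) and Theorem \ref{super H-sober function space} with $H=\mathcal D$, using the identification of well-filteredness with super $\mathcal D$-sobriety from Corollary \ref{WF=Smyth d-space}. Nothing is missing.
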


\begin{proposition}\label{WF equalizer} Let $X$ be a well-filtered space and $Y$ a $T_0$ space. If $f, g: X \longrightarrow Y$ are continuous, then the equalizer
$E(f, g)=\{x\in X : f(x)=g(x)\}$ \emph{(}as a subspace of $X$\emph{)} is well-filtered.
\end{proposition}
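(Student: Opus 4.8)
The plan is to recognize this statement as the specialization to $\mathrm H=\mathcal D$ of the general equalizer result for super H-sober spaces, Proposition \ref{super H-sober equalizer}. Two identifications make this immediate. First, by Remark \ref{C D property M} the R-subset system $\mathcal D$ satisfies property M. Second, by Corollary \ref{WF=Smyth d-space} a $T_0$ space $Z$ is well-filtered precisely when $P_S(Z)$ is a $d$-space, i.e.\ when $P_S(Z)$ is $\mathcal D$-sober, which is exactly the condition that $Z$ be super $\mathcal D$-sober; thus ``well-filtered'' and ``super $\mathcal D$-sober'' name the same class. Given these, the hypotheses of Proposition \ref{super H-sober equalizer} with $\mathrm H=\mathcal D$ are met: $X$ is super $\mathcal D$-sober, $Y$ is $T_0$, and $f,g$ are continuous, so its conclusion that $E(f,g)$ is super $\mathcal D$-sober is just the assertion that $E(f,g)$ is well-filtered.

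If one prefers to see the argument unfolded rather than invoked, I would run the proof of Proposition \ref{H-sober equalizer} verbatim, but with the enlarged R-subset system $\mathcal D^D$ in place of $\mathrm H$. The key reduction is Theorem \ref{super H-sober HD-sober}: since $\mathcal D$ has property M, a $T_0$ space is super $\mathcal D$-sober (hence well-filtered) if and only if it is $\mathcal D^D$-sober. So I would first restate the goal as ``$E(f,g)$ is $\mathcal D^D$-sober''. Since $\mathcal D^D$ is itself an R-subset system (Lemma \ref{HD functor}), Proposition \ref{subspace H-set}(1) applied to $\mathcal D^D$ gives $\mathcal D^D(E(f,g))\subseteq \mathcal D^D(X)$. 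For $A\in \mathcal D^D(E(f,g))$ I would use $\mathcal D^D$-sobriety of $X$ to produce $x\in X$ with $\cl_X A=\cl_X\{x\}$, then compute $\cl_Y f(\{x\})=\cl_Y f(\cl_X A)=\cl_Y f(A)=\cl_Y g(A)=\cl_Y g(\cl_X A)=\cl_Y g(\{x\})$, so that $T_0$-ness of $Y$ forces $f(x)=g(x)$ and hence $x\in E(f,g)$; finally $\cl_{E(f,g)}A=\cl_X A\cap E(f,g)=\cl_X\{x\}\cap E(f,g)=\cl_{E(f,g)}\{x\}$.

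The step carrying the real content is the passage to $x\in E(f,g)$, which leans on the continuity of $f$ and $g$ (so that each commutes with the closure of images in the form $\cl_Y f(\cl_X A)=\cl_Y f(A)$) together with the $T_0$ separation of $Y$; everything else is routine bookkeeping about subspace closures. I expect the only genuine obstacle to be conceptual rather than computational: $E(f,g)$ is in general neither closed nor saturated in $X$, so the stability of super H-sobriety under closed and saturated subspaces (Proposition \ref{super H-sober closed}) does not apply, and one must instead route the argument through the super H-sober determined sets $\mathcal D^D$. Once the heavy machinery (the topological Rudin lemma and property M) has been absorbed into the single equivalence ``well-filtered $=$ super $\mathcal D$-sober $=$ $\mathcal D^D$-sober'', no further work is required.
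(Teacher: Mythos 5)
Your proposal is correct and follows exactly the paper's route: the paper obtains this proposition by citing Remark \ref{C D property M} and Proposition \ref{super H-sober equalizer} with ${\rm H}=\mathcal D$, identifying well-filteredness with super $\mathcal D$-sobriety via Corollary \ref{WF=Smyth d-space}, and your unfolded second paragraph is precisely the paper's underlying derivation of Proposition \ref{super H-sober equalizer} from Proposition \ref{H-sober equalizer} applied to $\mathcal D^D$ through Theorem \ref{super H-sober HD-sober}.
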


By Corollary \ref{WF prod} and Proposition \ref{WF equalizer}, we get the following corollary.

\begin{corollary}\label{WF cat complete} $\mathbf{Top}_w$ is complete.
\end{corollary}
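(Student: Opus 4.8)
The plan is to deduce completeness of $\mathbf{Top}_w$ from the standard categorical criterion that a category possessing all small products and all equalizers is complete (every small limit can be built as an equalizer of a pair of maps between products). Thus it suffices to verify that $\mathbf{Top}_w$ has both of these, and the two preceding results are tailored precisely to supply them. The conceptual point throughout is that $\mathbf{Top}_w$ is a \emph{full} subcategory of $\mathbf{Top}_0$, so any limit of a diagram in $\mathbf{Top}_w$ computed in $\mathbf{Top}_0$ is automatically a limit in $\mathbf{Top}_w$ as soon as the limit object itself happens to be well-filtered.

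First I would treat products. Given a family $\{X_i : i\in I\}$ of well-filtered spaces, form the product space $\prod_{i\in I}X_i$ in $\mathbf{Top}_0$ (the product topology), which carries the usual projections $p_i$ and satisfies the product universal property in $\mathbf{Top}_0$. By Corollary \ref{WF prod}, since each factor $X_i$ is well-filtered, the space $\prod_{i\in I}X_i$ is well-filtered, hence an object of $\mathbf{Top}_w$. Because every test object in $\mathbf{Top}_w$ is in particular a $T_0$ space, the universal property verified in $\mathbf{Top}_0$ restricts verbatim to $\mathbf{Top}_w$, so $\prod_{i\in I}X_i$ is the product in $\mathbf{Top}_w$. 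This shows $\mathbf{Top}_w$ has all small products.

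Next I would treat equalizers. For continuous maps $f,g : X \longrightarrow Y$ between well-filtered spaces, the equalizer in $\mathbf{Top}_0$ is the subspace $E(f,g)=\{x\in X : f(x)=g(x)\}$ together with its inclusion into $X$, which enjoys the equalizer universal property in $\mathbf{Top}_0$. Here $X$ is well-filtered and $Y$ is a $T_0$ space, so Proposition \ref{WF equalizer} applies and yields that $E(f,g)$ is well-filtered, hence lies in $\mathbf{Top}_w$. Fullness again transfers the universal property to $\mathbf{Top}_w$, so $E(f,g)$ is the equalizer in $\mathbf{Top}_w$.

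Having both small products and equalizers in $\mathbf{Top}_w$, I would invoke the criterion cited above to conclude that $\mathbf{Top}_w$ is complete. I do not expect any genuine obstacle here: all of the substantive topological content (that products and equalizers of well-filtered spaces remain well-filtered) has already been isolated in Corollary \ref{WF prod} and Proposition \ref{WF equalizer}, so the remaining work is the routine bookkeeping of checking that limits formed in $\mathbf{Top}_0$ stay inside the full subcategory $\mathbf{Top}_w$. The only mild point to articulate carefully is the fullness argument transferring universal properties, and the observation that the hypotheses of Proposition \ref{WF equalizer} (namely $X$ well-filtered and $Y$ merely $T_0$) are met for any parallel pair in $\mathbf{Top}_w$.
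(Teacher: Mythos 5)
Your proposal is correct and follows exactly the paper's route: the paper derives the corollary directly from Corollary \ref{WF prod} (products of well-filtered spaces are well-filtered) and Proposition \ref{WF equalizer} (equalizers are well-filtered), via the standard criterion that a category with small products and equalizers is complete. The categorical bookkeeping you spell out (fullness transferring the universal properties from $\mathbf{Top}_0$) is left implicit in the paper but is precisely what is intended.
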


For a $T_0$ space and $A\subseteq X$, define $\Psi_{\mk (X)}(A)=\{K\in \mk (X) : K\cap A\neq \emptyset\}$. The following theorem provides a new characterization of super {\rm H}-sober spaces.

\begin{theorem}\label{super H-sober Psi} Let $H : \mathbf{Top}_0 \longrightarrow \mathbf{Set}$ be an R-subset system and $X$ a $T_0$ space. Consider the following conditions are equivalent:
 \begin{enumerate}[\rm (1)]
		\item $X$ is super H-sober.
        \item For any $(A, K)\in H^D_c(X)\times\mk (X)$, $\Psi_{\mk (X)}(A)\in \mathrm{Id}(\mk (X))$, $\mathrm{max} (A)\neq\emptyset$ and $\downarrow (A\cap K)\in \Gamma (X)$.
        \item For any $(A, K)\in H^R_c(X)\times\mk (X)$, $\Psi_{\mk (X)}(A)\in \mathrm{Id}(\mk (X))$, $\mathrm{max} (A)\neq\emptyset$ and $\downarrow (A\cap K)\in \Gamma (X)$.
\end{enumerate}
Then \emph{(1)} $\Rightarrow$ \emph{(2)} and \emph{(3)} $\Rightarrow$ \emph{(1)}, and all three conditions are equivalent if $H$ has property M.
\end{theorem}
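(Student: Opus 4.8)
The plan is to prove the two asserted implications directly and to obtain full equivalence under property M from the inclusion $H^R\le H^D$.

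For $(1)\Rightarrow(2)$ I would exploit the fact that, inside a super H-sober space, a super H-sober determined closed set collapses to a point closure. Given $(A,K)\in H^D_c(X)\times\mk(X)$, apply the identity $\mathrm{id}_X:X\longrightarrow X$ (whose codomain is super H-sober by $(1)$) to the super H-sober determined set $A$; this yields a unique $x\in X$ with $\overline{A}=\overline{\{x\}}$, and since $A$ is closed, $A=\da x$. All three requirements are then immediate. First, $\mathrm{max}(A)=\{x\}\neq\emptyset$. Second, because every $K'\in\mk(X)$ is saturated one has $K'\cap\da x\neq\emptyset\Leftrightarrow x\in K'$, so $\Psi_{\mk(X)}(A)=\{K'\in\mk(X):x\in K'\}$; this is a lower set in the Smyth order and has greatest element $\ua x$ (as $x\in K'$ forces $\ua x\subseteq K'$), hence is an ideal. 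Third, $\da(A\cap K)$ equals $\da x$ when $x\in K$ and equals $\emptyset$ when $x\notin K$, and both are closed. No appeal to property M is needed here.

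The substantive direction is $(3)\Rightarrow(1)$, which I would prove through the criterion of Theorem \ref{super H-sober charact H-sets}: it suffices to show that for every $\mathcal A\in H(P_S(X))$ and $U\in\mathcal O(X)$, $\bigcap\mathcal A\subseteq U$ forces $K\subseteq U$ for some $K\in\mathcal A$. Arguing by contradiction, if no member of $\mathcal A$ lies in $U$ then the closed set $X\setminus U$ meets every member of $\mathcal A$, so by the Topological Rudin Lemma (Lemma \ref{t Rudin}) it contains a minimal irreducible closed subset $A\in m(\mathcal A)$; thus $A\in H^R_c(X)$ and condition $(3)$ applies to $A$ paired with any $K$. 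Fix $a\in\mathrm{max}(A)$ (nonempty by $(3)$). I would then show $a\in K$ for every $K\in\mathcal A$. The decisive step is that, for a fixed $K\in\mathcal A$, the set $\da(A\cap K)$, which is closed by $(3)$ and contained in $A=\da A$, still meets every member of $\mathcal A$: given $K'\in\mathcal A$, the directedness of $\Psi_{\mk(X)}(A)$ in the Smyth order (both $K,K'$ meet $A$) produces $K''\in\mk(X)$ with $K''\subseteq K\cap K'$ and $K''\cap A\neq\emptyset$, and any $z\in K''\cap A$ lies in $\da(A\cap K)\cap K'$. Hence $\da(A\cap K)\in M(\mathcal A)$, and since $\da(A\cap K)\subseteq A$ with $A$ minimal in $M(\mathcal A)$, minimality gives $\da(A\cap K)=A\ni a$. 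As $a$ is maximal in $A$, any witness $z\in A\cap K$ with $a\le z$ forces $a=z\in K$, so $a\in K$. Therefore $a\in\bigcap\mathcal A\subseteq U$, contradicting $a\in A\subseteq X\setminus U$.

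To upgrade to full equivalence when H has property M, I would invoke Proposition \ref{H Hd HR HD ir}(3), which gives $H^R\le H^D$ and hence $H^R_c(X)\subseteq H^D_c(X)$; consequently condition $(2)$, quantified over the larger family $H^D_c(X)\times\mk(X)$, trivially implies condition $(3)$, closing the cycle $(1)\Rightarrow(2)\Rightarrow(3)\Rightarrow(1)$. The main obstacle is the single directedness-driven step in $(3)\Rightarrow(1)$ that transfers the property of meeting every member of $\mathcal A$ from $A$ to $\da(A\cap K)$: this is precisely where the three hypotheses of $(3)$—closedness of $\da(A\cap K)$, nonemptiness of $\mathrm{max}(A)$, and directedness of $\Psi_{\mk(X)}(A)$—interlock, and recognizing that these are exactly what the minimality argument consumes is the heart of the proof.
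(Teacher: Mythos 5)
Your proposal is correct and follows essentially the same route as the paper: for (1)$\Rightarrow$(2) you collapse $A$ to $\da x$ via the identity map and read off the three properties, and for (3)$\Rightarrow$(1) you use the Topological Rudin Lemma to produce a minimal $A\in m(\mathcal A)$, then use the directedness of $\Psi_{\mk(X)}(A)$ together with closedness of $\da(A\cap K)$ and minimality of $A$ to force $\da(A\cap K)=A$, and finish with a maximal point of $A$ landing in $\bigcap\mathcal A$. The upgrade to full equivalence under property M via $H^R\le H^D$ is also exactly the paper's argument.
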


\begin{proof}  (1) $\Rightarrow$ (2):  Since the identity $id_X : X\longrightarrow X$ is continuous, there is a unique $x\in X$ such that $\overline{A}=\overline{\{x\}}$.  there is $x\in X$ with $A=\overline {\{x\}}$, and hence $\mathrm{max} (A)=\{x\}\neq\emptyset$. Clearly, $\Psi_{\mk (X)}(A)=\{K\in \mk (X) : K\cap \da x\neq \emptyset\}=\{K\in \mk (X) : x\in K\}=\da_{\mk (X)}\ua x$ is a principal ideal of $\mk (X)$ (note that the order on $\mk (X)$ is the reverse inclusion
order). Now we show that $\downarrow (A\cap K)$ is a closed subset of $X$. If $\downarrow x\cap K\neq\emptyset$, then $x\in K$ since $K$ is an upper set. It follows that  $\downarrow (A\cap K)=\downarrow (\downarrow x\cap K)=\downarrow x\in \Gamma (X)$.

(3) $\Rightarrow$ (1): Suppose $\mathcal A\in {\rm H}(P_S(X))$ and $U\in \mathcal O(X)$ with $\bigcap \mathcal A\subseteq U$. If $K\not\subseteq U$ for all $K\in U$, then by Lemma \ref{t Rudin}, $X\setminus U$ contains a minimal irreducible closed subset $A$ that still meets all members of $\mathcal{A}$. For any $\{K_1, K_2\}\subseteq \mathcal A$, since $\Psi_{\mk (X)}(A)\in \mathrm{Id}(\mk (X))$, there is a $K_3\in\Psi_{\mk (X)}(A)$ with $K_3\subseteq K_1\cap K_2$. It follows that $\emptyset \neq A\cap K_3\subseteq \downarrow (A\cap K_1)\cap K_2\neq\emptyset$. By the minimality of $A$, we have $\downarrow (A\cap K_1)=A$ for all $K_1\in\mathcal A$.  By condition (3), $\mathrm{max} (A)\neq \emptyset$. Select an $x\in \mathrm{max} (A)$. Then for each $K\in \mathcal A$, $x\in  \downarrow (A\cap K)$, and consequently, there is $a_k\in A\cap K$ with $x\leq a_k$. By the maximality of $x$ in $A$, we have $x=a_k$. Therefore, $x\in K$ for all $K\in \mathcal A$, and whence $x\in \bigcap \mathcal A \subseteq U\subseteq X\setminus A$, a contradiction. It follows from Theorem \ref{super H-sober charact H-sets} that $X$ is super {\rm H}-sober.

(2) $\Rightarrow$ (3): If H has property M, then by Proposition \ref{H Hd HR HD ir}, we have (2) $\Rightarrow$ (3), and hence all three conditions are equivalent.
\end{proof}

By Lemma \ref{d-space max point} and Theorem \ref{super H-sober Psi}, we get the following result.

\begin{corollary}\label{super H-sober Psi-1} Let $H : \mathbf{Top}_0 \longrightarrow \mathbf{Set}$ be an R-subset system and $X$ a $d$-space space. Consider the following conditions:
 \begin{enumerate}[\rm (1)]
		\item $X$ is super H-sober.
        \item For any $(A, K)\in H^D_c(X)\times\mk (X)$, $\Psi_{\mk (X)}(A)\in \mathrm{Id}(\mk (X))$ and $\downarrow (A\cap K)\in \Gamma (X)$.
        \item For any $(A, K)\in H^R_c(X)\times\mk (X)$, $\Psi_{\mk (X)}(A)\in \mathrm{Id}(\mk (X))$ and $\downarrow (A\cap K)\in \Gamma (X)$.
\end{enumerate}
Then \emph{(1)} $\Rightarrow$ \emph{(2)} and \emph{(3)} $\Rightarrow$ \emph{(1)}, and all three conditions are equivalent if $H$ has property M.
\end{corollary}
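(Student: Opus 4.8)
The plan is to deduce this corollary directly from Theorem \ref{super H-sober Psi} by showing that, under the extra hypothesis that $X$ is a $d$-space, the clause ``$\mathrm{max}(A)\neq\emptyset$'' occurring in conditions (2) and (3) of that theorem becomes automatic, so that the conditions listed here coincide with those of the theorem. First I would pin down the domain over which $A$ ranges: the sets indexing conditions (2) and (3) live in $H^D_c(X)=H^D(X)\cap\Gamma(X)$ and $H^R_c(X)=H^R(X)\cap\Gamma(X)$ respectively. By Proposition \ref{H Hd HR HD ir} we have $H^D(X)\subseteq\ir(X)$ and $H^R(X)\subseteq\ir(X)$, so every such $A$ is an irreducible closed subset of $X$; in particular $A$ is a nonempty closed set, which is exactly the hypothesis needed to apply Lemma \ref{d-space max point}.

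Next I would invoke Lemma \ref{d-space max point}: since $X$ is a $d$-space, any nonempty closed subset $A$ of $X$ satisfies $A=\da\,\mathrm{max}(A)$, and therefore $\mathrm{max}(A)\neq\emptyset$. Applying this to each $A\in H^D_c(X)$ (respectively each $A\in H^R_c(X)$), the requirement $\mathrm{max}(A)\neq\emptyset$ in condition (2) (respectively (3)) of Theorem \ref{super H-sober Psi} is satisfied for free. Hence, dropping that clause changes nothing: condition (2) of the present corollary is logically equivalent to condition (2) of Theorem \ref{super H-sober Psi}, and condition (3) here is equivalent to condition (3) there.

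With this identification established, the assertions transfer verbatim. Theorem \ref{super H-sober Psi} supplies the implications $(1)\Rightarrow(2)$ and $(3)\Rightarrow(1)$ for an arbitrary R-subset system $H$, together with the equivalence of all three conditions when $H$ has property M, and by the previous paragraph these are precisely the statements claimed in the corollary. I do not expect any substantive obstacle; the only point requiring care is the verification that the sets $A$ appearing in the conditions are genuinely nonempty closed sets, so that Lemma \ref{d-space max point} is applicable --- and this is guaranteed by the irreducibility already built into the systems $H^D$ and $H^R$ via Proposition \ref{H Hd HR HD ir}.
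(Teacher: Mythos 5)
Your proposal is correct and follows exactly the route the paper intends: the paper derives this corollary by citing Lemma \ref{d-space max point} together with Theorem \ref{super H-sober Psi}, which is precisely your observation that in a $d$-space the clause $\mathrm{max}(A)\neq\emptyset$ is automatic for the (nonempty, closed, irreducible) sets in $H^D_c(X)$ and $H^R_c(X)$. Your extra care in checking, via Proposition \ref{H Hd HR HD ir}, that these sets really are nonempty closed sets is a welcome explicit justification of a step the paper leaves implicit.
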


\begin{corollary}\label{WF Psi}\emph{(\cite{xuzhao})}
	For any $T_0$ space $X$, the following conditions are equivalent:
 \begin{enumerate}[\rm (1)]
		\item $X$ is well-filtered.
        \item For each $(A, K)\in \mathcal D^D(X)\times\mk (X)$, $\mathrm{max} (A)\neq\emptyset$ and $\downarrow (A\cap K)\in \Gamma (X)$.
        \item For each $(A, K)\in \mathcal D^R(X)\times\mk (X)$, $\mathrm{max} (A)\neq\emptyset$ and $\downarrow (A\cap K)\in \Gamma (X)$.
\end{enumerate}
\end{corollary}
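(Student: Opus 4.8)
The plan is to derive Corollary~\ref{WF Psi} as the instance $H=\mathcal D$ of Theorem~\ref{super H-sober Psi}, after reconciling two cosmetic differences between the two statements: here ``super $\mathcal D$-sober'' is phrased as ``well-filtered'', and the hypothesis $\Psi_{\mk(X)}(A)\in\mathrm{Id}(\mk(X))$ has been suppressed. The first is handled at once: by Remark~\ref{C D property M} the system $\mathcal D$ has property~M, so conditions (1)--(3) of Theorem~\ref{super H-sober Psi} are mutually equivalent, and by Corollary~\ref{WF=Smyth d-space} being super $\mathcal D$-sober (i.e. $P_S(X)$ is $\mathcal D$-sober, a $d$-space) is the same as being well-filtered. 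As in Theorem~\ref{super H-sober Psi}, the sets $A$ should be understood to range over the closed determined/Rudin sets, i.e. over $\mathcal D^D_c(X)=\mathcal D^D(X)\cap\Gamma(X)$ and $\mathcal D^R_c(X)=\mathcal D^R(X)\cap\Gamma(X)$. Thus the only real task is to show that, for $H=\mathcal D$, the ideal condition on $\Psi_{\mk(X)}(A)$ is redundant.

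The implications (1)$\Rightarrow$(2) and (2)$\Rightarrow$(3) I would dispatch quickly. Deleting the clause $\Psi_{\mk(X)}(A)\in\mathrm{Id}(\mk(X))$ only weakens (2) and (3), so (1)$\Rightarrow$(2) is inherited directly from Theorem~\ref{super H-sober Psi}; concretely, a well-filtered space is a $d$-space (Remark~\ref{WF is d-space}), whence $\mathrm{max}(A)\neq\emptyset$ for every nonempty closed $A$ by Lemma~\ref{d-space max point}, while $\da(A\cap K)\in\Gamma(X)$ follows by writing $A=\da x$ through $\mathcal D^D$-sobriety (Theorem~\ref{super H-sober HD-sober}). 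For (2)$\Rightarrow$(3), Proposition~\ref{H Hd HR HD ir}(3) gives $\mathcal D^R\leq\mathcal D^D$ (property~M), hence $\mathcal D^R_c(X)\subseteq\mathcal D^D_c(X)$, so the universal statement (2) over the larger family forces (3) over the smaller one.

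The content is in (3)$\Rightarrow$(1), and the crux is that directedness of $\mathcal A\in\mathcal D(P_S(X))$ supplies exactly what the dropped hypothesis $\Psi_{\mk(X)}(A)\in\mathrm{Id}(\mk(X))$ provided in Theorem~\ref{super H-sober Psi}. I would assume $\bigcap\mathcal A\subseteq U$ with $U\in\mathcal O(X)$ and $K\not\subseteq U$ for all $K\in\mathcal A$, and invoke the Topological Rudin Lemma (Lemma~\ref{t Rudin}) to obtain a minimal irreducible closed set $A\subseteq X\setminus U$ meeting every member of $\mathcal A$; since $\mathcal A$ is directed, $A\in\mathcal D^R_c(X)$ and so condition (3) applies to each pair $(A,K)$. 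Given $K_1,K_2\in\mathcal A$, directedness in the Smyth order yields $K_3\in\mathcal A$ with $K_3\subseteq K_1\cap K_2$, and $K_3$ meets $A$; therefore $\da(A\cap K_1)$, which is closed by (3), meets every $K_2\in\mathcal A$, and minimality of $A$ forces $\da(A\cap K_1)=A$ for every $K_1\in\mathcal A$. Picking $x\in\mathrm{max}(A)$ (nonempty by (3)) and using $x\in A=\da(A\cap K)$ together with the maximality of $x$ in $A$, I conclude $x\in K$ for all $K\in\mathcal A$, so $x\in\bigcap\mathcal A\subseteq U$, contradicting $x\in A\subseteq X\setminus U$. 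By Theorem~\ref{super H-sober charact H-sets} together with Corollary~\ref{WF=Smyth d-space}, $X$ is well-filtered.

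The step I expect to be the main obstacle is precisely this recovery of the ideal hypothesis inside (3)$\Rightarrow$(1): one must check that the common refinement $K_3$ furnished by directedness still meets $A$ (so that it plays the role of a member of $\Psi_{\mk(X)}(A)$), and that the set produced by Rudin's Lemma is a genuine $\mathcal D$-Rudin set, so that the minimality argument of Theorem~\ref{super H-sober Psi} goes through verbatim once the directedness substitution is made.
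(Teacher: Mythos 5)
Your proposal is correct and follows essentially the same route as the paper: (1)$\Rightarrow$(2) by specializing Theorem \ref{super H-sober Psi} via Corollary \ref{WF=Smyth d-space}, (2)$\Rightarrow$(3) from $\mathcal D^R\leq\mathcal D^D$ (Proposition \ref{H Hd HR HD ir}), and (3)$\Rightarrow$(1) by the Topological Rudin Lemma argument in which directedness of $\mathcal K$ supplies the common refinement $K_3\subseteq K_1\cap K_2$ meeting $A$, so that minimality forces $\da(A\cap K_1)=A$ and a maximal point of $A$ lands in $\bigcap\mathcal K\subseteq U$, a contradiction. This is exactly the paper's proof, including your observation that the ideal hypothesis $\Psi_{\mk(X)}(A)\in\mathrm{Id}(\mk(X))$ is recovered from directedness.
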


\begin{proof}  (1) $\Rightarrow$ (2): By Corollary \ref{WF=Smyth d-space} and Theorem \ref{super H-sober Psi}.

(2) $\Rightarrow$ (3): By Proposition \ref{H Hd HR HD ir}.

(3) $\Rightarrow$ (1): Suppose that $\mathcal K\in \mathcal D(\mk (X))$ and $U\in \mathcal O(X)$ with $\bigcap \mathcal K \subseteq U$. If $K\not\subseteq U$ for each $K\in \mathcal K$, then by Lemma \ref{t Rudin}, $X\setminus U$ contains a minimal irreducible closed subset $A$ that still meets all members of $\mathcal{K}$, and hence $A\in \mathcal D^R(X)$. For any $\{K_1, K_2\}\subseteq \mathcal K$, we can find $K_3\in \mathcal K$ with $K_3\subseteq K_1\cap K_2$. It follows that $\downarrow (A\cap K_1)\in \mathcal C(X)\Gamma (X)$ and $\emptyset \neq A\cap K_3\subseteq \downarrow (A\cap K_1)\cap K_2\neq\emptyset$. By the minimality of $A$, we have $\downarrow (A\cap K_1)=A$ for all $K_1\in\mathcal K$. Since $\mathrm{max} (A)\neq\emptyset$, we can select an $x\in \mathrm{max} (A)$. For each $K\in \mathcal K$, we have $x\in  \downarrow (A\cap K)$, and consequently, there is $a_k\in A\cap K$ such that $x\leq a_k$. By the maximality of $x$ we have $x=a_k$. Therefore, $x\in K$ for all $K\in \mathcal K$, and so $x\in \bigcap \mathcal K \subseteq U\subseteq X\setminus A$, a contradiction. Thus $X$ is well-filtered.
\end{proof}

By Lemma \ref{d-space max point} and Corollary \ref{WF Psi}, we have the following corollary.

\begin{corollary}\label{WF Psi-1}\emph{(\cite{xuzhao})}
	For a $d$-space $X$, the following conditions are equivalent:
 \begin{enumerate}[\rm (1)]
		\item $X$ is well-filtered.
        \item For each $(A, K)\in \mathcal D^D(X)\times\mk (X)$, $\downarrow (A\cap K)\in \Gamma (X)$.
        \item For each $(A, K)\in \mathcal D^R(X)\times\mk (X)$, $\downarrow (A\cap K)\in \Gamma (X)$.
\end{enumerate}
\end{corollary}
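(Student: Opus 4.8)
The plan is to obtain this corollary as the $d$-space specialization of Corollary~\ref{WF Psi}: the two statements coincide except that conditions~(2) and~(3) here drop the clause ``$\mathrm{max}(A)\neq\emptyset$'' that appears in Corollary~\ref{WF Psi}. Hence the whole task reduces to showing that, for a $d$-space, this clause is redundant, which is exactly what Lemma~\ref{d-space max point} delivers for nonempty closed subsets.

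For the two easy implications I would argue as follows. The implication (1)~$\Rightarrow$~(2) is immediate: by Corollary~\ref{WF Psi}, well-filteredness already yields both $\mathrm{max}(A)\neq\emptyset$ and $\downarrow(A\cap K)\in\Gamma(X)$ for every $(A,K)\in\mathcal D^D(X)\times\mk(X)$, and we simply keep the second conclusion. For (2)~$\Rightarrow$~(3) I would invoke Remark~\ref{C D property M}, so that $\mathcal D$ has property~M, together with Proposition~\ref{H Hd HR HD ir}, which gives $\mathcal D^R\le\mathcal D^D$, i.e.\ $\mathcal D^R(X)\subseteq\mathcal D^D(X)$; since condition~(2) quantifies over the larger class $\mathcal D^D(X)$, it entails condition~(3) over the smaller class $\mathcal D^R(X)$.

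The substantive direction is (3)~$\Rightarrow$~(1), and here I would re-run the proof of (3)~$\Rightarrow$~(1) from Corollary~\ref{WF Psi}. Assuming $X$ is not well-filtered, there are $\mathcal K\in\mathcal D(\mk(X))$ and $U\in\mathcal O(X)$ with $\bigcap\mathcal K\subseteq U$ but $K\not\subseteq U$ for every $K\in\mathcal K$. The Topological Rudin's Lemma (Lemma~\ref{t Rudin}) then produces a minimal irreducible closed set $A\subseteq X\setminus U$ meeting every member of $\mathcal K$, so $A\in\mathcal D^R(X)$. Condition~(3) makes each $\downarrow(A\cap K)$ closed, and the minimality of $A$ forces $\downarrow(A\cap K)=A$ for all $K\in\mathcal K$. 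The key new point is that $A$ is a nonempty \emph{closed} subset of the $d$-space $X$, so Lemma~\ref{d-space max point} yields $\mathrm{max}(A)\neq\emptyset$ for free; picking $x\in\mathrm{max}(A)$, the standard maximality argument shows $x\in K$ for every $K$, whence $x\in\bigcap\mathcal K\subseteq U\subseteq X\setminus A$, contradicting $x\in A$.

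I expect the only delicate point to be the quantifier matching in this last step: condition~(3) supplies $\downarrow(A\cap K)\in\Gamma(X)$ but not $\mathrm{max}(A)\neq\emptyset$, so one cannot literally cite Corollary~\ref{WF Psi}(3). The remedy is to notice that the sole use of $\mathrm{max}(A)\neq\emptyset$ in that earlier proof is made for the closed set $A$ coming from Rudin's Lemma, and for such a set the $d$-space hypothesis makes it automatic via Lemma~\ref{d-space max point}; the rest is a verbatim transcription of the earlier argument.
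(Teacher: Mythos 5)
Your proposal is correct and matches the paper's intended argument exactly: the paper derives this corollary from Lemma \ref{d-space max point} and Corollary \ref{WF Psi}, with the only substantive point being that for the closed Rudin set $A$ produced in the proof of (3) $\Rightarrow$ (1), the hypothesis $\mathrm{max}(A)\neq\emptyset$ becomes automatic in a $d$-space. You have also correctly identified and resolved the one delicate point, namely that one must re-run the Rudin argument rather than cite Corollary \ref{WF Psi}(3) verbatim.
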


\begin{corollary}\label{xi-lawsonWF}\emph{(\cite{Xi-Lawson-2017})}
	Let $X$ be a $d$-space such that $\downarrow (A\cap K)$ is closed for all
$A\in \Gamma (X)$ and $K\in \mk (X)$. Then $X$ is well-filtered.
\end{corollary}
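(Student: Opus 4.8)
The plan is to reduce the statement to the characterization of well-filteredness already obtained for $d$-spaces in Corollary \ref{WF Psi-1}, and then to observe that the hypothesis supplies exactly the closedness required there. Since $X$ is assumed to be a $d$-space, Corollary \ref{WF Psi-1} tells us that well-filteredness amounts to having $\da (A\cap K)\in\Gamma(X)$ for the relevant $\mathcal D$-Rudin sets $A$ and all $K\in\mk(X)$; the point I would stress is that the sets $A$ actually needed there are \emph{closed}, hence lie in $\Gamma(X)$, so the blanket hypothesis applies to them verbatim. To keep the argument self-contained, however, I would rather run the Topological Rudin Lemma argument directly rather than quote the statement of Corollary \ref{WF Psi-1}.

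So let $\mathcal K\in\mathcal D(\mk(X))$ and $U\in\mathcal O(X)$ with $\bigcap\mathcal K\subseteq U$, and suppose toward a contradiction that $K\nsubseteq U$ for every $K\in\mathcal K$. Then $X\setminus U$ meets every member of $\mathcal K$, i.e.\ $X\setminus U\in M(\mathcal K)$; since a directed subset of $P_S(X)$ is irreducible, $\mathcal K\in\ir(P_S(X))$, and the Topological Rudin Lemma (Lemma \ref{t Rudin}) yields a minimal irreducible closed set $A\subseteq X\setminus U$ still meeting every $K\in\mathcal K$.

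The key computation is to show $\da(A\cap K_1)=A$ for each $K_1\in\mathcal K$. Since $A\in\Gamma(X)$ and $K_1\in\mk(X)$, the hypothesis gives that $\da(A\cap K_1)$ is closed, and as $A$ is a lower set, $\da(A\cap K_1)\subseteq A$. For any $K_2\in\mathcal K$, directedness in the Smyth order provides $K_3\in\mathcal K$ with $K_3\subseteq K_1\cap K_2$, so $\emptyset\neq A\cap K_3\subseteq\da(A\cap K_1)\cap K_2$; hence $\da(A\cap K_1)$ is a closed subset of $A$ meeting every member of $\mathcal K$, and the minimality of $A$ in $M(\mathcal K)$ forces $\da(A\cap K_1)=A$. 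Because $X$ is a $d$-space, $\mathrm{max}(A)\neq\emptyset$ by Lemma \ref{d-space max point}; picking $x\in\mathrm{max}(A)$, the equality $A=\da(A\cap K)$ gives, for each $K\in\mathcal K$, some $a_K\in A\cap K$ with $x\leq a_K$, and maximality of $x$ forces $x=a_K\in K$. Thus $x\in\bigcap\mathcal K\subseteq U$, contradicting $x\in A\subseteq X\setminus U$. Therefore some $K\subseteq U$, and $X$ is well-filtered.

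The main obstacle, and the one subtle point to watch, is the mismatch between the family over which the hypothesis quantifies (all closed sets $A\in\Gamma(X)$) and the family appearing in Corollary \ref{WF Psi-1} (the possibly non-closed Rudin sets in $\mathcal D^R(X)$). The resolution, reflected in the direct argument above, is that the only instances of $\da(A\cap K)$ ever invoked involve the \emph{minimal irreducible closed} set $A$ produced by Lemma \ref{t Rudin}, which already belongs to $\Gamma(X)$; consequently the hypothesis on closed sets is more than enough and no closure-versus-Rudin discrepancy arises.
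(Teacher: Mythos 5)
Your proof is correct and is essentially the paper's argument: the paper obtains this corollary from Corollary \ref{WF Psi-1}, whose proof of the implication (3) $\Rightarrow$ (1) in Corollary \ref{WF Psi} is exactly the Topological Rudin Lemma computation you carry out (minimality forces $\da(A\cap K)=A$, then a maximal point of the $d$-space closed set $A$ lands in $\bigcap\mathcal K$). Your observation that the hypothesis only needs to cover the minimal irreducible \emph{closed} set produced by Lemma \ref{t Rudin}, so quantifying over $\Gamma(X)$ suffices, is exactly why the reduction works.
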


By Lemma \ref{Scott compact closed}, Corollary \ref{complete lattice Scott compact closed} and Corollary \ref{WF Psi-1} (or Corollary \ref{xi-lawsonWF}), we get the following two conclusions.

\begin{corollary}\label{Scott is WF} For a dcpo $P$, if $\da (\ua x\cap A)\in \Gamma (\Sigma~\!\!P)$ for any $x\in P$ and $A\in \Gamma (\Sigma~\!\!P)$,
	then $\Sigma~\!\!P$ is well-filtered.
\end{corollary}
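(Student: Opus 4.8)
The plan is to chain together two results already established in the paper: the equivalence in Lemma \ref{Scott compact closed} (which upgrades a single-point closure condition to a compact-set closure condition) and the well-filteredness criterion of Corollary \ref{xi-lawsonWF}. Since $P$ is a dcpo, the Scott space $\Sigma~\!\!P$ is automatically a $d$-space, so the remaining task is simply to verify that $\Sigma~\!\!P$ satisfies the closure hypothesis demanded by Corollary \ref{xi-lawsonWF}.

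First I would observe that the hypothesis of the corollary is verbatim condition (1) of Lemma \ref{Scott compact closed}: for every $x\in P$ and every $A\in\Gamma(\Sigma~\!\!P)$ we have $\da(\ua x\cap A)\in\Gamma(\Sigma~\!\!P)$. By that lemma, condition (1) is equivalent to condition (2), namely that
\[
\da(K\cap A)=\bigcup_{k\in K}\da(\ua k\cap A)\in\Gamma(\Sigma~\!\!P)
\]
for every $K\in\mk(\Sigma~\!\!P)$ and every $A\in\Gamma(\Sigma~\!\!P)$. Since $\da(K\cap A)=\da(A\cap K)$, this says exactly that $\da(A\cap K)$ is Scott-closed for all closed $A$ and all compact saturated $K$.

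Having established this, I would invoke Corollary \ref{xi-lawsonWF} directly: $\Sigma~\!\!P$ is a $d$-space in which $\da(A\cap K)$ is closed for every $A\in\Gamma(\Sigma~\!\!P)$ and every $K\in\mk(\Sigma~\!\!P)$, and therefore $\Sigma~\!\!P$ is well-filtered. This completes the argument.

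The point worth flagging is that there is essentially no residual obstacle in this corollary itself --- all of the genuine work has been absorbed into Lemma \ref{Scott compact closed}, whose proof uses the compactness of $K$ together with a directedness argument to pass from the pointwise condition $\da(\ua x\cap A)\in\Gamma(\Sigma~\!\!P)$ to the compact-set condition $\da(K\cap A)\in\Gamma(\Sigma~\!\!P)$. So the corollary is a clean two-step composition, and the only thing to be careful about is correctly identifying the hypothesis with condition (1) of that lemma and matching $\da(K\cap A)$ with $\da(A\cap K)$ when feeding it into Corollary \ref{xi-lawsonWF}.
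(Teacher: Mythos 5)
Your proof is correct and follows exactly the paper's own route: the paper derives this corollary by combining Lemma \ref{Scott compact closed} (to pass from the pointwise hypothesis to $\da(K\cap A)\in\Gamma(\Sigma~\!\!P)$ for all compact saturated $K$) with Corollary \ref{xi-lawsonWF} (or Corollary \ref{WF Psi-1}), using that $\Sigma~\!\!P$ is a $d$-space since $P$ is a dcpo. Nothing further is needed.
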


\begin{corollary}\label{complete Soctt is WF} \emph{(\cite{Xi-Lawson-2017})} For a complete lattice $L$, $\Sigma \!\!~L$ is well-filtered.
\end{corollary}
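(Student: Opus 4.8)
The plan is to recognize that this statement is an immediate specialization of Corollary~\ref{xi-lawsonWF}, so the substantive work has already been packaged into the earlier lemmas; what remains is to verify that a complete lattice meets the two hypotheses of that corollary, namely that $\Sigma~\!\!L$ is a $d$-space and that $\da(A\cap K)\in\Gamma(\Sigma~\!\!L)$ for every $A\in\Gamma(\Sigma~\!\!L)$ and every $K\in\mk(\Sigma~\!\!L)$.

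First I would observe that a complete lattice $L$ is in particular a dcpo, since every directed subset (indeed every subset) has a least upper bound. By the remark recorded after the definition of $d$-space (for a dcpo $P$, $\Sigma~\!\!P$ is a $d$-space), the Scott space $\Sigma~\!\!L$ is therefore a $d$-space, which dispenses with the first hypothesis of Corollary~\ref{xi-lawsonWF}.

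The second hypothesis is exactly the content of Corollary~\ref{complete lattice Scott compact closed}, so I could simply cite it; for transparency I would indicate how it flows from Lemma~\ref{Scott compact closed}. A complete lattice is a sup semilattice, so the final clause of Lemma~\ref{Scott compact closed} applies and yields $\da(\ua x\cap A)\in\Gamma(\Sigma~\!\!L)$ for every $x\in L$ and every $A\in\Gamma(\Sigma~\!\!L)$, i.e.\ condition~(1) of that lemma holds for all $x$ and $A$. The equivalence (1)$\Leftrightarrow$(2) of the same lemma then upgrades this to $\da(K\cap A)=\bigcup_{k\in K}\da(\ua k\cap A)\in\Gamma(\Sigma~\!\!L)$ for every $K\in\mk(\Sigma~\!\!L)$ and $A\in\Gamma(\Sigma~\!\!L)$, which is precisely what is required.

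With both hypotheses verified, Corollary~\ref{xi-lawsonWF} delivers the conclusion that $\Sigma~\!\!L$ is well-filtered. I do not anticipate a genuine obstacle here: the real argument — the Scott-closedness of $\da(\ua x\cap A)$ for a sup semilattice, proved by directedness together with the identity $\bigvee_{d\in D}(d\vee x)=(\bigvee D)\vee x$ — is already carried out in Lemma~\ref{Scott compact closed}. If one wished to bypass Corollary~\ref{xi-lawsonWF}, the same two facts feed directly into Corollary~\ref{WF Psi-1}, whose condition on $\da(A\cap K)$ for a $d$-space is identical; the only mild point to bear in mind is that the order on $\mk(\Sigma~\!\!L)$ is reverse inclusion, but this does not affect the closedness computation.
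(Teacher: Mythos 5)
Your proposal is correct and follows the same route as the paper: the paper derives this corollary from Lemma \ref{Scott compact closed}, Corollary \ref{complete lattice Scott compact closed} and Corollary \ref{WF Psi-1} (or Corollary \ref{xi-lawsonWF}), exactly as you do. Your verification that $\Sigma~\!\!L$ is a $d$-space and that $\da(A\cap K)$ is Scott closed via the sup-semilattice clause of Lemma \ref{Scott compact closed} fills in the intended details faithfully.
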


By Lemma \ref{d-space max point}, Theorem \ref{Heckman-Keimel theorem}, Theorem \ref{H Hd HR HD ir} and Theorem \ref{super H-sober Psi}, we have the following two corollaries.

\begin{corollary}\label{super Psi}
For a $T_0$ space $X$, the following two conditions are equivalent:
 \begin{enumerate}[\rm (1)]
		\item $X$ is sober.
        \item For any $(A, K)\in \ir_c(X)\times\mk (X)$, $\Psi_{\mk (X)}(A)\in \mathrm{Id}(\mk (X))$, $\mathrm{max} (A)\neq\emptyset$ and $\downarrow (A\cap K)\in \Gamma (X)$.
\end{enumerate}
\end{corollary}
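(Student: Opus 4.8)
The plan is to derive this corollary as the instance $H=\mathcal R$ of Theorem \ref{super H-sober Psi}. First I would record that $\mathcal R$ satisfies property M by Lemma \ref{R has property M}, so that for the system $\mathcal R$ all three conditions of Theorem \ref{super H-sober Psi} are equivalent; this is what will let conditions (2) and (3) there fuse into the single condition (2) of the present corollary.

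Next I would identify the derived systems $\mathcal R^D$ and $\mathcal R^R$ with $\mathcal R$ itself. By Corollary \ref{Hd R-subset system} we have $\mathcal R^d=\mathcal R$, and Proposition \ref{H Hd HR HD ir}(1) supplies the chain $\mathcal R\leq \mathcal R^d\leq \mathcal R^D\leq \mathcal R$, which forces $\mathcal R^D=\mathcal R$. Since $\mathcal R$ has property M, Proposition \ref{H Hd HR HD ir}(3) gives $\mathcal R\leq \mathcal R^R\leq \mathcal R^D=\mathcal R$, so $\mathcal R^R=\mathcal R$ as well. Consequently $\mathcal R^D_c(X)=\mathcal R^R_c(X)=\ir_c(X)$ for every $T_0$ space $X$, and the two parameter sets $\mathcal R^D_c(X)\times\mk(X)$ and $\mathcal R^R_c(X)\times\mk(X)$ appearing in Theorem \ref{super H-sober Psi} both become $\ir_c(X)\times\mk(X)$.

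Finally I would interpret condition (1) of Theorem \ref{super H-sober Psi} in this setting: being super $\mathcal R$-sober means that $P_S(X)$ is sober, which by the Heckmann--Keimel--Schalk Theorem (Theorem \ref{Heckman-Keimel theorem}) is equivalent to $X$ being sober. Substituting the identifications $\mathcal R^D_c=\mathcal R^R_c=\ir_c$ into conditions (2) and (3) of the theorem, both collapse to the single requirement that for every $(A,K)\in \ir_c(X)\times\mk(X)$ one has $\Psi_{\mk(X)}(A)\in \mathrm{Id}(\mk(X))$, $\mathrm{max}(A)\neq\emptyset$, and $\downarrow(A\cap K)\in \Gamma(X)$, i.e. exactly condition (2) of the present corollary. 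The equivalence of the three conditions of Theorem \ref{super H-sober Psi} (valid because $\mathcal R$ has property M) then yields the stated equivalence. Since the argument is a pure specialization, there is no genuine obstacle; the only point needing care is the verification that $\mathcal R^D=\mathcal R^R=\mathcal R$, so that the generic closed $H^D$- and $H^R$-sets reduce to irreducible closed sets, together with the recollection that super $\mathcal R$-sobriety coincides with sobriety.
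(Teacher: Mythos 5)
Your proposal is correct and follows essentially the same route as the paper, which derives this corollary by specializing Theorem \ref{super H-sober Psi} to ${\rm H}=\mathcal R$, using Theorem \ref{Heckman-Keimel theorem} to identify super $\mathcal R$-sobriety with sobriety and Proposition \ref{H Hd HR HD ir} to collapse $\mathcal R^R_c(X)$ and $\mathcal R^D_c(X)$ to $\ir_c(X)$. The only cosmetic remark is that $\mathcal R^R=\mathcal R$ already follows from Proposition \ref{H Hd HR HD ir}(2) without invoking property M, though your detour through part (3) is equally valid since $\mathcal R$ does have property M.
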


\begin{corollary}\label{sober Psi-1}
	For a $d$-space $X$, the following two conditions are equivalent:
 \begin{enumerate}[\rm (1)]
        \item $X$ is sober.
        \item For each $(A, K)\in \ir_c(X)\times\mk (X)$, $\Psi_{\mk (X)}(A)\in \mathrm{Id}(\mk (X))$ and $\downarrow (A\cap K)\in \Gamma (X)$.
\end{enumerate}
\end{corollary}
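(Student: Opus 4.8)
The plan is to obtain Corollary~\ref{sober Psi-1} as an immediate specialization of the general sobriety characterization Corollary~\ref{super Psi} (equivalently, of Theorem~\ref{super H-sober Psi} applied to the R-subset system $\mathcal R$), using that in a $d$-space the maximality clause is automatic. So there is essentially nothing new to prove; the work is to identify the right instance of the machinery already built and to discard one redundant hypothesis.

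First I would fix $H=\mathcal R$ and assemble the facts about this R-subset system. By Lemma~\ref{R has property M}, $\mathcal R$ has property M, so all three conditions of Theorem~\ref{super H-sober Psi} are equivalent for $\mathcal R$. By Theorem~\ref{H Hd HR HD ir}(1) we have $\mathcal R\leq\mathcal R^d\leq\mathcal R^D\leq\mathcal R$, which forces $\mathcal R^D=\mathcal R$ and hence $\mathcal R^D_c(X)=\ir_c(X)$; the same sandwich gives $\mathcal R^R_c(X)=\ir_c(X)$. Finally, by Theorem~\ref{Heckman-Keimel theorem}, $X$ is super $\mathcal R$-sober precisely when $X$ is sober. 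Substituting these identifications into the equivalence of conditions (1) and (2) in Theorem~\ref{super H-sober Psi} yields exactly Corollary~\ref{super Psi}: for an arbitrary $T_0$ space $X$, sobriety is equivalent to the requirement that $\Psi_{\mk (X)}(A)\in\mathrm{Id}(\mk (X))$, $\mathrm{max}(A)\neq\emptyset$ and $\da(A\cap K)\in\Gamma(X)$ for every $(A,K)\in\ir_c(X)\times\mk (X)$.

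The second step is to delete the clause $\mathrm{max}(A)\neq\emptyset$ under the standing hypothesis that $X$ is a $d$-space. By Lemma~\ref{d-space max point}, every nonempty closed subset $A$ of a $d$-space satisfies $A=\da\,\mathrm{max}(A)$, and in particular $\mathrm{max}(A)\neq\emptyset$. Since each $A\in\ir_c(X)$ is a nonempty closed set, this clause holds automatically and may be removed from Corollary~\ref{super Psi}(2) without altering its content. Hence condition (2) of Corollary~\ref{sober Psi-1} is logically equivalent to condition (2) of Corollary~\ref{super Psi}, and the desired equivalence (1)~$\Leftrightarrow$~(2) follows at once.

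I do not anticipate a genuine obstacle: the entire argument is a bookkeeping specialization of the already-established Theorem~\ref{super H-sober Psi} and Corollary~\ref{super Psi}. The only two points requiring a moment's care are the identification $\mathcal R^D=\mathcal R$ (so that the indexing family $\mathcal R^D_c(X)$ collapses to $\ir_c(X)$), which is forced by the sandwich of Theorem~\ref{H Hd HR HD ir}, and the observation that, because $X$ is assumed to be a $d$-space from the outset (rather than merely sober), Lemma~\ref{d-space max point} is available to render the maximality condition vacuous.
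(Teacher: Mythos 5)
Your proposal is correct and follows exactly the route the paper intends: it derives the statement by specializing Theorem~\ref{super H-sober Psi} to $\mathcal R$ (using property M from Lemma~\ref{R has property M}, the collapse $\mathcal R^R=\mathcal R^D=\mathcal R$ from Proposition~\ref{H Hd HR HD ir}, and the identification of super $\mathcal R$-sobriety with sobriety from Theorem~\ref{Heckman-Keimel theorem}) to get Corollary~\ref{super Psi}, and then discards the $\mathrm{max}(A)\neq\emptyset$ clause via Lemma~\ref{d-space max point}. This is precisely the chain of citations the paper gives for these two corollaries, so nothing further is needed.
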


\begin{remark}\label{sober Psi not sober} Let $L$ be the Isbell's lattice (see Example \ref{examp2}). Then $\Sigma~\!\! L$ is not sober. For each $(A, K)\in \ir_c(\Sigma~\!\! L)\times\mk (\Sigma~\!\! L)$, by Lemma \ref{Scott compact closed} or Corollary \ref{complete lattice Scott compact closed}, we have $\downarrow (A\cap K)\in \Gamma (\Sigma~\!\! L)$. So the condition that $\Psi_{\mk (X)}(A)\in \mathrm{Id}(\mk (X))$ in Corollary \ref{super Psi} and Corollary \ref{sober Psi-1} is essential. Therefore, two answers to \cite[Problem 4.3 and Problem 4.4]{xuzhao} are both negative.

\end{remark}

\section{H-sober reflections and super H-sober reflections of $T_0$ spaces}

In this section, we shall give a direct construction of the H-sobrifcations and super H-sobrifications of $T_0$ spaces, and investigate some basic properties of the H-sober reflections and super H-sober reflections of $T_0$ spaces.

In what follows, $\mathbf{K}$ always refers to a full subcategory $\mathbf{Top}_0$ containing $\mathbf{Sob}$, the objects of $\mathbf{K}$ are called $\mathbf{K}$-\emph{spaces}.

\begin{definition}\label{K-reflection}
	Let $X$ be a $T_0$ space. A $\mathbf{K}$-\emph{reflection} of $X$ is a pair $\langle \widetilde{X}, \mu\rangle$ consisting of a $\mathbf{K}$-space $\widetilde{X}$ and a continuous mapping $\mu :X\longrightarrow \widetilde{X}$ satisfying that for any continuous mapping $f: X\longrightarrow Y$ to a $\mathbf{K}$-space, there exists a unique continuous mapping $f^* : \widetilde{X}\longrightarrow Y$ such that $f^*\circ\mu=f$, that is, the following diagram commutes.\\
\begin{equation*}
	\xymatrix{
		X \ar[dr]_-{f} \ar[r]^-{\mu}
		&\widetilde{X}\ar@{.>}[d]^-{f^*}\\
		&Y}
	\end{equation*}
For $\mathbf{K}=\mathbf{H}$-$\mathbf{Sob}$ (resp., $\mathbf{K}=\mathbf{SH}$-$\mathbf{Sob}$), the $\mathbf{K}$-\emph{reflection} of $X$ is called the \emph{H}-\emph{sober reflection} (resp., \emph{super H}-\emph{sober reflection}) of $X$, or the \emph{H}-\emph{sobrification} (resp., \emph{super H}-\emph{sobrification}) of $X$.
\end{definition}

By a standard argument, $\mathbf{K}$-reflections, if they exist, are unique up to homeomorphism. We shall use $X^k$ to denote the space of the $\mathbf{K}$-reflection of $X$ if it exists, and $X^h$ (resp., $X^{H}$) to denote the space of the {\rm H}-sobrification (resp., super {\rm H}-sobrification) of $X$ if it exists.

\begin{definition}\label{K closed with respect to homeomorphisms} A full subcategory $\mathbf{K}$ of $\mathbf{Top}_0$ is said to be \emph{closed with respect to homeomorphisms} if homeomorphic copies of $\mathbf{K}$-spaces are $\mathbf{K}$-spaces.
\end{definition}

Let ${\rm H} : \mathbf{Top}_0 \longrightarrow \mathbf{Set}$ be an R-subset system. Then by Lemma \ref{Ps functor} and Theorem \ref{Heckman-Keimel theorem}, $\mathbf{H}$-$\mathbf{Sob}$ and $\mathbf{SH}$-$\mathbf{Sob}$ are full subcategories $\mathbf{Top}_0$ containing $\mathbf{Sob}$ and are closed with respect to homeomorphisms.

\begin{definition}\label{K-determined set} (\cite{xu20})
	 A subset $A$ of a $T_0$ space $X$ is called $\mathbf{K}$-\emph{determined} provided for any continuous mapping $ f:X\longrightarrow Y$
to a $\mathbf{K}$-space $Y$, there exists a unique $y_A\in Y$ such that $\overline{f(A)}=\overline{\{y_A\}}$. Clearly, a subset $A$ of a space $X$ is a $\mathbf{K}$-determined set if{}f $\overline{A}$ is a $\mathbf{K}$-determined set.
Denote by $\mathbf{K}(X)$ the set of all closed $\mathbf{K}$-determined sets of $X$.
\end{definition}

One can easily show that if $f:X\longrightarrow Y$ is a continuous mapping in $\mathbf{Top}_0$, then $\overline{f(A)}\in \mathbf{K} (Y)$ for all $A\in \mathbf{K} (X)$ (see \cite[Lemma 3.11]{xu20}).

\begin{remark}\label{Sob WF d-space determined set} Let $X$ be a $T_0$ space. Then by Lemma \ref{sobd=irr} and Proposition \ref{H Hd HR HD ir}, we have the following conclusions:
 \begin{enumerate}[\rm (1)]
 \item $\mathbf{Sob}(X)=\mathcal R^d_c(X)=\mathcal R^R_c(X)=\mathcal R^D_c(X)=\ir_c (X)$.
 \item $\mathcal S_c(X)\subseteq\mathbf{K}(X)\subseteq\mathbf{Sob}(X)=\ir_c (X)$.
 \item $\mathcal S_c(X)\subseteq {\rm H}_c(X)\subseteq\mathbf{H}$-$\mathbf{Sob}(X)={\rm H}^d_c(X)\subseteq\mathbf{SH}$-$\mathbf{Sob}(X)={\rm H}^D_c(X)\subseteq\mathbf{Sob}(X)=\ir_c (X)$.
 \item $\mathcal S_c(X)\subseteq\mathcal D_c(X)\subseteq\mathbf{Top}_d(X)=\mathcal D^d_c(X)\subseteq \mathcal D^R_c(X)\subseteq \mathcal D^D_c(X)=\mathbf{Top}_w(X)\subseteq\ir_c (X)$.
 \end{enumerate}
 \end{remark}

\begin{lemma}\label{K-set prod}\emph{(\cite{xu20})}
	Let	$\{X_i: 1\leq i\leq n\}$ be a finite family of $T_0$ spaces and $X=\prod\limits_{i=1}^{n}X_i$ the product space. For $A\in\ir (X)$, the following conditions are equivalent:
\begin{enumerate}[\rm (1)]
	\item $A$ is $\mathbf{K}$-determined.
	\item $p_i(A)$ is $\mathbf{K}$-determined for each $1\leq i\leq n$.
\end{enumerate}
\end{lemma}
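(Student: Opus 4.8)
The plan is to prove the two implications separately, with the content concentrated in $(2)\Rightarrow(1)$, which I will reduce by induction on $n$ to the case of two factors. For $(1)\Rightarrow(2)$ I would argue directly, with no product structure needed: fix $i$ and let $g\colon X_i\longrightarrow Y$ be any continuous map into a $\mathbf{K}$-space $Y$. Then $g\circ p_i\colon X\longrightarrow Y$ is continuous, so since $A$ is $\mathbf{K}$-determined there is a unique $y\in Y$ with $\overline{(g\circ p_i)(A)}=\overline{\{y\}}$; as $(g\circ p_i)(A)=g(p_i(A))$, this exhibits $p_i(A)$, which lies in $\ir(X_i)$ by Lemma~\ref{irrimage}, as $\mathbf{K}$-determined.

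For $(2)\Rightarrow(1)$ I would set up the induction by writing $X=X_1\times X'$ with $X'=\prod_{i=2}^{n}X_i$ and letting $q\colon X\longrightarrow X'$ be the projection. Since $p'_j\circ q=p_j$ for $2\le j\le n$ (where $p'_j$ denotes the projection of $X'$), the hypothesis makes every coordinate projection of the irreducible set $q(A)$ a $\mathbf{K}$-determined set, so the inductive hypothesis yields that $q(A)$ is $\mathbf{K}$-determined. Thus it suffices to treat two factors: given $A\in\ir(X_1\times X')$ with both $p_1(A)$ and $q(A)$ being $\mathbf{K}$-determined, show that $A$ is $\mathbf{K}$-determined.

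The heart of the proof is this two-factor case. Given a continuous $f\colon X_1\times X'\longrightarrow Y$ into a $\mathbf{K}$-space $Y$, I would first use that $p_1(A)$ is $\mathbf{K}$-determined to define, for each $t\in X'$, the unique point $s(t)\in Y$ with $\overline{f(p_1(A)\times\{t\})}=\overline{\{s(t)\}}$, since $x\mapsto f(x,t)$ is continuous on $X_1$. The key technical step is that $s\colon X'\longrightarrow Y$ is continuous: for $V\in\mathcal O(Y)$ one checks $s^{-1}(V)=\{t: f(p_1(A)\times\{t\})\cap V\neq\emptyset\}=\bigcup_{a\in p_1(A)}\{t: f(a,t)\in V\}$, a union of open sets because each $t\mapsto f(a,t)$ is continuous. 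Now $q(A)$ is $\mathbf{K}$-determined, so there is a unique $y_A\in Y$ with $\overline{s(q(A))}=\overline{\{y_A\}}$, and it remains to verify $\overline{f(A)}=\overline{\{y_A\}}$. Using $\overline{A}=\overline{p_1(A)}\times\overline{q(A)}$ from Lemma~\ref{irrprod} gives $\overline{f(A)}=\overline{f(p_1(A)\times q(A))}$. For the inclusion $\supseteq$, each $s(t)$ with $t\in q(A)$ lies in $\overline{f(p_1(A)\times\{t\})}\subseteq\overline{f(p_1(A)\times q(A))}$; for $\subseteq$, if an open $V$ meets $f(p_1(A)\times q(A))$ at $f(a,t)$ then $f(p_1(A)\times\{t\})\cap V\neq\emptyset$ forces $s(t)\in V$, whence $s(q(A))\cap V\neq\emptyset$ and $y_A\in V$. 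Uniqueness of $y_A$ follows from $Y$ being $T_0$.

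I expect the main obstacle to be the continuity of the auxiliary map $s$, together with the bookkeeping of passing between $p_i(A)$ and its closure inside the closure operators; here the identity $\overline{B_1\times B_2}=\overline{B_1}\times\overline{B_2}$ for finite products ensures that replacing $p_i(A)$ by $\overline{p_i(A)}$ leaves the relevant image-closures unchanged. Once the two-factor case is secured, the inductive reduction is routine, relying only on Lemma~\ref{irrimage} to keep every projected set irreducible so that the inductive hypothesis applies.
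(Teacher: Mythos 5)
Your argument is correct. Note that the paper itself gives no proof of this lemma --- it is imported from \cite{xu20} --- but your proof is complete and follows the natural route taken there: reduce by induction to two factors, freeze the second coordinate to define the auxiliary map $s(t)$ via $\mathbf{K}$-determinedness of $p_1(A)$, verify that $s$ is continuous by writing $s^{-1}(V)$ as a union of open sets, and then apply $\mathbf{K}$-determinedness of $q(A)$ to $s$; the two inclusions for $\overline{f(A)}=\overline{\{y_A\}}$ and the use of $\overline{A}=\overline{p_1(A)}\times\overline{q(A)}$ (Lemma~\ref{irrprod}) all check out.
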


\begin{corollary}\label{K-closed set prod}\emph{(\cite{xu20})}
	Let	$X=\prod\limits_{i=1}^{n}X_i$ be the product of a finite family $\{X_i: 1\leq i\leq n\}$ of $T_0$ spaces. If $A\in\mathbf{K} (X)$, then $A=\prod\limits_{i=1}^{n}p_i(X_i)$, and $p_i(A)\in \mathbf{K} (X_i)$ for all $1\leq i \leq n$.
\end{corollary}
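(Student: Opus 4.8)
The plan is to recognize this statement as an immediate consequence of two earlier results: the factorization of irreducible closed subsets of a product (Corollary \ref{irrcprod}) and the factorwise characterization of $\mathbf{K}$-determinedness (Lemma \ref{K-set prod}). First I would unwind the hypothesis $A\in\mathbf{K}(X)$. By Remark \ref{Sob WF d-space determined set}(2) we have $\mathcal S_c(X)\subseteq\mathbf{K}(X)\subseteq\mathbf{Sob}(X)=\ir_c(X)$, so in particular $A$ is an irreducible closed subset of $X$. This is exactly the hypothesis needed to invoke the product results for irreducible closed sets.

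Next I would apply Corollary \ref{irrcprod} to the finite product $X=\prod_{i=1}^{n}X_i$. Since $A\in\ir_c(X)$, this yields at once the decomposition $A=\prod_{i=1}^{n}p_i(A)$ (which is the intended reading of the displayed equation, the $X_i$ there being a typo for $A$) together with the fact that $p_i(A)\in\ir_c(X_i)$ for each $1\leq i\leq n$. In particular each projection $p_i(A)$ is a \emph{closed} subset of $X_i$, which is one of the two things we must establish at the end.

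It then remains only to upgrade ``irreducible closed'' to ``$\mathbf{K}$-determined'' on each factor. Because $A$ is $\mathbf{K}$-determined and $A\in\ir(X)$, the implication (1) $\Rightarrow$ (2) of Lemma \ref{K-set prod} gives that $p_i(A)$ is $\mathbf{K}$-determined for each $i$. Combining this with the closedness of $p_i(A)$ recorded above yields $p_i(A)\in\mathbf{K}(X_i)$, completing the proof. There is no genuine obstacle here, since the corollary merely packages Corollary \ref{irrcprod} and Lemma \ref{K-set prod}; the only point deserving a moment's care is to check that $\mathbf{K}(X)\subseteq\ir_c(X)$ so that Corollary \ref{irrcprod} applies, and this holds precisely because $\mathbf{K}$ is a full subcategory of $\mathbf{Top}_0$ containing $\mathbf{Sob}$, as recorded in Remark \ref{Sob WF d-space determined set}.
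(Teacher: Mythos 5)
Your proof is correct: the paper states this corollary without proof (citing \cite{xu20}), and your derivation --- $\mathbf{K}(X)\subseteq\ir_c(X)$ via Remark \ref{Sob WF d-space determined set}, then Corollary \ref{irrcprod} for the decomposition $A=\prod_{i=1}^{n}p_i(A)$ and the closedness of each $p_i(A)$, then Lemma \ref{K-set prod}(1)$\Rightarrow$(2) to get $\mathbf{K}$-determinedness of each factor --- is exactly the intended packaging of the two preceding results. You are also right that the displayed $\prod_{i=1}^{n}p_i(X_i)$ is a typo for $\prod_{i=1}^{n}p_i(A)$.
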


By Lemma \ref{K-set prod} and Corollary \ref{K-closed set prod}, we get the following four corollaries.

\begin{corollary}\label{H-sober d-set prod}
	Let $H : \mathbf{Top}_0 \longrightarrow \mathbf{Set}$ be an R-subset system and $X=\prod\limits_{i=1}^{n}X_i$ the product space of a finite family $\{X_i: 1\leq i\leq n\}$ of $T_0$ spaces. For $A\in\ir (X)$, the following conditions are equivalent:
\begin{enumerate}[\rm (1)]
	\item $A$ is H-sober determined.
	\item $p_i(A)$ is H-sober determined for each $1\leq i\leq n$.
\end{enumerate}
\end{corollary}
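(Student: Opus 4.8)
The plan is to obtain this corollary as an immediate specialization of the general product result for $\mathbf{K}$-determined sets, Lemma \ref{K-set prod}, taking $\mathbf{K}=\mathbf{H}$-$\mathbf{Sob}$. The work therefore reduces to two verifications: that $\mathbf{H}$-$\mathbf{Sob}$ is an admissible choice of $\mathbf{K}$, and that for this choice the notion of $\mathbf{K}$-determined set coincides with that of H-sober determined set. Once these are in place, the equivalence (1)$\Leftrightarrow$(2) is nothing but the corresponding equivalence in Lemma \ref{K-set prod}.

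First I would record that $\mathbf{H}$-$\mathbf{Sob}$ is a full subcategory of $\mathbf{Top}_0$ containing $\mathbf{Sob}$ and closed with respect to homeomorphisms, which are precisely the standing hypotheses imposed on $\mathbf{K}$ throughout this section. This was already observed after Definition \ref{K closed with respect to homeomorphisms}, using Lemma \ref{Ps functor} and Theorem \ref{Heckman-Keimel theorem} (note that every sober space is H-sober since ${\rm H}\leq \mathcal R$). Hence Lemma \ref{K-set prod} is genuinely applicable with $\mathbf{K}=\mathbf{H}$-$\mathbf{Sob}$. Next I would observe that, on comparing Definition \ref{K-determined set} with Definition \ref{Hd set}, a subset $A$ of a $T_0$ space is $\mathbf{H}$-$\mathbf{Sob}$-determined exactly when it is H-sober determined: both ask that for every continuous map $f$ into an H-sober space there be a unique $y_A$ with $\overline{f(A)}=\overline{\{y_A\}}$. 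With this identification, conditions (1) and (2) of the corollary are literally conditions (1) and (2) of Lemma \ref{K-set prod} for $\mathbf{K}=\mathbf{H}$-$\mathbf{Sob}$, so their equivalence follows at once.

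I do not expect a genuine obstacle here, since all the real content is carried by Lemma \ref{K-set prod} itself (whose argument rests on Corollary \ref{irrcprod} together with the behaviour of $\mathbf{K}$-determined sets under the projections $p_i$ and the reconstruction of a product-form closed determined set). The only points requiring care are the bookkeeping in the definitional identification above and the hypothesis $A\in\ir(X)$ in the statement; the latter is harmless, since by Lemma \ref{H Hd ir} every H-sober determined set is in any case irreducible, so restricting attention to $A\in\ir(X)$ loses nothing. Accordingly, I would keep the proof to a single sentence invoking Lemma \ref{K-set prod} after these observations.
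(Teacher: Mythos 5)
Your proposal is correct and matches the paper exactly: the paper obtains this corollary (together with three companions) by directly specializing Lemma \ref{K-set prod} and Corollary \ref{K-closed set prod} to $\mathbf{K}=\mathbf{H}$-$\mathbf{Sob}$, which is precisely your argument. The two verifications you spell out (that $\mathbf{H}$-$\mathbf{Sob}$ is an admissible $\mathbf{K}$ and that $\mathbf{K}$-determined coincides with H-sober determined for this choice) are the details the paper leaves implicit, and both are handled correctly.
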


\begin{corollary}\label{H-sober d-closed set prod}
	Let $H : \mathbf{Top}_0 \longrightarrow \mathbf{Set}$ be an R-subset system and	$X=\prod\limits_{i=1}^{n}X_i$ the product of a finite family $\{X_i: 1\leq i\leq n\}$ of $T_0$ spaces. If $A\in H^d_c(X)$, then $A=\prod\limits_{i=1}^{n}p_i(X_i)$, and $p_i(A)\in H^d_c (X_i)$ for all $1\leq i \leq n$.
\end{corollary}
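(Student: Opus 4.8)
The plan is to derive this statement as the special instance $\mathbf{K}=\mathbf{H}$-$\mathbf{Sob}$ of the general product decomposition for closed $\mathbf{K}$-determined sets, Corollary \ref{K-closed set prod}. The entire content is therefore a matter of matching the categorical framework of $\mathbf{K}$-reflections to the H-sober setting; I will first justify that $\mathbf{H}$-$\mathbf{Sob}$ is an admissible choice of $\mathbf{K}$, then identify its family of closed determined sets, and finally quote the general corollary.

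First I would recall that, as noted immediately after Definition \ref{K-determined set}, $\mathbf{H}$-$\mathbf{Sob}$ is a full subcategory of $\mathbf{Top}_0$ that contains $\mathbf{Sob}$ and is closed with respect to homeomorphisms; hence it is a legitimate category $\mathbf{K}$ to which Lemma \ref{K-set prod} and Corollary \ref{K-closed set prod} apply. Next, since a $\mathbf{K}$-space for $\mathbf{K}=\mathbf{H}$-$\mathbf{Sob}$ is exactly an H-sober space, the definition of a $\mathbf{K}$-determined subset (Definition \ref{K-determined set}) reads word for word as the definition of an H-sober determined subset (Definition \ref{Hd set}). This gives the identifications $\mathbf{K}(X)=H^d_c(X)$ and $\mathbf{K}(X_i)=H^d_c(X_i)$, which are precisely the equalities already recorded in Remark \ref{Sob WF d-space determined set}(3).

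With these translations in place, the conclusion is immediate: applying Corollary \ref{K-closed set prod} to $\mathbf{K}=\mathbf{H}$-$\mathbf{Sob}$ and to $A\in H^d_c(X)=\mathbf{K}(X)$ yields both $A=\prod_{i=1}^{n}p_i(A)$ and $p_i(A)\in\mathbf{K}(X_i)=H^d_c(X_i)$ for every $1\le i\le n$, which is exactly what is asserted.

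I do not expect any genuine obstacle at this level, since all the real work has been discharged in the results being specialized. If one wants to locate where the difficulty actually lies, it is twofold: in the nontrivial (reverse) direction of Lemma \ref{K-set prod}, which reconstructs the $\mathbf{K}$-determinacy of $A$ from that of its projections, and in the product decomposition of closed irreducible sets (Corollary \ref{irrcprod}), which is what forces each projection $p_i(A)$ to be closed and hence to belong to $H^d_c(X_i)$ rather than merely to $H^d(X_i)$.
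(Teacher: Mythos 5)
Your proposal is correct and follows exactly the paper's route: the paper derives this corollary (together with the three neighbouring ones) by specializing Lemma \ref{K-set prod} and Corollary \ref{K-closed set prod} to $\mathbf{K}=\mathbf{H}$-$\mathbf{Sob}$, using the identification $\mathbf{H}$-$\mathbf{Sob}(X)={\rm H}^d_c(X)$ recorded in Remark \ref{Sob WF d-space determined set}. Your additional remarks on where the real work lies (and on the evident typo $p_i(X_i)$ for $p_i(A)$ in the statement) are accurate but not needed for the derivation.
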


\begin{corollary}\label{super H-sober D-set prod}
	Let $H : \mathbf{Top}_0 \longrightarrow \mathbf{Set}$ be an R-subset system and $X=\prod\limits_{i=1}^{n}X_i$ the product space of a finite family $\{X_i: 1\leq i\leq n\}$ of $T_0$ spaces. For $A\in\ir (X)$, the following conditions are equivalent:
\begin{enumerate}[\rm (1)]
	\item $A$ is super H-sober determined.
	\item $p_i(A)$ is super H-sober determined for each $1\leq i\leq n$.
\end{enumerate}
\end{corollary}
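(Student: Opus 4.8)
The plan is to derive this corollary as the special case $\mathbf{K}=\mathbf{SH}$-$\mathbf{Sob}$ of the general product lemma for $\mathbf{K}$-determined sets, Lemma \ref{K-set prod}. First I would record the definitional match: by Definition \ref{K-determined set}, a subset $A$ of a $T_0$ space $X$ is $\mathbf{SH}$-$\mathbf{Sob}$-determined exactly when for every continuous mapping $f:X\longrightarrow Y$ into a $\mathbf{SH}$-$\mathbf{Sob}$-space---that is, into a super H-sober space---there is a unique $y_A\in Y$ with $\overline{f(A)}=\overline{\{y_A\}}$. Comparing this with Definition \ref{H-Rudin set HD-set}(2), this is precisely the condition that $A$ be super H-sober determined. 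Hence the class of $\mathbf{SH}$-$\mathbf{Sob}$-determined subsets of $X$ coincides with ${\rm H}^D(X)$, and likewise for each factor $X_i$.

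To invoke Lemma \ref{K-set prod} it then remains only to check that $\mathbf{SH}$-$\mathbf{Sob}$ meets the standing requirements on $\mathbf{K}$, namely that it be a full subcategory of $\mathbf{Top}_0$ containing $\mathbf{Sob}$ (and closed with respect to homeomorphisms). This has already been recorded in the excerpt: by Lemma \ref{Ps functor} and Theorem \ref{Heckman-Keimel theorem}, $\mathbf{SH}$-$\mathbf{Sob}$ is such a subcategory, the key point being that a sober space $Y$ has $P_S(Y)$ sober and hence H-sober, so that $\mathbf{Sob}\subseteq\mathbf{SH}$-$\mathbf{Sob}$. With these hypotheses verified, Lemma \ref{K-set prod} applies verbatim to $\mathbf{K}=\mathbf{SH}$-$\mathbf{Sob}$ and yields, for any $A\in\ir(X)$, the equivalence of ``$A$ is $\mathbf{SH}$-$\mathbf{Sob}$-determined'' and ``$p_i(A)$ is $\mathbf{SH}$-$\mathbf{Sob}$-determined for each $1\le i\le n$''. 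Translating back through the definitional match of the first paragraph gives exactly the asserted equivalence of (1) and (2).

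Because the corollary is a pure specialization, there is essentially no obstacle at this level: all the substantive work is already carried by Lemma \ref{K-set prod}, whose proof rests on the continuity of the projections $p_i:X\longrightarrow X_i$ (so that $p_i(A)\in\ir(X_i)$ by Lemma \ref{irrimage}) together with the product description of irreducible closures in Lemma \ref{irrprod}. The only points one must be careful about are bookkeeping ones---that the defining universal property of a super H-sober determined set is literally the $\mathbf{K}$-determinedness condition for $\mathbf{K}=\mathbf{SH}$-$\mathbf{Sob}$, and that the containment $\mathbf{Sob}\subseteq\mathbf{SH}$-$\mathbf{Sob}$ genuinely holds so that the standing hypothesis of Lemma \ref{K-set prod} is satisfied; note in particular that no use of property M is needed here, since super H-sober determinedness is defined purely through mappings into super H-sober spaces.
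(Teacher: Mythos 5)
Your proof is correct and follows exactly the paper's route: the paper derives this corollary by specializing Lemma \ref{K-set prod} to $\mathbf{K}=\mathbf{SH}$-$\mathbf{Sob}$, using the fact (already recorded via Lemma \ref{Ps functor} and Theorem \ref{Heckman-Keimel theorem}) that $\mathbf{SH}$-$\mathbf{Sob}$ is a full subcategory of $\mathbf{Top}_0$ containing $\mathbf{Sob}$ and closed under homeomorphisms, and the observation that $\mathbf{SH}$-$\mathbf{Sob}$-determinedness is literally super H-sober determinedness. Your additional remarks (the containment $\mathbf{Sob}\subseteq\mathbf{SH}$-$\mathbf{Sob}$ and the fact that property M is not needed) are accurate bookkeeping that the paper leaves implicit.
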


\begin{corollary}\label{super H-sober D-closed set prod}
	Let $H : \mathbf{Top}_0 \longrightarrow \mathbf{Set}$ be an R-subset system and $X=\prod\limits_{i=1}^{n}X_i$ the product space of a finite family $\{X_i: 1\leq i\leq n\}$ of $T_0$ spaces. If $A\in H^D_c(X)$, then $A=\prod\limits_{i=1}^{n}p_i(X_i)$, and $p_i(A)\in H^D_c (X_i)$ for all $1\leq i \leq n$.
\end{corollary}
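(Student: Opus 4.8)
The plan is to deduce this corollary as the instance $\mathbf{K}=\mathbf{SH}$-$\mathbf{Sob}$ of the general product theory for $\mathbf{K}$-determined sets, exactly in the way the three preceding corollaries are obtained from Lemma \ref{K-set prod} and Corollary \ref{K-closed set prod}. First I would record that $\mathbf{SH}$-$\mathbf{Sob}$ meets the standing hypotheses imposed on $\mathbf{K}$, namely that it is a full subcategory of $\mathbf{Top}_0$ containing $\mathbf{Sob}$ and closed with respect to homeomorphisms. This is precisely what was noted just after Definition \ref{K closed with respect to homeomorphisms}, using Lemma \ref{Ps functor} and Theorem \ref{Heckman-Keimel theorem}. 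Consequently the whole apparatus surrounding Definition \ref{K-determined set}, and in particular Corollary \ref{K-closed set prod}, is legitimately available for this choice of $\mathbf{K}$.

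The second step is to identify the two notions of determinedness. Comparing Definition \ref{H-Rudin set HD-set}(2) with Definition \ref{K-determined set} specialized to $\mathbf{K}=\mathbf{SH}$-$\mathbf{Sob}$, one sees at once that a subset $A$ of $X$ is super {\rm H}-sober determined if and only if it is $\mathbf{SH}$-$\mathbf{Sob}$-determined: both demand that for every continuous $f:X\longrightarrow Y$ into a super {\rm H}-sober space $Y$ there is a unique $y_A\in Y$ with $\overline{f(A)}=\overline{\{y_A\}}$. Restricting to closed sets, this yields the equality ${\rm H}^D_c(X)=\mathbf{SH}$-$\mathbf{Sob}(X)$, which is exactly the identification already recorded in Remark \ref{Sob WF d-space determined set}(3).

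With these two observations in place the conclusion is immediate. Given $A\in {\rm H}^D_c(X)=\mathbf{SH}$-$\mathbf{Sob}(X)$, I would apply Corollary \ref{K-closed set prod} with $\mathbf{K}=\mathbf{SH}$-$\mathbf{Sob}$ to obtain both $A=\prod_{i=1}^{n}p_i(A)$ and $p_i(A)\in\mathbf{SH}$-$\mathbf{Sob}(X_i)={\rm H}^D_c(X_i)$ for every $1\leq i\leq n$, translating the last membership back through Remark \ref{Sob WF d-space determined set}(3). Note that no separate irreducibility hypothesis on $A$ is needed for this closed version, since $\mathbf{SH}$-$\mathbf{Sob}(X)\subseteq\mathbf{Sob}(X)=\ir_c(X)$ already (Remark \ref{Sob WF d-space determined set}(2), consistent with Proposition \ref{H Hd HR HD ir}(1) giving ${\rm H}^D(X)\subseteq\ir(X)$). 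I do not expect a genuine obstacle here, as the substantive work lives in the general $\mathbf{K}$-setting; the only points requiring care are the purely formal verification that $\mathbf{SH}$-$\mathbf{Sob}$ satisfies the hypotheses on $\mathbf{K}$ and the clean translation ${\rm H}^D_c=\mathbf{SH}$-$\mathbf{Sob}(\cdot)$, both of which are supplied verbatim by earlier results in the paper.
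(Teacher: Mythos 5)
Your proposal is correct and follows exactly the route the paper intends: the paper derives this corollary (together with its three companions) directly from Lemma \ref{K-set prod} and Corollary \ref{K-closed set prod} by taking $\mathbf{K}=\mathbf{SH}$-$\mathbf{Sob}$, using the identification ${\rm H}^D_c(X)=\mathbf{SH}$-$\mathbf{Sob}(X)$ from Remark \ref{Sob WF d-space determined set}(3) and the fact, recorded after Definition \ref{K closed with respect to homeomorphisms}, that $\mathbf{SH}$-$\mathbf{Sob}$ is a full subcategory of $\mathbf{Top}_0$ containing $\mathbf{Sob}$ and closed under homeomorphisms. Your spelled-out version supplies precisely the verifications the paper leaves implicit (and correctly reads the statement's $\prod_{i=1}^{n}p_i(X_i)$ as the intended $\prod_{i=1}^{n}p_i(A)$).
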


\begin{remark}\label{eta k embedding}(\cite{xu20})  The canonical mapping $\eta_X^k : X\longrightarrow P_H(\mathbf{K}(X))$ is a topological embedding, where $\eta_X^k(x)=\overline {\{x\}}$ for all $x\in X$.
\end{remark}

For the discussion of $\mathbf{K}$-reflections of $T_0$ spaces, we need the following key lemma.

\begin{lemma}\label{K-lemma f star} \emph{(\cite{xu20})}
Let $X$ be a $T_0$ space and $f:X\longrightarrow Y$ a continuous mapping to a $\mathbf{K}$-space $Y$. Then there exists a unique continuous mapping $f^* :P_H(\mathbf{K}(X))\longrightarrow Y$ such that $f^*\circ\eta_X^k=f$, that is, the following diagram commutes.
\begin{equation*}
\xymatrix{
	X \ar[dr]_-{f} \ar[r]^-{\eta_X^k}
	&P_H(\mathbf{K}(X))\ar@{.>}[d]^-{f^*}\\
	&Y}
\end{equation*}	

\noindent The unique continuous mapping $f^* :P_H(\mathbf{K}(X))\longrightarrow Y$ is defined by

$$\forall A\in\mathbf{K}(X),\ \  f^*(A)=y_A,$$

\noindent where $y_A\in Y$ is the unique point such that $\overline{f(A)}=\overline{\{y_A\}}$.
\end{lemma}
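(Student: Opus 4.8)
The plan is to verify directly that the stated formula $f^*(A)=y_A$ defines a map with the required properties---well-definedness together with $f^*\circ\eta_X^k=f$, continuity, and uniqueness---relying on the membership relations for $\mathbf K(X)$ recorded in Remark~\ref{Sob WF d-space determined set} and on the description of the lower Vietoris topology. First I would note that $f^*$ is well-defined: each $A\in\mathbf K(X)$ is $\mathbf K$-determined and $Y$ is a $\mathbf K$-space, so by Definition~\ref{K-determined set} there is a unique $y_A\in Y$ with $\overline{f(A)}=\overline{\{y_A\}}$, and we set $f^*(A)=y_A$. For the commuting triangle, since $\mathcal S_c(X)\subseteq\mathbf K(X)$ every $\overline{\{x\}}$ lies in $\mathbf K(X)$, and continuity of $f$ gives $f(\overline{\{x\}})\subseteq\overline{\{f(x)\}}$ with $f(x)\in f(\overline{\{x\}})$, whence $\overline{f(\overline{\{x\}})}=\overline{\{f(x)\}}$; therefore $f^*(\eta_X^k(x))=f^*(\overline{\{x\}})=f(x)$, i.e.\ $f^*\circ\eta_X^k=f$.

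For continuity I would compute the preimage of an arbitrary open set $V\in\mathcal O(Y)$. Recall that $\{\Diamond_{\mathbf K(X)}U:U\in\mathcal O(X)\}$ is the topology of $P_H(\mathbf K(X))$ because $\mathbf K(X)\subseteq\ir_c(X)$. Using that $V$ is an upper set in the specialization order of $Y$ and that $\overline{\{y_A\}}=\da y_A$, one checks the chain of equivalences $y_A\in V\Leftrightarrow \overline{\{y_A\}}\cap V\neq\emptyset\Leftrightarrow \overline{f(A)}\cap V\neq\emptyset\Leftrightarrow f(A)\cap V\neq\emptyset\Leftrightarrow A\cap f^{-1}(V)\neq\emptyset$. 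Hence $(f^*)^{-1}(V)=\{A\in\mathbf K(X):A\cap f^{-1}(V)\neq\emptyset\}=\Diamond_{\mathbf K(X)}f^{-1}(V)$, which is open since $f^{-1}(V)\in\mathcal O(X)$; thus $f^*$ is continuous.

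The step I expect to be the main obstacle is uniqueness, since $\eta_X^k(X)$ need not be dense in $P_H(\mathbf K(X))$, so the usual ``agreement on a dense subset'' argument is unavailable and the order structure of the Hoare power space must be exploited instead. Let $g:P_H(\mathbf K(X))\longrightarrow Y$ be continuous with $g\circ\eta_X^k=f$; I would fix $A\in\mathbf K(X)$ and show $g(A)=y_A$ by establishing two inequalities in the specialization order of $Y$. On one hand, a short computation with subbasic neighbourhoods shows that for $B\in\mathbf K(X)$ one has $B\in\overline{\eta_X^k(A)}$ iff $B\subseteq A$; in particular $A\in\overline{\eta_X^k(A)}$, so continuity of $g$ gives $g(A)\in g\!\left(\overline{\eta_X^k(A)}\right)\subseteq\overline{g(\eta_X^k(A))}=\overline{f(A)}=\overline{\{y_A\}}$, whence $g(A)\le y_A$. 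On the other hand, for each $a\in A$ the inclusion $\overline{\{a\}}\subseteq A$ means $\eta_X^k(a)\le A$ in $P_H(\mathbf K(X))$, so monotonicity of $g$ yields $f(a)=g(\eta_X^k(a))\le g(A)$; thus $f(A)\subseteq\overline{\{g(A)\}}$, hence $\overline{\{y_A\}}=\overline{f(A)}\subseteq\overline{\{g(A)\}}$ and $y_A\le g(A)$. Since $Y$ is $T_0$, these two inequalities force $g(A)=y_A=f^*(A)$, giving $g=f^*$ and completing the argument.
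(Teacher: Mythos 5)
Your proof is correct. Note first that the paper itself gives no proof of this lemma: it is imported from \cite{xu20}, and the only fragment of argument present in the text is the continuity computation in Remark \ref{fd continuous}, which your preimage calculation $(f^*)^{-1}(V)=\Diamond_{\mathbf K(X)}f^{-1}(V)$ reproduces exactly (in the special case ${\rm H}^d_c(X)$ there). The well-definedness and commutativity steps are routine and correctly handled via $\mathcal S_c(X)\subseteq\mathbf K(X)$, and your uniqueness argument --- squeezing $g(A)$ between $y_A$ from above (via $A\in\overline{\eta_X^k(A)}$ and continuity of $g$) and from below (via monotonicity of $g$ and $\eta_X^k(a)\subseteq A$ for $a\in A$), then invoking $T_0$ --- is complete and is the standard way this is done for sobrification-type universal properties.

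One small inaccuracy in your framing of the uniqueness step: $\eta_X^k(X)$ \emph{is} dense in $P_H(\mathbf K(X))$, since every nonempty open set $\Diamond_{\mathbf K(X)}U$ with $U\neq\emptyset$ contains $\overline{\{x\}}$ for any $x\in U$. The reason the ``agreement on a dense subset'' shortcut is unavailable is not a failure of density but that $Y$ is only assumed $T_0$, not Hausdorff, so two continuous maps into $Y$ agreeing on a dense set need not coincide. This does not affect your argument, which never uses the false claim.
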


From Lemma \ref{K-lemma f star} we immediately deduce the following result.

\begin{theorem}\label{K-reflection theorem}\emph{(\cite{xu20})}
	Let $X$ be a $T_0$ space. If $P_H(\mathbf{K}(X))$ is a $\mathbf{K}$-space,  then the pair $\langle X^k=P_H(\mathbf{K}(X)), \eta_X^k\rangle$ is the $\mathbf{K}$-reflection of $X$, where $\eta_X^k :X\longrightarrow X^k$ is the canonical topological embedding.
\end{theorem}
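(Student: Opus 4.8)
The plan is to verify directly that the pair $\langle X^k, \eta_X^k\rangle$ satisfies the three requirements implicit in Definition \ref{K-reflection}: that $X^k$ is a $\mathbf{K}$-space, that $\eta_X^k$ is continuous, and that the universal factorization property holds. The first of these is exactly the standing hypothesis, namely that $P_H(\mathbf{K}(X))$ is a $\mathbf{K}$-space. The second follows at once from Remark \ref{eta k embedding}, which states that the canonical mapping $\eta_X^k : X \longrightarrow P_H(\mathbf{K}(X))$ is a topological embedding, and in particular continuous.

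For the universal property I would simply invoke Lemma \ref{K-lemma f star}, which carries essentially all the weight of the argument. Given any continuous $f : X \longrightarrow Y$ to a $\mathbf{K}$-space $Y$, that lemma produces a continuous map $f^* : X^k \longrightarrow Y$, defined by $f^*(A) = y_A$, where $y_A \in Y$ is the unique point with $\overline{f(A)} = \overline{\{y_A\}}$ (this point exists because each $A \in \mathbf{K}(X)$ is $\mathbf{K}$-determined and $Y$ is a $\mathbf{K}$-space), and it guarantees both that $f^* \circ \eta_X^k = f$ and that $f^*$ is the unique continuous map with this property. Combining these three facts establishes that $\langle X^k, \eta_X^k\rangle$ meets the defining condition of a $\mathbf{K}$-reflection of $X$, and uniqueness of reflections up to homeomorphism then identifies it as \emph{the} $\mathbf{K}$-reflection.

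The step I expect to be decisive is the application of Lemma \ref{K-lemma f star}; there is in fact no genuine obstacle at the level of this theorem, since the construction of $f^*$, the verification of its continuity, and the uniqueness argument are all packaged inside that lemma. The only point worth emphasizing is the precise role of the hypothesis: the factorization furnished by Lemma \ref{K-lemma f star} is available for \emph{every} $T_0$ space $X$, but to conclude that $\langle X^k, \eta_X^k\rangle$ is a $\mathbf{K}$-reflection one genuinely needs $X^k$ itself to be an object of $\mathbf{K}$, which is why the assumption that $P_H(\mathbf{K}(X))$ is a $\mathbf{K}$-space cannot be omitted.
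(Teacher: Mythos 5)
Your proposal is correct and follows exactly the paper's route: the paper deduces the theorem immediately from Lemma \ref{K-lemma f star}, with the hypothesis supplying that $P_H(\mathbf{K}(X))$ is a $\mathbf{K}$-space and Remark \ref{eta k embedding} supplying the continuity of $\eta_X^k$. Your added remark on why the hypothesis cannot be dropped is a correct and useful clarification but does not change the argument.
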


\begin{definition}\label{K-adequate} (\cite{xu20}) $\mathbf{K}$ is called \emph{adequate} if for any $T_0$ space $X$, $P_H(\mathbf{K}(X))$ is a $\mathbf{K}$-space.
\end{definition}

\begin{corollary}\label{K-adequate reflective}\emph{(\cite{xu20})}
	If $\mathbf{K}$ is adequate, then $\mathbf{K}$ is reflective in $\mathbf{Top}_0$.
\end{corollary}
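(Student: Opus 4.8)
The plan is to combine the adequacy hypothesis with Theorem~\ref{K-reflection theorem} and then invoke the standard characterization of reflective subcategories in terms of the existence of reflection arrows. First I would fix an arbitrary $T_0$ space $X$. By Definition~\ref{K-adequate}, adequacy of $\mathbf{K}$ gives that $P_H(\mathbf{K}(X))$ is a $\mathbf{K}$-space, so the hypothesis of Theorem~\ref{K-reflection theorem} is met. That theorem then yields that the pair $\langle X^k=P_H(\mathbf{K}(X)), \eta_X^k\rangle$ is a $\mathbf{K}$-reflection of $X$: the canonical embedding $\eta_X^k : X\longrightarrow X^k$ (see Remark~\ref{eta k embedding}) is a morphism into a $\mathbf{K}$-space, and for every continuous $f : X\longrightarrow Y$ with $Y$ a $\mathbf{K}$-space there is a unique continuous $f^* : X^k\longrightarrow Y$ with $f^*\circ\eta_X^k=f$, where $f^*$ is the map constructed explicitly in Lemma~\ref{K-lemma f star}.

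Next I would observe that, since $X$ was arbitrary, \emph{every} object of $\mathbf{Top}_0$ admits such a reflection arrow $\eta_X^k$ into $\mathbf{K}$. It is a standard categorical fact that a full subcategory is reflective precisely when each object of the ambient category has a reflection arrow. Concretely, the universal property of $\eta_X^k$ assigns to each continuous map $g : X\longrightarrow X'$ between $T_0$ spaces a unique continuous map $g^k : X^k\longrightarrow (X')^k$ making the square with $\eta_X^k$ and $\eta_{X'}^k$ commute, by applying the factorization property of $\eta_X^k$ to the composite $\eta_{X'}^k\circ g : X\longrightarrow (X')^k$. The assignments $X\mapsto X^k$ and $g\mapsto g^k$ then constitute a functor left adjoint to the inclusion $\mathbf{K}\hookrightarrow\mathbf{Top}_0$, with the family $\{\eta_X^k\}$ serving as the unit of the adjunction.

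The substance of this corollary is entirely carried by the earlier lemmas, so no computational obstacle remains; the only point requiring care is the verification that the reflection is functorial, namely that $g\mapsto g^k$ preserves identities and composites. This I would dispatch by appealing to the uniqueness clause in the universal property of $\eta_X^k$: both $(\mathrm{id}_X)^k$ and $\mathrm{id}_{X^k}$ factor $\eta_X^k$ through itself, forcing $(\mathrm{id}_X)^k=\mathrm{id}_{X^k}$, and both $(g'\circ g)^k$ and $g'^k\circ g^k$ factor $\eta_{X''}^k\circ(g'\circ g)$ through $\eta_X^k$, forcing their equality. Thus the existence of the $\mathbf{K}$-reflection for each $X$, together with this uniqueness-driven functoriality, establishes that $\mathbf{K}$ is reflective in $\mathbf{Top}_0$.
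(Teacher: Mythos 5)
Your proposal is correct and follows the same route the paper intends: adequacy plus Theorem \ref{K-reflection theorem} gives a $\mathbf{K}$-reflection arrow $\eta_X^k$ for every $T_0$ space $X$, and the standard equivalence between pointwise reflections and reflectivity (with functoriality forced by the uniqueness clause, exactly as spelled out afterwards in Corollary \ref{K-fuctor}) finishes the argument. The paper treats this as immediate from those two ingredients, so your write-up simply makes the implicit categorical bookkeeping explicit.
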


\begin{corollary}\label{K-fuctor}\emph{(\cite{xu20})}
	If $\mathbf{K}$ is adequate, then for any  $T_0$ spaces $X,Y$ and any continuous mapping $f:X\longrightarrow Y$, there exists a unique continuous mapping $f^k:X^k\longrightarrow Y^k$ such that $f^k\circ \eta_X^k=\eta_Y^k\circ f$, that is, the following diagram commutes.
		\begin{equation*}
	\xymatrix{
		X \ar[d]_-{f} \ar[r]^-{\eta_X^k} &X^k\ar[d]^-{f^k}\\
		Y \ar[r]^-{\eta_Y^k} &Y^k
	}
	\end{equation*}
For each $A\in \mathbf{K} (X)$, $f^k(A)=\overline{f(A)}$.
\end{corollary}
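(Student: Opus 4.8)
The plan is to derive Corollary~\ref{K-fuctor} entirely from the universal property of the $\mathbf{K}$-reflection that is already packaged in Lemma~\ref{K-lemma f star} and Theorem~\ref{K-reflection theorem}, so that functoriality comes almost for free; the only genuine work is the identification of the explicit formula $f^k(A)=\overline{f(A)}$.

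First, for \emph{existence and uniqueness} of $f^k$, I would feed the correct map into the universal property. Since $\mathbf{K}$ is adequate (Definition~\ref{K-adequate}), the space $Y^k=P_H(\mathbf{K}(Y))$ is itself a $\mathbf{K}$-space, and by Theorem~\ref{K-reflection theorem} the pair $\langle Y^k,\eta_Y^k\rangle$ is the $\mathbf{K}$-reflection of $Y$. I would consider the composite continuous mapping $\eta_Y^k\circ f:X\longrightarrow Y^k$. Because its target is a $\mathbf{K}$-space, Lemma~\ref{K-lemma f star} applied to $\eta_Y^k\circ f$ yields a \emph{unique} continuous mapping $(\eta_Y^k\circ f)^*:X^k=P_H(\mathbf{K}(X))\longrightarrow Y^k$ with $(\eta_Y^k\circ f)^*\circ\eta_X^k=\eta_Y^k\circ f$, and I would set $f^k:=(\eta_Y^k\circ f)^*$; this makes the required square commute. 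For the uniqueness clause, any continuous $g:X^k\longrightarrow Y^k$ satisfying $g\circ\eta_X^k=\eta_Y^k\circ f$ is a lift of $\eta_Y^k\circ f$ through $\eta_X^k$, so the uniqueness part of Lemma~\ref{K-lemma f star} forces $g=f^k$.

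Next, for the \emph{explicit formula}, I would unwind the description of $f^*$ in Lemma~\ref{K-lemma f star}: for $A\in\mathbf{K}(X)$ we have $f^k(A)=(\eta_Y^k\circ f)^*(A)=y_A$, where $y_A\in\mathbf{K}(Y)$ is the unique point of $Y^k$ with $\overline{(\eta_Y^k\circ f)(A)}=\overline{\{y_A\}}$ in $P_H(\mathbf{K}(Y))$, and I claim $y_A=\overline{f(A)}$. Two ingredients are needed. First, $\overline{f(A)}$ must be a legitimate point of $Y^k$: this is exactly the stability fact recorded after Definition~\ref{K-determined set}, namely $\overline{f(A)}\in\mathbf{K}(Y)$ whenever $A\in\mathbf{K}(X)$. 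Second, I must show $\overline{(\eta_Y^k\circ f)(A)}=\overline{\{\overline{f(A)}\}}$. Since $\mathcal S_c(Y)\subseteq\mathbf{K}(Y)$, Remark~\ref{eta continuous}(1) says the specialization order on $P_H(\mathbf{K}(Y))$ is set inclusion, so the right-hand side equals $\{C\in\mathbf{K}(Y):C\subseteq\overline{f(A)}\}=\Box_{\mathbf{K}(Y)}\overline{f(A)}$.

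Finally, the computation I expect to be the \textbf{main obstacle} is the identity $\overline{\eta_Y^k(B)}=\Box_{\mathbf{K}(Y)}\overline{B}$ for an arbitrary $B\subseteq Y$ (which I would then specialize to $B=f(A)$). Working from the subbasis $\{\Diamond V:V\in\mathcal O(Y)\}$ of the lower Vietoris topology, a point $C\in\mathbf{K}(Y)$ lies in $\overline{\eta_Y^k(B)}$ iff every basic neighborhood $\Diamond V_1\cap\cdots\cap\Diamond V_n$ of $C$ meets $\eta_Y^k(B)$, that is, iff some $b\in B$ satisfies $b\in V_1\cap\cdots\cap V_n$. Here the \emph{irreducibility} of $C$ (recall $\mathbf{K}(Y)\subseteq\ir_c(Y)$) is the crux: it collapses ``$C\cap V_i\neq\emptyset$ for all $i$'' into $C\cap\bigcap_i V_i\neq\emptyset$, reducing the membership condition to the single-open-set statement ``$C\cap V\neq\emptyset\Rightarrow B\cap V\neq\emptyset$ for all $V\in\mathcal O(Y)$'', which is equivalent to $C\subseteq\overline{B}$. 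Matching this with the description of $\overline{\{\overline{f(A)}\}}$ obtained above identifies $y_A=\overline{f(A)}$ and closes the argument.
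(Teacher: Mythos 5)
Your proposal is correct and takes essentially the route the paper intends: Corollary \ref{K-fuctor} is stated without proof as a consequence of Lemma \ref{K-lemma f star} and adequacy (citing \cite{xu20}), and your argument --- applying the universal property to $\eta_Y^k\circ f$ for existence and uniqueness, then identifying $y_A=\overline{f(A)}$ via the stability fact $\overline{f(A)}\in\mathbf{K}(Y)$ and the computation $\overline{\eta_Y^k(B)}=\Box_{\mathbf{K}(Y)}\overline{B}$ using irreducibility of the members of $\mathbf{K}(Y)$ --- is exactly the standard derivation.
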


Corollary \ref{K-fuctor} defines a functor $K : \mathbf{Top}_0 \longrightarrow \mathbf{K}$, which is the left adjoint to the inclusion functor $I : \mathbf{K} \longrightarrow \mathbf{Top}_0$.

\begin{theorem}\label{K-reflectionprod}\emph{(\cite{xu20})}
	For an adequate $\mathbf{K}$ and a finite family $\{X_i: 1\leq i\leq n\}$ of $T_0$ spaces,  $(\prod\limits_{i=1}^{n}X_i)^k=\prod\limits_{i=1}^{n}X_i^k$ \emph{(}up to homeomorphism\emph{)}.
\end{theorem}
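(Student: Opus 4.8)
The plan is to make the $\mathbf{K}$-reflection concrete through Theorem~\ref{K-reflection theorem} and then exhibit an explicit homeomorphism between the two candidate reflections. Write $X=\prod_{i=1}^{n}X_i$ and let $p_i$ denote the $i$-th projection. Since $\mathbf{K}$ is adequate, $X^k=P_H(\mathbf{K}(X))$ and $X_i^k=P_H(\mathbf{K}(X_i))$ for each $i$ by Theorem~\ref{K-reflection theorem}, so it suffices to produce a homeomorphism $\varphi:\prod_{i=1}^{n}P_H(\mathbf{K}(X_i))\longrightarrow P_H(\mathbf{K}(X))$ that intertwines the canonical embeddings. I would define $\varphi$ by $\varphi((A_i)_{i=1}^n)=\prod_{i=1}^{n}A_i$.

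First I would check that $\varphi$ is a well-defined bijection. For $A_i\in\mathbf{K}(X_i)$ we have $A_i\in\ir_c(X_i)$ by Remark~\ref{Sob WF d-space determined set}, so $\prod_{i=1}^{n}A_i$ is closed and, by Lemma~\ref{prodirr}, irreducible; since $p_i(\prod_{j}A_j)=A_i$ is $\mathbf{K}$-determined for every $i$, Lemma~\ref{K-set prod} yields $\prod_{i=1}^{n}A_i\in\mathbf{K}(X)$. Conversely, Corollary~\ref{K-closed set prod} gives $B=\prod_{i=1}^{n}p_i(B)$ with $p_i(B)\in\mathbf{K}(X_i)$ for each $B\in\mathbf{K}(X)$, so $B\mapsto(p_i(B))_{i}$ is a two-sided inverse of $\varphi$ (injectivity uses that the factors are nonempty).

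The heart of the argument is to show $\varphi$ is a homeomorphism, for which I would establish the identity $\Diamond_{\mathbf{K}(X)}\big(\bigcap_{j\in F}p_j^{-1}(U_j)\big)=\bigcap_{j\in F}\Diamond_{\mathbf{K}(X)}p_j^{-1}(U_j)$ for any finite $F$ and $U_j\in\mathcal O(X_j)$. This holds because every $B\in\mathbf{K}(X)$ is a rectangle $\prod_i p_i(B)$, so $B$ meets $\bigcap_{j\in F}p_j^{-1}(U_j)$ exactly when $p_j(B)\cap U_j\neq\emptyset$ for all $j\in F$ (the coordinates may be chosen independently). A direct computation then shows $\varphi$ sends the subbasic open $p_j^{-1}(\Diamond_{\mathbf{K}(X_j)}U_j)$ of the product onto $\Diamond_{\mathbf{K}(X)}p_j^{-1}(U_j)$; together with $\Diamond(\bigcup_\alpha W_\alpha)=\bigcup_\alpha\Diamond W_\alpha$ and the fact that every open set of $X$ is a union of basic opens $\bigcap_{j\in F}p_j^{-1}(U_j)$, this lets me pull every subbasic open of $P_H(\mathbf{K}(X))$ back to an open set of $\prod_i X_i^k$ and conversely, giving bicontinuity of $\varphi$.

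Finally, for $x=(x_i)_{i}\in X$ one has $\overline{\{x\}}=\prod_{i=1}^{n}\overline{\{x_i\}}$, so $\varphi\circ\big(\prod_{i=1}^{n}\eta_{X_i}^k\big)=\eta_X^k$; since $\mathbf{K}$ is closed under homeomorphisms, $\prod_{i=1}^{n}X_i^k$ is thus a $\mathbf{K}$-space carrying an embedding equivalent to $\eta_X^k$, and uniqueness of $\mathbf{K}$-reflections gives $X^k=\prod_{i=1}^{n}X_i^k$ up to homeomorphism. The step I expect to be the main obstacle is the rectangle identity for $\Diamond$: it is false for arbitrary elements of a lower space and is available here only because closed $\mathbf{K}$-determined subsets of a \emph{finite} product split as products of their projections, which is precisely why the theorem is restricted to finite families.
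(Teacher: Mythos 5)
The paper does not prove Theorem~\ref{K-reflectionprod} here; it imports it from \cite{xu20} and records only the ingredients (Theorem~\ref{K-reflection theorem}, Lemma~\ref{K-set prod}, Corollary~\ref{K-closed set prod}). Your argument is correct and is exactly the proof one would assemble from those ingredients: realize both sides concretely as lower--Vietoris spaces via adequacy, use the splitting $B=\prod_i p_i(B)$ of closed $\mathbf{K}$-determined sets of a finite product to get the bijection $\varphi((A_i)_i)=\prod_i A_i$, and match the topologies on subbasic opens. All the individual steps check out: well-definedness via Lemma~\ref{prodirr} and Lemma~\ref{K-set prod}, the two-sided inverse via Corollary~\ref{K-closed set prod}, the identity $\varphi^{-1}\bigl(\Diamond_{\mathbf{K}(X)}p_j^{-1}(U_j)\bigr)=p_j^{-1}\bigl(\Diamond_{\mathbf{K}(X_j)}U_j\bigr)$, and the compatibility $\varphi\circ\prod_i\eta_{X_i}^k=\eta_X^k$ coming from $\overline{\{x\}}=\prod_i\overline{\{x_i\}}$. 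One small remark on the step you flag as the main obstacle: the identity $\Diamond_{\mathbf{K}(X)}\bigl(\bigcap_{j\in F}p_j^{-1}(U_j)\bigr)=\bigcap_{j\in F}\Diamond_{\mathbf{K}(X)}p_j^{-1}(U_j)$ does not actually need the rectangle structure --- since $\mathbf{K}(X)\subseteq\ir(X)$, an irreducible set meets a finite intersection of open sets as soon as it meets each of them, so $\Diamond_{\mathcal G}U\cap\Diamond_{\mathcal G}V=\Diamond_{\mathcal G}(U\cap V)$ holds for any $\mathcal G\subseteq\ir(X)$. The rectangle splitting is genuinely indispensable, but for the surjectivity of $\varphi$ rather than for this topological identity; that is where finiteness of the index set enters.
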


\begin{lemma}\label{Ph functor} Let $H : \mathbf{Top}_0 \longrightarrow \mathbf{Set}$ be an R-subset system. Then $P_H^d : \mathbf{Top}_0 \longrightarrow \mathbf{Top}_0$ is a covariant functor, where for any $f : X \longrightarrow Y$ in $\mathbf{Top}_0$, $P_H^d(f) : P_H(H^d_c(X)) \longrightarrow P_H(H^d_c(Y))$ is defined by $P_H^d(f)(A)=\overline{f(A)}$ for all $A\in H^d_c(X)$.
\end{lemma}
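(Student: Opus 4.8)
The plan is to verify, in the same order as the proof of Lemma~\ref{Ps functor}, the four defining properties of a covariant functor: that the object assignment $X\mapsto P_H(H^d_c(X))$ lands in $\mathbf{Top}_0$, that each $P_H^d(f)$ is a well-defined continuous map into $P_H(H^d_c(Y))$, that identities are preserved, and that composition is preserved. First I would check that $P_H(H^d_c(X))$ is a $T_0$ space. Since $H^d_c(X)\subseteq\ir_c(X)$ by Lemma~\ref{H Hd ir}, its members are nonempty closed sets and $\{\Diamond_{H^d_c(X)} U : U\in\mathcal O(X)\}$ is a (sub)base for the lower Vietoris topology. Given two distinct members $A,B\in H^d_c(X)$, say $A\not\subseteq B$, I would pick $x\in A\setminus B$; then $X\setminus B\in\mathcal O(X)$ separates them, since $A\in\Diamond (X\setminus B)$ while $B\notin\Diamond (X\setminus B)$. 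Hence $P_H(H^d_c(X))$ is an object of $\mathbf{Top}_0$.

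Next I would establish that $P_H^d(f)$ is well defined on points, i.e. that $\overline{f(A)}\in H^d_c(Y)$ whenever $A\in H^d_c(X)$. This is exactly where Lemma~\ref{Hd image} enters: it gives $f(A)\in H^d(Y)$, and since a subset is H-sober determined if{}f its closure is (the remark following Definition~\ref{Hd set}), $\overline{f(A)}$ is a closed H-sober determined set, i.e. a member of $H^d_c(Y)$. For continuity I would compute the preimage of a subbasic open set: for $V\in\mathcal O(Y)$,
\[
(P_H^d(f))^{-1}(\Diamond_{H^d_c(Y)} V)=\{A\in H^d_c(X):\overline{f(A)}\cap V\neq\emptyset\}=\{A\in H^d_c(X):A\cap f^{-1}(V)\neq\emptyset\}=\Diamond_{H^d_c(X)} f^{-1}(V),
\]
using that $\overline{f(A)}\cap V\neq\emptyset\Leftrightarrow f(A)\cap V\neq\emptyset$ for the open set $V$, and $f(A)\cap V\neq\emptyset\Leftrightarrow A\cap f^{-1}(V)\neq\emptyset$. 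Since $f^{-1}(V)\in\mathcal O(X)$ by continuity of $f$, this preimage is open, so $P_H^d(f)$ is continuous.

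Finally, for functoriality: $P_H^d(\mathrm{id}_X)(A)=\overline{\mathrm{id}_X(A)}=\overline{A}=A$ because $A$ is closed, so $P_H^d(\mathrm{id}_X)=\mathrm{id}_{P_H(H^d_c(X))}$; and for a continuous $g:Y\longrightarrow Z$ the identity $P_H^d(g\circ f)=P_H^d(g)\circ P_H^d(f)$ reduces to $\overline{g(f(A))}=\overline{g(\overline{f(A)})}$, which is the standard fact that $\overline{g(\overline{B})}=\overline{g(B)}$ for continuous $g$ (applied with $B=f(A)$). I expect the whole argument to be essentially formal and to parallel Lemma~\ref{Ps functor} throughout; the only step carrying genuine content is the well-definedness of the morphism assignment, so the main thing to get right is the appeal to Lemma~\ref{Hd image} (equivalently, the functoriality of $H^d$ recorded in Corollary~\ref{Hd R-subset system}), after which the continuity and functoriality computations follow the $P_S$ template verbatim.
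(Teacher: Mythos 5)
Your proposal is correct and follows essentially the same route as the paper's proof: well-definedness via Lemma \ref{Hd image} (together with the observation that $A$ is H-sober determined if{}f $\overline{A}$ is), continuity via the computation $(P_H^d(f))^{-1}(\Diamond V)=\Diamond f^{-1}(V)$, and the same verifications of identity and composition using $\overline{g(\overline{B})}=\overline{g(B)}$. The only difference is cosmetic: you spell out the $T_0$ separation that the paper leaves to the reader, and you omit the paper's additional (but not logically required) Claim~1 that $P_H^d(f)\circ\eta_X^h=\eta_Y^h\circ f$.
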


\begin{proof} For any $T_0$ space $X$ and any continuous mapping $f : X \longrightarrow Y$ to a $T_0$ space $Y$, one can easily deduce that $P_H({\rm H}^d_c(X))$ and $P_H({\rm H}^d_c(Y))$ are $T_0$. By Lemma \ref{Hd image}, $P_H^d(f) : P_H({\rm H}^d_c(X)) \longrightarrow P_H({\rm H}^d_c(Y))$ is well-defined. Let $\eta_X^h : X \longrightarrow P_H({\rm H}^d_c(X))$ be the canonical mapping, defined by $\eta_X^h(x)=\overline{\{x\}}$. It is easy to check that $\eta_X^h$ is an order and topological embedding (cf. Remark \ref{eta continuous}).

	{Claim 1:} $P_H^d(f)\circ\eta_X^h=\eta_Y^h\circ f$.
	
	For each $ x\in X$, we have
	$$P_H^d(f)(\eta_X^h(x))=P_H^d(f)(\overline{\{x\}})=\overline{f(\overline{\{x\}})}=\overline{f(\{x\})}=\overline{\{f(x)\}}=\eta_Y^h(f(x)),$$
	that is, the following diagram commutes.
	\begin{equation*}
	\xymatrix{
		X \ar[d]_-{f} \ar[r]^-{\eta_X^h} &P_H(({\rm H}^d_c(X)))\ar[d]^-{P_H^d(f)}\\
		Y \ar[r]^-{\eta_Y^h} &P_H({\rm H}^d_c(Y))	}
	\end{equation*}

{Claim 2:} $P_H^d(f): P_H({\rm H}^d_c(X))\longrightarrow P_H({\rm H}^d_c(Y))$ is continuous.

For $V\in\mathcal O(Y)$, we have
$$\begin{array}{lll}
P_H^d(f)^{-1}(\Diamond V)&=&\{B\in {\rm H}^d_c(X): P_H^d(f)(B)\in \Diamond V\}\\
&=&\{B\in {\rm H}^d_c(X): \overline{f(B)}\cap V\neq\emptyset \}\\
&=&\{B\in {\rm H}^d_c(X):  B\cap f^{-1}(V)\neq\emptyset\}\\
&=&\Diamond f^{-1}(V),
\end{array}$$
which is open in $P_H({\rm H}^d_c(X))$. This implies that $P_H^d(f)$ is continuous.

{Claim 3:} $P_H^d(id_X)=id_{P_H^d(X)}$

$P_H^d(id_X)(A)=\overline{A}=A$ for all $A\in {\rm H}^d_c(X)$.

{Claim 4:} For any continuous mapping $g : Y \longrightarrow Z $ in $\mathbf{Top}_0$, $P_H^d(g\circ f)=P_H^d(g)\circ P_H^d(f)$.

For each $A\in {\rm H}^d_c(X)$, $P_H^d(g\circ f)(A)=\overline{g(f(A))}=\overline{g(\overline{f(A)})}=P_H^d(g)\circ(P_H^d(f)(A)$.

\noindent Thus $P_H^d : \mathbf{Top}_0 \longrightarrow \mathbf{Top}_0$ is a covariant functor.
\end{proof}

\begin{remark}\label{fd continuous} Let $X$ be a $T_0$ space and $f : X \longrightarrow Y$ a continuous mapping to an {\rm H}-sober space $Y$. Then by Lemma \ref{K-lemma f star}, the mapping $f^d : P_H({\rm H}^d_c(X)) \longrightarrow Y$ is continuous, where $f^d$ is defined by
 $$\forall A\in{\rm H}^d_c(X),\ \  f^d(A)=y_A,$$
here $y_A\in Y$ is the unique point such that $\overline{f(A)}=\overline{\{y_A\}}$.
\end{remark}

We can directly check the continuity of $f^d$. In fact, for any  $V\in\mathcal O(Y)$, we have $(f^d)^{-1}(V)=\{A\in {\rm H}^d_c(X): f^d(A)\in V\}=\{A\in {\rm H}^d_c(X): \overline{\{f^d(A)\}}\cap  V\neq\emptyset\}=\{A\in {\rm H}^d_c(X): \overline{f(A)}\cap V\neq\emptyset \}=\{A\in {\rm H}^d_c(X):  A\cap f^{-1}(V)\neq\emptyset\}=\Diamond f^{-1}(V)\in \mathcal O(P_H({\rm H}^d_c(X)))$, proving that $P_H^d(f)$ is continuous.

\begin{theorem}\label{H-sober reflection}  Let $H : \mathbf{Top}_0 \longrightarrow \mathbf{Set}$ be an R-subset system. Then $\mathbf{H}$-$\mathbf{Sob}$ is adequate. Therefore, for any $T_0$ space $X$, $X^h=P_H(H^d_c(X))$ with the canonical topological embedding $\eta_{X}^h: X\longrightarrow X^h$ is the H-sobrification of $X$, where $\eta_{X}^h(x)=\overline{\{x\}}$ for all $x\in X$.
\end{theorem}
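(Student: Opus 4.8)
The plan is to deduce the theorem from the general machinery of Section~6: by Theorem~\ref{K-reflection theorem} it suffices to show that $\mathbf{H}$-$\mathbf{Sob}$ is \emph{adequate}, i.e.\ that $P_H(\mathbf{H}\text{-}\mathbf{Sob}(X))$ is an $\mathrm H$-sober space for every $T_0$ space $X$. Since $\mathbf{H}\text{-}\mathbf{Sob}(X)={\rm H}^d_c(X)$ by Remark~\ref{Sob WF d-space determined set}(3), the task reduces to proving that $Y:=P_H({\rm H}^d_c(X))$ is $\mathrm H$-sober; the canonical map $\eta_X^h$ is already known to be a topological embedding (Remark~\ref{eta k embedding}). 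I will use throughout that, because $\mathcal S_c(X)\subseteq {\rm H}^d_c(X)$, the specialization order of $Y$ is set inclusion and the sets $\Diamond_{{\rm H}^d_c(X)}U$ ($U\in\mathcal O(X)$) form a topology on $Y$.

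Fix $\mathcal A\in {\rm H}(Y)$ and put $B:=\cl_X\!\left(\bigcup\mathcal A\right)$, a closed subset of $X$. Assuming for the moment that $B\in Y$, I would verify $\cl_Y\mathcal A=\overline{\{B\}}^{\,Y}=\da_Y B$. One inclusion is immediate: every $C\in\mathcal A$ satisfies $C\subseteq\bigcup\mathcal A\subseteq B$, so $\mathcal A\subseteq\da_Y B$ and hence $\cl_Y\mathcal A\subseteq\overline{\{B\}}^{\,Y}$. For the reverse inclusion it suffices to check $B\in\cl_Y\mathcal A$: a basic neighbourhood $\Diamond_{{\rm H}^d_c(X)}U$ of $B$ means $B\cap U\neq\emptyset$, equivalently $\bigcup\mathcal A\cap U\neq\emptyset$ (as $U$ is open), equivalently $C\cap U\neq\emptyset$ for some $C\in\mathcal A$; thus every basic open neighbourhood of $B$ meets $\mathcal A$, giving $B\in\cl_Y\mathcal A$ and $\overline{\{B\}}^{\,Y}\subseteq\cl_Y\mathcal A$. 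These computations are routine topology.

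The heart of the argument — and the step I expect to be the main obstacle — is to show that $B$ is in fact a point of $Y$, i.e.\ $B\in{\rm H}^d_c(X)$. Being closed, it remains to prove $B$ is $\mathrm H$-sober determined. So let $f:X\longrightarrow Z$ be continuous with $Z$ an $\mathrm H$-sober space. By Lemma~\ref{K-lemma f star} (see also Remark~\ref{fd continuous}) there is a continuous map $f^d:Y\longrightarrow Z$ sending each $C\in{\rm H}^d_c(X)$ to the unique $z_C\in Z$ with $\overline{f(C)}=\overline{\{z_C\}}$, satisfying $f^d\circ\eta_X^h=f$. Since ${\rm H}$ is a functor into $\mathbf{Set}$ (Definition~\ref{subset system}) and $\mathcal A\in{\rm H}(Y)$, we obtain $f^d(\mathcal A)\in{\rm H}(Z)$, so $\mathrm H$-sobriety of $Z$ yields a unique $z\in Z$ with $\overline{f^d(\mathcal A)}=\overline{\{z\}}$. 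It then remains to identify $\overline{f(B)}$ with $\overline{f^d(\mathcal A)}$: using continuity of $f$ one has $\overline{f(B)}=\overline{f\left(\bigcup\mathcal A\right)}=\overline{\bigcup_{C\in\mathcal A}\overline{f(C)}}=\overline{\bigcup_{C\in\mathcal A}\overline{\{z_C\}}}=\overline{\{z_C:C\in\mathcal A\}}=\overline{f^d(\mathcal A)}$. Hence $\overline{f(B)}=\overline{\{z\}}$ with $z$ unique by the $T_0$ separation of $Z$, proving $B\in{\rm H}^d(X)$; as $B$ is closed, $B\in{\rm H}^d_c(X)=Y$.

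Combining the two steps gives, for every $\mathcal A\in{\rm H}(Y)$, a point $B\in Y$ with $\cl_Y\mathcal A=\overline{\{B\}}^{\,Y}$, unique because $Y$ is $T_0$; therefore $Y$ is $\mathrm H$-sober and $\mathbf{H}$-$\mathbf{Sob}$ is adequate. The reflection statement then follows verbatim from Theorem~\ref{K-reflection theorem} with $X^h=P_H({\rm H}^d_c(X))$ and the canonical embedding $\eta_X^h$. The only genuinely nontrivial point is the $\mathrm H$-sober determinedness of $B$, where the existence of the induced map $f^d$ and the closure identity $\overline{f(B)}=\overline{f^d(\mathcal A)}$ do all the work.
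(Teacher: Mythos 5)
Your proposal is correct and follows essentially the same route as the paper's own proof: both reduce adequacy to showing $P_H({\rm H}^d_c(X))$ is ${\rm H}$-sober, take the closure of the union $\overline{\bigcup\mathcal A}$ as the candidate point, use the induced continuous map $f^d$ from Remark \ref{fd continuous} together with functoriality of ${\rm H}$ to verify it is ${\rm H}$-sober determined, and finish with the same closure computations. You have correctly identified the membership $B\in{\rm H}^d_c(X)$ as the crux, which is exactly where the paper's proof concentrates its effort.
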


\begin{proof}
	Suppose that $X$ be a $T_0$ space. We show that $P_H({\rm H}^d_c(X))$ is an ${\rm H}$-sober space. Since $X$ is $T_0$, one can directly deduce that $P_H({\rm H}^d_c(X))$ is $T_0$. Let $\{
	A_i : i\in I\}\in {\rm H}(P_H({\rm H}^d_c(X)))$ and $A=\overline{\bigcup_{i\in I}A_i}$. We check that $A\in {\rm H}^d_c(X)$. For any continuous mapping $f : X \longrightarrow Y$ to an {\rm H}-sober space $Y$ and $i\in I$, by $A_i\in {\rm H}^d_c(X)$, there is a unique $y_i\in Y$ such that $\overline{f(A_i)}=\overline{\{y_i\}}$. By Remark \ref{fd continuous}, $f^d(\{
	A_i : i\in I\})=\{y_i : i\in I\}\in {\rm H}(Y)$, and hence by the {\rm H}-sobriety of $Y$, there is a unique $y\in Y$ such that $\overline{\{y_i : i\in I\}}=\overline{\{y\}}$.  Therefore, we have
 $$\begin{array}{lll}
           \overline{f(A)} & =\overline{f(\overline{\bigcup_{i\in I}A_i})}\\
	                       & =\overline{f(\bigcup_{i\in I}A_i)}\\
	                       & =\overline{\bigcup_{i\in I}f(A_i)}\\
	                       & =\overline{\bigcup_{i\in I}\overline{f(A_i)}}\\
                           & =\overline{\bigcup_{i\in I}\overline{\{y_i\}}}\\
                           & =\overline{\{y_i : i\in I\}}\\
                           & =\overline{\{y\}}.\\
	\end{array}$$
Thus $A\in {\rm H}^d_c(X)$. We check that $\overline{\{
	A_i : i\in I\}}=\overline{\{A\}}$ in $P_H({\rm H}^d_c(X))$. For any open set $\Diamond U$ ($U\in \mathcal O(X)$) of $P_H({\rm H}^d_c(X))$, we have

$$\begin{array}{lll}
\emptyset&\neq&\overline{\{A\}}\bigcap \Diamond U\\
&\Leftrightarrow&A\in\Diamond U\\
&\Leftrightarrow&A\bigcap U\neq \emptyset\\
&\Leftrightarrow&A_i\bigcap U\neq \emptyset \mbox{~for some~} i\in I\\
&\Leftrightarrow&A_i\in \Diamond U \mbox{~for some~} i\in I\\
&\Leftrightarrow&\{A_i : i\in I\}\bigcap \Diamond U\neq\emptyset\\.
\end{array}$$

\noindent It follows that $\overline{\{
	A_i : i\in I\}}=\overline{\{A\}}$ in $P_H({\rm H}^d_c(X))$. Thus $P_H({\rm H}^d_c(X))$ is {\rm H}-sober.

By Lemma \ref{K-lemma f star} or Theorem \ref{K-reflection theorem}, $\mathbf{H}$-$\mathbf{Sob}$ is adequate. Therefore, for any $T_0$ space $X$, the pair $\langle X^h=P_H({\rm H}^d_c(X)), \eta_{X}^h\rangle$ is the {\rm H}-sobrification of $X$.
\end{proof}

By Corollary \ref{K-adequate reflective}, Corollary \ref{K-fuctor} (or Lemma \ref{Ph functor}) and Theorem \ref{H-sober reflection}, we have the following corollary.

\begin{corollary}\label{H-fuctor} Let $H : \mathbf{Top}_0 \longrightarrow \mathbf{Set}$ be an R-subset system. Then $\mathbf{H}$-$\mathbf{Sob}$ is reflective in $\mathbf{Top}_0$. The functor $H^s : \mathbf{Top}_0 \longrightarrow \mathbf{H}$-$\mathbf{Sob}$ is a left adjoint to the inclusion functor $I : \mathbf{H}$-$\mathbf{Sob} \longrightarrow \mathbf{Top}_0$, where for any continuous mapping $f:X\longrightarrow Y$ in $\mathbf{Top}_0$, $H^s(X)=X^h=P_H(H^d_c(X))$, $H^s(Y)=Y^h=P_H(H^d_c(Y))$, and $H^s(f)=f^h :X^h\longrightarrow Y^h$ is the unique continuous mapping such that $f^h\circ \eta_X^h=\eta_Y^h\circ f$, that is, the following diagram commutes.
		\begin{equation*}
	\xymatrix{
		X \ar[d]_-{f} \ar[r]^-{\eta_X^h} &X^h\ar[d]^-{f^h}\\
		Y \ar[r]^-{\eta_Y^h} &Y^h
	}
	\end{equation*}
For each $A\in {\rm H}^d_c(X)$, $f^h(A)=\overline{f(A)}$.
\end{corollary}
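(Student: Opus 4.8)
The plan is to assemble the claim directly from the machinery of $\mathbf{K}$-reflections developed above, specialised to $\mathbf{K}=\mathbf{H}$-$\mathbf{Sob}$. As remarked just before Definition \ref{K-determined set}, $\mathbf{H}$-$\mathbf{Sob}$ is a full subcategory of $\mathbf{Top}_0$ containing $\mathbf{Sob}$ and closed with respect to homeomorphisms, so the whole $\mathbf{K}$-framework applies to it. First I would invoke Theorem \ref{H-sober reflection}, which says precisely that $\mathbf{H}$-$\mathbf{Sob}$ is adequate in the sense of Definition \ref{K-adequate}: for every $T_0$ space $X$ the space $P_H(H^d_c(X))$ is H-sober, and $X^h=P_H(H^d_c(X))$ with the canonical embedding $\eta_X^h:X\longrightarrow X^h$ is the H-sobrification of $X$. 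An immediate application of Corollary \ref{K-adequate reflective} with $\mathbf{K}=\mathbf{H}$-$\mathbf{Sob}$ then yields that $\mathbf{H}$-$\mathbf{Sob}$ is reflective in $\mathbf{Top}_0$.

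Next I would produce the reflector as a functor. By Remark \ref{Sob WF d-space determined set}(3) the closed $\mathbf{K}$-determined sets for $\mathbf{K}=\mathbf{H}$-$\mathbf{Sob}$ are exactly the closed H-sober determined sets, i.e. $\mathbf{H}$-$\mathbf{Sob}(X)=H^d_c(X)$. Hence Corollary \ref{K-fuctor}, applied to $\mathbf{K}=\mathbf{H}$-$\mathbf{Sob}$, provides for each continuous $f:X\longrightarrow Y$ a unique continuous $f^h:X^h\longrightarrow Y^h$ with $f^h\circ\eta_X^h=\eta_Y^h\circ f$ and $f^h(A)=\overline{f(A)}$ for every $A\in H^d_c(X)$. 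This is exactly the action of the functor $P_H^d$ of Lemma \ref{Ph functor}, which already records functoriality, namely $P_H^d(\mathrm{id}_X)=\mathrm{id}$ and $P_H^d(g\circ f)=P_H^d(g)\circ P_H^d(f)$; so setting $H^s=P_H^d$ gives a well-defined functor $H^s:\mathbf{Top}_0\longrightarrow\mathbf{H}$-$\mathbf{Sob}$ with the stated action on objects and morphisms.

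Finally I would read off the adjunction $H^s\dashv I$. The universal property of the H-sobrification, guaranteed by Lemma \ref{K-lemma f star} (or Definition \ref{K-reflection}), states that for every H-sober space $Z$ the assignment $g\mapsto g\circ\eta_X^h$ is a bijection from $\mathbf{H}$-$\mathbf{Sob}(X^h,Z)$ onto $\mathbf{Top}_0(X,Z)$. The commuting square of Corollary \ref{K-fuctor} shows that the maps $\eta_X^h$ are the components of a natural transformation $\mathrm{id}_{\mathbf{Top}_0}\Rightarrow I\circ H^s$; taking this as the unit makes these hom-set bijections natural in $X$ and $Z$, which is exactly the statement that $H^s$ is left adjoint to the inclusion $I:\mathbf{H}$-$\mathbf{Sob}\longrightarrow\mathbf{Top}_0$.

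I do not anticipate a genuine obstacle, since each nontrivial ingredient --- adequacy of $\mathbf{H}$-$\mathbf{Sob}$, the concrete form of $X^h$, the mapping $f^h$, and its functoriality --- is already supplied by Theorem \ref{H-sober reflection}, Corollary \ref{K-fuctor} and Lemma \ref{Ph functor}. The only point deserving a line of care is the identification $\mathbf{H}$-$\mathbf{Sob}(X)=H^d_c(X)$ of Remark \ref{Sob WF d-space determined set}(3), which guarantees that the abstract reflector furnished by Corollary \ref{K-fuctor} literally coincides with the explicit functor $P_H^d$; once this is in hand, the left-adjoint assertion is merely the standard categorical reformulation of the reflection's universal property.
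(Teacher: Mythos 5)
Your proposal is correct and follows exactly the paper's route: the paper derives this corollary by combining Corollary \ref{K-adequate reflective}, Corollary \ref{K-fuctor} (or Lemma \ref{Ph functor}) and Theorem \ref{H-sober reflection}, which is precisely the assembly you describe. The identification $\mathbf{H}$-$\mathbf{Sob}(X)={\rm H}^d_c(X)$ that you single out is indeed the point recorded in Remark \ref{Sob WF d-space determined set}(3), so nothing is missing.
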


By Corollary \ref{SH-sober is subcat of H-sober}, Theorem \ref{super H-sober HD-sober}, Theorem \ref {H-sober reflection} and Corollary \ref{H-fuctor}, we get the following two results.

\begin{theorem}\label{super H-sober reflection}  Let $H : \mathbf{Top}_0 \longrightarrow \mathbf{Set}$ be an R-subset system having property M. Then $\mathbf{SH}$-$\mathbf{Sob}$ is adequate. Therefore, for any $T_0$ space $X$, $X^{H}=P_H(H^D_c(X))$ with the canonical mapping $\eta_{X}^{H}: X\longrightarrow X^{H}$ is the super H-sobrification of $X$, where $\eta_{X}^{H}(x)=\overline{\{x\}}$ for all $x\in X$.
\end{theorem}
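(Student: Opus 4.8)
The plan is to deduce the statement from the already-established H-sobrification machinery (Theorem \ref{H-sober reflection}) by applying it not to ${\rm H}$ itself but to the derived R-subset system ${\rm H}^D$. First I would record that ${\rm H}^D : \mathbf{Top}_0 \longrightarrow \mathbf{Set}$ is genuinely an R-subset system; this is Lemma \ref{HD functor} and requires no hypothesis on ${\rm H}$. Hence Theorem \ref{H-sober reflection} applies verbatim with ${\rm H}$ replaced by ${\rm H}^D$, yielding that the category of all ${\rm H}^D$-sober spaces is adequate and that, for every $T_0$ space $X$, the space $P_H(({\rm H}^D)^d_c(X))$ together with the canonical map $x\mapsto\overline{\{x\}}$ is its ${\rm H}^D$-sobrification.

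The key reduction is then to identify this data with the asserted one. Here property M enters through Theorem \ref{super H-sober HD-sober}, which tells us that a $T_0$ space is super ${\rm H}$-sober if and only if it is ${\rm H}^D$-sober; combined with Corollary \ref{SH-sober is subcat of H-sober}, this says that $\mathbf{SH}$-$\mathbf{Sob}$ and the category of ${\rm H}^D$-sober spaces are the same full subcategory of $\mathbf{Top}_0$. Because the defining test maps in Definition \ref{Hd set} (for $({\rm H}^D)^d$) and in Definition \ref{H-Rudin set HD-set}(2) (for ${\rm H}^D$) both run over exactly this same class of target spaces, I would conclude $({\rm H}^D)^d(X)={\rm H}^D(X)$, and therefore $({\rm H}^D)^d_c(X)={\rm H}^D_c(X)$, for every $X$. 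Equivalently, one may invoke that $d\leq D$ with $D$ idempotent, as in Corollary \ref{HDD=HD}, so that $({\rm H}^D)^d={\rm H}^D$.

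Substituting these two identifications into the conclusion of the first paragraph gives at once that $P_H({\rm H}^D_c(X))=P_H(({\rm H}^D)^d_c(X))$ is ${\rm H}^D$-sober, i.e., super ${\rm H}$-sober; since $\mathbf{SH}$-$\mathbf{Sob}(X)={\rm H}^D_c(X)$ by Remark \ref{Sob WF d-space determined set}(3), this is exactly the statement that $\mathbf{SH}$-$\mathbf{Sob}$ is adequate. Theorem \ref{K-reflection theorem} (or Corollary \ref{H-fuctor} transported to ${\rm H}^D$) then upgrades adequacy to the reflection statement: $X^{H}=P_H({\rm H}^D_c(X))$ with the canonical topological embedding $\eta_X^{H}$, $\eta_X^{H}(x)=\overline{\{x\}}$, is the super ${\rm H}$-sobrification of $X$.

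The only real content beyond bookkeeping is the equality $({\rm H}^D)^d={\rm H}^D$, and thus the whole argument hinges on Theorem \ref{super H-sober HD-sober}: it is precisely property M that makes ``super ${\rm H}$-sober'' and ``${\rm H}^D$-sober'' coincide, which in turn collapses the a priori larger system $({\rm H}^D)^d$ back to ${\rm H}^D$ and lets the general H-sobrification theorem produce the clean form $P_H({\rm H}^D_c(X))$ rather than an iterated closure. I expect verifying this collapse carefully, together with checking that the canonical embedding and universal property transfer unchanged from the ${\rm H}^D$-setting, to be the main point requiring attention.
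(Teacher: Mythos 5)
Your proposal is correct and follows essentially the same route as the paper, which derives the theorem precisely by combining Corollary \ref{SH-sober is subcat of H-sober}, Theorem \ref{super H-sober HD-sober}, Theorem \ref{H-sober reflection} and Corollary \ref{H-fuctor}; your write-up merely makes explicit the key identification $({\rm H}^D)^d={\rm H}^D$ (via the coincidence of the test classes of target spaces under property M) that the paper leaves implicit.
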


\begin{corollary}\label{super H-fuctor} Let $H : \mathbf{Top}_0 \longrightarrow \mathbf{Set}$ be an R-subset system having property M. Then $\mathbf{SH}$-$\mathbf{Sob}$ is reflective in $\mathbf{Top}_0$ and $\mathbf{SH}$-$\mathbf{Sob}$ is reflective in $\mathbf{H}$-$\mathbf{Sob}$. The functor $H^S : \mathbf{Top}_0 \longrightarrow \mathbf{SH}$-$\mathbf{Sob}$ is a left adjoint to the inclusion functor $I : \mathbf{SH}$-$\mathbf{Sob} \longrightarrow \mathbf{Top}_0$, where for any continuous mapping $f:X\longrightarrow Y$ in $\mathbf{Top}_0$, $H^S(X)=X^{H}=P_H({\rm H}^D_c(X))$, $H^S(Y)=Y^{H}=P_H({\rm H}^D_c(Y))$, and $H^S(f)=f^{H} :X^{H}\longrightarrow Y^{H}$ is the unique continuous mapping such that $f^{H}\circ \eta_X^h=\eta_Y^h\circ f$, that is, the following diagram commutes.
		\begin{equation*}
	\xymatrix{
		X \ar[d]_-{f} \ar[r]^-{\eta_X^{H}} &X^{H}\ar[d]^-{f^{H}}\\
		Y \ar[r]^-{\eta_Y^{H}} &Y^{H}
	}
	\end{equation*}
For each $A\in H^D_c(X)$, $f^{H}(A)=\overline{f(A)}$.
\end{corollary}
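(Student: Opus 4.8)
The plan is to obtain this corollary as a formal consequence of the adequacy of $\mathbf{SH}$-$\mathbf{Sob}$ together with the general $\mathbf{K}$-reflection machinery of the preceding section, instantiated at $\mathbf{K}=\mathbf{SH}$-$\mathbf{Sob}$. Since ${\rm H}$ has property M, Theorem \ref{super H-sober reflection} tells us that $\mathbf{SH}$-$\mathbf{Sob}$ is adequate, i.e. $P_H(\mathbf{K}(X))$ is a $\mathbf{K}$-space for every $T_0$ space $X$. The first bookkeeping step is to pin down the closed $\mathbf{K}$-determined sets: comparing Definition \ref{K-determined set} with Definition \ref{H-Rudin set HD-set}(2), and using Remark \ref{Sob WF d-space determined set}(3), we have $\mathbf{SH}\text{-}\mathbf{Sob}(X)={\rm H}^D_c(X)$, so that $X^{H}=P_H({\rm H}^D_c(X))$ is exactly the space produced by Theorem \ref{super H-sober reflection}.

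With adequacy in hand, Corollary \ref{K-adequate reflective} yields at once that $\mathbf{SH}$-$\mathbf{Sob}$ is reflective in $\mathbf{Top}_0$, so the inclusion $I:\mathbf{SH}\text{-}\mathbf{Sob}\longrightarrow\mathbf{Top}_0$ admits a left adjoint. Applying Corollary \ref{K-fuctor} with $\mathbf{K}=\mathbf{SH}$-$\mathbf{Sob}$ then produces the reflector $H^S$ explicitly: for each continuous $f:X\longrightarrow Y$ there is a unique continuous $f^{H}:X^{H}\longrightarrow Y^{H}$ with $f^{H}\circ\eta_X^{H}=\eta_Y^{H}\circ f$, given on objects by $A\mapsto\overline{f(A)}$ for $A\in{\rm H}^D_c(X)$. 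This is precisely the functorial data asserted in the statement, so the first half follows directly.

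It remains to deduce reflectivity inside $\mathbf{H}$-$\mathbf{Sob}$. Here I would invoke the standard fact that a reflective full subcategory restricts to any intermediate full subcategory. By Corollary \ref{SH-sober is subcat of H-sober}, $\mathbf{SH}$-$\mathbf{Sob}$ is a full subcategory of $\mathbf{H}$-$\mathbf{Sob}$, which in turn is a full subcategory of $\mathbf{Top}_0$. For any H-sober space $X$, the reflection $X^{H}$ is super H-sober and hence H-sober, so both $X$ and $X^{H}$ lie in $\mathbf{H}$-$\mathbf{Sob}$; the universal factorization guaranteed by Theorem \ref{K-reflection theorem} for continuous maps into super H-sober spaces applies verbatim when $X\in\mathbf{H}$-$\mathbf{Sob}$, exhibiting $\langle X^{H},\eta_X^{H}\rangle$ as the reflection of $X$ into $\mathbf{SH}$-$\mathbf{Sob}$ within $\mathbf{H}$-$\mathbf{Sob}$.

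Since all of the genuine work — the proof that $P_H({\rm H}^D_c(X))$ is itself super H-sober — is carried out in Theorem \ref{super H-sober reflection}, this corollary is essentially formal. The only points requiring care are the identification $\mathbf{SH}\text{-}\mathbf{Sob}(X)={\rm H}^D_c(X)$, which must be secured before Corollary \ref{K-fuctor} can be quoted verbatim, and the verification that the universal arrow's factorization transfers down to the intermediate category $\mathbf{H}$-$\mathbf{Sob}$; neither involves any obstacle beyond routine checking.
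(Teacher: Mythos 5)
Your proposal is correct and follows essentially the same route as the paper: all the substantive work is delegated to the adequacy of $\mathbf{SH}$-$\mathbf{Sob}$ (Theorem \ref{super H-sober reflection}, resting on the identification $\mathbf{SH}$-$\mathbf{Sob}(X)={\rm H}^D_c(X)$ from Remark \ref{Sob WF d-space determined set}), after which Corollaries \ref{K-adequate reflective} and \ref{K-fuctor} yield reflectivity and the explicit reflector, exactly as the paper does for $\mathbf{H}$-$\mathbf{Sob}$ in Corollary \ref{H-fuctor}. Your observation that reflectivity in $\mathbf{Top}_0$ restricts to the intermediate full subcategory $\mathbf{H}$-$\mathbf{Sob}$ is the same use of Corollary \ref{SH-sober is subcat of H-sober} that the paper makes.
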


By Theorem \ref{K-reflectionprod}, Theorem \ref{H-sober reflection} and Theorem \ref{super H-sober reflection}, we get the following two corollaries.

\begin{corollary}\label{H-sober reflection prod}  Let $H : \mathbf{Top}_0 \longrightarrow \mathbf{Set}$ be an R-subset system and $\{X_i: 1\leq i\leq n\}$ a finite family of $T_0$ spaces. Then $(\prod\limits_{i=1}^{n}X_i)^h=\prod\limits_{i=1}^{n}X_i^h$ \emph{(}up to homeomorphism\emph{)}.
\end{corollary}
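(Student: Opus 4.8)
The plan is to obtain this statement as an immediate specialization of the general $\mathbf{K}$-reflection product theorem, Theorem \ref{K-reflectionprod}, to the particular category $\mathbf{K}=\mathbf{H}$-$\mathbf{Sob}$. All of the substance has already been established in the cited results; what remains is to confirm that $\mathbf{H}$-$\mathbf{Sob}$ satisfies the hypotheses under which Theorem \ref{K-reflectionprod} was proved and that the notation $X^h$ for the H-sobrification coincides with the generic notation $X^k$ for the $\mathbf{K}$-reflection.

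Concretely, I would first recall that $\mathbf{H}$-$\mathbf{Sob}$ is a full subcategory of $\mathbf{Top}_0$ containing $\mathbf{Sob}$ and closed with respect to homeomorphisms (as noted after Definition \ref{K-determined set}), so it is an admissible choice of $\mathbf{K}$. Next I would invoke Theorem \ref{H-sober reflection} to record that $\mathbf{H}$-$\mathbf{Sob}$ is adequate, which is precisely the standing assumption of Theorem \ref{K-reflectionprod}. Finally I would identify the two reflections: since the closed $\mathbf{K}$-determined sets for $\mathbf{K}=\mathbf{H}$-$\mathbf{Sob}$ are exactly the closed H-sober determined sets, Remark \ref{Sob WF d-space determined set}(3) yields $\mathbf{K}(X)={\rm H}^d_c(X)$, so the $\mathbf{K}$-reflection space $X^k=P_H(\mathbf{K}(X))$ supplied by Theorem \ref{H-sober reflection} is literally $P_H({\rm H}^d_c(X))=X^h$. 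Applying Theorem \ref{K-reflectionprod} then gives $(\prod_{i=1}^{n}X_i)^k=\prod_{i=1}^{n}X_i^k$, which is exactly the desired homeomorphism $(\prod_{i=1}^{n}X_i)^h=\prod_{i=1}^{n}X_i^h$.

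This argument carries no independent difficulty: the real work --- realizing the reflection as $P_H({\rm H}^d_c(X))$, proving adequacy of $\mathbf{H}$-$\mathbf{Sob}$, and showing that $\mathbf{K}$-reflections commute with finite products --- is already contained in Theorem \ref{H-sober reflection} and in the imported Theorem \ref{K-reflectionprod} from \cite{xu20}. The only point that needs care is the bookkeeping matching the two notions of determined set, i.e.\ confirming $\mathbf{K}(X)={\rm H}^d_c(X)$ for $\mathbf{K}=\mathbf{H}$-$\mathbf{Sob}$ so that the abstract $X^k$ and the concrete $X^h$ genuinely agree; once this identification is made the corollary is immediate.
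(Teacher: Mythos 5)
Your proposal is correct and matches the paper's own derivation: the corollary is obtained by noting that Theorem \ref{H-sober reflection} shows $\mathbf{H}$-$\mathbf{Sob}$ is adequate with $X^h=P_H({\rm H}^d_c(X))$, and then applying the general product theorem, Theorem \ref{K-reflectionprod}, with $\mathbf{K}=\mathbf{H}$-$\mathbf{Sob}$. The identification $\mathbf{K}(X)={\rm H}^d_c(X)$ you flag is exactly the bookkeeping implicit in the paper's citation of these two theorems.
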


\begin{corollary}\label{super H-sober reflection prod}  Let $H : \mathbf{Top}_0 \longrightarrow \mathbf{Set}$ be an R-subset system having property M and $\{X_i: 1\leq i\leq n\}$ a finite family of $T_0$ spaces. Then $(\prod\limits_{i=1}^{n}X_i)^{H}=\prod\limits_{i=1}^{n}X_i^{H}$ \emph{(}up to homeomorphism\emph{)}.
\end{corollary}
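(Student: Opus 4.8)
The plan is to recognise this statement as a direct instance of the general product-preservation result for adequate reflective subcategories, Theorem~\ref{K-reflectionprod}, applied to the particular choice $\mathbf{K}=\mathbf{SH}$-$\mathbf{Sob}$. The entire argument is an assembly of facts already established in the excerpt, with property M entering at precisely one point.

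First I would check that $\mathbf{SH}$-$\mathbf{Sob}$ is an admissible $\mathbf{K}$ for the framework of this section: as recorded after Definition~\ref{K-adequate}, $\mathbf{SH}$-$\mathbf{Sob}$ is a full subcategory of $\mathbf{Top}_0$ containing $\mathbf{Sob}$ and is closed with respect to homeomorphisms. Consequently the notions of $\mathbf{K}$-determined set, $\mathbf{K}$-reflection and adequacy all specialise to the super H-sober setting; in particular the closed $\mathbf{SH}$-$\mathbf{Sob}$-determined sets are exactly the sets in ${\rm H}^D_c(X)$, and the $\mathbf{SH}$-$\mathbf{Sob}$-reflection of a $T_0$ space $X$ is the super H-sobrification $X^{H}=P_H({\rm H}^D_c(X))$ produced by Theorem~\ref{super H-sober reflection}.

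The key hypothesis-dependent step is adequacy. Because ${\rm H}$ has property M, Theorem~\ref{super H-sober reflection} guarantees that $\mathbf{SH}$-$\mathbf{Sob}$ is adequate, i.e. $P_H({\rm H}^D_c(X))$ is super H-sober for every $T_0$ space $X$. Adequacy is exactly the running hypothesis of Theorem~\ref{K-reflectionprod}. This is the only place property M is used; without it one cannot even ensure that $\mathbf{SH}$-$\mathbf{Sob}$ is reflective in $\mathbf{Top}_0$.

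Finally I would invoke Theorem~\ref{K-reflectionprod} with $\mathbf{K}=\mathbf{SH}$-$\mathbf{Sob}$ and the finite family $\{X_i:1\le i\le n\}$, obtaining $\left(\prod_{i=1}^{n}X_i\right)^{H}=\prod_{i=1}^{n}X_i^{H}$ up to homeomorphism. Since every ingredient is already in place, there is no genuine obstacle here; the only thing to be careful about is bookkeeping, namely confirming that the behaviour of super H-sober determined sets under finite products (Corollary~\ref{super H-sober D-set prod} and Corollary~\ref{super H-sober D-closed set prod}), which underlies Theorem~\ref{K-reflectionprod}, is indeed the relevant determined-set data for $\mathbf{SH}$-$\mathbf{Sob}$, which it is because $\mathbf{SH}$-$\mathbf{Sob}(X)={\rm H}^D_c(X)$. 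The companion statement Corollary~\ref{H-sober reflection prod} is obtained by the identical argument, using Theorem~\ref{H-sober reflection} in place of Theorem~\ref{super H-sober reflection} and requiring no assumption on ${\rm H}$.
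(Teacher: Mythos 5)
Your proposal is correct and follows exactly the paper's route: the paper likewise obtains this corollary by applying Theorem~\ref{K-reflectionprod} to $\mathbf{K}=\mathbf{SH}$-$\mathbf{Sob}$, whose adequacy under property M is supplied by Theorem~\ref{super H-sober reflection}. The additional bookkeeping you mention (that $\mathbf{SH}$-$\mathbf{Sob}$ is a full subcategory of $\mathbf{Top}_0$ containing $\mathbf{Sob}$, closed under homeomorphisms, with $\mathbf{SH}$-$\mathbf{Sob}(X)={\rm H}^D_c(X)$) is consistent with the remarks the paper records in that section.
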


We can directly apply the main results of this section to the R-subset systems $\mathcal D$ and $\mathcal R$, and obtain the following known results: (1) $\mathbf{Top}_d$, $\mathbf{Sob}$ and $\mathbf{Top}_w$ are all reflective in $\mathbf{Top}_0$; and (2) the $d$-space reflection and the well-filtered reflection preserve finite products of $T_0$ spaces (cf. \cite{xu20}).

\section{Conclusion}

In this paper, we have provided a uniform approach to $d$-spaces, sober spaces and well-filtered spaces, and have developed a general framework for dealing with H-sober spaces and super H-spaces for an R-subset system H. One can continue some further investigations on the H-sober spaces, super H-spaces and other related structures. For example, for an an R-subset system H and a $T_0$ space $X$, we may call $X$ a \emph{H}-\emph{Rudin space} if any closed irreducible subset of $X$ is an H-Rudin set, and $X$ a \emph{H}-\emph{sober determined space} if any closed irreducible subset of $X$ is a  H-sober determined set. These two kinds of spaces are closely related to sober spaces, H-sober spaces, super H-sober spaces, locally compact spaces and core compact spaces (cf. \cite{xu-shen-xi-zhao1}). Their Smyth power spaces and categorical structures deserve further investigation.

We now close our paper with several questions.

\begin{question}\label{HR=HD question} Let ${\rm H} : \mathbf{Top}_0 \longrightarrow \mathbf{Set}$ be an R-subset system having property M. Does ${\rm H}^R(X)={\rm H}^D(X)$ hold for any $T_0$ space $X$?
\end{question}

\begin{question}\label{HRR=HR question}  Let ${\rm H} : \mathbf{Top}_0 \longrightarrow \mathbf{Set}$ be an R-subset system having property M. Is $R : \mathcal H\longrightarrow \mathcal H$, ${\rm H}\mapsto {\rm H}^R$, a closure operator?
\end{question}

\begin{question}\label{super H-sober prod question}
	Let ${\rm H} : \mathbf{Top}_0 \longrightarrow \mathbf{Set}$ be an R-subset system and $\{X_i:i\in I\}$ a family of super H-spaces. Is the product space $\prod_{i\in I}X_i$ super H-sober?
\end{question}

\begin{question}\label{super H-sober function space question}
	Let ${\rm H} : \mathbf{Top}_0 \longrightarrow \mathbf{Set}$ be an R-subset system, $X$ a $T_0$ space and $Y$ a super H-sober space. Is the function space $\mathbf{Top}_0(X, Y)$ equipped with the topology of pointwise convergence super H-sober?
\end{question}

\begin{question}\label{super H-sober equalizer question} Let ${\rm H} : \mathbf{Top}_0 \longrightarrow \mathbf{Set}$ be an R-subset system, $X$ a super H-sober space and $Y$ a $T_0$ space. For a pair of continuous mappings $f, g: X \longrightarrow Y$, is the equalizer
$E(f, g)=\{x\in X : f(x)=g(x)\}$ (as a subspace of $X$) super H-sober?
\end{question}

\begin{question}\label{super H-sober complete question} For an R-subset system ${\rm H} : \mathbf{Top}_0 \longrightarrow \mathbf{Set}$, is $\mathbf{SH}$-$\mathbf{Sob}$ complete?
\end{question}

\begin{question}\label{Hd HR HD property M question}
	If an R-subset system ${\rm H} : \mathbf{Top}_0 \longrightarrow \mathbf{Set}$ has property M, do the induced R-subset systems ${\rm H}^d$, ${\rm H}^R$ and ${\rm H}^D$ have property M?
\end{question}

\begin{question}\label{H-sober reflection-infinite-prod} Does the H-sobrification preserves arbitrary products of $T_0$ spaces? Or equivalently, does $(\prod\limits_{i\in I}X_i)^h=\prod\limits_{i\in I}X_i^h$ (up to homeomorphism) hold for any family $\{X_i : i\in I\}$ of $T_0$ spaces?
\end{question}

\begin{question}\label{super H-sober reflection-infinite-prod} For an R-subset system H having property M, does the super H-sobrification preserves arbitrary products of $T_0$ spaces? Or equivalently, does $(\prod\limits_{i\in I}X_i)^H=\prod\limits_{i\in I}X_i^H$ (up to homeomorphism) hold for any family $\{X_i : i\in I\}$ of $T_0$ spaces?
\end{question}

\begin{question}\label{super H-sobrification question}
	For an R-subset system ${\rm H} : \mathbf{Top}_0 \longrightarrow \mathbf{Set}$ and a $T_0$ space $X$, does the super H-sobrification of $X$ exist?
\end{question}

\begin{question}\label{super H-sober reflective question}
	For an R-subset system ${\rm H} : \mathbf{Top}_0 \longrightarrow \mathbf{Set}$, is $\mathbf{SH}$-$\mathbf{Sob}$ reflective in $\mathbf{Top}_0$?
\end{question}

\noindent{\bf References}

\end{document}